\documentclass[reqno,12pt]{amsart} 
\usepackage{mathrsfs,amsmath,amsthm,amssymb,amsfonts,bbm}
\usepackage{verbatim,pifont,empheq}
\usepackage{esint}
\usepackage{ulem}
\usepackage{enumitem}

\usepackage{color}
\usepackage[hyperfootnotes=false,pagebackref]{hyperref}
\hypersetup{
  colorlinks,
  citecolor=blue,
  linkcolor=red,
  urlcolor=blue}

\usepackage{tikz}
\usetikzlibrary{positioning, arrows.meta}
\usetikzlibrary{calc, decorations.pathmorphing}

\usepackage[left=1.2in, right=1.2in, top=1.2in, bottom = 1.2in]{geometry}
\allowdisplaybreaks[3]

\newtheorem{theorem}{Theorem}[section]
\newtheorem{lemma}[theorem]{Lemma}
\newtheorem{proposition}[theorem]{Proposition}
\newtheorem{corollary}[theorem]{Corollary}
\theoremstyle{definition}

\theoremstyle{remark}
\newtheorem{remark}[theorem]{Remark}

\numberwithin{equation}{section}

\newcommand{\cL}{\mathcal{L}}
\newcommand{\M}{\mathcal{M}}

\newcommand{\Q}{\mathcal{Q}}

\newcommand{\Z}{\mathbb{Z}}

\newcommand{\R}{\mathbb{R}}
\newcommand{\T}{\mathbb{T}}
\newcommand{\N}{\mathbb{N}}
\newcommand{\e}{\varepsilon}

\DeclareMathOperator{\diam}{diam}

\DeclareMathOperator{\dist}{dist}

\DeclareMathOperator{\proj}{proj}

\begin{document}
\title[Large-scale harmonic measures and nontangential maximal functions]{Large-scale harmonic measures and nontangential maximal functions in periodic homogenization}

\author{Zhongwei Shen}
\address{Zhongwei Shen: Institute for Theoretical Sciences, Westlake University, No. 600 Dunyu Road, Xihu District, Hangzhou,
Zhejiang 310030, P.R. China.}
\email{shenzhongwei@westlake.edu.cn}

\author{Jinping Zhuge}
\address{Jinping Zhuge: Morningside Center of Mathematics, Academy of Mathematics and systems science,
Chinese Academy of Sciences, Beijing 100190, China.}
\email{jpzhuge@amss.ac.cn}



\date{}

\begin{abstract}
    In this paper, we consider the elliptic operators $\mathcal{L}_\varepsilon = -\nabla\cdot (A(X/\e) \nabla )$ with periodic coefficients in a bounded domain $\Omega$ without any local smoothness assumption on $A = A(Y)$, where $\e \ll \diam(\Omega)$ is a microscopic scale. Due to the irregularity of the coefficients at $\varepsilon$ scale, we introduce the correct forms of the large-scale nontangential maximal functions for the Dirichlet, Neumann and regularity problems that measure the behaviors of solutions at an $\varepsilon$ distance away from the boundary. The $L^p$ estimates uniform in $\e$ are established for these nontangential maximal functions for the same and optimal ranges of $p$ as the Laplace operator in the Lipschitz or $C^1$ domains. With some additional regularity assumption on the coefficients, the large-scale estimates combined with the small-scale estimates recover the classical full-scale estimates of the nontangential maximal functions. Our proofs are based on the notion of large-scale $\mathcal{L}_\varepsilon$-harmonic measures, the periodic structure of operators in the transversal direction to the boundaries, and the homogenization tools, including convergence rates and large-scale regularity.
\end{abstract}

\maketitle


\section{Introduction}

\subsection{Motivations}
Consider the elliptic operator $\cL_\e = -\nabla\cdot (A^\e\nabla)$,
which arises in homogenization theory, 
where $A^\e=A(X/\e)$ and  $\e \in (0, 1)$ is a small parameter.
Assume that the coefficient matrix $A$ is real, symmetric, and satisfies the following conditions:
\begin{itemize}
    \item Ellipticity and boundedness: there exists $\Lambda \ge 1$ such that for a.e. $\xi \in \R^d, Y\in \R^d$,
    \begin{equation}\label{ellipticity}
       \xi \cdot A(Y)\xi \ge \Lambda^{-1} |\xi|^2, \quad |A(Y) \xi| \le \Lambda |\xi|.
    \end{equation}

    \item Periodicity: for any $Z\in \Z^d$, 
    \begin{equation}\label{periodicity}
        A(\cdot + Z) = A(\cdot).
    \end{equation}
\end{itemize}
We are interested in the $L^p$ Dirichlet and Neumann problems in a bounded domain $\Omega$,

\begin{equation}\label{Dp}
    (D)_p \quad \left\{
    \begin{aligned}
        & \cL_\e(u_\e) = 0 \quad \text{in } \Omega, \\
        & u_\e = f \in L^p(\partial \Omega) \quad \text{on } \partial \Omega,
    \end{aligned}
    \right.
\end{equation}
and 
\begin{equation}\label{Np}
    (N)_p \quad \left\{
    \begin{aligned}
        & \cL_\e(u_\e) = 0 \quad \text{in } \Omega, \\
        & \frac{\partial u_\e}{\partial \nu_\e} = g \in L^p(\partial \Omega) \quad \text{on } \partial \Omega,
    \end{aligned}
    \right.
\end{equation}
where $\partial u_\e/\partial \nu_\e = n\cdot A^\e \nabla u_\e$ and $n$ represents the unit outer normal vector.
We will also consider the $L^p$ regularity problem,
\begin{equation}\label{Rp}
    (R)_p \quad \left\{
    \begin{aligned}
        & \cL_\e(u_\e) = 0 \quad \text{in } \Omega, \\
        & u_\e = f \in W^{1, p}(\partial \Omega) \quad \text{on } \partial \Omega.
    \end{aligned}
    \right.
\end{equation}
In the case $\e=1$ without the periodicity condition, these problems have been studied 
extensively since the late 1970's.
Let $N(u)$ denote the nontangential maximal function of $u$.
If $\Omega$ is a bounded Lipschitz domain,
it is well known that under certain smoothness conditions on the matrix $A$, 
the Dirichlet problem $(D)_p$ with $\e=1$ and the natural regularity condition $N(u_1)\in L^p(\partial\Omega)$ 
is solvable for $p_0< p\le \infty$, while 
$(N)_p$ and $(R)_p$ with $\e=1$ and $N(\nabla u_1)\in L^p(\partial\Omega)$ are solvable for $1< p< p_1$, where $p_0<2 $ and $p_1>2$ depend on $A$ and $\Omega$ \cite{K94}.
Under the periodicity condition \eqref{periodicity},
these results were extended to the case $\e\in (0, 1)$
with nontangential-maximal-function estimates that are uniform in $\e$
by C. Kenig and the first author.
Indeed, it was proved in \cite{KS11, KS11-2} that 
solutions of \eqref{Dp} and \eqref{Rp} satisfy the estimates,
\begin{equation}\label{uni-1}
\|N(u_\e) \|_{L^p(\partial\Omega)}
\le C \| f \|_{L^p(\partial\Omega)}
\quad \text{ and } \quad 
\|\widetilde{N}(\nabla u_\e) \|_{L^p(\partial\Omega)}
\le C \| f \|_{W^{1, p}(\partial\Omega)},
\end{equation}
respectively, and solutions of \eqref{Np} satisfy 
\begin{equation}\label{unif-2}
\| \widetilde{N} (\nabla u_\e)\|_{L^p(\partial\Omega)}
\le C \| g \|_{L^p(\partial\Omega)},
\end{equation}
where $C$ is independent of $\e$.
See \eqref{NM}-\eqref{MNM} for the definitions of 
nontangential maximal functions $N(u)$ and $\widetilde{N}(u)$.
We point out that uniform estimates \eqref{uni-1}-\eqref{unif-2}
cannot be expected to hold without some structure conditions on $A$,
as the same estimates for the case $\e=1$ fail 
without some smoothness conditions on $A$, even in a smooth domain
\cite{MM-1980, CFK81, K94}.

The primary purpose of this paper is to address the following question: 
Under the periodicity condition on $A$, 
are there large-scale nontangential-maximal-function estimates
for the operator $\cL_\e$,
without additional smoothness condition on $A$?
The question is partially motivated by the observation that if $u_\e$ is a weak solution of $\cL_\e(u_\e)=0$ in $B_1(0)=B(0, 1)$, where $A$ satisfies 
\eqref{ellipticity} and \eqref{periodicity}, then 
\begin{equation}\label{lip}
    \fint_{B_r (0)} |\nabla u_\e|^2
    \le C \fint_{B_1(0)} |\nabla u_\e|^2,
\end{equation}
for $\e\le r\le 1$, 
where $C$ depends only on $d$ and $\Lambda$.
This estimate, which follows from \cite{AL87}, is now referred to as the large-scale Lipschitz estimate.
By combining \eqref{lip} with the small-scale estimate,
$$
|\nabla u_\e (0) |^2
\le C \fint_{B_\e(0)} |\nabla u_\e|^2,
$$
which holds with some smoothness condition  on $A$ by rescaling, one obtains
the uniform  Lipschitz estimates for $\cL_\e$. The approach of deriving the large-scale estimates (due to periodic structure) and the small-scale estimate (due to the local smoothness of the coefficients) separately turns out to be natural and general in the study of uniform regularity in homogenization. Therefore, it is equally natural to pursue the large-scale nontangential-maximal-function estimates in the same spirit.

In this paper, we find the correct forms of the large-scale nontangential maximal functions and establish their uniform estimates independent of $\e$ for operators with periodic and merely bounded measurable coefficients.

\subsection{Statement of main results}
Recall the definitions of the nontangential maximal function,
\begin{equation}\label{NM}
    N(F) (Q)=\sup \big\{ |F(X)|: X \in \Gamma (Q) \big\}
\end{equation}
and its modified version,
\begin{equation}\label{MNM}
    \widetilde{N}(F)(Q) = \sup \bigg\{ \bigg(\fint_{B(X, \delta(X)/2) }|F|^2  \bigg)^{1/2}: X\in \Gamma(Q)\bigg\},
\end{equation}
where $\delta(X) = \dist(X, \partial \Omega)$, the nontangential region $\Gamma(Q)$ for $Q \in  \partial \Omega$ is given by
\begin{equation}\label{def.NTcone}
    \Gamma(Q) = \left\{X\in \Omega: |X - Q| < \beta \delta(X)\right \},
\end{equation}
and $\beta > 1$ is a large fixed constant depending on $\Omega$.
For $Q \in \partial \Omega$ and $\e\in (0, 1)$, define the large-scale nontangential maximal function by
\begin{equation}\label{def.N_e}
    N_\e(F)(Q) = \sup \big\{ |F(X)|: X\in \Gamma(Q)
    \text{ and } \delta (X)\ge 10\e\big\},
\end{equation}
and its modified version by
\begin{equation}\label{def.tN_e}
    \widetilde{N}_\e(F)(Q) = \sup \bigg\{ \bigg(\fint_{B(X, \delta(X)/2) }|F|^2  \bigg)^{1/2}: X\in \Gamma(Q) \text{ and  }
    \delta (X)\ge 10 \e\bigg\}.
\end{equation}
For $Q \in \partial \Omega$, define
\begin{equation}
    S_{\e}(f)(Q) = \sup \big\{ |f(P)|: P\in \partial\Omega \text{ and } |P-Q|< \e\big \}.
\end{equation}

The following theorems are the main results of the paper.

\begin{theorem}\label{thm.Dp}
Assume that $A$ is real, symmetric and satisfies \eqref{ellipticity} and \eqref{periodicity}.
    Let $\Omega$ be a bounded Lipschitz domain and
    $f\in H^{1/2}(\partial\Omega)\cap C(\partial\Omega)$. Let
     $u_\e\in H^1(\Omega)$  be a weak solution of
     $\cL_\e (u_\e)=0$ in $\Omega$ with the
     Dirichlet condition $u_\e =f$
     on $\partial\Omega$. Then there exists $\delta\in (0, 1)$,
     depending on $d$, $\Lambda$ and $\Omega$,
     such that for $2-\delta < p \le \infty$,
    \begin{equation}\label{est.Dp}
        \| N_\e(u_\e) \|_{L^p(\partial \Omega)} \le C \| S_{\e}(f)\|_{L^p(\partial \Omega)},
    \end{equation}
    where $C$  depends on $d$,  $p$,  $\Lambda$ and $\Omega$.
    Moreover, if $\Omega$ is a bounded $C^1$ domain,
    the estimate \eqref{est.Dp} holds for $1< p\le \infty$.
\end{theorem}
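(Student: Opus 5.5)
We will argue through the large-scale $\cL_\e$-harmonic measure, following the classical route for $(D)_p$: one first proves a reverse Hölder inequality for a suitable density, and then gets the $L^p$ bound from a pointwise domination of $N_\e(u_\e)$ by a maximal function. Since $A$ is only bounded measurable, the genuine harmonic measure $\omega_\e^X$ of $\cL_\e$ need not be absolutely continuous with any useful uniformity in $\e$. We therefore work with $\omega_\e^X$ only for poles $X$ with $\delta(X)\ge 10\e$, and average it at scale $\e$ on the boundary. Write $u_\e(X)=\int_{\partial\Omega} f\,d\omega_\e^X$, which makes sense for $f$ continuous. Fix $X_0$ with $\delta(X_0)\sim \diam\Omega$ and set
\[
k_\e(Q)=\frac{\omega_\e^{X_0}(\Delta(Q,\e))}{|\Delta(Q,\e)|}, \qquad \Delta(Q,r)=B(Q,r)\cap\partial\Omega .
\]
Then $\int f\,d\omega_\e^{X_0}\le C\int S_\e(f)\,k_\e\,d\sigma$, because each $P\in\partial\Omega$ lies in $\Delta(Q,\e)$ for a set of $Q$ of measure $\sim\e^{d-1}$.

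\textbf{Step 1 (reverse Hölder at large scale).} The aim is
\[
\Big(\fint_{\Delta(Q,r)} k_\e^{p'}\Big)^{1/p'}\le C\fint_{\Delta(Q,r)} k_\e \qquad\text{for } r\ge \e,
\]
with $p'<2+\delta'$ for Lipschitz $\Omega$ and every $p'<\infty$ for $C^1$ $\Omega$. For $\e\le r\le C\e$ this is essentially trivial: the doubling property of $\omega_\e$ (De Giorgi–Nash, valid for bounded measurable coefficients) gives that $k_\e$ is comparable to a constant there.

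For $r\gg\e$ the plan is to compare $\omega_\e$ with the harmonic measure $\omega_0$ of the homogenized operator $\cL_0$, which has constant coefficients and so satisfies the optimal reverse Hölder inequalities for Lipschitz and $C^1$ domains. To transfer the estimate we express $k_\e(P)$, up to constants, through a large-scale derivative of the Green function. Concretely we bound $\omega_\e^{X_0}(\Delta(P,\e))$ by $\e^{d-2}G_\e(X_0,P_\e)$, with $P_\e$ an interior point at distance about $\e$ from $P$, using the Caffarelli–Fabes–Mortola–Salsa comparison, again valid for measurable coefficients. This gives $k_\e(P)\approx \e^{-1}G_\e(X_0,P_\e)$.

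Next we need $\big(\fint_{\Delta(Q,r)}|\e^{-1}G_\e(X_0,\cdot_\e)|^{p'}\big)^{1/p'}\lesssim \fint_{\Delta(Q,r)} k_\e$. This requires a large-scale boundary Lipschitz/Hölder-gradient estimate in $\Omega$, meaning control of the averages $\big(\fint_{B(P,t)\cap\Omega}|\nabla G_\e|^2\big)^{1/2}$ for $\e\le t\le r$. It is obtained from the convergence rate $\|u_\e-u_0\|\lesssim \e$ at scale $r$ combined with a Campanato iteration, in the spirit of the large-scale Lipschitz estimate \eqref{lip}. In Lipschitz domains this gives only a Rellich-type $L^2$ estimate of the gradient with a small gain, which is the source of $2-\delta$. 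In $C^1$ domains the iteration gives large-scale $C^{1,\alpha}$-type control down to scale $\e$, hence $k_\e$ satisfies $B_q$ for all $q$. The periodic structure in the direction transversal to the boundary enters here: it is used to handle the boundary layers within distance $\e$ of the boundary, where the homogenization error is not small.

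\textbf{Step 2 (pointwise bound).} For $X\in\Gamma(Q)$ with $\delta(X)\ge 10\e$, Harnack's inequality and the comparison principle give $d\omega_\e^X/d\omega_\e^{X_0}$ controlled by a Poisson-type kernel at scale $\delta(X)$. Since $\delta(X)\ge10\e$, averaging at scale $\e$ costs only a constant, so
\[
|u_\e(X)|\le C\,M_{\omega}\big(S_\e f\big)(Q),
\]
where $M_\omega$ is the maximal function with respect to the measure $k_\e\,d\sigma$, restricted to radii at least $\e$. By Step 1, $k_\e\,d\sigma$ is an $A_\infty$-weight at scales $\ge\e$, and below scale $\e$ it is constant-like. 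The usual argument then gives $\|M_\omega g\|_{L^p}\le C\|g\|_{L^p}$ for $p$ dual to the reverse Hölder exponent. The case $p=\infty$ is simply the maximum principle.

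The main obstacle is Step 1 in the regime $r\gg\e$: one must obtain uniform reverse Hölder bounds on $k_\e$ from the homogenization tools alone, that is, from convergence rates and boundary large-scale regularity in Lipschitz or $C^1$ domains, without any smoothness of $A$. The first approach to try is to compare $G_\e$ with the homogenized Green function $G_0$ in annuli. The boundary-layer error must then be handled through an $\e$-smoothing of the domain together with the transversal periodicity.
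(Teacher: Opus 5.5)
Your Step 2 and the preliminary reductions are sound and essentially match the paper. The paper keeps the pole $X$ and works with $\overline{\omega}^X_\e$ on dyadic annuli with $\omega^X_\e(R_j)\lesssim 2^{-j\alpha}$, rather than a fixed pole plus a comparison kernel, but the two packagings are equivalent. The identification $k_\e(P)\simeq \e^{-1}G_\e(X_0,A_\e(P))$ is also what the paper uses.

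The gap is Step 1. It carries the whole content of the theorem, you leave it open, and the mechanism you propose for it cannot work. You ask for control of $\big(\fint_{B(P,t)\cap\Omega}|\nabla G_\e|^2\big)^{1/2}$, $\e\le t\le r$, at each boundary point, i.e.\ a large-scale boundary Lipschitz estimate. This is false in Lipschitz domains already for $\Delta$. With constant coefficients the bound is scale-free, so it would hold for all $t>0$, and it fails at a reentrant corner of opening $\theta\in(\pi,2\pi)$, where $|\nabla G|\sim\rho^{\pi/\theta-1}$. For the same reason a Campanato iteration has nothing to compare with, since $\cL_0$-harmonic functions vanishing on a Lipschitz boundary are only $C^\alpha$. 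In $C^1$ domains, ``large-scale $C^{1,\alpha}$-type control down to scale $\e$'' is also false: for $\Delta$ in a $C^1$ non-Dini domain the Poisson kernel is unbounded, so only $B_q$ with $q<\infty$ holds and no pointwise boundary gradient bound is available. Your remark that Lipschitz domains give ``only a Rellich-type $L^2$ estimate'' points the right way, but you never state that estimate or show how it yields a reverse H\"older inequality.

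The missing idea is that reverse H\"older needs only an estimate \emph{averaged along the boundary}, namely a boundary-layer estimate. The chain in the paper is as follows.
\begin{itemize}
\item Since $G_\e(X,\cdot)$ vanishes on $\partial\Omega$, Poincar\'e gives $k_\e(Q)\lesssim\big(\fint_{B_{2\e}(Q)\cap\Omega}|\nabla G_\e(X,\cdot)|^2\big)^{1/2}$.
\item Fubini then gives $\int_{\Delta_r}k_\e^2\,d\sigma\lesssim\e^{-1}\int_{\Omega_{2\e}\cap B_{1.5r}}|\nabla G_\e|^2$.
\item Next comes the local large-scale Rellich estimate of Proposition \ref{prop.LSRellich}: $\frac1t\int_{D_r\cap\Omega_t}|\nabla u_\e|^2\lesssim\frac1r\int_{D_{2r}}|\nabla u_\e|^2$ for $\e\le t\le r$ and $u_\e=0$ on $\Delta_{2r}$. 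The paper takes this from \cite{KS11}, where it is proved with difference quotients $\Q_\e$ in the direction transversal to the boundary; that is where periodicity enters.
\item Caccioppoli and the Green function/harmonic measure comparisons return $r^{1-d}\omega^X_\e(\Delta_r)^2$, which is $B_2$.
\item Gehring's lemma upgrades this to $B_{2+\delta}$, hence the range $p>2-\delta$.
\end{itemize}
For $C^1$ domains the same chain runs in $L^p$, with $\nabla G_\e$ replaced by the averaged gradient $M_\e(\nabla G_\e)$ because $A$ is rough. This requires an $L^p$ local Rellich estimate for $M_\e(\nabla u_\e)$ for every $p<\infty$, together with a large-scale $W^{1,p}$ estimate. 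The paper proves the former from three ingredients, not from any pointwise gradient bound:
\begin{itemize}
\item the $L^p$ nontangential estimates for $\cL_0$ in $C^1$ domains, made $\e$-averaged through a real-variable argument;
\item a two-scale expansion with correctors and smoothing;
\item the large-scale $W^{1,p}$ estimate applied to the error term.
\end{itemize}
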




\begin{theorem}\label{thm.Rp}
Let $A$ and $\Omega$ be the same as in Theorem \ref{thm.Dp}.
     Let $u_\e\in H^1(\Omega)$ be a weak solution of $\cL_\e (u_\e)=0$ in $\Omega$
     with the Dirichlet condition $u_\e=f\in H^1(\partial\Omega)$
     on $\partial\Omega$. Then there exists $\delta>0$, depending only on $d$,  $\Lambda$ and $\Omega$, such that for $1<p<2+\delta$,
    \begin{equation}\label{est.Rp}
        \| \widetilde{N}_\e( \nabla u_\e) \|_{L^p(\partial \Omega)} 
        \le C\| f \|_{W^{1,p}(\partial \Omega)},
    \end{equation}
    where $C$ depends on $d$, $p$,  $\Lambda$ and $\Omega$.
    Moreover, if $\Omega$ is a bounded $C^1$ domain,
    the estimate \eqref{est.Rp} holds for $1< p< \infty$.
\end{theorem}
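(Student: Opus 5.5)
The plan is to follow the classical Dahlberg--Jerison--Kenig scheme for the regularity problem: first prove a reverse H\"older / good-$\lambda$ type estimate for $\widetilde N_\e(\nabla u_\e)$ at $p=2$ (or slightly above 2), then extrapolate down to $1<p<2$ via an atomic (Hardy space) estimate at $p=1$ and interpolation. Throughout, the small scale $\e$ is handled by replacing pointwise boundary information with averages over $\e$-neighborhoods. The large-scale interior Lipschitz estimate \eqref{lip} and its boundary analogues (large-scale Lipschitz estimates for Dirichlet problems in Lipschitz domains, valid down to scale $\e$) replace smoothness of $A$.

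\textbf{Step 1 ($p=2$).} I would establish
\[
\|\widetilde N_\e(\nabla u_\e)\|_{L^2(\partial\Omega)}\le C\|f\|_{H^1(\partial\Omega)}
\]
by a Rellich-type argument at large scale.

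First reduce to the nontangential estimate of the gradient in a layer $\{\delta(X)\ge c\e\}$. By Caccioppoli and \eqref{lip}, $\widetilde N_\e(\nabla u_\e)$ is controlled by a truncated maximal function of $\nabla u_\e$ averaged over boxes of size $\ge\e$. To bound it, compare $u_\e$ with the homogenized solution $u_0$ of $\cL_0 u_0=0$, $u_0=f$. For the constant-coefficient operator $\cL_0$, the Rellich estimate $\|N(\nabla u_0)\|_{L^2}\le C\|f\|_{H^1}$ is classical.

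Then use the two-scale expansion $u_0+\e\chi(X/\e)\eta_\e\nabla u_0$ with a boundary cutoff $\eta_\e$ vanishing in an $\e$-layer. The error $w_\e$ satisfies an $H^1$ estimate of the form
\[
\|\nabla w_\e\|_{L^2(\Omega)}\lesssim \e^{1/2}\|\nabla u_0\|_{L^2(\Omega_{C\e})}+\dots,
\]
and the layer term is bounded by $\e^{1/2}\|N(\nabla u_0)\|_{L^2(\partial\Omega)}$. Localized to boxes at height $t\ge\e$ and combined with \eqref{lip} in dyadic Whitney regions, this gives square-function and maximal control of $\nabla w_\e$ at each scale. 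Summing the scales requires the boundary large-scale Lipschitz estimate so that scales between $\e$ and $t$ are absorbed.

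\textbf{Step 2 (push $p$ to $2+\delta$ and down to $1$).} For $2<p<2+\delta$, localize Step 1 on surface balls $\Delta(Q,r)$ with $r\ge\e$. The solution with data $f-$const is split into a part with data supported in $\Delta(Q,2r)$, handled by Step 1, and a part vanishing on $\Delta(Q,2r)$. The latter satisfies a reverse H\"older inequality for $\widetilde N_\e$ via a boundary Caccioppoli / large-scale boundary H\"older--Lipschitz estimate in Lipschitz domains. A real-variable (Shen's $L^p$ extrapolation) lemma with balls restricted to radii $\ge\e$ then gives $L^p$ for $p<2+\delta$. Radii below $\e$ are trivial because $\widetilde N_\e$ is constant-ish at scale $\e$, being controlled by $\M_\e(|\nabla_{\tan} f|)$ averages.

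For $1<p<2$: take a Hardy atom $a$ supported on $\Delta(Q,r)$ with $\|\nabla_{\tan}a\|_{L^2}\le r^{-(d-1)/2}$. Step 1 handles the near part. Away from $\Delta(Q,Cr)$, decay of $\nabla u_\e$ comes from the large-scale boundary H\"older estimate for solutions vanishing on the boundary together with Green function estimates. If $r<\e$, the atom is first averaged at scale $\e$, consistent with the definition of $\widetilde N_\e$. Interpolation between $H^1_{at}$ and $L^2$ gives $1<p<2$.

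\textbf{Step 3 ($C^1$ domains).} In a $C^1$ domain the large-scale boundary Lipschitz estimate holds for $\cL_\e$ at all scales $\ge\e$, and the homogenized problem is solvable in $L^p$ for all $p$. The reverse H\"older in Step 2 then holds for every exponent, giving $1<p<\infty$.

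The main obstacle is Step 1: proving the $L^2$ bound with merely measurable coefficients. The Rellich identity cannot be used directly for $\cL_\e$, and the boundary-layer error in the two-scale expansion must be shown to be controlled uniformly at every scale $\ge\e$. I would first try a dyadic decomposition in height combined with the convergence rate $\|u_\e-u_0\|\lesssim(\e/r)^{\sigma}$ on Whitney boxes, summed geometrically.
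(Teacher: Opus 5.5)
Your proposal has genuine gaps, and the tool you plan to close the central step with does not exist. There is no boundary Lipschitz estimate, large-scale or otherwise, in Lipschitz domains: already for the Laplacian, solutions vanishing near a reentrant corner have unbounded gradient. There is none in $C^1$ domains either, since that requires a Dini-type modulus. What is available is boundary H\"older estimates and, in $C^1$ domains, boundary $W^{1,p}$ estimates for $p<\infty$. So both the ``summing the scales'' in Step 1 and all of Step 3 rest on a false claim.

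Step 1 does not actually need it. You already have $\|\nabla w_\e\|_{L^2(\Omega)}\lesssim \e^{1/2}\|N(\nabla u_0)\|_{L^2(\partial\Omega)}\lesssim \e^{1/2}\|f\|_{H^1(\partial\Omega)}$. This is the bound behind Lemma \ref{lem.weL2}; it uses only the $L^2$ nontangential estimate for $\cL_0$, so it holds in Lipschitz domains. (Your display has a spurious factor $\e^{1/2}$ in front of the layer norm.) Since $\widetilde N_\e$ only samples Whitney balls contained in $\{\delta\ge 5\e\}$, splitting each cone into dyadic heights and using Fubini gives
\[
\|\widetilde N_\e(\nabla w_\e)\|_{L^2(\partial\Omega)}^2\lesssim \int_{\Omega\setminus\Omega_{5\e}}\frac{|\nabla w_\e|^2}{\delta(X)}\,dX\le \frac{1}{5\e}\int_{\Omega}|\nabla w_\e|^2\lesssim \|f\|_{H^1(\partial\Omega)}^2 .
\]
On those same balls, the $L^2$ averages of $\nabla u_0$ and of $\nabla(\e\chi(X/\e)K_\e(\nabla u_0)\eta_\e)$ are bounded by $N(\nabla u_0)$ with a wider cone. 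Indeed, averages of $|\chi(\cdot/\e)|^2+|\nabla\chi(\cdot/\e)|^2$ over balls of radius $\ge 5\e$ are bounded, $\e|\nabla\eta_\e|\lesssim 1$, and $\e|\nabla^2u_0|\lesssim \sup|\nabla u_0|$ there. With this observation your homogenization route does prove $(R)_2$.

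That route is genuinely different from the paper's, which never compares with $u_0$ at $p=2$. The paper uses the difference quotient $\Q_\e(u_\e)$, which is again a solution because $A^\e$ is $\e$-periodic in $x_d$. It bounds $\widetilde N_\e(\Q_\e u_\e)$ by the $(D)_2$ estimate coming from the reverse H\"older property of $\overline{\omega}^X_\e$. This reduces everything to $\e^{-1}\int_{\Omega_{20\e}}|\nabla u_\e|^2$ (Lemmas \ref{lem.NT.ptws} and \ref{lem.NeQe}), which is then bounded by the large-scale Rellich estimate \eqref{eq.LSRellich-D}. Your version needs full periodicity and the $O(\e^{1/2})$ rate; the paper's needs only periodicity transversal to the boundary.

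Steps 2 and 3 have further gaps.

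\textbf{Solutions vanishing on a boundary piece.} This affects the piece vanishing on $\Delta(Q,2r)$ and the far annuli in the atomic estimate. Boundary Caccioppoli plus boundary H\"older only bound the Whitney average of $|\nabla u_\e|$ at height $\delta$ by $C\delta^{\alpha-1}r^{-\alpha}\sup_{T_{2r}}|u_\e|$. This blows up as $\delta$ decreases to $10\e$, so it gives no uniform bound on the cone. What is needed is a localized $(R)_2$ estimate with zero data, namely \eqref{est.localR2}. A localized Step 1 could supply it, but you do not say so.

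\textbf{The reverse H\"older inequality.} Even with \eqref{est.localR2}, the reverse H\"older inequality does not close for $\widetilde N_\e$ alone. Caccioppoli and Sobolev--Poincar\'e produce $\int_{T_{3r}}|\nabla u_\e|^{p_*}$ over the whole box, including the layer $\{\delta<10\e\}$, which $\widetilde N_\e$ never sees. The paper adds the layer average $M^\star_{10\e}$ to form $\widehat N_\e$ and controls that term by the localized Rellich estimate (Lemma \ref{lem.RH}). Relatedly, $\widetilde N_\e$ is indeed roughly constant at scale $\e$, but it is not controlled by averages of $\nabla_{\tan} f$.

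\textbf{Step 3.} The claim ``reverse H\"older for every exponent'' is unsupported. For $p>2$ the boundary layer must be controlled in $L^p$, and with rough $A$ this is possible only for averaged gradients. The paper proceeds in three parts:
\begin{itemize}
\item it proves large-scale $L^p$ Rellich estimates for $M_\e(\nabla u_\e)$ (Theorem \ref{thm.LocalRellich.Rp}, via $L^p$ convergence rates and Proposition \ref{prop.W1p.local});
\item it gets $(D)_{p'}$ for $\Q_\e(u_\e)$ from $\overline{\omega}^X_\e\in B_p$ for every $p$;
\item it combines these in Lemma \ref{lem.C1Rp}.
\end{itemize}
Your route could plausibly be continued instead through
\[
\|\widetilde N_\e(\nabla w_\e)\|^p_{L^p(\partial\Omega)}\lesssim \e^{-1}\|M_\e(\nabla w_\e)\|^p_{L^p(\Omega)},
\]
together with a global version of Proposition \ref{prop.Dwe.Lp}. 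That, not a boundary Lipschitz estimate, is what has to be supplied.
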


\begin{theorem}\label{thm.Np}
Let $A$ and $\Omega$ be the same as in Theorem \ref{thm.Dp}.
     Let $u_\e\in H^1(\Omega)$ be a weak solution of 
    the Neumann problem: $\cL_\e (u_\e)=0$ in $\Omega$
    and $\frac{\partial u_\e}{\partial \nu_\e} =g$ on $\partial\Omega$,
    where $g\in L^2(\partial\Omega)$ and
    $\int_{\partial\Omega} gd\sigma =0$. Then there exists $\delta>0$, depending on $d$,  $\Lambda$ and $\Omega$, such that for $1<p<2+\delta$, 
    \begin{equation}\label{est.Np}
        \| \widetilde{N}_\e( \nabla u_\e) \|_{L^p(\partial \Omega)} \le C  \| g \|_{L^p(\partial \Omega)},
    \end{equation}
    where $C$ depends on $d$, $p$, $\Lambda$, and $\Omega$.
    Moreover, if $\Omega$ is a bounded $C^1$ domain,
    the estimate \eqref{est.Np} holds for $1< p< \infty$.
\end{theorem}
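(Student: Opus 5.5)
The statement to prove is Theorem \ref{thm.Np}. I would follow the classical Dahlberg--Kenig scheme: prove the $L^2$ estimate (actually $2<p<2+\delta$) first, then obtain $1<p<2$ by an atomic $H^1$ estimate and interpolation. The $\e$-scale irregularity is absorbed by working only at distance $\ge 10\e$ from $\partial\Omega$. The large-scale structure is supplied by the large-scale Lipschitz estimate \eqref{lip} and the homogenization convergence rates.

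\textbf{Step 1 ($L^2$ estimate via a Rellich-type comparison).} By \eqref{lip} and Caccioppoli, for $X$ with $\delta(X)\ge 10\e$ the average $\big(\fint_{B(X,\delta(X)/2)}|\nabla u_\e|^2\big)^{1/2}$ is controlled by averages over larger Whitney balls. It therefore suffices to bound a square-function/area-type quantity of $\nabla u_\e$ on the layer $\{\delta\ge c\e\}$, together with a boundary-layer term
$$
\Big(\int_{\{\delta<C\e\}}|\nabla u_\e|^2\Big)\Big/\e .
$$
For the layer term I would use the Rellich identity of \cite{KS11-2} with a vector field $h$ transversal to $\partial\Omega$, averaged over a period in the transversal direction. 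The key device is that periodicity of $A$ in the transversal direction lets one integrate the Rellich identity over parallel translated surfaces $\partial\Omega+t e$, $0<t<\e$. This replaces boundary integrals of $|\nabla u_\e|^2$ by averages over the $\e$-layer, so no smoothness of $A$ is needed. Using the symmetry of $A$, this yields
$$
\frac1\e\int_{\{\delta<\e\}}|\nabla u_\e|^2\le C\|g\|_{L^2(\partial\Omega)}^2 + C\int_{\Omega}|\nabla u_\e|^2,
$$
and the energy is in turn bounded by $\|g\|_{L^2}^2$ through the trace inequality. The localized version of this layer bound, on surface balls, gives a reverse-Hölder inequality. Combined with Gehring-type self-improvement in the style of Shen's real-variable method, this extends the estimate to $2<p<2+\delta$. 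For that step the comparison function is the solution with data $g\chi_{4\Delta}$, and the large-scale Lipschitz/Hölder estimates near the boundary handle the remainder.

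\textbf{Step 2 ($1<p<2$).} I would prove the $L^1$ bound $\|\widetilde N_\e(\nabla u_\e)\|_{L^1}\le C$ for $g=a$ an $H^1$ atom supported in $\Delta(Q_0,r)$. If $r\le\e$ this is handled by $L^2$ on $\Delta(Q_0,C\e)$ and decay away. If $r>\e$, the $L^2$ estimate on $\Delta(Q_0,Cr)$ treats the near part. Far away, at distance $2^jr$, I would use the mean-zero property together with the large-scale boundary Hölder estimate for the Neumann function, valid down to scale $\e$, to get decay $2^{-j\sigma}$. Interpolation then gives $1<p<2$.

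\textbf{Step 3 ($C^1$ domains).} Here the large-scale boundary $C^{1}$-type estimates (again down to scale $\e$) and the vanishing Lipschitz character allow the reverse Hölder exponent to be any finite $p$. This follows the known perturbation argument in which $\delta\to\infty$ as the local Lipschitz constant tends to $0$, with the $L^2$ estimate localized on small patches.

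The main obstacle I expect is Step 1, namely the Rellich estimate without smoothness. The boundary terms $\int_{\partial\Omega}|\nabla u_\e|^2$ are not even defined for bounded measurable $A$. Making the averaged Rellich identity over translates rigorous, and matching it with the large-scale object $\widetilde N_\e$, is the delicate point. If the direct translation argument fails for a general Lipschitz $\partial\Omega$ (which is not periodic), I would fall back on a different route. That route compares $u_\e$ with the homogenized solution $u_0$, using the $O(\e^{1/2})$ convergence rate in $H^1$ on the layer together with the known $L^2$ Neumann estimate for the constant-coefficient operator. I would then close the argument by an iteration over dyadic scales from $1$ down to $\e$.
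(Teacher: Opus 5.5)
\textbf{Gap in Step 1.} The gap is at the sentence ``it therefore suffices to bound a square-function/area-type quantity of $\nabla u_\e$ on the layer $\{\delta\ge c\e\}$, together with a boundary-layer term.'' The transversally averaged Rellich identity only gives $\e^{-1}\int_{\Omega_\e}|\nabla u_\e|^2\lesssim\|g\|_{L^2(\partial\Omega)}^2$. This is \eqref{eq.LSRellich-N}, which the paper takes from Proposition \ref{prop.localN2}. It is the $\e$-scale analogue of $\|\nabla u\|_{L^2(\partial\Omega)}$, and nothing in your plan turns it into a bound on $\|\widetilde N_\e(\nabla u_\e)\|_{L^2(\partial\Omega)}$. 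None of the tools you name does this:
\begin{itemize}
\item The interior estimate \eqref{lip} only compares averages over balls inside a single Whitney region, so it cannot carry boundary information up the cone.
\item A square function of $\nabla u_\e$ would involve $\nabla^2 u_\e$, which is meaningless for measurable $A$. The equivalence of nontangential and square functions holds for solutions, not for their gradients.
\item The area quantity $\int_{\Omega\setminus\Omega_{c\e}}|\nabla u_\e|^2\delta^{-1}$ is not uniformly bounded. Each dyadic layer $\{t<\delta<2t\}$, $\e\le t\lesssim 1$, can contribute an amount comparable to $\|g\|_{L^2}^2$, so one only gets $\log(1/\e)\|g\|_{L^2}^2$.
\end{itemize}
Classically this step uses that derivatives of solutions are again solutions, so $(D)_2$ applies to them. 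For bounded measurable $A$ that is false.

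\textbf{The missing idea.} The paper replaces differentiation by a difference quotient. After localization and a rational rotation, $A^\e$ is periodic in the transversal direction \eqref{eq.periodicInxd}, so $\Q_\e(u_\e)=(u_\e(\cdot+\e e_d)-u_\e)/\e$ is again an $\cL_\e$-solution. The large-scale Dirichlet estimate \eqref{N-M} rests on $\overline{\omega}^X_\e\in B_2(d\sigma)$ (Theorem \ref{thm.B2}). It bounds $N_\e(\Q_\e(u_\e))$ in $L^2$ by $\big(\e^{-1}\int_{\Omega_{5\e}}|\nabla u_\e|^2\big)^{1/2}$ (Lemma \ref{lem.NeQe}). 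Caccioppoli plus telescoping along vertical segments (Lemma \ref{lem.NT.ptws}) then recovers $\widetilde N_\e(\nabla u_\e)$ from $N_\e(\Q_\e(u_\e))$ and the layer term $V_\e$. Only then does the Rellich bound close the argument. Your Neumann argument never invokes the Dirichlet problem, and this is exactly where it is needed.

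\textbf{Consequences for Steps 2 and 3.} Step 2 is otherwise the paper's argument; the plain H\"older bound \eqref{est.NF.Holder} on the Neumann function suffices there. But Step 2 and your reverse H\"older extension both need the local estimate \eqref{est.localN2}, so they inherit the gap. Step 3 has the same bottleneck, plus further problems:
\begin{itemize}
\item $C^1$ domains give no large-scale boundary Lipschitz estimates for $\cL_\e$, only $W^{1,p}$ and $C^\alpha$.
\item You give no mechanism for a ``$\delta\to\infty$'' perturbation argument for $\cL_\e$.
\item $\nabla u_\e$ need not lie in $L^p_{\rm loc}$ for large $p$, so layer estimates must be stated for $M_\e(\nabla u_\e)$.
\end{itemize}
The paper obtains such $L^p$ Rellich estimates via your fallback comparison with $u_0$ (Lemma \ref{lem.weLp}, Proposition \ref{prop.W1p.local}). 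It then again passes to $\widetilde N_\e$ through $\Q_\e$ and large-scale $(D)_p$, $p>2$ (Lemma \ref{lem.C1Rp}). By itself, that comparison controls layer energies, not a nontangential maximal function.
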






As a corollary, we obtain the full-scale estimates of the nontangential maximal functions under the additional regularity assumption on the coefficients.
\begin{corollary}\label{coro.fullscale}
    If in addition, $A$ is  H\"{o}lder continuous, then the large-scale nontangential maximal functions, $N_\e(u_\e)$ and $\widetilde{N}_\e(\nabla u_\e)$ in Theorems \ref{thm.Dp}-\ref{thm.Np}, can be upgraded to the full-scale nontangential maximal functions, $N(u_\e)$ and $\widetilde{N}(\nabla u_\e)$, respectively.
\end{corollary}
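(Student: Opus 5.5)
The plan is to bound the full-scale maximal functions by the large-scale ones plus a boundary-layer contribution. For $Q\in\partial\Omega$ we split $\Gamma(Q)$ into the part where $\delta(X)\ge 10\e$, which is controlled by $N_\e$ or $\widetilde N_\e$, and the layer $\{X\in\Gamma(Q):\delta(X)<10\e\}$. We treat the layer by rescaling $X\mapsto X/\e$. This turns $\cL_\e$ into $\cL_1$ with H\"older continuous coefficients, posed in the rescaled Lipschitz (or $C^1$) domain $\e^{-1}\Omega$ near $\e^{-1}Q$. The Lipschitz character of that domain is uniform on unit scale. On unit scale the classical (non-homogenized) theory for H\"older coefficients applies, as in \cite{K94,KS11,KS11-2}. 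This gives local nontangential estimates in a boundary region $T(Q,C\e)=B(Q,C\e)\cap\Omega$ with constants independent of $\e$.

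\textbf{Dirichlet case.} Take $X\in\Gamma(Q)$ with $\delta(X)<10\e$, and apply the maximum principle (or the local $L^p$ Dirichlet estimate at scale $\e$) in $T(Q,C\e)$. This gives
\begin{equation*}
|u_\e(X)|\le \|f\|_{L^\infty(\partial\Omega\cap B(Q,C\e))}+C\fint_{T(Q,C\e)\cap\{\delta\ge c\e\}}|u_\e|.
\end{equation*}
Since only $p>1$ is needed, a cleaner route is the local square-function/harmonic-measure estimate. We use the fact that the $\cL_1$-harmonic measure on the unit-scale Lipschitz domain is $A_\infty$, or that $(D)_q$ holds locally for some $q<p$. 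This yields
\begin{equation*}
N(u_\e)(Q)\le C\,M_{\partial\Omega}\big(S_\e(f)\big)(Q)+C\,\big(M_{\partial\Omega}(N_\e(u_\e)^q)\big)^{1/q}(Q).
\end{equation*}
Here the interior average at height $\sim\e$ is dominated by $N_\e(u_\e)$ at nearby boundary points. Taking $L^p$ norms and applying Theorem \ref{thm.Dp} together with the Hardy–Littlewood maximal theorem gives the claim. One also needs $\|S_\e(f)\|_{L^p}\le C\|f\|_{L^p}$ to fail gracefully: the statement keeps $S_\e(f)$ on the right-hand side, so we leave it there, or we use a local $L^p$ estimate in place of sup bounds to recover $\|f\|_{L^p}$ directly.

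\textbf{Gradient cases.} For $\widetilde N(\nabla u_\e)$ we proceed in the same way. Rescaled to unit scale, the local regularity or Neumann estimate for $\cL_1$ with H\"older coefficients in a Lipschitz domain gives, for $Q$ and its boundary box $\Delta(Q,C\e)$,
\begin{equation*}
\int_{\Delta(Q,\e)}|\widetilde N^{(\e)}(\nabla u_\e)|^p\le C\int_{\Delta(Q,C\e)}\big(|\nabla_{\tan}f|^p\ \text{or}\ |g|^p\big)+C\e^{d-1}\Big(\fint_{T(Q,C\e)}|\nabla u_\e|^2\Big)^{p/2}.
\end{equation*}
Here $\widetilde N^{(\e)}$ is the truncated maximal function over $\delta(X)<10\e$. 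The last term is bounded by $\int_{\Delta(Q,C\e)}\widetilde N_\e(\nabla u_\e)^p$, after using Caccioppoli and interior Lipschitz/H\"older regularity to move to heights $\ge10\e$ up to constant factors. Covering $\partial\Omega$ by boxes of size $\e$ with bounded overlap and summing, then applying Theorems \ref{thm.Rp} and \ref{thm.Np}, finishes the argument.

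\textbf{Main obstacle.} The hardest step is the local estimate at scale $\e$ with an interior error term that genuinely sits at height $\gtrsim\e$. This requires a localized version of the classical $L^p$ theory for H\"older coefficients in Lipschitz domains, uniform over all rescaled domains. It also requires the sharp ranges of $p$ for $\cL_1$ to at least match those of Theorems \ref{thm.Dp}–\ref{thm.Np}. In $C^1$ domains, where all $1<p<\infty$ are claimed, we would rely on the classical fact that for H\"older coefficients and $C^1$ domains the small-scale problems are solvable for all $p$. We would verify this via perturbation from constant coefficients at small scales.
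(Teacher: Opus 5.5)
Your overall architecture matches the paper's. You split $\widetilde N(\nabla u_\e)\le \widetilde N_\e(\nabla u_\e)+\widetilde N^{(\e)}(\nabla u_\e)$, treat the layer $\{\delta<10\e\}$ by the rescaled estimate for H\"older coefficients, and use Theorems \ref{thm.Rp}--\ref{thm.Np} for the rest.

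\textbf{The gap in the gradient cases.} The step that fails is your bound of the error term $\e^{d-1}\big(\fint_{T(Q,C\e)}|\nabla u_\e|^2\big)^{p/2}$ by $\int_{\Delta(Q,C\e)}\widetilde N_\e(\nabla u_\e)^p$. The box $T(Q,C\e)$ contains part of the layer $\Omega_{10\e}$. Caccioppoli and interior estimates cannot bound the energy there by gradient averages at height $\ge 10\e$; they only pass from a larger region, which again contains the layer, to a smaller one. For a concrete counterexample, take $A=I$, $\Omega\subset\{x_d>0\}$ with a flat boundary piece, and $u=e^{-kx_d}\sin(kx_1)$ with $k\e\gg1$. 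Then your error term is of order $\e^{d-1}(k/\e)^{p/2}$, while $\widetilde N_\e(\nabla u)\le k e^{-5k\e}$ everywhere.

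The layer energy has to be paid for by the boundary data. Summed over $\e$-boxes, the error terms give (for $p=2$) $\e^{-1}\int_{\Omega_{C\e}}|\nabla u_\e|^2$. The paper bounds this by the large-scale Rellich estimates \eqref{eq.LSRellich-D} and \eqref{eq.LSRellich-N}. For $p>2$ in $C^1$ domains, it passes to $M_\e(\nabla u_\e)$ and uses Theorems \ref{thm.C1Rellich.Rp} and \ref{thm.C1Rellich.Np}. This is the missing ingredient. With it, the standard local estimate (error over the whole box) suffices, and your ``main obstacle'' disappears.

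In Lipschitz domains with $p\neq 2$, a direct $L^p$ summation would still need an $L^p$ bound on the layer term, which is not available; the Rellich estimates there are $L^2$ only. The paper proves only the full-scale $L^2$ estimates this way. It then obtains $1<p<2$ (atomic $H^1$ estimate plus interpolation) and $2<p<2+\delta$ (self-improvement of reverse H\"older inequalities) from the classical results of \cite{KP93}. Those results require only full-scale $L^2$ solvability.

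\textbf{The Dirichlet case.} You settle for $\|N(u_\e)\|_{L^p}\lesssim \|S_\e(f)\|_{L^p}$, but this version needs no H\"older continuity at all. Split $f$ at $\Delta(Q,C\e)$.
\begin{itemize}
\item The near part is at most $\sup_{\Delta(Q,C\e)}|f|\lesssim \mathcal{M}_{\partial\Omega}(S_\e f)(Q)$.
\item The far part is dominated by a nonnegative solution vanishing on $\Delta(Q,C\e)$. The Carleson estimate bounds it by its value at a point of $\Gamma(Q)$ at height $\simeq C\e$, hence by $N_\e(U_\e)(Q)$, where $U_\e$ has data $|f|$.
\item Theorem \ref{thm.Dp} applies to $U_\e$.
\end{itemize}
The point of the corollary, as the paper proves it and as needed to recover \cite{KS11}, is the classical bound by $\|f\|_{L^p}$. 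Your suggested patch cannot produce that bound on top of Theorem \ref{thm.Dp}, because its large-scale part is controlled only by $S_\e(f)$. The paper instead obtains the full-scale $(D)_{p'}$ by duality from the full-scale $(R)_p$ \cite[Theorem 5.4]{KP93}. So the Dirichlet case follows once the gradient case, with the layer term handled by Rellich, is done.
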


Corollary \ref{coro.fullscale} recovers the results of \cite{KS11} in Lipschitz domains. The results in $C^1$ domains for the full range of $p\in (1,\infty)$ are new. Moreover, we point out that the large-scale estimates in Theorems \ref{thm.Dp}-\ref{thm.Np} hold also in convex domains; see Remark \ref{rmk.convex} for more details.




\subsection{Proof sketch}
We now describe our approaches to Theorems \ref{thm.Dp}-\ref{thm.Np}.
Let 
$$
\Omega_r = \big\{X \in \Omega:\ \  \dist(X, \partial \Omega) < r \big\}
$$
denote a boundary layer with thickness $r$.
Our starting point is the two large-scale Rellich estimates
\eqref{eq.LSRellich-D} and \eqref{eq.LSRellich-N} established in \cite{KS11, Shen17}.
Let $u_\e$ be a  solution of the Dirichlet problem:
$\mathcal{L}_\e (u_\e)=0$ in $\Omega$ and $u_\e =f \in H^1(\partial\Omega)$
on $\partial \Omega$. Then
\begin{equation}\label{eq.LSRellich-D}
    \bigg( \frac{1}{r} \int_{\Omega_r} |\nabla u_\e|^2 \bigg)^{1/2} \le C  \| f \|_{H^1(\partial \Omega)},
\end{equation}
for all $\e \le r < \diam(\Omega)$, where
 the constant $C$ depends only on $d$, $\Lambda$ and $\Omega$.  
 If $u_\e$ is a  solution of the Neumann problem:
 $\cL_\e(u_\e)=0 $ in $\Omega$ and 
 $\frac{\partial u_\e}{\partial \nu_\e} =g \in L^2(\partial\Omega)$
 on $\partial\Omega$,
 then 
\begin{equation}\label{eq.LSRellich-N}
    \bigg( \frac{1}{r} \int_{\Omega_r} |\nabla u_\e|^2 \bigg)^{1/2} \le C \| g \|_{L^2(\partial \Omega)},
\end{equation}
for all $\e \le r< \diam(\Omega)$.
For $X\in \Omega$, let $\omega_\e^X $ denote the $\cL_\e$-harmonic 
measure in $\Omega$.
We introduce the large-scale $\cL_\e$-harmonic measure $\overline{\omega}^X_\e d \sigma$ (or $\overline{\omega}^X_\e$ for simplicity), where
$$
    \overline{\omega}^X_\e(Q) := \frac{\omega_\e^X(\Delta_\e(Q))}{\sigma(\Delta_\e(Q))}
$$
and $\Delta_\e (Q)=B_\e(Q)\cap \partial \Omega$.
Using a localized version of \eqref{eq.LSRellich-D},
we  show that the large-scale $\cL_\e$-harmonic 
measure is a $B_2$ weight on $\partial\Omega$ (i.e., $\overline{\omega}^X_\e$ satisfies the $L^2$ reverse H\"{o}lder inequality on $\partial\Omega$).
This leads to the estimate \eqref{est.Dp} for the Dirichlet problem in a Lipschitz domain for
$2-\delta < p\le \infty$.
For the Neumann and regularity problems in a bounded Lipschitz domain, we apply the technique of the difference operator to utilize the 
periodicity of the coefficient matrix, as in \cite{KS11}.
Together with the estimate \eqref{est.Dp}, this allows us to bound $\widetilde{N}_\e (u_\e)$ by the boundary data as well as an integral of $|\nabla u_\e|^2$
over the boundary layer $\Omega_{10\e}$.
The desired estimates for $p=2$ then follow from the large-scale Rellich 
estimates \eqref{eq.LSRellich-D} and \eqref{eq.LSRellich-N}.
The extensions to the range $2<p< 2+\delta$
rely on  the large-scale reverse H\"older estimates, 
while the case $1< p< 2$ uses the interpolation and some well-known estimates
for the Green and Neumann functions. 

 If $\Omega$ is a bounded $C^1$ domain,
the large-scale Rellich estimates hold in the $L^p$ setting 
for  $2< p< \infty$, 
as demonstrated in \cite{Shen17} under some smoothness condition on $A$.
Without the smoothness condition, the solutions must be averaged at $\e$-scale in the estimates.
As a result,  by systematic applications of large-scale analysis (involving different types  of average operators), the approach outlined above for Lipschitz domains yields the large-scale
nontangential-maximal-function estimates \eqref{est.Dp}-\eqref{est.Np}
for any $1< p< \infty$; see Figure \ref{fig.proofflow} for the flowchart of the proofs for both the $L^2$ estimate in Lipschitz domains and the $L^p$ estimates in $C^1$ domains.

\begin{figure}[htbp]
\begin{tikzpicture}[node distance=1.5cm, every node/.style={draw, rectangle, minimum width=2cm, minimum height=1cm, align = center}, arrow/.style={->, >=Stealth, thick}]

    \node (step1) {Localized Rellich estimates \\ Propositions \ref{prop.localR2}, \ref{prop.localN2}, Theorems \ref{thm.LocalRellich.Rp}, \ref{thm.LocalRellich.Np}};
    \node[below=1cm of step1] (step2) {Reverse H\"{o}lder inequality \\ of large-scale $\cL_\e$-harmonic measure\\ Theorem \ref{thm.B2}, Theorem \ref{thm.C1reverse}};
    \node[below=1cm of step2] (step3) {$(D)_p$ estimates \\ Theorem \ref{thm.Dp}};
    \node[below=2cm of step3] (step4) {Reduce nontangential maximal functions \\ to Rellich estimates \\ Lemmas \ref{lem.NT.ptws}, \ref{lem.NeQe}, \ref{lem.C1Rp}};
    \node[below=2cm of step4] (step5) {$(R)_p$ and $(N)_p$ estimates \\ Theorems \ref{thm.Rp}, \ref{thm.Np}};

    \node[left=2cm of step3] (Diff) at ($(step3)!0.5!(step4)$) [yshift = 3.5pt] {Difference operator \\ in graph domains \eqref{def.Qe}}; 
    \node[right=2cm of step3] (GloRell) at ($(step4)!0.5!(step5)$) [yshift=-3.5pt] {Global Rellich estimates\\ \eqref{eq.LSRellich-D}, \eqref{eq.LSRellich-N}, Theorems \ref{thm.C1Rellich.Rp}, \ref{thm.C1Rellich.Np}};
    \draw[arrow] (step1) -- (step2);
    \draw[arrow] (step2) -- (step3);
    \draw[arrow] (step3) -- coordinate[midway] (midpoint1) (step4);
    \draw[arrow] (step4) -- coordinate[midway] (midpoint2) (step5);
    \draw[arrow] (Diff.east) --  (midpoint1);
    \draw[arrow] (GloRell.west) --  (midpoint2);
    \draw[<->,>=Stealth, thick] (GloRell.north) --  ++(0,0.1) |- (step1.east);
\end{tikzpicture}
    \caption{The flowchart of proofs}
    \label{fig.proofflow}
\end{figure}
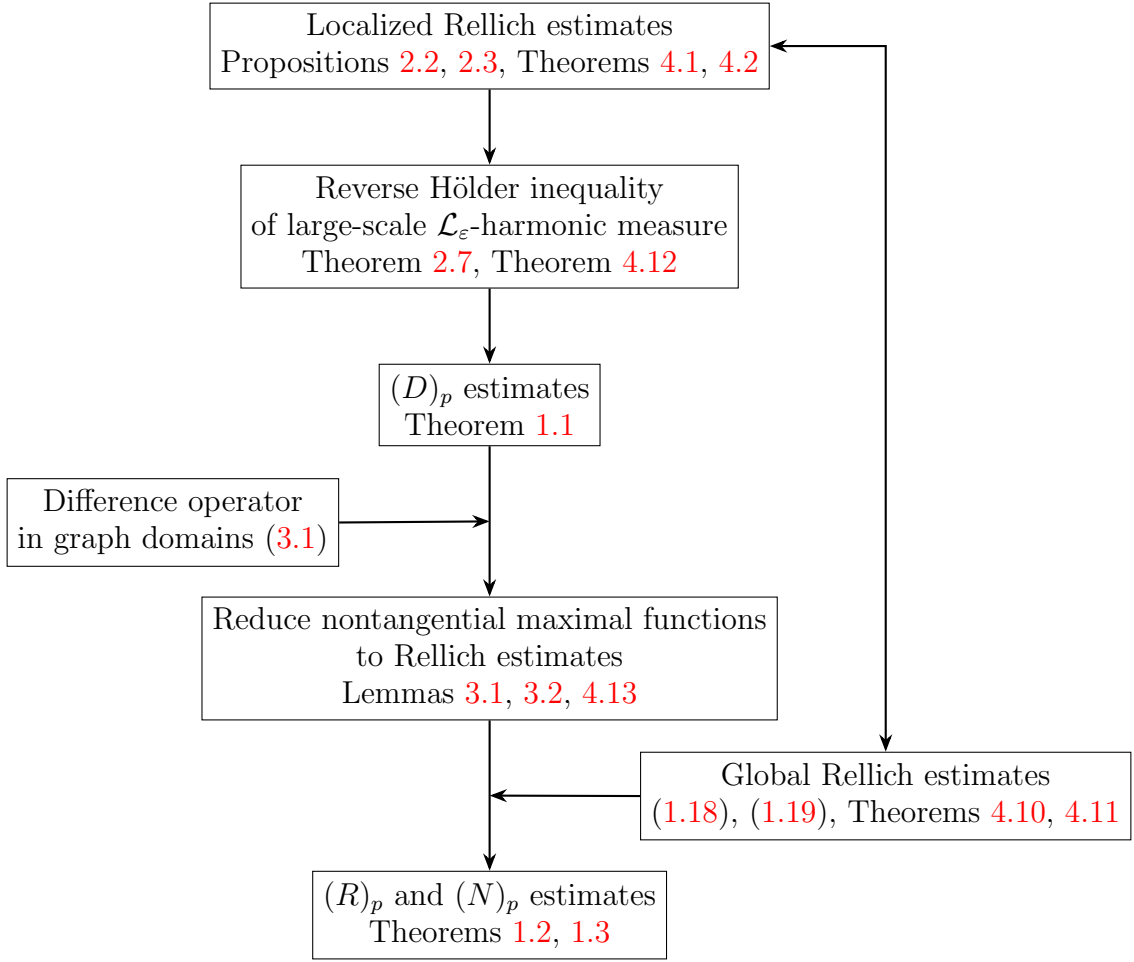



\subsection{Notations}

We list below some notations that are frequently used throughout this paper.

\begin{itemize}
    \item We use the capital letters $X, Y, Z, \cdots$ to denote the interior points contained in the domain $\Omega \subset \R^d$, and $P, Q, \cdots$ to denote the boundary points on $\partial \Omega$.

    \item $\delta(X) = \dist(X, \partial \Omega): = \inf\{ |X-Q|: Q\in \partial \Omega \}$.


    \item $\sigma$ denotes the surface measure of $\partial \Omega$. For a Borel subset $E\subset \partial \Omega$, we also write $|E| = \sigma(E).$ 

    \item We use $\fint_E = \frac{1}{|E|} \int_E$ to represent the average integral over a set $E$. Here $|E|$ denotes either the volume of $E$ if it is a subset of $\R^d$, or the surface measure of $E$ if it is a subset of $\partial \Omega$.
    
    \item We use $B_r(X)$ or $B(X,r)$ to denote the Euclidean balls in $\R^d$ centered at $X$ with radius $r$. We use $\Delta_r(P) = \partial \Omega \cap B_r(P)$ for some $P\in \partial \Omega$ to denote the surface balls on $\partial \Omega$. Define $D_r(P) = \Omega \cap B_r(P)$ for $P\in \partial \Omega$.

    \item $\Omega_r = \{X \in \Omega: \dist(X,\partial \Omega) < r \}$ denotes the boundary layer of $\Omega$ with thickness $r$. Define $\Omega^r = \Omega \setminus \overline{\Omega_r}$.

    \item Let $\Omega$ be a bounded Lipschitz domain. For $Q\in \partial \Omega$, the nontangential region at $Q$ is defined by
    \begin{equation}
        \Gamma(Q) = \{ X\in \Omega: |X-Q| < \beta \dist(X,\partial \Omega)  \},
    \end{equation}
    where $\beta>1$ is a large fixed constant depending on the Lipschitz character of $\Omega$. A different version of nontangential cone will be defined in Lipschitz graph domains; see \eqref{def.Gamma.graph}.

    \item We will use various nontangential maximal functions on $ \partial \Omega$.
    Let $F \in C^0(\Omega)$. The classical nontangential maximal function $N(F)$ is defined by \eqref{NM}.
    In this paper, we define the large-scale (or precisely $\e$-scale) nontangential maximal function $N_\e (F)$ by \eqref{def.N_e}.
    
    \item Let $F \in L^2_{\rm loc}(\Omega)$ be either a scalar or a vector-valued function. The modified nontangential maximal function $\widetilde{N}(F)$ is given by \eqref{MNM}.
    Define the large-scale modified nontangential maximal function $\widetilde{N}_\e(F)$ by \eqref{def.tN_e}. 
    Note that  $\widetilde{N}(F)(Q) = \lim_{\e \to 0} \widetilde{N}_\e(F)(Q)$.
    For $r> 10\e$, define the truncated large-scale nontangential maximal function by $\widetilde{N}_\e^r(F)(Q)$; see \eqref{def.tNe.truncated}.
    A variant of the large-scale maximal function in a graph domain can be found in \eqref{def.hatN}.

    \item The conormal derivative of $u_\e$ is denoted by $\frac{\partial u_\e}{\partial \nu_\e} = n\cdot A^\e \nabla u_\e$, where $n$ is the outer unit normal vector on $\partial \Omega$.

    \item Given $f \in C^1(\partial \Omega)$, the tangential derivative (gradient) of $f$ on $\partial \Omega$ is given by $\nabla_{\tan } f = (I - n\otimes n) \nabla f$.


    \item For $u \in L^2(\Omega)$, define the ($\e$-scale) average operator $M_\e$ in $\Omega$ by
    \begin{equation}
        M_\e(u)(X) = \bigg( \fint_{B_\e(X) \cap \Omega} |u|^2 \bigg)^{1/2}.
    \end{equation}
    Clearly, $M_\e(u)(Q)$ can also be defined for $Q \in \partial \Omega$. Similarly, we define the boundary average operator by
    \begin{equation}
        M_\e^{\partial}(f)(Q) = \bigg( \fint_{\Delta_\e(Q)} |f|^2 d\sigma \bigg)^{1/2}.
    \end{equation}
    This definition will be modified if $\Omega$ is a graph domain; see \eqref{def.bdryMt.inIr}.

    \item Let $\psi \in C_0^\infty(\R^d)$ and ${\rm supp}(\psi) \subset B_{1/2}(0)$. Assume $\psi \ge 0$ and $\int \psi = 1$. Let $\psi_\e(X) = \e^{-d} \psi(X/\e)$. Define the smoothing operator $K_\e$ by
    \begin{equation}\label{def.Ke}
        K_\e f(X) = \psi_\e * f(X).
    \end{equation}



    \item For nonnegative quantities $a$ and $b$, we write $a\lesssim b$ if there exists some implicit constant $C$ independent of $a$ and $b$ such that $a\leq C b$. We write $a \gtrsim b$ if $b \lesssim a$. We write $a \simeq b$ if $a \lesssim b \lesssim a$. All the implicit constants in this paper depend at most on $d, \Lambda$, the exponent $p$ and the geometric characters of $\Omega$. In particular, they will never depend on the parameter $\e$.
    \end{itemize}

\textbf{Organization.} In Section \ref{sec.Dp}, we introduce the large-scale $\cL_\e$-harmonic measure and prove Theorem \ref{thm.Dp} in Lipschitz domains with $2-\delta<p\le \infty$. In Section \ref{sec.RpNp}, we prove Theorem \ref{thm.Rp}-\ref{thm.Np} in the case of Lipschitz domains for $1<p<2+\delta$. In Section \ref{sec.C1}, we prove Theorem \ref{thm.Dp}-\ref{thm.Np} in the case of bounded $C^1$ domains for the full range of $p\in (1,\infty)$. In Section \ref{sec.full-scale}, we prove Corollary \ref{coro.fullscale}. Some auxiliary analysis tools are stated or proved in appendices.

\textbf{Acknowledgments.} J. Zhuge is partially supported by NNSF of China (No. 12288201, 12494541, 12471115).

\section{Large-scale $\cL_\e$-harmonic measure and Dirichlet problem} \label{sec.Dp}

\subsection{$\cL_\e$-harmonic measure}

The $\cL_\e$-harmonic measure and Green function can be defined for the elliptic operator $\cL_\e = -\nabla\cdot (A^\e\nabla)$ in a bounded domain, as long as $A^\e$ is bounded measurable and satisfies the uniform ellipticity condition. In particular, the periodic structure or regularity on $A^\e$ is not needed.
Let $\{ \omega^X_\e \}_{X\in \Omega}$ denote the family of $\cL_\e$-harmonic measures for the operator $\cL_\e$ in $\Omega$. This means that if $f\in C(\partial \Omega)$, the classical solution of \eqref{Dp} can be expressed as
\begin{equation}\label{eq.hm}
    u_\e(X) = \int_{\partial \Omega} f d \omega^X_\e.
\end{equation}
Let $G_\e(X,Y)$ be the Green function of $\cL_\e$ in $\Omega$, i.e., for each $Y\in \Omega$, $\cL_\e(G_\e(\cdot,Y)) = \delta_Y(\cdot)$ in the sense of distributions and $G_\e(\cdot, Y) = 0$ on $\partial \Omega$. Moreover, $G_\e(X,Y) = G_\e^*(Y,X)$, where $G_\e^*$ is the Green function for the adjoint operator $\cL_\e^* = -\nabla\cdot (A^*(Y/\e) \nabla)$. In other words, $\cL_\e^*(G_\e(X,\cdot)) = \delta_X(\cdot)$ for each $X\in \Omega$.

We recall some basics of the $\cL_\e$-harmonic measure and Green function, which hold for any bounded measurable coefficients; see \cite{K94} for a collection of these materials. 
Let $\Delta_r(Q) = \partial \Omega \cap B_r(Q)$. Let $A_r(Q)$ be a point in $\Gamma(Q)$ such that $|A_r(Q) - Q| \simeq r$.
\begin{proposition}\label{prop.hmBasics}
    Let $\Omega$ be a bounded Lipschitz domain and $Q\in \partial \Omega$.
    \begin{enumerate}[label=(\roman*)]
        \item \label{item.doubling}
        $\omega^X_\e$ is doubling, i.e., for $X\in \Omega \setminus B_{4r}(Q)$,
        \begin{equation}
            \omega^X_\e(\Delta_{2r}(Q)) \lesssim \omega^X_\e(\Delta_{r}(Q)).
        \end{equation}
        \item \label{item.w=1} It holds
        \begin{equation}
            \omega_\e^{A_r(Q)}(\Delta_{r}(Q)) \simeq 1.
        \end{equation}
        
        \item  \label{item.w=G}  
        For $X\in \Omega \setminus B_{2r}(Q)$, 
    \begin{equation}\label{est.w=G}
        \omega_\e^X(\Delta_r(Q)) \simeq r^{d-2} G_\e(X, A_r(Q)).
    \end{equation}
        \item \label{item.G-L2}
        For $X\in \Omega \setminus B_{2r}(Q)$,
    \begin{equation}
        G_\e(X, A_r(Q)) \simeq \bigg( \fint_{\Omega \cap B_r(Q)} |G_\e(X,Y)|^2 dY\bigg)^{1/2}.
    \end{equation}

    \item \label{item.G.Holder}
    There exists $\alpha \in (0,1)$ such that for $X,Y\in \Omega$ and $\delta(X) < \frac12 |X-Y|$, 
    \begin{equation}
        G_\e(X,Y) \lesssim \frac{\delta(X)^\alpha}{|X-Y|^{d-2+\alpha}}.
    \end{equation}
    \end{enumerate}
\end{proposition}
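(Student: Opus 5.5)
The plan is to note first that every item in Proposition \ref{prop.hmBasics} uses only the ellipticity \eqref{ellipticity} of $A^\e$, and neither periodicity nor smoothness. All constants therefore depend only on $d$, $\Lambda$ and the Lipschitz character of $\Omega$, hence not on $\e$. The argument is the classical Caffarelli--Fabes--Mortola--Salsa / Jerison--Kenig theory for divergence-form operators with bounded measurable coefficients (as collected in \cite{K94}). I would reproduce its skeleton in the following order.

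\emph{Step 1 (tools).} From De Giorgi--Nash--Moser we have interior Harnack and H\"older estimates. The Lipschitz domain satisfies an exterior cone condition, so we also have a boundary H\"older estimate: a solution vanishing on $\Delta_{2r}(Q)$ satisfies $|u(X)|\lesssim (|X-Q|/r)^\alpha \sup_{D_{2r}(Q)}|u|$. From Gr\"uter--Widman we have the Green function bound $G_\e(X,Y)\lesssim |X-Y|^{2-d}$, with comparability to $|X-Y|^{2-d}$ when $|X-Y|\le \frac12\max(\delta(X),\delta(Y))$. Item \ref{item.G.Holder} follows at once. Apply the boundary H\"older estimate to $G_\e(\cdot,Y)$ in $D_{|X-Y|/2}(\hat X)$, where $\hat X\in\partial\Omega$ is a closest point to $X$. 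Then bound the supremum by $|X-Y|^{2-d}$ using Gr\"uter--Widman.

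\emph{Step 2 (items \ref{item.w=1} and the lower half of \ref{item.w=G}).} The function $1-\omega_\e^{(\cdot)}(\Delta_r(Q))$ vanishes on $\Delta_r(Q)$. By the boundary H\"older estimate it is $\le 1/2$ at points of $\Gamma(Q)$ at distance $\simeq cr$ from $Q$. A Harnack chain within the cone then gives $\omega_\e^{A_r(Q)}(\Delta_r(Q))\gtrsim 1$; the upper bound by $1$ is trivial. For $\omega^X\gtrsim r^{d-2}G_\e(X,A_r)$ I would compare the two solutions on $\Omega\setminus B(A_r,cr)$ via the maximum principle. On $\partial B(A_r,cr)$ we have $r^{d-2}G_\e\simeq 1\lesssim\omega$ by Step 1 and Harnack, and on $\partial\Omega$ we have $G_\e=0$.

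\emph{Step 3 (upper half of \ref{item.w=G}, the main obstacle).} The reverse inequality $\omega^X(\Delta_r)\lesssim r^{d-2}G_\e(X,A_r)$ for $X\notin B_{2r}(Q)$ needs two ingredients. The first is the Carleson estimate: a nonnegative solution vanishing on $\Delta_{2r}$ satisfies $\sup_{D_r}u\lesssim u(A_r)$. This comes from boundary H\"older plus a Harnack chain, via the standard iteration argument. The second is a duality/test-function argument. Take $\varphi\in C_0^\infty(B_{2r}(Q))$ with $\varphi=1$ on $B_r$ and $|\nabla\varphi|\lesssim 1/r$. Then
\[
\omega^X(\Delta_r)\le\int\varphi\,d\omega^X=-\int_\Omega A^\e\nabla G_\e^*(\cdot,X)\cdot\nabla\varphi .
\]
By Cauchy--Schwarz and Caccioppoli this is $\lesssim r^{d-2}\sup_{D_{3r}}G_\e(X,\cdot)$. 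The Carleson estimate applied to $G_\e(X,\cdot)$, which is a solution of the adjoint equation vanishing on the boundary near $Q$, then gives $\lesssim r^{d-2}G_\e(X,A_r)$. The Carleson iteration is where I expect the real work.

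\emph{Step 4 (items \ref{item.G-L2} and \ref{item.doubling}).} For \ref{item.G-L2}, the upper bound of the $L^2$ average by $G_\e(X,A_r)$ is the Carleson estimate again. The lower bound holds because $G_\e(X,\cdot)\simeq G_\e(X,A_r)$ on a ball of radius $\simeq r$ around $A_r$ (by Harnack), and that ball occupies a fixed fraction of $D_r(Q)$. For \ref{item.doubling}, use \ref{item.w=G} at scales $r$ and $2r$. Since $A_r(Q)$ and $A_{2r}(Q)$ are joined by a Harnack chain of bounded length away from $X$, we get $G_\e(X,A_{2r})\simeq G_\e(X,A_r)$, and hence $\omega^X(\Delta_{2r})\lesssim\omega^X(\Delta_r)$ for $X\notin B_{4r}(Q)$.
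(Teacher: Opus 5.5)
Your outline is the classical Caffarelli--Fabes--Mortola--Salsa, Gr\"uter--Widman and Jerison--Kenig argument, and that is what the paper relies on: it gives no proof of Proposition~\ref{prop.hmBasics} and simply refers to \cite{K94}, noting, as you do, that only ellipticity enters, so all constants are independent of $\e$. The one thing to fix is the scale bookkeeping in Step~3, and likewise in Step~1, where the ball $D_{|X-Y|/2}(\hat X)$ can come arbitrarily close to the pole $Y$: with ${\rm supp}\,\varphi\subset B_{2r}(Q)$ and a supremum over $D_{3r}(Q)$, your bound is infinite or degenerates for $2r\le|X-Q|<3r$. Take ${\rm supp}\,\varphi\subset B_{(1+\eta)r}(Q)$ instead, and apply the Carleson estimate on a slightly enlarged region (obtained by covering with small boundary balls and Harnack chains); this is how the classical proof reaches every $X\in\Omega\setminus B_{2r}(Q)$.
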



\subsection{Localization}
\label{subsec.localization}
Though our main theorems are proved for bounded Lipschitz domains, it is standard to apply a localization argument to reduce the local boundary estimates to Lipschitz graph domains to avoid some technical issues. For example, for a bounded Lipschitz domain $\Omega$, $D_r(P) = \Omega\cap B_r(P)$ may not be a Lipschitz domain or even be disconnected. Here, we will briefly describe the localization argument.

Let $\Omega$ be a bounded Lipschitz domain. If we want to estimate $\widetilde{N}_\e(\nabla u)$, it suffices to estimate the truncated nontangential maximal function $\widetilde{N}_\e^R(\nabla u)$. This is due to the following observation
\begin{equation}\label{est.ReduceToLayer}
    \widetilde{N}_\e(u)(Q) \le \widetilde{N}_\e^{R}(u)(Q) + C_R \| \nabla u \|_{L^2(\Omega\setminus \Omega_{R/2})}.
\end{equation}
If we choose $R \simeq r_0 \gg \e$, where $r_0$ is a constant depending only on ${\rm diam}(\Omega)$ and the Lipschitz character of $\Omega$, then the second term on the right-hand side of \eqref{est.ReduceToLayer} can be easily controlled by the energy estimate. 
Consequently, we may concentrate on the estimates on the boundary layer $\Omega_{2r_0}$. Next, we cover $\Omega_{2r_0}$ by a finite number of balls $B(P_i,10r_0)$ centered at $P_i \in \partial \Omega$ with finite overlaps. This reduces the estimates to each $ D_{10r_0}(P_i) = B(P_i, 10r_0) \cap \Omega$. By a translation and a rotation, we can assume
\begin{equation}\label{eq.graphdomain}
    B(P_i, 10r_0) \cap \Omega = B(0,10r_0) \cap \left\{ X = (x',x_d): x_d > \phi(x') \right\},
\end{equation}
where $\phi:\R^{d-1} \to \R$ is a Lipschitz function, representing the local graph of $\partial \Omega$. We will use $x',y',z' \in \R^{d-1}$ to represent the first $d-1$ components of $X,Y,Z \in \R^d$ and $x_d, y_d, z_d$  the last components.


With the above localization and reduction, we only need to consider the boundary value problems in the Lipschitz  graph domain $\Omega = \{ X = (x', x_d): x_d > \phi(x') \}$, whose boundary is given by $\partial \Omega = \{ X = (x',\phi(x')) \}$, where $\phi$ is a Lipschitz function. 
Without loss of generality, assume $O = (0',0) \in \partial \Omega$, where $0'$ denotes the origin in $\R^{d-1}$.
Given $Q = (x',\phi(x')) \in \partial \Omega$, let 
\begin{equation}\label{I-r}
I_r(Q) = \left\{ P = (y',\phi(y')): |y'-x'|<r \right\}
\end{equation}
and 
\begin{equation}\label{T-r}
T_r(Q) = \left\{ Y = (y',y_d): |y'-x'|<r, \phi(y') < y_d < \phi(x') + M r \right\},
\end{equation}
where $M=100d(1+ \| \nabla \phi \|_\infty).$
These are analogs of $\Delta_r(Q)$ and $D_r(Q)$. Note that $T_r(Q)$ is always a Lipschitz domain and $I_r(Q)$ is the bottom boundary of $T_r(Q)$. The crucial fact is that the estimates in $I_r(Q)$ or $T_r(Q)$ are interchangeable with those in $\Delta_r(Q)$ and $D_r(Q)$ due to the following simple comparisons: 
\begin{equation}\label{eq.Tr=Dr}
    I_{cr}(Q) \subset \Delta_r(Q) \subset I_r(Q) \quad \text{and} \quad T_{cr}(Q) \subset D_r(Q) \subset T_{Cr}(Q),
\end{equation}
for some constants $0<c<1<C$. Oftentimes, we will simply write $I_r$ and $T_r$ if $Q = O \in \partial \Omega$, where $O$ is the origin.

    \begin{figure}[htbp]
\begin{tikzpicture}[scale = 0.5]
    \draw (-1, 1) -- (0,0.5) -- (1,-1) -- (2,0) -- (3,2) -- (4,-1) -- (5,1.4) -- (6, 0) -- (7,0.9) -- (8,-2) -- (9,2) -- (10, 0.4) -- (11, -0.2) -- (12, 1.2) -- (13, -0.4) -- (14, 0.1) -- (15, 1.6) -- (16, -0.1) -- (17, 0.3) -- (18, -1) -- (19, -1.2);

\draw[thick] (3.5, 0.5) -- (3.5, 8.5) -- (13.5, 8.5) -- (13.5, -0.15);


    \draw[blue, thick] (3.5, 0.5) -- (4,-1) -- (5,1.4) -- (6, 0) -- (7,0.9) -- (8,-2) -- (9,2) -- (10, 0.4) -- (11, -0.2) -- (12, 1.2) -- (13, -0.4) -- (13.5, -0.15);

    \node at (18,0) {$\partial \Omega$};
    \node[blue] at (11,-0.8) {$I_r(Q)$};
    \node at (5,7) {$T_r(Q)$};

    \coordinate (Q) at (8.5,0);
    \fill (Q) circle (1pt) node[below] {$Q$};

    \draw[red, thick] (Q) -- ++(83:6) node[midway, right] {$\Gamma(Q)$};
    \draw[red, thick] (Q) -- ++(97:6);

    \draw[dashed, red] (Q) -- ++(90:6);


\end{tikzpicture}
\caption{A graph domain}
\label{fig.graphdomain}
\end{figure}
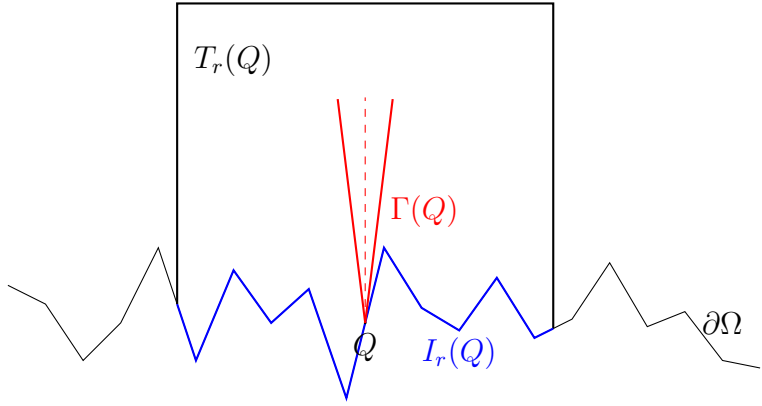

Note that although $\Omega$ is an unbounded graph domain, we can still define $\delta(X)$ as the distance from $X\in \Omega$ to the boundary $\partial \Omega$, and $\Omega_r = \{ X\in \Omega: \delta(X) < r \}$. For each $Q = (x',\phi(x')) \in \partial \Omega$, we redefine the nontangential cone on $Q$ by
\begin{equation}\label{def.Gamma.graph}
    \Gamma(Q) = \left\{Y = (y', y_d) \in \Omega: |x'-y'| < (\tan \beta) (y_d - \phi(x')) \right\}
\end{equation}
for some $\beta \in (0,\pi/2)$ (representing the half-aperture of the cone), which is a harmless modification of the original nontangential maximal function in a bounded domain. By this, all kinds of nontangential maximal functions can be defined analogously as in bounded Lipschitz domains.
For example, we redefine the truncated large-scale nontangential maximal function by
\begin{equation}\label{def.tNe.truncated}
    \widetilde{N}_\e^r(F)(Q) = \sup\bigg\{ \bigg(\fint_{B(Y,\delta(Y)/2)} |F|^2 \bigg)^{1/2}: Y\in \Gamma(Q) \cap (\Omega_{r}\setminus \Omega_{10\e}) \bigg\}.
\end{equation}

Finally, we will see how the operator $\cL_\e$ changes in the above localization. If $A^\e(X) = A(X/\e)$ is $\e$-periodic (i.e., under the assumption \eqref{periodicity}), then after a rotation it is still $\e$-periodic with respect to a rotated lattice. Here, we will introduce the idea in \cite{KS11} that can handle more general cases for which $A^\e$ only needs to be periodic in the $x_d$-direction (and has no structure assumption in the remaining $d-1$ directions). Actually, if $A$ satisfies \eqref{periodicity}, we can always choose a rational rotation in the above reduction to \eqref{eq.graphdomain} such that $A^\e(X)$ is still $\ell \e$-periodic in $x_d$-direction for some integer $\ell \ge 1$ (the size of $\ell$ depends on the Lipschitz character of $\partial \Omega$), i.e., $A^\e(x',x_d) = A^\e(x', x_d + z\ell \e)$ for all $z\in \Z$. Without loss of generality, we assume $\ell = 1$, which yields
\begin{equation}\label{eq.periodicInxd}
    A^\e(x',x_d) = A^\e(x', x_d + \e z ) \qquad \text{for any } z\in \Z.
\end{equation}
Indeed, the results in this section are available whenever $A^\e$ satisfies \eqref{eq.periodicInxd} in each localized graph domain, which is a  weaker assumption than \eqref{periodicity}.

\subsection{$L^2$ Rellich estimates}
\label{subsec.L2Rel}
There are two different approaches to achieve the large-scale $L^2$ Rellich estimates. The first one was introduced in \cite{KS11} using the difference operator under the assumption \eqref{eq.periodicInxd}, and the second was introduced in \cite{Shen17} based on the convergence rates in homogenization theory. These two approaches have their own advantages and disadvantages, though they are both applicable to periodic (in all directions) coefficients. The approach in \cite{KS11} relies on the strict periodicity in the $x_d$-direction (i.e., \eqref{eq.periodicInxd}); while the approach in \cite{Shen17} also applies to non-periodic coefficients, but needs the structure assumption on the coefficients in all directions so that the quantitative homogenization applies.
We also point out that the approach in \cite{KS11} requires $A$ to be symmetric, while the approach in \cite{Shen17} only requires symmetry for the Neumann condition. For simplicity, we will always assume that $A$ is symmetric in this paper.

In order to be consistent with the class of operators for regularity and Neumann problems, we recall the large-scale $L^2$ Rellich estimates in \cite{KS11} based on the assumption \eqref{eq.periodicInxd}.

\begin{proposition}[{\cite[Lemma 3.4]{KS11}}] \label{prop.localR2}
    Let $\e < r$ and $A^\e$ satisfy \eqref{ellipticity} and \eqref{eq.periodicInxd}. Let $u_\e$ be a solution of
        \begin{equation}\label{eq.Rp.T3r}
    \left\{
    \begin{aligned}
        & -\nabla\cdot A^\e \nabla u_\e = 0 \quad \text{in } T_{3r}, \\
        & u_\e = f  \quad \text{on } I_{3r}.
    \end{aligned}
    \right.
    \end{equation}
    Then for any $\e < t< r$,
    \begin{equation}\label{est.L2Rellich-1}
        \frac{1}{t} \int_{T_r \cap \Omega_t} |\nabla u_\e|^2 \lesssim \int_{I_{3r}} |\nabla_{\tan} f|^2  + \frac{1}{r} \int_{T_{3r}} |\nabla u_\e|^2.
    \end{equation}
\end{proposition}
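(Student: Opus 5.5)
The estimate \eqref{est.L2Rellich-1} will come from the difference-operator technique of \cite{KS11}, which uses the periodicity \eqref{eq.periodicInxd} in the $x_d$-direction, combined with a localized Rellich-type identity. Set $v_\e(x',x_d)=u_\e(x',x_d+\e)-u_\e(x',x_d)$. By \eqref{eq.periodicInxd}, $v_\e$ is again a solution of $\cL_\e v_\e=0$ in the shifted region $T_{3r}\cap\{x_d>\phi(x')+\e\}$, since $A^\e$ is invariant under the shift $x_d\mapsto x_d+\e$. The proof then splits into two parts: a Rellich estimate on the horizontal slabs, and the layer estimate itself.

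For the first part, we regard $\partial\Omega+\e e_d$ as a Lipschitz graph and work in the translated domain. Fix a cutoff $\eta$ adapted to $T_{2r}$ with $|\nabla\eta|\lesssim 1/r$. We apply the Rellich/Caccioppoli argument to the averaged quantity
\[
\frac1{\e}\int_{x_d-\e}^{x_d}|\nabla u_\e(x',s)|^2\,ds,
\]
which is a weighted energy over the strip $\{\phi(x')<x_d<\phi(x')+\e\}$. Rather than pointwise identities, which are unavailable for rough $A$, we use $u_\e-f$ (with $f$ extended to be constant in $x_d$) as a test function. Together with the symmetry of $A$, this gives a Rellich-type identity: integrating $\partial_d$ of the quadratic form $A^\e\nabla u_\e\cdot\nabla u_\e$ against $\eta^2$ over $T_{3r}\cap\{\phi<x_d<\phi+t\}$, and replacing the derivative $\partial_d$ by the discrete difference in $x_d$ (legitimate because of periodicity), shows that the energy of $u_\e$ in a strip of thickness $\e$ is controlled by $\int_{I_{3r}}|\nabla_{\tan}f|^2+\frac1r\int_{T_{3r}}|\nabla u_\e|^2$, plus a term of the form $\int|\nabla v_\e|\,|\nabla u_\e|$.

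For the second part, we sum these strip estimates over the $t/\e$ strips of thickness $\e$ that make up $\Omega_t$. The function $v_\e$ is a discrete difference, so $\int|\nabla v_\e|^2$ over the layer telescopes: it is controlled by boundary-strip contributions plus the interior Caccioppoli term at scale $r$. Dividing by $t$ then yields \eqref{est.L2Rellich-1}. An alternative for this step is to obtain the estimate for $t\simeq\e$ first and then pass to general $t\in(\e,r)$ by the large-scale Lipschitz/Caccioppoli estimate \eqref{lip} applied on Whitney-type boxes.

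The main obstacle is making the Rellich identity rigorous without any smoothness of $A$. The term $\int\partial_d(A^\e)\nabla u_\e\cdot\nabla u_\e$ that appears in the classical Rellich identity is meaningless for rough coefficients. The plan is to replace $\partial_d$ by the finite difference $\e^{-1}(\tau_\e-I)$, under which $A^\e$ is exactly invariant, and to use symmetry to write
\[
A^\e\nabla u\cdot\nabla u(\cdot+\e)-A^\e\nabla u\cdot\nabla u=A^\e\nabla v_\e\cdot\nabla(2u+v_\e).
\]
The identity then closes with the $v_\e$ terms absorbed, using Cauchy–Schwarz and the energy bound for $v_\e$.
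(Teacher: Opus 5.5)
Your strategy is the difference-operator route of \cite{KS11} that the paper relies on (the paper itself only cites \cite[Lemma 3.4]{KS11}). You shift by $\e e_d$ and use the invariance of $A^\e$ under this shift. With $u_1=u_\e(\cdot+\e e_d)=u_\e+v_\e$, this lets you write the strip energy $S=\int_{\{\phi<x_d<\phi+\e\}}\eta^2A^\e\nabla u_\e\cdot\nabla u_\e$ as $\int_\Omega\eta^2A^\e\nabla u_\e\cdot\nabla u_\e-\int_\Omega\eta^2(\cdot+\e e_d)A^\e\nabla u_1\cdot\nabla u_1$, and then you expand by symmetry. (Your displayed identity should have $A^\e\nabla u(\cdot+\e)\cdot\nabla u(\cdot+\e)$ as its first term.)

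The step that is supposed to close the argument fails. Multiplying \eqref{est.L2Rellich-1} by $\e$, you need $S\lesssim \e\int_{I_{3r}}|\nabla_{\tan}f|^2+\frac{\e}{r}\int_{T_{3r}}|\nabla u_\e|^2$; your stated strip bound is missing this factor $\e$. A remainder $\int|\nabla v_\e||\nabla u_\e|$ cannot be absorbed by Cauchy--Schwarz and an energy bound for $v_\e$:
\begin{itemize}
\item If the remainder is taken over the strip, it is comparable to $S$ plus the energy of the next strip up, with no small constant.
\item If it is taken over the support of $\eta$ (height $\sim r$), it lacks the factor $\e/r$. Caccioppoli for $v_\e$ on Whitney balls at distance $\delta$ gives only $(\e/\delta)^2\int|\nabla u_\e|^2$. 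That yields at best $\e\int\delta(X)^{-1}|\nabla u_\e|^2$ (a logarithmic loss) plus $\int_{\Omega_{C\e}}|\nabla u_\e|^2$, which is the quantity being estimated. This is unavoidable because the trace $u_\e(\cdot+\e e_d)-f$ of $v_\e$ is unknown.
\end{itemize}
Once absolute values are put on the $v_\e$-terms, the argument cannot close.

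The missing idea is to keep the $v_\e$-terms exact. One has
$S=-2\int\eta^2A^\e\nabla v_\e\cdot\nabla u_\e-\int\eta^2A^\e\nabla v_\e\cdot\nabla v_\e+\int(\eta^2-\eta^2(\cdot+\e e_d))A^\e\nabla u_1\cdot\nabla u_1$.
\begin{itemize}
\item The middle term is $\le 0$ and is simply discarded, not estimated.
\item The last term is $O(\frac{\e}{r}\int_{T_{3r}}|\nabla u_\e|^2)$.
\item For the cross term, test the equation of $v_\e$ (not of $u_\e$) with $\eta^2(u_\e-F)$, where $F(x',x_d)=f(x',\phi(x'))$. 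This is admissible since $u_\e=F$ on $I_{3r}$, and it trades $\nabla u_\e$ for $\nabla F$ and $\eta\nabla\eta\,(u_\e-F)$.
\item Then undo the difference by shifting back by $\e e_d$, using periodicity and $\partial_d F=0$. This gives $\int\eta^2A^\e\nabla v_\e\cdot\nabla F=-\int_{\mathrm{strip}}\eta^2A^\e\nabla u_\e\cdot\nabla F+\int_{\Omega+\e e_d}(\eta^2(\cdot-\e e_d)-\eta^2)A^\e\nabla u_\e\cdot\nabla F$.
\item Treat the $\eta\nabla\eta\,(u_\e-F)$ term the same way, using a one-dimensional Poincar\'e inequality in $x_d$ for $u_\e-F$.
\end{itemize}
Every resulting term is then either a strip integral, absorbed by Cauchy--Schwarz, or carries the factor $\e/r$ (or $\e$ in front of $\int|\nabla_{\tan}f|^2$).

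Your passage to general $t$ is also not valid:
\begin{itemize}
\item The interior strips $\{\phi+k\e<x_d<\phi+(k+1)\e\}$ carry no boundary data.
\item $|\nabla v_\e|^2$ does not telescope; only $\int A^\e\nabla u_\e\cdot\nabla u_\e$ over the nested sets $\Omega+k\e e_d$ does.
\item The interior large-scale Lipschitz estimate bounds small-ball averages by larger ones, never a boundary layer by boundary data.
\end{itemize}
As the paper's remark notes, the fix is immediate. $A^\e$ is also $m\e$-periodic in $x_d$, so running the same argument with shift $m\e$, $m=\lfloor t/\e\rfloor$, gives the estimate at scale $t$ directly.
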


\begin{proposition}[{\cite[Lemma 3.2]{KS11} }] \label{prop.localN2}
    Let $\e < r$ and $A^\e$ satisfy \eqref{ellipticity} and \eqref{eq.periodicInxd}. Let $u_\e$ be a solution of
        \begin{equation}\label{eq.Np.T3r}
    \left\{
    \begin{aligned}
        & -\nabla\cdot A^\e \nabla u_\e = 0 \quad \text{in } T_{3r}, \\
        & \frac{\partial u_\e}{\partial \nu_\e} = g \quad \text{on } I_{3r}.
    \end{aligned}
    \right.
    \end{equation}
    Then for any $\e < t< r$,
    \begin{equation}\label{est.L2Rellich-2}
        \frac{1}{t} \int_{T_r \cap \Omega_t} |\nabla u_\e|^2 \lesssim \int_{I_{3r}} |g|^2  + \frac{1}{r} \int_{T_{3r}} |\nabla u_\e|^2.
    \end{equation}
\end{proposition}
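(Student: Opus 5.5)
The plan is to mimic the classical Rellich identity. The pointwise identity is not available because $A$ is merely measurable, so I would replace the derivative $\partial_d$ by a finite difference in the $x_d$-direction. Periodicity \eqref{eq.periodicInxd} makes this difference commute with the operator.

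\textbf{Reduction.} It suffices to treat $t = k\e$ with $k\in\mathbb{N}$: for general $\e<t<r$, the layer $\Omega_t$ is contained in $\Omega_{k\e}$ with $k\e \le 2t$. Fix $h = k\e$ and write $u_h(X) = u_\e(X + h e_d)$. By \eqref{eq.periodicInxd}, $u_h$ solves the same equation $-\nabla\cdot A^\e\nabla u_h = 0$ in the shifted region $T_{3r} - h e_d$, with conormal derivative $g(\cdot + h e_d)$ on the shifted graph.

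\textbf{Main identity.} Take a cutoff $\eta=\eta(x')$ equal to $1$ on $|x'|<r$ and supported in $|x'|<2r$, and a vertical cutoff that kills the top of $T_{3r}$. Then compare
\[
\int_{\Omega} \eta\, (A^\e\nabla u_\e\cdot\nabla u_\e)(X+he_d)\,dX
\quad\text{with}\quad
\int_{\Omega}\eta\, (A^\e\nabla u_\e\cdot\nabla u_\e)(X)\,dX .
\]
After the change of variables $X \mapsto X + h e_d$, the first integral runs over $\Omega + h e_d$, using $A^\e(X+he_d)=A^\e(X)$. Their difference is therefore exactly the integral of $\eta A^\e\nabla u_\e\cdot\nabla u_\e$ over the strip $\{\phi(x')<x_d<\phi(x')+h\}$, up to the cutoff error. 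This strip contains $T_r\cap\Omega_{ch}$, so ellipticity gives control of the left side of \eqref{est.L2Rellich-2}.

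On the other hand, I would write the difference inside $\Omega$ using symmetry of $A$ as
\[
A^\e\nabla u_h\cdot\nabla u_h - A^\e\nabla u_\e\cdot\nabla u_\e = A^\e\nabla w\cdot\nabla(u_h+u_\e), \qquad w = u_h - u_\e .
\]
Then I integrate by parts, using the equations for $u_\e$ and $u_h$ in $\Omega\cap(T_{3r}-he_d)$. This produces boundary terms $\int_{\partial\Omega}\eta\, w\,(g + \partial_{\nu_\e}u_h)$. Since the equation and the normal direction are not shifted, the term involving $\partial_{\nu_\e}u_h$ on $\partial\Omega$ is rewritten via the divergence theorem on the strip. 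It is then bounded by interior quantities on the layer, plus the data $g$ on the shifted graph. The cutoff errors are $\lesssim r^{-1}\int_{T_{3r}}|\nabla u_\e|^2\, h$.

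\textbf{Key estimate (main obstacle).} The main obstacle is bounding the boundary term $\int \eta\, g\, w\, d\sigma$ without circularity, since $w=\int_0^h\partial_d u_\e(x',x_d+s)\,ds$ involves the gradient on vertical segments. I would use Cauchy--Schwarz:
\[
\Big|\int \eta\, g\, w\Big| \le \|g\|_{L^2(I_{3r})}\,\|w\|_{L^2(I_{2r})},
\qquad
\|w\|_{L^2(I_{2r})}^2 \lesssim h\int_{T_{2r}\cap\{\phi<x_d<\phi+h\}}|\nabla u_\e|^2 .
\]
Writing $L(h)$ for the layer integral, this gives
\[
L(h) \lesssim h^{1/2}\|g\|_{L^2}\,L'(h)^{1/2} + \frac{h}{r}\int_{T_{3r}}|\nabla u_\e|^2 ,
\]
where $L'$ is the integral over a slightly wider layer (in $x'$). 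Young's inequality then gives the claim. The mismatch between $L$ on $T_r$ and $L'$ on $T_{2r}$ would be closed by the usual iteration over nested boxes $T_{r_j}$, absorbing with a small constant. The nested boxes are needed because the layer sits on the left side. Dividing by $h\simeq t$ yields \eqref{est.L2Rellich-2}.
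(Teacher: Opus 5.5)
Your framework is the right one: a vertical shift by $h=k\e$, periodicity \eqref{eq.periodicInxd} to turn the strip integral into $\int_\Omega \eta\,(A^\e\nabla u_\e\cdot\nabla u_\e-A^\e\nabla u_h\cdot\nabla u_h)$, Cauchy--Schwarz on $\int\eta g w$ along vertical segments, then Young and nested boxes. Your reduction to $t=k\e$ is exactly the paper's remark on $m\e$-periodicity; beyond that remark the paper just cites \cite[Lemma 3.2]{KS11}.

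\textbf{The gap.} The step you pass over quickly is the crux, and as described it fails. The term $\int_{\partial\Omega}\eta\, w\,\partial_{\nu_\e}u_h$ involves the conormal derivative of $u_\e$ on the interior surface $\partial\Omega+he_d$, where nothing is prescribed. The graph on which $u_h$ carries the data $g(\cdot+he_d)$ is $\partial\Omega-he_d$, which lies outside $\Omega$. Moving the term onto the strip $S=\{\phi(x')<x_d<\phi(x')+h\}$ by the divergence theorem needs a test function $\theta$ on $S$ whose top trace is $w(\cdot-he_d)$. The resulting volume term $\int_S A^\e\nabla u_\e\cdot\nabla(\eta\theta)$ is of size $C\,L'(h)$ with $C$ not small. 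For the natural choice $\theta(x',\phi(x')+s)=u_\e(x',\phi(x')+s)-u_\e(x',\phi(x')+h-s)$ one finds exactly
\[
\int_{\partial\Omega}\eta\, w\,(g+\partial_{\nu_\e}u_h)\,d\sigma=-\int_S A^\e\nabla u_\e\cdot\nabla(\eta\theta).
\]
So the two $g$-terms cancel and the identity collapses to a tautology: the strip integral appears with coefficient one on both sides. Your inequality $L(h)\lesssim h^{1/2}\|g\|_{L^2}L'(h)^{1/2}+\frac hr\int_{T_{3r}}|\nabla u_\e|^2$ silently omits this term. No nested-box iteration can absorb a term $C\,L'(h)$ with $C\ge 1$.

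\textbf{The missing idea.} Never produce $\partial_{\nu_\e}u_h$: expand asymmetrically and use a sign. With $w=u_h-u_\e$ and $A$ symmetric,
\[
A^\e\nabla u_\e\cdot\nabla u_\e-A^\e\nabla u_h\cdot\nabla u_h=-2A^\e\nabla u_\e\cdot\nabla w-A^\e\nabla w\cdot\nabla w .
\]
Multiply by $\psi=\eta(x')\zeta(x_d)\ge 0$ and integrate over $\Omega$.
\begin{itemize}
\item The second term is $\le 0$ by ellipticity and can be discarded.
\item Integrate the first term by parts against the equation for $u_\e$ alone, whose conormal derivative on $I_{3r}$ is $g$. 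This gives $-2\int_{\partial\Omega}\psi g w\,d\sigma+2\int_\Omega w\,A^\e\nabla u_\e\cdot\nabla\psi$.
\item The last integral is $\lesssim \frac hr\int_{T_{3r}}|\nabla u_\e|^2$, since $|w|\le\int_0^h|\partial_d u_\e(x',x_d+s)|\,ds$. The error from replacing $\psi(\cdot+he_d)$ by $\psi$ obeys the same bound.
\end{itemize}
From there your Cauchy--Schwarz, Young and absorption step finishes the proof.
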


\begin{remark}
 The above propositions are not stated in the same form as \cite[Lemmas 3.2, 3.4]{KS11}. In this remark, we clarify that they are actually equivalent. First, the above propositions are stated in a scaled version with period $\e$, which definitely is equivalent to the original version in \cite[Lemmas 3.2, 3.4]{KS11}  with period 1. Second,
    \cite[Lemma 3.2, 3.4]{KS11} were stated for the flat boundary $\{ x_d = 0 \}$, while the above propositions are stated over Lipschitz boundaries $\{x_d = \phi(x') \}$. However, by a change of variables, $(x', x_d) \to (y',y_d) = (x', x_d - \phi(x'))$, we can flatten the boundary without changing the condition \eqref{eq.periodicInxd}. Third, the estimates \eqref{est.L2Rellich-1} and \eqref{est.L2Rellich-2} hold for all $t \in (\e,r)$, instead of a fixed scale $t = \e$, because if $A_\e$ is $\e$-periodic in $x_d$, then it is $m\e$-periodic for any $m\in \N$ and the desired estimates are valid for any $t \simeq [t/\e]\e$.
\end{remark}

The above propositions can be converted to $D_r$ in a bounded Lipschitz domain for the operator $\cL_\e$ using \eqref{eq.Tr=Dr} and a covering argument. Consequently, they implies the global $L^2$ Rellich estimates \eqref{eq.LSRellich-D} and \eqref{eq.LSRellich-N}. In fact, by \eqref{est.L2Rellich-1}, we have
\begin{equation}
        \frac{1}{t} \int_{D_{cr_0}(P_i) \cap \Omega_t} |\nabla u_\e|^2 \lesssim \int_{\Delta_{r_0}(P_i)} |\nabla_{\tan} f|^2  + \frac{1}{r_0} \int_{D_{r_0}(P_i)} |\nabla u_\e|^2.
    \end{equation}
Choosing $P_i$'s such that $\Omega_{cr_0}$ is covered by the union of $D_{cr_0}(P_i)$ with finite overlaps, then we sum over $i$ for the above inequality and obtain
\begin{equation}\label{est.L2RR}
    \frac{1}{t}\int_{\Omega_t} |\nabla u_\e|^2 \lesssim \int_{\partial \Omega} |\nabla_{\tan} f|^2 + \frac{1}{r_0}\int_{\Omega} |\nabla u_\e|^2.
\end{equation}
Finally, the energy estimate yields $\| \nabla u_\e \|_{L^2(\Omega)} \lesssim \| f \|_{H^1(\partial \Omega)}$, which combined with \eqref{est.L2RR} leads to \eqref{eq.LSRellich-D}. The derivation of \eqref{eq.LSRellich-N} is completely similar.

For an application to the Dirichlet problem in the next subsection, we state the following local estimate in $D_r$, which follows readily from \eqref{eq.Rp.T3r}.

\begin{proposition}\label{prop.LSRellich}
    Let $\e \le r$. Let $u_\e$ be a weak solution of $\cL_\e(u_\e) = 0$ in $D_{2r}(Q) = \Omega \cap B_{2r}(Q)$ for some $Q \in \partial \Omega$, and $u_\e = 0$ on $\Delta_{2r}(Q)$. Then for any $t\in [\e,r]$, 
    \begin{equation}
        \frac{1}{t} \int_{D_r(Q) \cap \Omega_t} |\nabla u_\e|^2 \lesssim \frac{1}{r} \int_{D_{2r}(Q)} |\nabla u_\e|^2.
    \end{equation}
\end{proposition}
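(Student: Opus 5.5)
The plan is to derive this from Proposition \ref{prop.localR2}, applied with $f=0$ after the localization of Subsection \ref{subsec.localization}, followed by a covering argument. The vanishing boundary data removes the tangential gradient term. The periodicity in the $x_d$-direction \eqref{eq.periodicInxd} is available after a rational rotation adapted to the local graph.

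\textbf{Step 1: reduction to a graph domain.} First handle the case $r \le c_0 r_0$, where $r_0$ is the localization scale. Fix $P \in \Delta_r(Q)$. After a rational rotation and translation, $\Omega$ coincides near $P$ with a Lipschitz graph domain with $P=O$, and $A^\e$ is $\ell\e$-periodic in $x_d$. Choose a small constant $c_1$ so that $T_{3c_1 r}(P) \subset D_{2r}(Q)$ and $I_{3c_1 r}(P) \subset \Delta_{2r}(Q)$; this uses \eqref{eq.Tr=Dr} and $|P-Q|<r$. Apply Proposition \ref{prop.localR2} with radius $\rho = c_1 r$ and $f=0$ on $I_{3\rho}$. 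For $\ell\e < t < \rho$ this gives
\begin{equation*}
\frac1t \int_{T_{\rho}(P)\cap\Omega_t} |\nabla u_\e|^2 \lesssim \frac1\rho\int_{T_{3\rho}(P)} |\nabla u_\e|^2 \lesssim \frac1r \int_{D_{2r}(Q)}|\nabla u_\e|^2 .
\end{equation*}

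\textbf{Step 2: covering.} The set $D_r(Q)\cap\Omega_t$ with $t\le \rho$ is covered by boundedly many sets $T_\rho(P_j)$ with $P_j \in \Delta_{r+Ct}(Q)$. The number of sets depends only on $d$ and the Lipschitz character. A point within distance $t$ of the boundary and in $B_r(Q)$ lies in $T_\rho(P)$ for some nearby $P$, because $M$ is large and $t \le \rho$. Slightly enlarge the admissible region in Step 1 to absorb $P_j$ outside $\Delta_r(Q)$, still keeping $T_{3\rho}(P_j)\subset D_{2r}(Q)$. Summing over $j$ then gives the estimate for $t \in (\ell\e, c_1 r)$.

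\textbf{Step 3: the remaining ranges of $t$.}
\begin{itemize}
\item For $t\in[c_1 r, r]$, use the trivial bound $\frac1t\int_{D_r\cap\Omega_t}|\nabla u_\e|^2 \le \frac{1}{c_1 r}\int_{D_{2r}}|\nabla u_\e|^2$.
\item For $t \in [\e,\ell\e]$, use monotonicity of the integral in $t$ together with $t\simeq \ell\e$. If $r < C\ell\e$, this case is again trivial since then $t\simeq r$.
\item For $r > c_0 r_0$ (relevant only in a bounded domain), the graph localization is no longer possible at scale $r$. Cover $D_r(Q)\cap\Omega_t$ by sets $D_{c_0 r_0}(P_k)$, apply the small-$r$ case to each at scale $c_0r_0\simeq r$, and sum. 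The number of pieces is bounded depending on $\Omega$, since $r \le \diam\Omega$.
\end{itemize}

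\textbf{Main obstacle.} The only delicate point is the geometric bookkeeping in Steps 1--2. We need the sets $T_{3\rho}(P_j)$ and $I_{3\rho}(P_j)$ to stay inside $D_{2r}(Q)$ and $\Delta_{2r}(Q)$, so that $u_\e$ solves the equation there with zero data. At the same time the $T_\rho(P_j)$ must cover the layer $D_r(Q)\cap\Omega_t$. This is handled by taking $c_1$ small relative to $M$ and the Lipschitz constant. Everything else is a direct application of Proposition \ref{prop.localR2}.
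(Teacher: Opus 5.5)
Your proposal is correct and follows the paper's route. The paper says the estimate follows readily from Proposition \ref{prop.localR2} with $f=0$, transferred from $T_r$ to $D_r$ via \eqref{eq.Tr=Dr} and a covering argument, which is exactly your Steps 1--2; your Step 3 (the ranges $t\lesssim \ell\e$ and $t\simeq r$, and the case $r\gtrsim r_0$) spells out bookkeeping the paper leaves implicit.
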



\subsection{Large-scale $\cL_\e$-harmonic measure}
Define the large-scale $\cL_\e$-harmonic measure by
\begin{equation}\label{def.LSomega}
    \overline{\omega}^X_\e(Q) := \frac{\omega_\e^X(\Delta_\e(Q))}{\sigma(\Delta_\e(Q))}.
\end{equation}
For a Lipschitz domain $\Omega$, we have $\sigma(\Delta_\e(Q)) \simeq \e^{d-1}$.
By the doubling property of $\omega^X_\e$, we can show the doubling property of $\overline{\omega}^X_\e$.
\begin{lemma}\label{lem.doubling}
For any $0<r<r_0, \Delta_r = \Delta_r(P), X \in \Omega $ and $|X - P| > 5\max\{ r,\e \}$, we have
\begin{equation}\label{est.doubling}
    \int_{\Delta_{2r}} \overline{\omega}^X_\e(Q) d\sigma(Q) \lesssim \int_{\Delta_{r}} \overline{\omega}^X_\e(Q) d\sigma(Q).
\end{equation}
\end{lemma}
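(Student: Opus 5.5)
The approach is to use Fubini to convert the integral of $\overline{\omega}^X_\e$ over a surface ball into an integral of $\omega^X_\e$ against a smoothed indicator. Then we apply the doubling property of $\omega^X_\e$ (Proposition \ref{prop.hmBasics}\ref{item.doubling}) at scale $\max\{r,\e\}$.

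Write
\begin{equation*}
\int_{\Delta_{s}} \overline{\omega}^X_\e\, d\sigma = \int_{\partial\Omega} \bigg(\int_{\Delta_s} \frac{\mathbbm{1}_{\{|P'-Q|<\e\}}}{\sigma(\Delta_\e(Q))}\, d\sigma(Q)\bigg) d\omega^X_\e(P'),
\end{equation*}
and set $h_s(P') = \sigma(\Delta_s \cap \Delta_\e(P'))\,\e^{1-d}$, using $\sigma(\Delta_\e(Q))\simeq \e^{d-1}$. Up to constants the integral is then $\int h_s\, d\omega^X_\e$. We have $0\le h_s\lesssim \min\{1, (s/\e)^{d-1}\}$, and $h_s$ is supported in $\Delta_{s+\e}$. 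We split into two cases.

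\emph{Case $r\ge \e$.} Here $h_{2r}\lesssim \mathbbm{1}_{\Delta_{3r}}$. Every $P'\in\Delta_{r/2}$ satisfies $\Delta_\e(P')\cap\Delta_r\supset \Delta_{\e/2}(P'')$ for a suitable $P''$ (take $P''$ on the segment toward $P$ when $P'$ is near the edge; by the Lipschitz geometry $\sigma(\Delta_{\e/2}(P''))\simeq \e^{d-1}$). Hence $h_r\gtrsim \mathbbm{1}_{\Delta_{r/2}}$. Since $|X-P|>5r$, we can apply doubling a bounded number of times, from $\Delta_{3r}$ down to $\Delta_{r/2}$, provided the pole condition $X\notin B_{4\cdot(3r/2)}(P)$ holds. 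It does not quite hold with $5r$, so I would instead cover $\Delta_{3r}$ by boundedly many balls $\Delta_{r/4}(P_j)$ centered near $\Delta_{3r}$. Alternatively, I would use the comparison \eqref{est.w=G} with the Green function and Harnack's inequality along a Harnack chain to compare $\omega^X_\e(\Delta_{3r})$ with $\omega^X_\e(\Delta_{r/2})$.

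\emph{Case $r<\e$.} Then $h_{s}\simeq (s/\e)^{d-1}$ on $\Delta_{\e/2}$ (for $s=r,2r$), and $h_s\lesssim (s/\e)^{d-1}\mathbbm{1}_{\Delta_{2\e+s}}$. Both integrals are therefore comparable to $(r/\e)^{d-1}$ times $\omega^X_\e$ of a surface ball of radius $\simeq \e$. Doubling at scale $\e$ then gives the claim, using $|X-P|>5\e$ and the same covering or Harnack device for the pole condition.

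The main obstacle is the geometric bookkeeping. First, we need the lower bound $h_r\gtrsim 1$ on a fixed fraction of $\Delta_r$, which requires care with Lipschitz surface balls; here I would work in the graph coordinates $I_r$ of \eqref{eq.Tr=Dr}. Second, the pole is only $5\max\{r,\e\}$ away, which is slightly short of what a direct iteration of doubling needs. I would handle this by the comparison $\omega^X_\e(\Delta_\rho(Q))\simeq \rho^{d-2}G_\e(X,A_\rho(Q))$ from \eqref{est.w=G} together with Harnack's inequality between the points $A_\rho(Q)$, valid while $X$ stays at distance $\gtrsim\rho$ from them.
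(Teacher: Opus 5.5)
Your proposal is correct and follows the same route as the paper. Both use Fubini to write $\int_{\Delta_s}\overline{\omega}^X_\e\,d\sigma$ as $\omega^X_\e$ integrated against a kernel bounded above by $\mathbbm{1}_{\Delta_{s+\e}}$ and below on a slightly smaller surface ball, and then apply the doubling property of $\omega^X_\e$. The paper splits at $r=10\e$ and compares both sides to $\omega^X_\e(\Delta_{r-\e})$ using $2r+\e\le 3(r-\e)$; in the small case it shows $\overline{\omega}^X_\e$ is essentially constant on $\Delta_{2r}$, which is the same mechanism as your explicit bounds on $h_s$.

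The pole-condition issue you flag is real, and the paper's step $\omega^X_\e(\Delta_{2r+\e})\lesssim\omega^X_\e(\Delta_{r-\e})$ glosses over it too. Your fix, covering plus the Green-function comparison \eqref{est.w=G} and Harnack chains, is the standard way to handle it.
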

\begin{proof}
    First, consider $r<10\e$. In this case, by the doubling property of $\omega_\e^X$, for any $Q, Q' \in \Delta_{2\e}(P)$, we have $\omega_\e^X(\Delta_\e(Q)) \simeq  \omega_\e^X(\Delta_\e(Q'))$. 
    Here we have used the assumption $|X-P|>5\e$.
    This implies that $\overline{\omega}^X_\e(Q) \simeq \overline{\omega}^X_\e(Q') $ for any $Q, Q^\prime\in \Delta_{2r} \subset \Delta_{2\e}(P)$. Thus, \eqref{est.doubling} follows.

    Now consider the case $r\ge 10\e$. By Fubini's Theorem, we have
    \begin{equation}\label{est.doubling<}
        \begin{aligned}
            \int_{\Delta_{2r}} \overline{\omega}^X_\e(Q) d\sigma(Q) & = \int_{\Delta_{2r}} \int_{\Delta_{2r+\e}} \mathbbm{1}_{\{|Q'-Q|<\e\}} |\Delta_\e(Q')|^{-1} d\omega_\e^X(Q') d\sigma(Q) \\
            & \lesssim \int_{\Delta_{2r+\e}} \int_{\Delta_{2r}} \mathbbm{1}_{\{|Q'-Q|<\e\}} d\sigma(Q) \e^{1-d} d\omega_\e^X(Q') \\
            & \lesssim \int_{\Delta_{2r+\e}} d\omega_\e^X(Q') \\
            & \lesssim \omega_\e^X(\Delta_{2r+\e}) \lesssim \omega_\e^X(\Delta_{r-\e}),
        \end{aligned}
    \end{equation}
    where in the last line we have used the fact $2r+\e \le 3(r-\e)$ for $r\ge 10\e$ and the doubling property of $\omega_\e^X$ in Proposition \ref{prop.hmBasics} \ref{item.doubling}.

    On the other hand, using Fubini's Theorem again, we have
    \begin{equation}
    \begin{aligned}
        \int_{\Delta_{r}} \overline{\omega}^X_\e(Q) d\sigma(Q) & = \int_{\Delta_{r}} \int_{\Delta_{r+\e}} \mathbbm{1}_{\{|Q'-Q|<\e\}} |\Delta_\e(Q')|^{-1} d\omega_\e^X(Q') d\sigma(Q) \\
        & \gtrsim \int_{\Delta_{r+\e}} \int_{\Delta_{r}} \mathbbm{1}_{\{|Q'-Q|<\e\}} d\sigma(Q) \e^{1-d} d\omega_\e^X(Q').
    \end{aligned}
    \end{equation}
    Observe that for $Q'\in \Delta_{r-\e}$,
    \begin{equation}
        \int_{\Delta_{r}} \mathbbm{1}_{\{|Q'-Q|<\e\}} d\sigma(Q) = \sigma(\Delta_\e(Q')) \simeq \e^{d-1}.
    \end{equation}
    It follows that
    \begin{equation}\label{est.doubling>}
        \int_{\Delta_{r}} \overline{\omega}^X_\e(Q) d\sigma(Q) \gtrsim \int_{\Delta_{r-\e}} d\omega_\e^X(Q') = \omega_\e^X(\Delta_{r-\e}).
    \end{equation}
    Combining \eqref{est.doubling<} and \eqref{est.doubling>}, we get \eqref{est.doubling}.
\end{proof}



\begin{theorem}\label{thm.B2}
    It holds $\overline{\omega}_\e^X \in B_2(d\sigma)$ uniformly in $\e$, in the sense that, any $r > 0, P\in \partial \Omega, X \in \Omega$ and $|X-P| > 5\max \{r, \e \}$,
    \begin{equation}\label{est.B2}
        \bigg( \fint_{\Delta_r(P)} \overline{\omega}_\e^X(Q)^2 d\sigma(Q) \bigg)^{1/2} \lesssim \fint_{\Delta_{r}(P)} \overline{\omega}_\e^X(Q) d\sigma(Q).
    \end{equation}
\end{theorem}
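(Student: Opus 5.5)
Our plan follows the classical Dahlberg-type argument for the $B_2$ property of harmonic measure, with the $L^2$ Rellich estimate replaced by its large-scale version, Proposition \ref{prop.LSRellich}, and the pointwise value of the Green function's normal derivative replaced by $\e$-averages.

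\textbf{Step 1: the case $r \le 10\e$.} As in the proof of Lemma \ref{lem.doubling}, the doubling property of $\omega_\e^X$ shows that $\overline{\omega}^X_\e$ is comparable to a constant on $\Delta_{2\e}(P)$. Hence \eqref{est.B2} is trivial in this case, and we may assume $r > 10\e$.

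\textbf{Step 2: an $\e$-average bound on $\overline{\omega}^X_\e$.} Fix $Q\in \Delta_r(P)$. By Proposition \ref{prop.hmBasics} \ref{item.w=G} and \ref{item.G-L2}, applied at scale $\e$,
\[
\overline{\omega}^X_\e(Q) \simeq \e^{-1} G_\e(X, A_\e(Q)) \simeq \e^{-1}\Big(\fint_{D_\e(Q)} |G_\e(X,\cdot)|^2\Big)^{1/2}.
\]
The function $v = G_\e(X,\cdot)$ solves $\cL_\e^* v = 0$ in $D_{4r}(P)$ and vanishes on $\Delta_{4r}(P)$; since $A$ is symmetric, $\cL_\e^* = \cL_\e$. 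Applying Poincar\'e's inequality in the vertical direction on $D_{C\e}(Q)$, using $v=0$ on the boundary, we get
\[
\overline{\omega}^X_\e(Q)^2 \lesssim \fint_{D_{C\e}(Q)}|\nabla v|^2 .
\]
Integrating over $Q\in \Delta_r(P)$ and applying Fubini, which works as in \eqref{est.doubling<}, gives
\[
\int_{\Delta_r(P)} (\overline{\omega}^X_\e)^2\, d\sigma \lesssim \frac{1}{\e}\int_{D_{2r}(P)\cap\Omega_{C\e}} |\nabla v|^2 .
\]

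\textbf{Step 3: large-scale Rellich and Caccioppoli.} Proposition \ref{prop.LSRellich} with $t = C\e$ (after slightly enlarging radii, and using $|X-P|>5r$ so that $X$ lies outside the region) bounds the right-hand side of Step 2 by $\frac{1}{r}\int_{D_{4r}(P)}|\nabla v|^2$. The boundary Caccioppoli inequality, combined with Proposition \ref{prop.hmBasics} \ref{item.G-L2} and the comparison principle or Harnack chain, bounds this in turn by
\[
r^{-3}\int_{D_{5r}}|v|^2 \lesssim r^{d-3}\, G_\e(X,A_r(P))^2 .
\]
We then convert back using Proposition \ref{prop.hmBasics} \ref{item.w=G}:
\[
r^{d-3}G_\e(X,A_r(P))^2 \simeq r^{1-d}\,\omega_\e^X(\Delta_r(P))^2 .
\]

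\textbf{Step 4: conclusion.} Dividing by $|\Delta_r|\simeq r^{d-1}$ yields
\[
\fint_{\Delta_r}(\overline{\omega}^X_\e)^2 \lesssim \big(r^{1-d}\omega^X_\e(\Delta_r)\big)^2 .
\]
Finally, \eqref{est.doubling>} together with doubling gives $\omega_\e^X(\Delta_r) \lesssim \omega^X_\e(\Delta_{r-\e}) \lesssim \int_{\Delta_r}\overline{\omega}^X_\e\,d\sigma$, which is \eqref{est.B2}.

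\textbf{Main obstacle.} We expect Step 2 to be the main difficulty: the pointwise normal derivative of $G_\e$ is not available for rough coefficients, so it must be replaced by the $\e$-scale Poincar\'e bound, and the geometry of $D_{C\e}(Q)$ must be handled carefully so that the union of these sets stays inside the boundary layer $\Omega_{C\e}\cap D_{2r}$ with bounded overlap. The constraint $|X-P|>5\max\{r,\e\}$ is what keeps the pole of $G_\e$ away from all regions where the local estimates are used.
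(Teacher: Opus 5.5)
Your proposal is correct and follows essentially the same route as the paper. The shared steps are: the $\e$-scale Poincar\'e bound $\overline{\omega}^X_\e(Q)^2 \lesssim \fint_{B_{C\e}(Q)\cap\Omega}|\nabla G_\e(X,\cdot)|^2$ obtained from Proposition~\ref{prop.hmBasics}, Fubini to pass to the layer $\Omega_{C\e}$, the localized large-scale Rellich estimate of Proposition~\ref{prop.LSRellich}, Caccioppoli, and then Proposition~\ref{prop.hmBasics} with doubling to return to $\omega^X_\e(\Delta_r)/\sigma(\Delta_r)$.

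The only thing to tidy is the bookkeeping of radii. When $|X-P|$ is just above $5r$ the pole can sit right next to $D_{5r}(P)$, so the final Caccioppoli and Green-function comparison should be run at a smaller scale $cr$ and then covered. The paper's own use of $D_{4r}$ has the same harmless looseness.
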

\begin{proof}
    The case $r<10\e$ follows by the same reasoning as Lemma \ref{lem.doubling}. It suffices to assume $r>10\e$. Let $\Delta_r = \Delta_r(P)$ and $B_r = B_r(P)$.

    By Proposition \ref{prop.hmBasics} \ref{item.w=G}, \ref{item.G-L2}  and \eqref{def.LSomega}, for each $Q\in \Delta_r$,
    \begin{equation}
        \overline{\omega}_\e^X(Q) \simeq \e^{-1} G(X, A_\e(Q)) \lesssim \bigg( \fint_{B_{2\e}(Q) \cap \Omega} |\nabla_Y G_\e(X, Y)|^2 dY \bigg)^{1/2},
    \end{equation}
    where we also used the Poincar\'{e} inequality. Thus, by Fubini's Theorem, Proposition \ref{prop.LSRellich}, Caccioppoli's inequality and Proposition \ref{prop.hmBasics} \ref{item.G-L2}, \ref{item.w=G}, in order, we have
    \begin{equation}
    \begin{aligned}
        \int_{\Delta_r} \overline{\omega}_\e^X(Q)^2 d\sigma(Q) & \lesssim \e^{-d} \int_{\Delta_r} \int_{B_{2\e(Q)} \cap \Omega} |\nabla_Y G_\e(X,Y)|^2 dY d\sigma \\
        & \lesssim \e^{-d} \int_{\Omega_{2\e} \cap B_{1.5r}} |\nabla G(X,Y)|^2 \int_{\Delta_r} \mathbbm{1}_{|Y-Q| < 2\e} d\sigma dY\\
        & \lesssim \e^{-1} \int_{\Omega_{2\e} \cap B_{1.5r}} |\nabla G_\e(X,Y)|^2 dY \\
        & \lesssim r^{-1} \int_{D_{3r}} |\nabla G_\e(X,Y)|^2 dY \\
        & \lesssim r^{-3} \int_{D_{4r}} |G_\e(X, Y)|^2 dY \\
        & \lesssim r^{d-3} |G_\e(X, A_r(P))|^2 \\
        & \simeq r^{1-d} \omega_\e^X(\Delta_r)^2.
    \end{aligned}
    \end{equation}
    Dividing both sides by $r^{d-1}$ and taking square root, we obtain
    \begin{equation}
        \bigg( \fint_{\Delta_r} \overline{\omega}_\e^X(Q)^2 d\sigma(Q) \bigg)^{1/2} \lesssim \frac{\omega_\e^X(\Delta_r)}{\sigma(\Delta_r)} \lesssim  \frac{\omega_\e^X(\Delta_{r/2})}{\sigma(\Delta_r)} \lesssim \fint_{\Delta_{r}} \overline{\omega}_\e^X(Q) d\sigma(Q),
    \end{equation}
    where we also used the doubling property of $\omega^X_\e$ in Proposition \ref{prop.hmBasics} \ref{item.doubling}, and Fubini's Theorem in the last inequality.
\end{proof}

The estimate \eqref{est.B2} is a reverse H\"{o}lder inequality valid over all small scales $r>0$. The well-known  self-improving property of the reverse H\"{o}lder inequality implies the following.
\begin{corollary}\label{coro.2+delta}
    There exists $\delta>0$, depending on $d$, $\Lambda$ and $\Omega$, such that $\overline{\omega}_\e^X \in B_{2+\delta}(d\sigma)$ in the sense that for any $r>0, P\in \partial \Omega, X\in \Omega$ and $|X - P| > 5\max \{r, \e\}$,
    \begin{equation}
        \bigg( \fint_{\Delta_r(P)} \overline{\omega}_\e^X(Q)^{2+\delta} d\sigma(Q) \bigg)^{1/(2+\delta)} \lesssim \fint_{\Delta_{r}(P)} \overline{\omega}_\e^X(Q) d\sigma(Q).
    \end{equation}
\end{corollary}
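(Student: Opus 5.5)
The plan is to deduce the corollary from Theorem \ref{thm.B2} and Lemma \ref{lem.doubling} through Gehring's lemma, in its version for doubling metric measure spaces. Fix $X\in\Omega$ and put $w=\overline{\omega}_\e^X$. For a fixed surface ball $\Delta_{r}(P)$ with $|X-P|>5\max\{r,\e\}$, I would apply the self-improvement on the smaller ball $\Delta_{r}(P)$ rather than the enlarged ball. For every sub-ball $\Delta_s(Q)\subset\Delta_{2r}(P)$ with $Q\in\Delta_r(P)$ and $s\le r$, we have $|X-Q|\ge |X-P|-r>4r$. This is slightly below the threshold $5\max\{s,\e\}$ when $s$ is close to $r$, so I would first shrink the admissible scales. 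Concretely, I would work in $\Delta_{r/2}(P)$ and restrict to sub-balls of radius $s\le r/4$, centered in $\Delta_{r/2}(P)$. These satisfy $|X-Q|>4.5r>5\max\{s,\e\}$, at least when $r\ge\e$. The case $r<10\e$ is trivial: as in the proof of Lemma \ref{lem.doubling}, $w$ is comparable to a constant on $\Delta_r(P)$.

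On this family of sub-balls, Theorem \ref{thm.B2} gives
\[
\Big(\fint_{\Delta_s(Q)} w^2\,d\sigma\Big)^{1/2}\lesssim \fint_{\Delta_s(Q)} w\,d\sigma,
\]
uniformly in $\e$. The surface measure $\sigma$ on the Lipschitz boundary is Ahlfors regular, and hence doubling. The local Gehring lemma on a space of homogeneous type then yields $\delta>0$, depending only on the reverse H\"older constant and the doubling constant of $\sigma$ (hence on $d,\Lambda,\Omega$), such that
\[
\Big(\fint_{\Delta_{r/4}(P)} w^{2+\delta}\Big)^{1/(2+\delta)}\lesssim \Big(\fint_{\Delta_{r/2}(P)} w^2\Big)^{1/2}.
\]
I would prove this either by the Calder\'on--Zygmund stopping-time argument on dyadic cubes of $\partial\Omega$ (Christ cubes, or projected dyadic cubes in the graph case), or by quoting the standard result. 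Applying Theorem \ref{thm.B2} to $\Delta_{r/2}(P)$ bounds the right-hand side by $\fint_{\Delta_{r/2}}w$.

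The next step passes back to radius $r$. A covering argument turns the estimate on $\Delta_{r/4}$ into one on $\Delta_r(P)$. I would cover $\Delta_r(P)$ by boundedly many balls $\Delta_{r/4}(Q_j)$ with $Q_j\in\Delta_r(P)$. Each $Q_j$ still satisfies $|X-Q_j|>4r$, which suffices to rerun the argument at radius $r/4$. Each resulting average of $w$ over $\Delta_{r/2}(Q_j)\subset\Delta_{2r}(P)$ is controlled by $\fint_{\Delta_r(P)}w$, using Lemma \ref{lem.doubling} applied at $P$ with radius $r$; its hypothesis $|X-P|>5\max\{r,\e\}$ holds.

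The main obstacle I anticipate is exactly this bookkeeping: the constraint $|X-P|>5\max\{r,\e\}$ is not stable under enlarging balls. One must also check that the $\e$-threshold does not spoil the uniformity of $\delta$. Gehring's lemma needs the reverse H\"older inequality at \emph{all} small scales, including $s<\e$. This is precisely why Theorem \ref{thm.B2} is stated for every $r>0$: for $s<10\e$, $w$ is essentially constant on $\Delta_s$. Since $\delta$ and the constants depend only on the B$_2$ and doubling constants, both uniform in $\e$ and $X$, the conclusion is uniform.
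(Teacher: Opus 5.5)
Your proposal is correct and is essentially the paper's argument. The paper deduces the corollary in one line from Theorem \ref{thm.B2}, which holds at every scale $r>0$, together with the well-known self-improving (Gehring) property of reverse H\"older inequalities on the doubling space $(\partial\Omega,\sigma)$. Your restriction to sub-balls of radius at most $r/4$ centered in $\Delta_{r/2}(P)$, followed by the covering of $\Delta_r(P)$ and Lemma \ref{lem.doubling}, just makes explicit the bookkeeping for the condition $|X-P|>5\max\{r,\e\}$ that the paper leaves implicit. One small correction: the bound $4.5r>5\e$ needs $r>10\e/9$, not merely $r\ge\e$, but this is harmless because you only use it for $r\ge 10\e$.
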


\subsection{Dirichlet problem}

In this subsection, we consider the Dirichlet problem \eqref{Dp} and prove Theorem \ref{thm.Dp} for Lipschitz domains. The following is the key theorem that explains how the large-scale $\cL_\e$-harmonic measure is involved.

\begin{theorem}\label{thm.Neve<f}
    Let $f\in C(\partial \Omega)$ and
    \begin{equation}\label{def.ve}
        v_\e(X) = \int_{\partial \Omega} f(Q) \overline{\omega}^X_\e(Q) d\sigma(Q).
    \end{equation}
    Then $v_\e$ is a solution of
    \begin{equation}\label{eq.ve=barf}
    \left\{
    \begin{aligned}
        & \cL_\e(v_\e) = 0 \quad \text{in } \Omega, \\
        & v_\e(Q) = \bar{f}(Q) := \int_{\Delta_\e(Q)} |\Delta_\e|^{-1} f d\sigma \quad \text{on } \partial \Omega,
    \end{aligned}
    \right.
\end{equation}
where $|\Delta_\e| = |\Delta_\e(P)| \simeq \e^{d-1}$.
    Moreover, for any $p\in (2-\delta, \infty)$,
    \begin{equation}\label{est.Neve}
        \| N_\e(v_\e) \|_{L^p(\partial \Omega)} \lesssim \| f \|_{L^p(\partial \Omega)}.
    \end{equation}
\end{theorem}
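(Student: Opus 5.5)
Two things need proof: that $v_\e$ solves \eqref{eq.ve=barf}, and the bound \eqref{est.Neve}.

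\textbf{Step 1: $v_\e$ solves \eqref{eq.ve=barf}.} We unfold the definition \eqref{def.LSomega} and apply Fubini:
\[
v_\e(X)=\int_{\partial\Omega}\int_{\partial\Omega} f(Q)\,\mathbbm{1}_{\{|Q-Q'|<\e\}}\,|\Delta_\e|^{-1}\,d\omega^X_\e(Q')\,d\sigma(Q)=\int_{\partial\Omega}\bar f(Q')\,d\omega^X_\e(Q').
\]
This uses $\sigma(\Delta_\e(Q))=\sigma(\Delta_\e(Q'))$ (interpreted as the normalizing constant $|\Delta_\e|$, as the statement does). Since $f$ is continuous, $\bar f$ is continuous on $\partial\Omega$. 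By \eqref{eq.hm}, $v_\e$ is then the $\cL_\e$-harmonic function with boundary data $\bar f$.

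\textbf{Step 2: reduction to a maximal function bound.} It suffices to prove the pointwise estimate
\[
N_\e(v_\e)(P)\lesssim \big(M_\sigma(|f|^{q})(P)\big)^{1/q}, \qquad q=(2+\delta)'<2 ,
\]
where $M_\sigma$ is the Hardy--Littlewood maximal operator on $\partial\Omega$ and $\delta$ is from Corollary \ref{coro.2+delta}. For $p>q$ this gives \eqref{est.Neve} by the $L^{p/q}$ boundedness of $M_\sigma$. Since $q=(2+\delta)'=2-\delta/(1+\delta)$, renaming $\delta$ yields the range $2-\delta<p\le\infty$; the case $p=\infty$ is immediate from the maximum principle.

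\textbf{Step 3: the pointwise bound.} Fix $X\in\Gamma(P)$ with $\delta(X)\ge10\e$ and set $r\simeq\delta(X)$, so $r\gtrsim\e$.

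On the local piece $\Delta_{cr}(P)$, with $c$ small so that $|X-P'|>5\max\{cr,\e\}$ for nearby centers, we use Hölder's inequality and Corollary \ref{coro.2+delta}:
\[
\int_{\Delta_{cr}}|f|\,\overline{\omega}^X_\e\,d\sigma \le |\Delta_{cr}|\Big(\fint|f|^q\Big)^{1/q}\Big(\fint(\overline{\omega}^X_\e)^{2+\delta}\Big)^{1/(2+\delta)} \lesssim \Big(\fint_{\Delta_{cr}}|f|^q\Big)^{1/q}\int_{\Delta_{cr}}\overline{\omega}^X_\e\,d\sigma .
\]
The last integral is $\lesssim\omega^X_\e(\Delta_{cr+\e})\le1$, by the Fubini computation in Lemma \ref{lem.doubling}.

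For the far part, we decompose into dyadic annuli $R_j=\Delta_{2^{j}cr}\setminus\Delta_{2^{j-1}cr}$. Each $R_j$ is covered by boundedly many surface balls $\Delta_{s}(P_k)$ with $s\simeq 2^j r$, chosen small relative to $|X-P_k|\gtrsim 2^jr$ so that Corollary \ref{coro.2+delta} applies. This requires $X$ to stay away from $\partial\Omega$ relative to $s$, which holds since $\delta(X)\simeq r\gtrsim\e$. The same Hölder argument gives
\[
\int_{R_j}|f|\,\overline\omega^X_\e \lesssim \Big(\fint_{\Delta_{2^{j}cr}}|f|^q\Big)^{1/q}\,\omega^X_\e\big(\Delta_{C2^jr}(P)\setminus\Delta_{c'2^jr}(P)\big).
\]

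\textbf{Step 4: summation, the main obstacle.} We need $\omega^X_\e$ of the $j$-th annulus to decay like $2^{-j\alpha}$, so that the sum of the Step 3 bounds is $\lesssim (M_\sigma|f|^q(P))^{1/q}$. I would obtain this from Proposition \ref{prop.hmBasics}\ref{item.w=G} and \ref{item.G.Holder}. Writing $\omega^X_\e(\Delta_s(P_k))\simeq s^{d-2}G_\e(A_s(P_k),X)$ by symmetry of $G_\e$, the Hölder decay gives $G_\e\lesssim r^\alpha(2^jr)^{2-d-\alpha}$ because $\delta(X)\simeq r<\tfrac12|X-A_s(P_k)|$. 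Hence the total is $\lesssim 2^{-j\alpha}$. Alternatively, the trivial bound $\omega\le1$ combined with the decay estimate can be used.

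The delicate bookkeeping is ensuring that the reverse Hölder hypothesis $|X-P_k|>5\max\{s,\e\}$ holds for every covering ball. I would handle this by choosing $s=c\,2^jr$ with $c$ small and absorbing overlaps via Lemma \ref{lem.doubling}. The $\e$-smearing near the annulus edges is harmless because $r\ge10\e$.
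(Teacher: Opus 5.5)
Your proposal is correct and follows essentially the same route as the paper. The ingredients match: the Fubini identity $v_\e=\int_{\partial\Omega}\bar f\,d\omega^X_\e$, a dyadic decomposition of $\partial\Omega$ around $P$ at scale $\simeq\delta(X)$, H\"older's inequality combined with the $B_{2+\delta}$ property of Corollary \ref{coro.2+delta} on each piece, the decay $\omega^X_\e(R_j)\lesssim 2^{-j\alpha}$ from Proposition \ref{prop.hmBasics} \ref{item.w=G}, \ref{item.G.Holder}, and finally the $L^{p/q}$ boundedness of the boundary Hardy--Littlewood maximal function. Your extra bookkeeping on the covering radii and on the $\e$-smearing of the annuli, which the paper passes over, is appropriate and harmless since $\delta(X)\ge 10\e$.
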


\begin{proof}
    By Fubini's Theorem,
    \begin{equation}
    \begin{aligned}
        v_\e(X) & = \int_{\partial \Omega} \int_{\partial \Omega} |\Delta_\e(Q)|^{-1} f(Q) 1_{\Delta_\e(Q)}(P) d\omega^X_\e(P) d\sigma(Q) \\ & = \int_{\partial \Omega} \int_{\partial \Omega} |\Delta_\e(Q)|^{-1} f(Q) 1_{\Delta_\e(P)}(Q) d\sigma(Q) d\omega_\e^X(P) \\
        & = \int_{\partial \Omega} \int_{\Delta_\e(P)} |\Delta_\e(Q)|^{-1} f(Q) d\sigma(Q) d\omega_\e^X(P)\\
        & = \int_{\partial \Omega} \bar{f} d\omega_\e^X.
    \end{aligned}
    \end{equation}
    This proves \eqref{eq.ve=barf}.

    Next, we prove \eqref{est.Neve}. Let $X\in \Gamma(Q)$ and $\dist(X,\partial \Omega) > 10\e$. Let $r_0 = c|X - Q|$ and $r_j = 2^j r_0$. Let $R_0 = \Delta_{r_0}(Q)$, $R_j = \Delta_{r_j}(Q) \setminus \Delta_{r_{j-1}}(Q)$ for $j = 1,2, \cdots, L$, where $L$ is the minimal integer such that $2^L r_0 \ge c\diam(\Omega)$,
    and $R_{L+1} = \partial\Omega \setminus \Delta_{r_L}$. Then
    \begin{equation}
        \partial \Omega = \bigcup_{j = 0}^{L+1} R_j.
    \end{equation}
    Observe that for every $j = 0,1,\cdots, L+1$, we have $\dist(X, R_j) \simeq 2^j r_0$. Thus, Corollary \ref{coro.2+delta} and a covering argument yields
    \begin{equation}
        \bigg( \fint_{R_j} \overline{\omega}_\e^X(Q)^{2+\delta} d\sigma(Q) \bigg)^{1/(2+\delta)} \lesssim \fint_{R_j} \overline{\omega}_\e^X(Q) d\sigma(Q).
    \end{equation}

    Now, we bound \eqref{def.ve} by
    \begin{equation}\label{est.BdryDecomp}
    \begin{aligned}
    |v_\e(X)| & \lesssim \sum_{j = 0}^{L+1} \int_{R_j} |f(P')| \overline{\omega}_\e^X(P') d\sigma(P')\\
    & \le \sum_{j = 0}^{L+1} \bigg(\int_{R_j} |f(P')|^{p'_0} d\sigma(P') \bigg)^{1/p'_0} \bigg( \int_{R_j} \overline{\omega}_\e^X(P')^{p_0} d\sigma(P') \bigg)^{1/p_0} \\
    & \lesssim \sum_{j = 0}^{L+1} \bigg(\int_{R_j} |f(P')|^{p'_0} d\sigma(P') \bigg)^{1/p'_0} \int_{R_j} \overline{\omega}_\e^X(P') d\sigma(P') |R_j|^{-1/p^\prime_0}, 
    \end{aligned}
\end{equation}
where $p_0=2+\delta$.
By the definition of $\overline{\omega}_\e^X$ and Proposition \ref{prop.hmBasics} \ref{item.w=G} \ref{item.G.Holder},
\begin{equation}
    \int_{R_j} \overline{\omega}_\e^X(P') d\sigma(P') \lesssim \int_{R_j} d \omega_\e^X = \omega_\e^X(R_j) \simeq 2^{-j\alpha}.
\end{equation}
It follows from \eqref{est.BdryDecomp} that
\begin{equation}
\begin{aligned}
    |v_\e(X)| & \lesssim \sum_{j = 0}^{L+1} \bigg(\fint_{R_j} |f(P')|^{p'_0} d\sigma(P') \bigg)^{1/p'_0} 2^{-j \alpha} \\ 
    & \lesssim \big\{ \M_{\partial \Omega}(|f|^{p'_0})(Q) \big\}^{1/{p'_0}},
\end{aligned}
\end{equation}
where $\M_{\partial \Omega}(F)$ denotes the Hardy-Littlewood maximal function on $\partial \Omega$.

Taking supremum over all $X\in \Gamma(Q)$ with $\dist(X,\partial \Omega) > 10\e$, we arrive at
\begin{equation}
    N_\e(v_\e)(Q) \lesssim \big\{ \M_{\partial \Omega}(|f|^{p'_0})(Q)\big\}^{1/{p'_0}}.
\end{equation}
Now, for any $p> p'_0$, by the $L^q$ boundedness of Hardy-Littlewood maximal function for any $q>1$, we have
\begin{equation}
    \| N_\e(v_\e) \|_{L^p(\partial \Omega)} \lesssim \bigg( \int_{\partial \Omega} \big\{ \M_{\partial \Omega}(|f|^{p'_0})(Q)\big\}^{p/{p'_0}} d\sigma(Q) \bigg)^{1/p} \lesssim \| f \|_{L^p(\partial \Omega)}.
\end{equation}
Note that $p = \infty$ is trivial from the maximal principle.
This ends the proof.
\end{proof}

\begin{remark}
    The equations \eqref{def.ve} and \eqref{eq.ve=barf}  can be written as
\begin{equation}\label{eq.bar.dual}
    \int_{\partial \Omega} f(Q) \overline{\omega}^X_\e(Q) d\sigma(Q) = \int_{\partial \Omega} \bar{f}(Q) d\omega_\e^X(Q).
\end{equation}
\end{remark}

\begin{remark}\label{rmk.Bp-Dp'}
    The proof of Theorem \ref{thm.Neve<f} gives the following result. If $\overline{\omega}_\e^X \in B_p(d\sigma)$ for some $p>1$, namely, for $X\in \Omega\setminus B_{10r}(P)$,
    \begin{equation}
        \bigg( \fint_{\Delta_r(P)} \overline{\omega}_\e^X(Q)^p d\sigma(Q) \bigg)^{1/p} \lesssim \fint_{\Delta_r(P)} \overline{\omega}_\e^X(Q) d\sigma(Q),
    \end{equation}
    then $v_\e$ given by \eqref{def.ve} satisfies
    \begin{equation}
            \| N_\e(v_\e) \|_{L^{p'}(\partial \Omega)} \lesssim \| f \|_{L^{p'}(\partial \Omega)}.
    \end{equation}
    This large-scale estimate can be compared to the classical result for $\cL_\e$-harmonic functions; see e.g. \cite{D77,D79,FKP91}.
\end{remark}



\begin{proof}[\textbf{Proof of Theorem \ref{thm.Dp}}]
We use the representation formula by $\cL_\e$-harmonic measure,
\begin{equation}
    u_\e(X) = \int_{\partial \Omega} f(P) d\omega_\e^X(P).
\end{equation}
Observe that by the definition of $S_\e(f)$, we have $|f(P)| \le S_\e(f)(Q)$ for any $Q\in \Delta_\e(P)$. Since $|\Delta_\e(P)| \simeq |\Delta_\e(Q)|$, we have
$|f(P)| \lesssim \overline{S_\e(f)}(P)$.
Consequently, in view of \eqref{eq.bar.dual},
\begin{equation}
    |u_\e(X)| \lesssim \int_{\partial \Omega} \overline{S_\e(f)} d\omega_\e^X = \int_{\partial \Omega} S_\e(f) \overline{\omega}_\e^X d\sigma.
\end{equation}
Then \eqref{est.Dp} follows from this and \eqref{est.Neve} in Theorem \ref{thm.Neve<f}.
\end{proof}

Note that if $\Omega$ is a Lipschitz domain and $\e$ is small enough (depending on $\Omega$), then $\Omega^\e= \Omega \setminus \overline{\Omega_\e}$ is also a Lipschitz domain.
 Applying Theorem \ref{thm.Dp} to the subdomain $\Omega^\e $, we obtain
\begin{equation}\label{est.N<S}
    \| N_\e(u_\e) \|_{L^p(\partial \Omega)} \lesssim \| S_\e(u_\e) \|_{L^p(\partial \Omega^\e)}.
\end{equation}
Here, we have also used an observation that the large-scale nontangential maximal function on $\partial \Omega$ can be bounded by that on $\partial \Omega^\e$ with slight modifications (enlarge $\beta$ in \eqref{def.NTcone} and replace $10\e$ by $8\e$ in \eqref{def.N_e}) on the definition for the latter.
We find a bi-Lipschitz map $\tau: \partial \Omega \mapsto \partial \Omega^\e$ such that $Q^\e = \tau(Q)$ satisfies $B_{2\e}(Q_\e) \subset B_{4\e}(Q)$.
Hence, by the interior $L^\infty$ estimate, we have for $P \in \partial \Omega^\e$
\begin{equation}
    S_\e(u_\e)(Q^\e) = \sup_{ \substack{P\in \partial \Omega^\e \\ |P-Q^\e|<\e }} |u_\e(P)|  \lesssim \fint_{B_{2\e}(Q^\e) \cap \Omega} |u_\e| \lesssim \fint_{B_{4\e}(Q) \cap \Omega} |u_\e| = M_{4\e}(u_\e)(Q).
\end{equation}
It follows that $\| S_\e(u_\e) \|_{L^p(\partial \Omega^\e)} \lesssim \| M_{4\e}(u_\e) \|_{L^p(\partial \Omega)}$, which together with \eqref{est.N<S} leads to
\begin{equation}\label{N-M}
    \| N_\e (u_\e)\|_{L^p(\partial\Omega)}
    \lesssim \| M_{4\e} (u_\e)\|_{L^p(\partial\Omega)}.
\end{equation}
This is an alternative version of the large-scale $(D)_p$ estimates.


\section{Regularity and Neumann problems}
\label{sec.RpNp}

Apart from the $L^2$ Rellich estimate, for the regularity and Neumann problems, we also need the difference operator, which relies essentially on the periodicity of the coefficient matrix $A^\e$ in the $x_d$-direction as in \eqref{eq.periodicInxd} in graph domains (after localization). 

\subsection{Difference operator}

Let $u_\e$ be a weak solution of $-\nabla\cdot A^\e \nabla u_\e = 0$ in $T_{10r} = T_{10r}(O)$ with $r>10\e$. Assume that $A^\e$ satisfies \eqref{eq.periodicInxd}. We will estimate $\widetilde{N}_\e^r(u_\e)(Q)$ for $Q \in I_{r}$.

For $X = (x',x_d) \in \Omega$, define the difference operator by
\begin{equation}\label{def.Qe}
    \Q_\e(u)(X) = \frac{ u(X + \e e_d) - u(X)}{\e} = \frac{1}{\e} \int_{x_d}^{x_d + \e} \frac{\partial u}{\partial x_d}(x',t) dt.
\end{equation}
The key observation for the difference operator $\Q_\e$ is that if $u_\e$ is a solution of $-\nabla\cdot A^\e \nabla u_\e = 0$ with $A^\e$ satisfying \eqref{eq.periodicInxd}, then $\Q_\e(u_\e)$ is also a solution in the domain where $\Q_\e(u_\e)$ is defined. The following lemma reduces the local estimate of large-scale nontangential maximal functions to the difference estimate and a boundary layer term.

\begin{lemma}\label{lem.NT.ptws}
    Let $\e \lesssim r$ and $I_r = I_r(O)$ with $O \in \partial \Omega$. Let $u_\e$ be a solution of $-\nabla\cdot A^\e \nabla u_\e = 0$ in $T_{10r}$.
    Then for any $Q = (z',\phi(z')) \in I_r$,
    \begin{equation}\label{est.NeDue.local}
    \begin{aligned}
        \widetilde{N}_\e^r(\nabla u_\e)(Q) &\lesssim \M_{I_{3r}} (\widetilde{N}_\e^{3r}(\Q_\e(u_\e)))(Q)  + \M_{I_{3r}} (V_\e)(Q),
    \end{aligned}
    \end{equation}
    where
\begin{equation}\label{def.VeP}
    V_\e(P) = \fint_{0}^{11\e} |\nabla u_\e(y',\phi(y') + t)| dt, \ \text{with } P = (y',\phi(y')),
\end{equation}
    and
    \begin{equation}\label{def.MI3r}
        \M_{I_{3r}}f(Q) = \sup \bigg\{ \fint_{I'} |f| d\sigma: Q\in I' \subset I_{3r}\bigg\}.
    \end{equation}
\end{lemma}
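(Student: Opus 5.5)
The plan is to fix $Q=(z',\phi(z'))\in I_r$ and a point $Y\in\Gamma(Q)\cap(\Omega_r\setminus\Omega_{10\e})$, write $\rho=\delta(Y)\ge 10\e$, and bound $\big(\fint_{B(Y,\rho/2)}|\nabla u_\e|^2\big)^{1/2}$ by the right-hand side of \eqref{est.NeDue.local}. Since $u_\e-c$ solves the equation for every constant $c$, interior Caccioppoli combined with the $L^2$–$L^1$ reverse Hölder (De Giorgi–Nash/Moser) self-improvement for solutions gives
\begin{equation*}
\bigg(\fint_{B(Y,\rho/2)}|\nabla u_\e|^2\bigg)^{1/2}\lesssim \frac{1}{\rho}\fint_{B(Y,3\rho/4)}|u_\e-c|.
\end{equation*}
Working in $L^1$ here is deliberate: the conclusion involves the $L^1$ maximal operator $\M_{I_{3r}}$, so $L^1$ averages must appear at the end.

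The key step is to write $u_\e$ inside the ball through vertical telescoping in the periodic direction. For $X=(x',x_d)\in B(Y,3\rho/4)$, choose $t\in[0,\e)$ with $x_d-\phi(x')-t=k\e$, $k\in\N$. Then
\begin{equation*}
u_\e(x',x_d)=u_\e(x',\phi(x')+t)+\e\sum_{j=0}^{k-1}\Q_\e(u_\e)(x',\phi(x')+t+j\e),
\end{equation*}
with $k\e\lesssim\rho$. I would take $c$ to be the average of $u_\e(x',\phi(x')+s)$ over $|x'-y'|\lesssim\rho$ and $s\in(0,\e)$, and then average the identity in $X$. This produces three contributions.

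\emph{(a) Base oscillation.} By Poincaré in $x'$ on the thin boundary layer, $\fint|u_\e(x',\phi(x')+t)-c|\lesssim \rho\fint_{I_{C\rho}(Q)}V_\e\,d\sigma$. Indeed, the gradient of $x'\mapsto u_\e(x',\phi(x')+t)$ is bounded by $C|\nabla u_\e|$, and the averaging in $t$ is absorbed into $V_\e$.

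\emph{(b) Bottom terms.} The terms with $\phi+t+j\e<10\e$ sum to at most $\int_0^{11\e}|\partial_d u_\e|\le 11\e\, V_\e$. After averaging and using $\e\le\rho$, this is again bounded by $\rho\fint_{I_{C\rho}}V_\e$.

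\emph{(c) Upper terms.} The terms at heights $h\in[10\e, C\rho]$ need the most work. Averaging over $x'$ and $t$, each term becomes an $L^1$ average of $|\Q_\e(u_\e)|$ over a slab $\{|x'-y'|\lesssim\rho,\ \phi(x')+h\le x_d<\phi(x')+h+\e\}$. Cover this slab by balls $B(Z,\delta(Z)/2)$ with $\delta(Z)\simeq h$, each of which lies in the cones $\Gamma(P)$ of all $P$ in a surface ball of radius $\simeq h$ inside $I_{C\rho}(Q)$. Since these balls are much thicker than the slab, this shows the slab average is at most $\fint_{I_{C\rho}(Q)}\widetilde N^{3r}_\e(\Q_\e(u_\e))\,d\sigma$. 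The $\lesssim\rho/\e$ terms each carry a factor $\e$, so their sum is bounded by $\rho\fint_{I_{C\rho}(Q)}\widetilde N_\e^{3r}(\Q_\e(u_\e))$.

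Dividing by $\rho$ and noting that $I_{C\rho}(Q)\subset I_{3r}$ contains $Q$ (because $\rho\le r$ and the aperture and $M$ can be adjusted), every average is dominated by $\M_{I_{3r}}$ evaluated at $Q$. Taking the supremum over $Y$ then gives \eqref{est.NeDue.local}.

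I expect step (c) to be the main obstacle. It requires careful cone geometry: the heights must stay in $[10\e,3r]$, and every ball used in the covering must lie in $\Gamma(P)$ for a set of $P$ of measure $\simeq h^{d-1}$ inside $I_{3r}$. It also requires care in passing from $\Q_\e$ values on thin slabs to the $L^2$ ball averages defining $\widetilde N_\e$; this passage works only because Hölder's inequality runs in the favorable direction, giving $L^1\le L^2$ on each ball.
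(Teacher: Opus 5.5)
Your architecture matches the paper's: Caccioppoli plus the $L^2$--$L^1$ reverse H\"older step, vertical telescoping by $\Q_\e$, the bottom layer absorbed into $V_\e$, Poincar\'e for the base term, then $\M_{I_{3r}}$. Steps (a) and (b) are fine. In (a), since $t=t(X)$, you need the easy remark that the distribution of $t$ induced by averaging over the ball is at most a constant times the uniform one; this uses $\rho\ge 10\e$.

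The gap is in (c), at exactly the step that decides the lemma. You claim each slab average of $|\Q_\e(u_\e)|$ is $\lesssim\fint_{I_{C\rho}}\widetilde N^{3r}_\e(\Q_\e(u_\e))$, because the covering balls are thicker than the slab and $L^1\le L^2$ on each ball. That inference runs the wrong way. H\"older controls the $L^1$ average over the whole ball, not over a piece $B\cap S$ of relative volume $\simeq \e/h$. For a general $L^2$ function one only has
\[
\fint_{B\cap S}|F|\le\Big(\frac{|B|}{|B\cap S|}\Big)^{1/2}\Big(\fint_B|F|^2\Big)^{1/2}\simeq\Big(\frac{h}{\e}\Big)^{1/2}\Big(\fint_B|F|^2\Big)^{1/2}.
\]
The loss is real. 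For $F=\mathbbm{1}_S$, every admissible ball $B(W,\delta(W)/2)$ meeting $S$ has radius $\simeq h$, so $\widetilde N_\e(F)\lesssim(\e/h)^{1/2}$, while the slab average is $1$. Summing your per-slab bounds $\e\,(h_j/\e)^{1/2}$ over $h_j=j\e\lesssim\rho$ gives $\rho(\rho/\e)^{1/2}$ instead of $\rho$. So step (c) as written is not uniform in $\e$; the thickness of the balls is precisely what breaks it.

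There are two ways to repair it.

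\emph{The paper's route.} By \eqref{eq.periodicInxd}, $u_\e(\cdot+\e e_d)$ solves the same equation, so $\Q_\e(u_\e)$ is itself a solution where defined. De Giorgi--Nash--Moser then gives the pointwise bound $|\Q_\e(u_\e)(Z)|\lesssim(\fint_{B(Z,\delta(Z)/2)}|\Q_\e(u_\e)|^2)^{1/2}$. Hence the supremum of $|\Q_\e(u_\e)|$ along the vertical segment above $(y',\phi(y'))$, between heights $10\e$ and $3s$, is $\lesssim \widetilde N^{3r}_\e(\Q_\e(u_\e))(y',\phi(y'))$, up to the usual adjustment of constants. Averaging over the ball then gives $s\fint_{I_s}\widetilde N^{3r}_\e(\Q_\e(u_\e))$ directly.

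\emph{A route that avoids the solution property.} You must sum over slabs before applying H\"older.
\begin{enumerate}
\item The shifted balls $B-i\e e_d$ overlap at most $\lesssim\rho/\e$ times. So the averaged upper sum is $\lesssim\rho^{1-d}\int_U|\Q_\e(u_\e)|$, where $U=\{|x'-y'|<3\rho/4,\ 10\e\le x_d-\phi(x')\lesssim\rho\}$.
\item Decompose $U$ into Whitney layers $\delta\simeq 2^k\e$, and cover each layer by $\lesssim(\rho/2^k\e)^{d-1}$ balls $B(W,\delta(W)/2)$.
\item Apply H\"older on these whole balls to get $\int_U|\Q_\e(u_\e)|\lesssim\sum_k 2^k\e\int_{I_{C\rho}}\widetilde N^{3r}_\e(\Q_\e(u_\e))\lesssim\rho\int_{I_{C\rho}}\widetilde N^{3r}_\e(\Q_\e(u_\e))$, which is exactly what you need.
\end{enumerate}
Either way, the per-slab argument has to be replaced.
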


\begin{proof}
Fix $Q_0 = (z',\phi(z')) \in I_r$ and $X \in \Gamma(Q_0) \cap (\Omega_{r} \setminus \Omega_{10\e})$. Let $s:= \delta(X) \le r$. 
By the interior Caccioppoli inequality,
\begin{equation}\label{est.Due2ue-E}
\begin{aligned}
    \bigg( \fint_{B(X,s/2)} |\nabla u_\e|^2 \bigg)^{1/2} & \lesssim \frac{1}{s} \bigg( \fint_{B(X,3s/5)} | u_\e - E|^2 \bigg)^{1/2} \\
    & \lesssim \frac{1}{s} \fint_{B(X,4s/5)} |u_\e - E|,
    \end{aligned}
\end{equation}
where $E$ is an arbitrary constant to be specified later.

Let $Y = (y',y_d) \in B(X, 4s/5)$.
Note that 
\begin{equation}
\begin{aligned}
    & |u_\e(y',y_d) - E| \\
    & \le |u_\e(y',y_d) - u_\e(y',y_d-\e)| +  |u_\e(y',y_d-\e) - u_\e(y',y_d-2\e)| \\
    & \qquad + \cdots + |u_\e(y',y_d-m\e) - u_\e(y',\phi(y') + h)| + |u_\e(y',\phi(y') + h) - E|,
    \end{aligned}
\end{equation}
where $m$ is an integer such that $y_d-m\e > \phi(y')+10\e \ge y_d-(m+1)\e $, and $h \in [0,\e]$ is a free parameter. On both sides taking average over $h\in [0,\e]$, we have
\begin{equation}\label{est.ue-E}
\begin{aligned}
     |u_\e(y',y_d) - E| & \le \sum_{j= 1}^{m-1} \e |\Q_\e(u_\e)(y',y_d - j \e)| \\
     & \qquad + \frac{1}{\e} \int_0^\e |u_\e(y',y_d-m\e) - u_\e(y',\phi(y') + h)|dh \\
     & \qquad + \frac{1}{\e} \int_0^\e |u_\e(y',\phi(y') + h) - E|dh \\
    & \le (m-1) \e \sup_{\phi(y')+10\e < t <\phi(y') + 3s} |\Q_\e(u_\e)(y',t)| \\
    & \qquad + \int_{\phi(y')}^{\phi(y') + 11\e} |\nabla u_\e(y',t)| dt \\
    & \qquad + \frac{1}{\e} \int_0^\e |u_\e(y',\phi(y') + h) - E| dh.
\end{aligned}
\end{equation}
Note that $m\e \simeq s$. We will write $u_\e(y',\phi(y') + h) = u_\e(Q + he_d)$ for $Q = (y', \phi(y'))$.

Integrating \eqref{est.ue-E} over $Y \in B(X,4s/5)$, we get
\begin{equation}\label{est.ue-E2}
\begin{aligned}
    & \fint_{B(X,4s/5)} |u_\e - E| \\
    & \lesssim s \fint_{I_s(x',\phi(x'))} \widetilde{N}_\e^{3r}(\Q_\e(u_\e)) d\sigma + \e \fint_{I_s(x',\phi(x'))} V_\e d\sigma \\
    & \qquad +  \frac{1}{\e} \int_0^\e \fint_{I_s(x',\phi(x'))} |u_\e(Q + he_d) - E| d\sigma(Q) dh,
    \end{aligned}
\end{equation}
where $V_\e$ is given by \eqref{def.VeP} and $(x',\phi(x'))$ is the vertical projection of $X = (x',x_d)$ on $\partial \Omega$.
Let $E = E_0 = \fint_{I_s(x',\phi(x'))} u_\e(Q) d\sigma(Q)$ and $E_h = \fint_{I_s(x',\phi(x'))} u_\e(Q + he_d) d\sigma(Q)$. Then by the Poincar\'{e} inequality,
\begin{equation}
    \fint_{I_s(x',\phi(x'))} |u_\e(Q + he_d) - E_h| \lesssim s \fint_{I_s(x',\phi(x'))} |\nabla_{\tan} u_\e(Q + he_d) | d\sigma(Q), 
\end{equation}
and
\begin{equation}
\begin{aligned}
    |E_h - E| & = \bigg| \fint_{I_s(x',\phi(x'))} ( u_\e(Q + he_d) - u_\e(Q)) d\sigma(Q)\bigg| \\
        & \le \fint_{I_s(x',\phi(x'))} \int_0^\e |\partial_{x_d} u_\e(Q + he_d)| dh d\sigma(Q).
\end{aligned}
\end{equation}
By these two inequalities, we have
\begin{equation}
\begin{aligned}
    & \frac{1}{\e} \int_0^\e \fint_{I_s(x',\phi(x'))} |u_\e(Q + he_d) - E| d\sigma(Q) dh \\
    & \le \fint_0^\e \fint_{I_s(x',\phi(x'))} |u_\e(Q + he_d) - E_h| d\sigma dh + \fint_0^\e |E_h - E| dh \\
    & \lesssim  s \fint_{I_s(x',\phi(x'))} \fint_0^\e |\nabla_{\tan} u_\e(Q + he_d) | dh d\sigma(Q) \\
    & \qquad + \e \fint_{I_s(x',\phi(x'))} \fint_0^\e |\partial_{x_d} u_\e(Q + he_d)| dh d\sigma(Q) \\
    & \lesssim   s \fint_{I_s(x',\phi(x'))} V_\e(Q) d\sigma(Q).
\end{aligned}
\end{equation}
Inserting this into \eqref{est.ue-E2}, we get
\begin{equation}
    \fint_{B(X,4s/5)} |u_\e - E| \lesssim s \fint_{I_s(x',\phi(x'))} \widetilde{N}_\e^{3r}(\Q_\e(u_\e)) d\sigma + s \fint_{I_s(x',\phi(x'))} V_\e d\sigma.
\end{equation}
Using the observation that $I_s(x',\phi(x')) \subset I_{2s}(Q_0) \subset I_{3r}(O)$ due to $X\in \Gamma(Q_0) \cap \Omega_r$, we have
\begin{equation}
    \frac{1}{s}\fint_{B(X,4s/5)} |u_\e - E| \lesssim \M_{I_{3r}} (\widetilde{N}_\e^{3r}(\Q_\e(u_\e)))(Q_0) + \M_{I_{3r}} (V_\e)(Q_0),
\end{equation}
which combined with \eqref{est.Due2ue-E} gives the desired estimate.
\end{proof}

\subsection{$L^2$ estimates}\label{subsec.L2}


The pointwise estimate \eqref{est.NeDue.local} and the $L^2$ boundedness of the Hardy-Littlewood maximal function imply
\begin{equation}\label{est.NeDue.RN}
\begin{aligned}
    \int_{I_{r}} \widetilde{N}_\e^{r}(\nabla u_\e)^2 d\sigma & \lesssim \int_{I_{3r}} \widetilde{N}_\e^{3r} (\Q_\e(u_\e))^2 d\sigma + \int_{I_{3r}} V_\e^2 d\sigma. 
    \end{aligned}
\end{equation}
Now it suffices to estimate the difference term with $\Q_\e(u_\e)$, which is also a solution.


Recall that $T_t = T_t(O) = \{ (x',x_d): |x'|<t, \phi(x') < x_d < \phi(0) + Mt \}$ is a Lipschitz domain.
Define $T_{t,\e} = (T_t)_\e = \{ X \in T_t: \dist(X, \partial T_t) < \e \}$.

\begin{lemma}\label{lem.NeQe}
    Let $u_\e$ be a solution of $-\nabla\cdot A^\e \nabla u_\e = 0$ in $T_{5r}$ with $r \ge 10 \e$. Then
    \begin{equation}
    \int_{I_{r}} \widetilde{N}_\e^{r} (\Q_\e(u_\e))^2 d\sigma \lesssim \frac{1}{\e} \int_{T_{5r} \cap \Omega_{5\e}} |\nabla u_\e|^2 + \int_{T_{5r}} |\nabla u_\e|^2.
\end{equation}
\end{lemma}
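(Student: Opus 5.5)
The plan is to exploit that $w_\e := \Q_\e(u_\e)$ is itself a solution of $-\nabla\cdot A^\e\nabla w_\e=0$ in $T_{5r}\cap\{X: X+\e e_d\in T_{5r}\}$, which contains $T_{4r}$. This holds by \eqref{eq.periodicInxd}. We then apply the large-scale Dirichlet estimate of Section \ref{sec.Dp} to $w_\e$ in a localized Lipschitz subdomain.

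First, let $\Omega' $ be a Lipschitz domain $T_{t}^{\e}$ obtained from $T_t$ by pushing the bottom boundary up by about $\e$, e.g. $\{(x',x_d): |x'|<t,\ \phi(x')+\e<x_d<\phi(0)+Mt\}$. Here $t\in(3r,4r)$ is chosen by averaging (a Fubini/pigeonhole argument) so that the lateral and top boundaries are well behaved. Apply the $p=2$ case of Theorem \ref{thm.Dp}, in the form \eqref{N-M}, or rather its interior-solid version \eqref{est.N<S}, to $w_\e$ in this domain. Cones at $Q\in I_r$ with $\delta\ge 10\e$ and height $\le r$ sit inside the corresponding cones for $\Omega'$ with slightly enlarged aperture. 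Moreover, $\widetilde N$ is dominated by $N$ of $w_\e$ via interior estimates, and by Proposition \ref{prop.hmBasics}-type interior boundedness the averages over $B(Y,\delta(Y)/2)$ are bounded by sup values. This gives
\[
\int_{I_r}\widetilde N_\e^r(w_\e)^2\,d\sigma\lesssim \int_{\partial\Omega'} M_{c\e}(w_\e)^2\,d\sigma .
\]
The boundary of $\Omega'$ consists of a bottom piece at height about $\e$ above $\partial\Omega$ and lateral/top pieces at distance $\simeq r$ from $I_r$.

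Second, estimate the boundary terms. On the bottom piece we have $|w_\e(X)|\le \fint_{x_d}^{x_d+\e}|\partial_d u_\e|$, so $\e$-averages of $w_\e^2$ are bounded by averages of $|\nabla u_\e|^2$ over $\e$-balls within $\Omega_{5\e}$. Integrating over the $(d-1)$-dimensional bottom surface and using Fubini (each point of $\Omega_{5\e}\cap T_{5r}$ is counted in a $\e$-thick slab) gives $\lesssim \e^{-1}\int_{T_{5r}\cap\Omega_{5\e}}|\nabla u_\e|^2$. On the lateral and top pieces, which lie at distance $\gtrsim r\ge10\e$ from $\partial\Omega$ except near the bottom corners, $|w_\e|\le \fint|\partial_d u_\e|$ along a segment of length $\e$. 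Choosing $t$ by averaging over $t\in(3r,4r)$ converts the surface integral into $r^{-1}\int_{T_{5r}}|\nabla u_\e|^2$, with the corner parts within $\Omega_{5\e}$ absorbed into the first term. Since $r\ge 10\e$ gives $r^{-1}\le \e^{-1}$, this is in fact harmless; the statement allows the cruder bound $\int_{T_{5r}}|\nabla u_\e|^2$ for $r\lesssim1$ after localization, or the two terms may simply be combined.

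The main obstacle is the first step: justifying the use of Theorem \ref{thm.Dp} on the truncated domain $\Omega'$ with constants uniform in $t$ and $\e$. This requires checking that $\Omega'$ has uniform Lipschitz character and that $\e$-scale averages on its boundary are controlled without loss near the corners where the bottom meets the sides. I would handle the corners by noting that they lie in $\Omega_{5\e}$, so any $\e$-average there is already dominated by the boundary-layer term. I would also replace $S_\e$ by $M_{c\e}$ using interior $L^\infty$ bounds exactly as in the derivation of \eqref{N-M}.
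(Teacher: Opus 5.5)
Your proposal is correct and is essentially the paper's proof: apply the $p=2$ large-scale Dirichlet estimate \eqref{N-M} to $\Q_\e(u_\e)$ on truncated boxes $T_t$, bound the $\e$-scale averages of $\Q_\e(u_\e)$ via \eqref{def.Qe} by $|\nabla u_\e|^2$ on an $\e$-thick layer, and integrate in $t$ so that the lateral and top faces contribute $\frac{1}{r}\int_{T_{5r}}|\nabla u_\e|^2$ (lifting the bottom boundary by $\e$ is unnecessary, since \eqref{N-M} involves only interior averages). One correction: both your argument and the paper's proof deliver the scale-invariant term $\frac{1}{r}\int_{T_{5r}}|\nabla u_\e|^2$ (the displayed statement drops the $\frac{1}{r}$, evidently a typo; compare Corollary \ref{coro.localR2}), so your remark that this yields the displayed $\int_{T_{5r}}|\nabla u_\e|^2$ ``for $r\lesssim 1$'' is backwards, and the term also cannot be merged into the first one, which integrates only over the layer $\Omega_{5\e}$.
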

\begin{proof}
By the estimate \eqref{N-M} applied to $\Q_\e(u_\e)$ in $T_{t}$ for every $t \in [4r, 5r]$, we have
\begin{equation}
    \int_{I_{r}} \widetilde{N}_\e^{r} (\Q_\e(u_\e))^2 d\sigma \lesssim \int_{\partial T_{t}} |N_\e(\Q_\e(u_\e))|^2 \lesssim \frac{1}{\e} \int_{T_{t,4\e}} |\Q_\e(u_\e)|^2.
\end{equation}
Using \eqref{def.Qe}, we see that 
\begin{equation}
    \frac{1}{\e} \int_{T_{t,4\e}} |\Q_\e(u_\e)|^2 \lesssim \frac{1}{\e} \int_{T_{t+\e, 5\e}} |\nabla u_\e|^2. 
\end{equation}
Combining the last two inequalities, we obtain
\begin{equation}
    \int_{I_{r}} \widetilde{N}_\e^{r} (\Q_\e(u_\e))^2 d\sigma \lesssim \frac{1}{\e} \int_{T_{t+\e, 5\e}} |\nabla u_\e|^2.
\end{equation}
Note that the left-hand side is independent of $t$ as $t$ varies in $[4r,5r]$. Thus, we integrate the above inequality in $t$ over $[4r,5r - \e]$ and use Fubini's Theorem to obtain
\begin{equation}
    \int_{I_{r}} \widetilde{N}_\e^{r} (\Q_\e(u_\e))^2 d\sigma \lesssim \frac{1}{\e} \int_{T_{5r} \cap \Omega_{5\e}} |\nabla u_\e|^2 + \frac{1}{r} \int_{T_{5r}} |\nabla u_\e|^2.
\end{equation}
The proof is complete.
\end{proof}

\begin{corollary}\label{coro.localR2}
    Let $u_\e$ be a solution of $-\nabla\cdot A^\e \nabla u_\e =0 $ in $T_{15r}$ with $r \ge 10\e$. Then
    \begin{equation}\label{est.NeDue.localL2}
    \int_{I_{r}} \widetilde{N}_\e^{r}(\nabla u_\e)^2 d\sigma  \lesssim \frac{1}{\e} \int_{T_{15r} \cap \Omega_{20\e}} |\nabla u_\e|^2 + \frac{1}{r}\int_{T_{15r}} |\nabla u_\e|^2.
    \end{equation}
\end{corollary}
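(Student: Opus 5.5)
The plan is to combine the pointwise reduction \eqref{est.NeDue.RN} (from Lemma \ref{lem.NT.ptws}) with Lemma \ref{lem.NeQe}, and then control the boundary-layer term $\int V_\e^2$ directly by the layer integral of $|\nabla u_\e|^2$.

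\emph{Step 1.} Since $u_\e$ solves the equation in $T_{15r}\supset T_{10r}$, \eqref{est.NeDue.RN} applies and gives
\begin{equation*}
\int_{I_r} \widetilde N_\e^r(\nabla u_\e)^2\,d\sigma \lesssim \int_{I_{3r}} \widetilde N_\e^{3r}(\Q_\e(u_\e))^2\,d\sigma + \int_{I_{3r}} V_\e^2\,d\sigma .
\end{equation*}

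\emph{Step 2.} For the first term, apply Lemma \ref{lem.NeQe} with $r$ replaced by $3r$; this is admissible because $3r\ge 10\e$ and $u_\e$ solves the equation in $T_{15r}=T_{5\cdot 3r}$. It yields
\begin{equation*}
\frac1\e\int_{T_{15r}\cap\Omega_{5\e}}|\nabla u_\e|^2 + \frac1r\int_{T_{15r}}|\nabla u_\e|^2 .
\end{equation*}
Here I use the version with the factor $1/r$ on the energy term that appears in the last display of the proof of Lemma \ref{lem.NeQe}. That factor is needed for the correct scaling, and the $t$-averaging over $[4r,5r]$ in that proof produces it.

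\emph{Step 3.} For the layer term, Cauchy--Schwarz in $t$ gives
\begin{equation*}
V_\e(P)^2\le \fint_0^{11\e}|\nabla u_\e(y',\phi(y')+t)|^2\,dt .
\end{equation*}
Integrating over $P\in I_{3r}$, with $d\sigma\simeq dy'$ because $\phi$ is Lipschitz, gives
\begin{equation*}
\frac{1}{11\e}\int_{\{|y'|<3r,\ \phi(y')<y_d<\phi(y')+11\e\}}|\nabla u_\e|^2 .
\end{equation*}
This region lies inside $T_{15r}\cap\Omega_{20\e}$: the vertical distance $11\e$ dominates $\delta(Y)$, and $\phi(y')+11\e<\phi(0)+15Mr$ since $r\ge 10\e$ and $M$ is large.

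\emph{Step 4.} Adding the two bounds and using $\Omega_{5\e}\subset\Omega_{20\e}$ gives \eqref{est.NeDue.localL2}.

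The main point to check is the bookkeeping in Step 2: the domain of $\Q_\e(u_\e)$ must fit inside $T_{15r}$ after the shift by $\e e_d$, and the $t$-averaging must be applied at scale $3r$. This works because Lemma \ref{lem.NeQe} already accounts for the shift, and because $T_{5\cdot3r}=T_{15r}$.
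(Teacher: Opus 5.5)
Your proposal is correct and follows the paper's route: the paper's one-line proof cites \eqref{est.NeDue.RN} together with Lemma \ref{lem.NeQe} at scale $3r$, and it leaves implicit the Cauchy--Schwarz bound $\int_{I_{3r}} V_\e^2\,d\sigma \lesssim \frac{1}{\e}\int_{T_{15r}\cap\Omega_{20\e}}|\nabla u_\e|^2$, which you supply. You are also right that the energy term in Lemma \ref{lem.NeQe} should carry the factor $1/r$; the $t$-averaging in its proof produces that factor, although the lemma's statement omits it.
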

\begin{proof}
    This follows from \eqref{est.NeDue.RN} and Lemma \ref{lem.NeQe}.
\end{proof}



\begin{proof}[\textbf{Proof of Theorems \ref{thm.Rp} and \ref{thm.Np} for $p = 2$}]
    First of all, by rotation and a localization argument as in Subsection \ref{subsec.localization}, the local estimate of large-scale nontangential maximal function \eqref{est.NeDue.localL2}, established in a graph domain, can be transferred into a form in a bounded Lipschitz domain, i.e.,
    \begin{equation}
    \int_{\Delta_{r}} \widetilde{N}_\e^{r}(\nabla u_\e)^2 d\sigma \lesssim \frac{1}{\e} \int_{D_{15r} \cap \Omega_{20\e}} |\nabla u_\e|^2 + \frac{1}{r}\int_{D_{15r}} |\nabla u_\e|^2. 
    \end{equation}
    By taking $r \simeq r_0$ as in Subsection \ref{subsec.localization} and a covering argument, we have
    \begin{equation}\label{est.RN.L2}
        \| \widetilde{N}_\e(\nabla u_\e) \|_{L^2(\partial \Omega)} \lesssim \bigg( \frac{1}{\e} \int_{\Omega_{20\e} } |\nabla u_\e|^2 \bigg)^{1/2} + \| \nabla u_\e \|_{L^2(\Omega)}.
    \end{equation}
    Now,  if $u_\e$ satisfies \eqref{Rp}, then the energy estimate implies
    \begin{equation}
        \| \nabla u_\e \|_{L^2(\Omega)} \lesssim  \|  f \|_{H^1(\partial \Omega)}.
    \end{equation}
    The large-scale Rellich estimate \eqref{eq.LSRellich-D} implies
    \begin{equation}
        \bigg( \frac{1}{\e} \int_{\Omega_{20\e} } |\nabla u_\e|^2 \bigg)^{1/2} \lesssim  \|  f \|_{H^1(\partial \Omega)}.
    \end{equation}
    Combining these estimates together, we obtain \eqref{est.Rp} for $p = 2$. Similarly, for the Neumann problem we apply the energy estimate and the large-scale Rellich estimate \eqref{eq.LSRellich-N} to the right-hand side of \eqref{est.RN.L2} to obtain \eqref{est.Np} for $p = 2$.
\end{proof}


\begin{corollary}[Localized $L^2$ estimates]
    Let $u_\e$ be a solution of $-\nabla \cdot A^\e \nabla u_\e = 0$ in $T_{2r}$. Then for $r> 10\e$,
    \begin{equation}\label{est.localR2}
        \int_{I_r} \widetilde{N}^r_\e(\nabla u_\e)^2 d\sigma \lesssim \int_{I_{2r}}  |\nabla_{\tan} u_\e|^2 d\sigma + \frac{1}{r}\int_{T_{2r}} |\nabla u_\e|^2,
    \end{equation}
    and
    \begin{equation}\label{est.localN2}
        \int_{I_r} \widetilde{N}^r_\e(\nabla u_\e)^2 d\sigma \lesssim \int_{I_{2r}}  \Big|\frac{\partial u_\e}{\partial \nu_\e}\Big|^2 d\sigma + \frac{1}{r}\int_{T_{2r}} |\nabla u_\e|^2.
    \end{equation}
\end{corollary}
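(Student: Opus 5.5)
The plan is to combine Corollary \ref{coro.localR2} with the localized Rellich estimates, Propositions \ref{prop.localR2} and \ref{prop.localN2}, at matching scales. Corollary \ref{coro.localR2} needs the equation in $T_{15r}$, while here we only have it in $T_{2r}$. So the first step is a rescaling and covering. Cover $I_r$ by surface pieces $I_{\rho}(Q_k)$ with $\rho = c r$ and $c$ small, so that $T_{15\rho}(Q_k)\subset T_{2r}$ for every $Q_k\in I_r$. Since $M=100d(1+\|\nabla\phi\|_\infty)$, the boxes centered at boundary points of $I_r$ at scale $15\rho$ stay inside $T_{2r}$ once $c$ is small. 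The number of pieces is bounded independently of $r$ and $\e$.

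We also need to control $\widetilde N^r_\e$ by $\widetilde N^{\rho}_\e$. For a point $Y\in\Gamma(Q)$ with $\rho\le\delta(Y)<r$, the ball $B(Y,\delta(Y)/2)$ lies in $T_{2r}$ away from the boundary at distance $\gtrsim r$. Its contribution is therefore bounded by $\big(r^{-d}\int_{T_{2r}}|\nabla u_\e|^2\big)^{1/2}$, and integrating its square over $I_r$ gives $\lesssim r^{-1}\int_{T_{2r}}|\nabla u_\e|^2$. If $\rho<10\e$ this truncation region is all of the relevant range anyway. Since $r>10\e$, we may shrink $c$ or treat the case $r\simeq\e$ by this interior bound alone.

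Next, on each piece we apply Corollary \ref{coro.localR2} at scale $\rho\ge10\e$:
\[
\int_{I_\rho(Q_k)}\widetilde N^\rho_\e(\nabla u_\e)^2\,d\sigma\lesssim \frac1\e\int_{T_{15\rho}(Q_k)\cap\Omega_{20\e}}|\nabla u_\e|^2+\frac1\rho\int_{T_{15\rho}(Q_k)}|\nabla u_\e|^2 .
\]
The last term is $\lesssim r^{-1}\int_{T_{2r}}|\nabla u_\e|^2$. The boundary-layer term is handled by Proposition \ref{prop.localR2}, or by Proposition \ref{prop.localN2} for the Neumann version. We apply it at a center $Q_k$ with radius $\rho'=15\rho$ and $t=20\e$, first covering $T_{15\rho}(Q_k)$ by slightly smaller boxes if needed, so that $T_{3\rho'}\subset T_{2r}$. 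This gives
\[
\frac1{20\e}\int_{T_{\rho'}(Q_k)\cap\Omega_{20\e}}|\nabla u_\e|^2\lesssim\int_{I_{3\rho'}(Q_k)}|\nabla_{\tan}u_\e|^2d\sigma+\frac1{\rho'}\int_{T_{3\rho'}(Q_k)}|\nabla u_\e|^2 .
\]
Here $u_\e=f$ on $I_{3\rho'}$, so $\nabla_{\tan}f=\nabla_{\tan}u_\e$. The condition $t<\rho'$ requires $20\e<\rho'$, which holds once $\rho\ge10\e$. If $t$ slightly exceeds the allowed range, we use the version with $t\in(\e,r)$ after adjusting constants, or split $\Omega_{20\e}$ into a bounded number of $\e$-layers.

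Summing over $k$ with finite overlap gives \eqref{est.localR2}, and the same argument with Proposition \ref{prop.localN2} gives \eqref{est.localN2}. The main obstacle is purely geometric bookkeeping. We must choose $c$, and the relation between the scales $\rho$, $15\rho$ and $45\rho$, so that every box used is contained in $T_{2r}$, while still keeping $\rho\ge10\e$ where the earlier results require it. The regime $10\e<r\lesssim C\e$ is handled directly by the interior bound above, since then $\Gamma(Q)\cap(\Omega_r\setminus\Omega_{10\e})$ consists of points at distance $\simeq r$ from the boundary.
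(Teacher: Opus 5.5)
Your proposal is correct and is essentially the paper's argument: the paper proves this corollary in one line by combining Corollary \ref{coro.localR2} with Propositions \ref{prop.localR2} and \ref{prop.localN2}. Your covering of $I_r$ at scale $\rho = cr$, the interior bound for cone points with $\delta(Y)\ge \rho$, and the separate treatment of the regime $10\e < r \lesssim \e/c$ spell out the scale bookkeeping (passing from the $T_{15r}$ hypothesis down to $T_{2r}$) that the paper leaves implicit.
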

\begin{proof}
    This follows from Corollary \ref{coro.localR2} and Propositions \ref{prop.localR2} and \ref{prop.localN2}.
\end{proof}

\subsection{$L^p$ estimates for $1<p<2$}
It is classical that the $(R)_2$ (resp. $(N)_2$) estimate will directly imply the $(R)_p$ (resp. $(N)_p$) estimates for any $1<p<2$, by an interpolation between $L^2$ estimate and $L^1$ estimate (with data in atom Hardy space $H^1_{\rm at}(\partial \Omega)$); see \cite[Theorem 5.2, 6.2]{KP93} (also see \cite{DK87,KS11}).
In this subsection, we will adjust the classical argument to the large-scale $L^p$ estimate with $1<p<2$, only using the large-scale $(R)_2$ or $(N)_2$ estimates.


We first consider the regularity problem.

\begin{proposition}\label{prop.R1}
    If $f\in C^1(\partial \Omega)$ and $u_\e$ is a solution of $\cL_\e(u_\e)=0$ in $\Omega$ with
    $u_\e=f$ on $\partial\Omega$, then
\begin{equation}\label{est.R1}
    \| \widetilde{N}_\e(\nabla u_\e) \|_{L^1(\partial \Omega)} \lesssim \| \nabla_{\tan} f \|_{H^1_{\rm at}(\partial \Omega)}.
\end{equation}
\end{proposition}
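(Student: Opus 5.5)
By linearity and the atomic decomposition of $H^1_{\rm at}(\partial\Omega)$, I will reduce \eqref{est.R1} to a uniform bound for a single atom. Write $\nabla_{\tan} f=\sum_k\lambda_k a_k$, where each $a_k$ is a (vector-valued) atom with $\sum_k|\lambda_k|\lesssim\|\nabla_{\tan}f\|_{H^1_{\rm at}}$. Following \cite{KP93}, each atom corresponds to a function $f_k$ such that $\nabla_{\tan} f_k=a_k$, with $f_k$ supported in a surface ball $\Delta_\rho(P)$, $\|\nabla_{\tan}f_k\|_{L^2}\le \rho^{-(d-1)/2}$, and $\|f_k\|_{L^\infty}\lesssim \rho^{-(d-2)}$; the last bound comes from Poincaré and the mean zero of $f_k$. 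Let $u_k$ be the solution with data $f_k$. Since $\widetilde N_\e$ is sublinear, it suffices to show
\[
\|\widetilde N_\e(\nabla u_k)\|_{L^1(\partial\Omega)}\lesssim 1
\]
uniformly in $k$ and $\e$.

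\textbf{Near region.} On $\Delta_{C\rho}(P)$ I use Cauchy–Schwarz together with the $L^2$ estimate already proved, that is, Theorem \ref{thm.Rp} with $p=2$. This gives $\int_{\Delta_{C\rho}}\widetilde N_\e(\nabla u)\lesssim \rho^{(d-1)/2}\|f\|_{H^1(\partial\Omega)}\lesssim 1$. Here $\|f\|_{L^2}\lesssim \rho\|\nabla_{\tan} f\|_{L^2}$, and the $L^2$ part of the $H^1$ norm costs nothing when $\rho\lesssim 1$. If $\rho\le 10\e$, I first treat the atom as having radius $10\e$: enlarging the support only weakens the normalization, since $\|\nabla_{\tan}f\|_{L^2}\le\rho^{-(d-1)/2}$ is no longer bounded by $(10\e)^{-(d-1)/2}$. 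I therefore handle this case separately by the same Cauchy–Schwarz bound over $\Delta_{C\max(\rho,\e)}$, which is harmless because the far-field decay argument below only needs distances at least $\max(\rho,\e)$.

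\textbf{Far region and main step.} The main work is on $Q\in\Delta_{2^{j+1}C\rho}\setminus\Delta_{2^jC\rho}$, where I want the decay
\[
\int_{R_j}\widetilde N_\e(\nabla u)\,d\sigma\lesssim 2^{-j\alpha}.
\]
For $X\in\Gamma(Q)$ with $\delta(X)\ge 10\e$ and $\delta(X)$ small compared with $|Q-P|$, I split $\widetilde N_\e$ into two pieces. The first piece comes from cone points with $\delta(X)\gtrsim 2^j\rho$. There interior Caccioppoli and the Green function representation $u(X)=\int f\,d\omega^X_\e$ apply. Using $\int f\,d\sigma$-type cancellation, I bound $|u|$ by $\rho\,\|f\|_\infty\,\sup|\nabla_Y G|$. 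More simply, I use $|u(Y)|\le\|f\|_\infty\,\omega^Y_\e(\Delta_\rho)$ together with Proposition \ref{prop.hmBasics}\ref{item.w=G},\ref{item.G.Holder}, which give $\omega^Y_\e(\Delta_\rho)\lesssim(\rho/|Y-P|)^\alpha$, so $\delta(X)^{-1}\sup|u|$ is controlled. The second piece comes from small $\delta(X)$. There I use the localized $L^2$ bound \eqref{est.localR2} on surface balls $\Delta_{2^j\rho}(Q_0)$ away from the support, where $\nabla_{\tan}u=0$ on the boundary. This bounds $\int\widetilde N_\e^{r}(\nabla u)^2$ by $r^{-1}\int_{D_{2r}}|\nabla u|^2$. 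Caccioppoli up to the boundary, where $u=0$, reduces this to $r^{-3}\int|u|^2$, and the harmonic measure decay gives $|u|\lesssim\rho^{2-d}(\rho/2^j\rho)^{\alpha}$. Converting to $L^1$ by Cauchy–Schwarz on a set of measure $(2^j\rho)^{d-1}$ gives
\[
(2^j\rho)^{(d-1)/2}(2^j\rho)^{(d-1)/2-1}\rho^{2-d}2^{-j\alpha}\simeq 2^{j(d-2)}\cdot 2^{-j\alpha}\cdots.
\]
I expect this \textbf{bookkeeping to be the main obstacle}, because this naive count does not decay. To fix it I will use the better bound $\|f\|_{L^1}\lesssim\rho$, obtained from $\|f\|_{L^\infty}\lesssim\rho^{2-d}$ times $\rho^{d-1}$, together with the Green function representation $u(Y)\approx\int f(Q')\,k^Y(Q')\,d\sigma$. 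Equivalently, I use $|u(Y)|\lesssim\rho\,\rho^{-(d-1)}\cdot\rho^{d-1}G$-scaling, namely
\[
|u(Y)|\lesssim \rho\,(2^j\rho)^{1-d}\,(2^{-j})^{\alpha}
\]
via \eqref{est.w=G} and the boundary Hölder decay. With this bound the sum over $j$ converges.

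Finally, the part of $\Omega$ far from the boundary is handled by \eqref{est.ReduceToLayer} and the energy estimate. Summing over $j$ and over the atoms yields \eqref{est.R1}.
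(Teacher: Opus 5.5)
Your overall architecture matches the paper's: reduction to one atom, $(R)_2$ plus Cauchy--Schwarz on $\Delta_{10\rho}(P)$, dyadic annuli $R_j$ at scale $r_j\simeq 2^j\rho$, interior Caccioppoli for cone points with $\delta(X)\gtrsim r_j$, and \eqref{est.localR2} plus boundary Caccioppoli for the rest. However, the pointwise decay of $u$ on which the far-field count rests is wrong in both forms you give, so the main step does not go through as written. Proposition \ref{prop.hmBasics} \ref{item.w=G}, \ref{item.G.Holder} gives more than $\omega^Y_\e(\Delta_\rho)\lesssim(\rho/|Y-P|)^\alpha$; it gives
\[
|u(Y)|\le \|f\|_{L^\infty}\,\omega^Y_\e(\Delta_\rho(P))\lesssim \rho^{2-d}\,\rho^{d-2}\,G_\e(Y,A_\rho(P))\lesssim \frac{\rho^\alpha}{|Y-P|^{d-2+\alpha}}.
\]
Your ``naive count'' $2^{j(d-2)}2^{-j\alpha}$ fails only because you discarded the factor $(\rho/|Y-P|)^{d-2}$. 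With that weak bound your first piece fails as well, since $\delta(X)^{-1}\sup|u|$ integrated over $R_j$ is $2^{j(d-2-\alpha)}$. Your repair, $|u(Y)|\lesssim\rho(2^j\rho)^{1-d}2^{-j\alpha}$, overshoots the truth by a factor $2^{-j}$ and is false. Already for the Laplacian in a half-space, a nonnegative bump of height $\rho^{2-d}$ on $\Delta_\rho(P)$ gives $u(Y)\simeq\rho\,\delta(Y)|Y-P|^{-d}\simeq\rho(2^j\rho)^{1-d}$ when $\delta(Y)\simeq|Y-P|\simeq 2^j\rho$, with no extra $2^{-j\alpha}$. Moreover, for rough $A$ there is no density $d\omega^Y_\e/d\sigma$ to integrate $f$ against. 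With the correct bound $|u|\lesssim\rho^\alpha r_j^{2-d-\alpha}$, the count closes exactly as in the paper. The large-$\delta$ piece is $\lesssim\rho^\alpha r_j^{1-d-\alpha}$ pointwise, hence $(\rho/r_j)^\alpha$ after integrating over $R_j$. The small-$\delta$ piece satisfies $\int_{\Delta_{cr_j}(Q_k)}\widetilde N_\e^{cr_j}(\nabla u)^2\lesssim r_j^{-3}\int_{D_{3cr_j}(Q_k)}|u|^2\lesssim\rho^{2\alpha}r_j^{1-d-2\alpha}$, hence $(\rho/r_j)^\alpha$ after Cauchy--Schwarz.

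Three smaller points also need fixing. First, $\|f\|_{L^\infty}\lesssim\rho^{2-d}$ does not follow from $\|\nabla_{\tan}f\|_{L^2}\le\rho^{-(d-1)/2}$ by Poincar\'e when $d\ge 3$; you only get an $L^2$-average bound. Yet the representation bound genuinely needs $\sup|f|$, because $\omega^Y_\e$ may be singular. Use, as the paper does, atoms supported in $\Delta_\rho(P)$ with $|\nabla_{\tan}f|\le C\rho^{1-d}$, for which $|f|\lesssim\rho^{2-d}$ is immediate. Second, your treatment of $\rho\le 10\e$ fails: Cauchy--Schwarz over $\Delta_{C\e}$ only yields $(\e/\rho)^{(d-1)/2}$, which is unbounded. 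No special case is needed. Keep Cauchy--Schwarz on $\Delta_{10\rho}(P)$, and note that on annuli with $cr_j\le 10\e$ every admissible cone point already has $\delta(X)\ge 10\e\ge cr_j$, so only the pointwise piece occurs (the case $cr_j\le 10\e$ in Lemma \ref{lem.QinRj}). Third, the interior cannot be handled by the global energy estimate, because $\|\nabla u\|_{L^2(\Omega)}\lesssim\|f\|_{H^{1/2}(\partial\Omega)}\simeq\rho^{(2-d)/2}$ is unbounded as $\rho\to 0$. Use instead the same decay, $|u|\lesssim\rho^\alpha$ there, i.e.\ treat it as the last annulus $R_{L+1}$.
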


It is sufficient to prove \eqref{est.R1} for an atom function $f$, whose support is contained in $\Delta_r(P)$ for some $P\in \partial \Omega$ with $r>0$ and $|\nabla_{\tan} f| \le Cr^{1-d}$. Let $u_\e$ be the solution of $\cL_\e(u_\e)=0 $ in $\Omega$ with Dirichlet data $f$. Then it suffices to show
\begin{equation}\label{est.DeuL1<1}
    \| \widetilde{N}_\e(\nabla u_\e) \|_{L^1(\partial \Omega)} \lesssim 1.
\end{equation}

First of all, by H\"{o}lder's inequality and the $(R)_2$ estimate, we have
\begin{equation}\label{est.Delta10r}
\begin{aligned}
    \int_{\Delta_{10r}(P)} |\widetilde{N}_\e(\nabla u_\e)| & \le |\Delta_{10r}(P)|^{1/2}\bigg( \int_{\Delta_{10r}(P)} |\widetilde{N}_\e(\nabla u_\e)|^2 \bigg)^{1/2}  \\
    & \lesssim r^{(d-1)/2} \bigg( \int_{\Delta_r(P)} |\nabla_{\tan} f|^2 \bigg)^{1/2} \lesssim 1.
\end{aligned}
\end{equation}

Next, we decompose the rest of the boundary as
\begin{equation}\label{eq.Omega.decomp}
    \partial \Omega \setminus \Delta_{10r}(P) = \bigcup_{j=1}^{L+1} R_j,
\end{equation}
where $R_j = \Delta_{r_j}(P) \setminus \Delta_{r_{j-1}}(P)$, $r_j = 2^j 10 r$ for $1\le j \le L$, $R_{L+1} =\partial\Omega \setminus \Delta_{r_L}$,  and $L$ is an integer such that $r_L \approx \diam(\Omega)$. We will estimate $\widetilde{N}_\e(\nabla u_\e)(Q)$ for $Q \in R_j$ for each $j \ge 1$.

\begin{lemma}\label{lem.QinRj}
    For $Q\in R_j$, we have
    \begin{equation}\label{est.L1inRj}
        \widetilde{N}_\e(\nabla u_\e)(Q) \lesssim \widetilde{N}_\e^{cr_j}(\nabla u_\e)(Q) + \frac{r^\alpha}{r_j^{d-1+\alpha}},
    \end{equation}
    where $c\in (0,1)$ is a constant independent of $r_j$ and $\e$.
    Moreover, if $cr_j \le 10\e$, then the first term on the right-hand side of \eqref{est.L1inRj} does not appear.
\end{lemma}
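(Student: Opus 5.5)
Fix $Q\in R_j$, so $|Q-P|\simeq r_j$, and take $X\in\Gamma(Q)$ with $\delta(X)\ge 10\e$. We split according to the size of $\delta(X)$: either $\delta(X)<cr_j$, or $\delta(X)\ge cr_j$.

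\emph{Small $\delta(X)$.} If $\delta(X)<cr_j$, then $X\in\Gamma(Q)\cap(\Omega_{cr_j}\setminus\Omega_{10\e})$. Hence the average $\big(\fint_{B(X,\delta(X)/2)}|\nabla u_\e|^2\big)^{1/2}$ is bounded by $\widetilde{N}_\e^{cr_j}(\nabla u_\e)(Q)$ directly from \eqref{def.tNe.truncated}. If $cr_j\le 10\e$, no such $X$ exists, which is why the first term disappears in that case.

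\emph{Large $\delta(X)$: reduction.} If $\delta(X)\ge cr_j$, we choose $c$ small depending on $\beta$ and the Lipschitz character. The key point is that $B(X,\delta(X)/2)$ then stays at distance $\gtrsim \max(r_j,|X-Q|)$ from $\Delta_r(P)$. Here we use $|X-P|\ge |Q-P|-|X-Q|$ when $|X-Q|\le \beta\delta(X)$ is small compared with $r_j$. When instead $\delta(X)\gtrsim r_j$, we have $|X-P|\gtrsim\delta(X)\ge cr_j$ anyway, since $P\in\partial\Omega$. By the interior Caccioppoli inequality it suffices to show
\[
|u_\e(Y)|\lesssim \frac{r^{\alpha}\,\rho^{1-\alpha}}{\rho^{d-1}}\cdot\frac{1}{1}\quad\text{with }\rho:=|Y-P|,
\]
or, more precisely,
\[
|u_\e(Y)|\lesssim \frac{r^{d-1+\alpha}}{\rho^{d-2+\alpha}}\cdot r^{2-d}\cdot\|f\|_\infty\cdot r^{-1}
\]
for $Y\in B(X,3\delta(X)/4)$. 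Since $|f|\le Cr\cdot r^{1-d}=Cr^{2-d}$ on $\Delta_r(P)$ (because $f$ vanishes outside $\Delta_r(P)$ and $|\nabla_{\tan}f|\le Cr^{1-d}$), the target is $|u_\e(Y)|\lesssim r^{2-d}\,(r/\rho)^{d-2+\alpha}$. Caccioppoli on $B(X,\delta(X)/2)\subset B(X,3\delta(X)/4)$ then gives
\[
\Big(\fint_{B(X,\delta(X)/2)}|\nabla u_\e|^2\Big)^{1/2}\lesssim \delta(X)^{-1}\sup_{B(X,3\delta(X)/4)}|u_\e|\lesssim \frac{r^\alpha}{\delta(X)\,|X-P|^{d-2+\alpha}}\lesssim\frac{r^\alpha}{r_j^{d-1+\alpha}},
\]
using $\delta(X)\gtrsim r_j$ and $|X-P|\gtrsim r_j$.

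\emph{The decay bound (main obstacle).} The remaining task is to prove $|u_\e(Y)|\lesssim r^{2-d}(r/|Y-P|)^{d-2+\alpha}$ for $|Y-P|\ge 2r$, uniformly in $\e$ and without smoothness of $A$. We write $u_\e(Y)=\int_{\Delta_r(P)}f\,d\omega^Y_\e$, so $|u_\e(Y)|\lesssim r^{2-d}\omega^Y_\e(\Delta_r(P))$. By Proposition \ref{prop.hmBasics} \ref{item.w=G}, $\omega^Y_\e(\Delta_r(P))\simeq r^{d-2}G_\e(Y,A_r(P))$. We then apply the boundary H\"older decay in \ref{item.G.Holder}, with the roles arranged so that the point near the boundary is $A_r(P)$, where $\delta(A_r(P))\simeq r<\tfrac12|Y-A_r(P)|$. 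Using the symmetry $G_\e(Y,Z)=G_\e(Z,Y)$ (since $A$ is symmetric), this gives
\[
G_\e(Y,A_r(P))\lesssim \frac{r^\alpha}{|Y-P|^{d-2+\alpha}},
\]
hence $\omega^Y_\e(\Delta_r(P))\lesssim (r/|Y-P|)^{d-2+\alpha}$ as required. All of these are estimates valid for bounded measurable coefficients, so no $\e$-dependence enters. The only care needed is the geometric bookkeeping, namely checking that $|Y-P|\simeq|X-P|\gtrsim r_j\ge 20r$ for $Y\in B(X,3\delta(X)/4)$ so that the hypotheses of \ref{item.w=G} and \ref{item.G.Holder} apply. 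Taking the supremum over $X$ then yields \eqref{est.L1inRj}.
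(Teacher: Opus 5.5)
Your argument is correct and is essentially the paper's proof. You split at $\delta(X)=cr_j$ and absorb the part $10\e\le\delta(X)<cr_j$ into $\widetilde{N}_\e^{cr_j}(\nabla u_\e)(Q)$. For $\delta(X)\ge cr_j$ you combine the Caccioppoli inequality, $|f|\lesssim r^{2-d}$, the harmonic-measure representation, Proposition~\ref{prop.hmBasics}~(iii), and the boundary H\"older decay (v) at $A_r(P)$ via symmetry of $G_\e$, together with $|Y-P|\ge\delta(Y)\gtrsim r_j$.

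Two minor points. First, your ``more precisely'' display is dimensionally off, although the target $r^{2-d}(r/\rho)^{d-2+\alpha}$ you then state is right. Second, $|Y-P|\gtrsim r_j$ alone does not give the separation hypotheses of (iii) and (v) (e.g.\ $|Y-P|\ge 2r$) for the first few $j$. For those finitely many $j$, use the maximum principle instead: $|u_\e|\le\|f\|_\infty\lesssim r^{2-d}\simeq r^{\alpha}r_j^{2-d-\alpha}$. The paper glosses over the same point.
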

\begin{proof}
    Let $X \in \Gamma(Q)$ with $\delta(X) \ge \max \{ 10\e, cr_j \}$. By the Caccioppoli inequality,
    \begin{equation}\label{est.R1.NeDue2ue}
        \bigg( \fint_{B(X,\delta(X)/2)} |\nabla u_\e|^2 \bigg)^{1/2} \lesssim \delta(X)^{-1} \bigg( \fint_{B(X,\frac35\delta(X))} | u_\e|^2 \bigg)^{1/2}.
    \end{equation}
    Note that for each $Y\in B(X,\frac35 \delta(X))$, we still have $\delta(Y) \ge \frac25 \max\{ 10\e, cr_j \}$. By using the representation by $\cL_\e$-harmonic measure \eqref{eq.hm}, we have
    \begin{equation}\label{est.ptwise.uY}
    \begin{aligned}
        |u_\e(Y)| & \le \| f \|_{L^\infty(\Delta_r(P))} \omega^Y_\e(\Delta_r(P)) \lesssim r^{d-2} \omega^Y_\e(\Delta_r(P)) \\
        & \simeq G_\e(Y, A_r(P)) \lesssim   \frac{r^\alpha }{|Y-P|^{d-2+\alpha}},
    \end{aligned}
    \end{equation}
    where we have used Proposition \ref{prop.hmBasics} \ref{item.w=G} and \ref{item.G.Holder} in the last inequality.
    Obviously, $|Y-P| \ge \delta(Y) \ge \frac25 cr_j$. Hence, for every $Y \in B(X, \frac35 \delta(X))$,
    \begin{equation}
        |u(Y)| \lesssim \frac{ r^\alpha}{ r_j^{d-2+\alpha}}.
    \end{equation}
    Combining this with \eqref{est.R1.NeDue2ue}, we obtain that for any $X\in \Gamma(Q)$ and $\delta(X) \ge \max\{ 10\e, r_j\}$,
    \begin{equation}
        \bigg( \fint_{B(X,\delta(X)/2)} |\nabla u_\e|^2 \bigg)^{1/2} \lesssim \frac{ r^\alpha}{ r_j^{d-1+\alpha}}.
    \end{equation}
    Thus, if $r_j \ge 10\e$, then we have \eqref{est.L1inRj}. If $r_j < 10\e$, we simply do not have the term $N_\e^{r_j}(\nabla u_\e)(Q)$ on the right-hand side.
\end{proof}


\begin{proof}[\textbf{Proof of Proposition \ref{prop.R1}}]
    It suffices to prove \eqref{est.DeuL1<1}.
    We show on each $R_j$ with $1\le j\le L+1$,
    \begin{equation}\label{est.rjPj}
        \int_{R_j} |\widetilde{N}_\e(\nabla u_\e)| d\sigma \lesssim  \Big( \frac{r}{r_j} \Big)^\alpha.
    \end{equation}    
    
    If $c r_j \le 10 \e$, then by Lemma \ref{lem.QinRj} and an integration over $R_j$, we directly get
    \begin{equation}
        \int_{R_j} |\widetilde{N}_\e(\nabla u_\e)| d\sigma \lesssim \int_{R_j } \frac{ r^\alpha}{ r_j^{d-1+\alpha}} d\sigma \lesssim \Big( \frac{r}{r_j} \Big)^\alpha.
    \end{equation}

    If $c r_j > 10\e$, by Lemma \ref{lem.QinRj},
    \begin{equation}\label{est.intof.rj>e}
        \int_{R_j} \widetilde{N}_\e(\nabla u_\e) d\sigma \lesssim \int_{R_j} \widetilde{N}_\e^{c r_j}(\nabla u_\e) d\sigma + \Big( \frac{r}{r_j} \Big)^\alpha.
    \end{equation}
    Note that $u_\e = 0$ on $\partial\Omega \setminus \Delta (P, r)$. We decompose $R_j$ into a union of $\Delta_{cr_j}(Q_k)$ with finite overlaps. As $\Omega$ is Lipschitz, the number of such $\Delta_{cr_j}(Q_k)$ depends only on $\Omega$. Then by \eqref{est.localR2} and the Caccioppoli inequality,
    \begin{equation}\label{est.localNrj}
        \int_{\Delta_{cr_j}(Q_k)} \widetilde{N}_\e^{cr_j}(\nabla u_\e)^2 d\sigma \lesssim \frac{1}{r_j} \int_{D_{2c r_j}(Q_k)} |\nabla u_\e|^2 \lesssim \frac{1}{r_j^3} \int_{D_{3c r_j}(Q_k)} |u_\e|^2.
    \end{equation}
    Using the same argument as Lemma \ref{lem.QinRj}, for $X\in D_{3cr_j}(Q_k)$,
    \begin{equation}\label{est.ptwise.uY1}
        |u_\e(X)| \lesssim \frac{r^\alpha}{r_j^{d-2+\alpha}}.
    \end{equation}
    Inserting this into \eqref{est.localNrj}, we get
    \begin{equation}
        \int_{\Delta_{cr_j}(Q_k)} \widetilde{N}_\e^{cr_j}(\nabla u_\e)^2 d\sigma \lesssim \frac{r^{2\alpha}}{r_j^{d-1+2\alpha}}.
    \end{equation}
    Summing over $k$ and combining this with \eqref{est.intof.rj>e} and the H\"{o}lder inequality, we arrive at \eqref{est.rjPj}.

    Consequently, by \eqref{est.Delta10r}, \eqref{eq.Omega.decomp} and \eqref{est.rjPj}, we have
    \begin{equation}
        \int_{\partial \Omega} |\widetilde{N}_\e(\nabla u_\e)| \le \int_{\Delta_{10r}(P)}|\widetilde{N}_\e(\nabla u_\e)| + \sum_{j}  \int_{R_j} |\widetilde{N}_\e(\nabla u_\e)| d\sigma \lesssim 1.
    \end{equation}
    The proof is complete.
\end{proof}

\begin{proof}[\textbf{Proof of Theorem \ref{thm.Rp} for $1<p<2$}]
    This follows by  interpolating  between the $(R)_2$ estimate (already proved in Subsection \ref{subsec.L2}) and $(R)_1$ estimate  in Proposition \ref{prop.R1} for the sublinear operator $f \mapsto N_\e(\nabla u_\e)$.
\end{proof}

Next, we prove the $(N)_p$ estimate for $1<p<2$ in Theorem \ref{thm.Np}.
We first recall the Neumann function $\mathcal{N}_\e(X,Y)$; see \cite{KP93} for the existence and basic estimates that will be used below. For any $g\in L_0^2(\partial \Omega)$, the weak solution of \eqref{Np} satisfying $\int_{\partial \Omega} u_\e d\sigma = 0$ is given by
\begin{equation}\label{eq.NP}
    u_\e(X) = \int_{\partial \Omega} \mathcal{N}_\e(X,Q) g(Q) d\sigma(Q).
\end{equation}
Moreover, the Neumann function satisfies the following estimates independent of the regularity or structure of $A$ (see \cite[Definition 2.5 and Lemma 2.10]{KP93}),
\begin{equation}
    |\mathcal{N}_\e(X,Y)| \lesssim |X-Y|^{2-d},
\end{equation}
and there exists $\alpha \in (0,1)$ such that if $|Z-X| \le \frac12 |X-Y|$ (see \cite[Corollary 2.14]{KP93}),
\begin{equation}\label{est.NF.Holder}
    |\mathcal{N}_\e(X,Y) - \mathcal{N}_\e(Z,Y)| \lesssim \frac{|X-Z|^\alpha}{|X-Y|^{d-2+\alpha}}.
\end{equation}
Since $A$ is symmetric, we have $\mathcal{N}_\e(X,Y) = \mathcal{N}_\e(Y,X)$.
Moreover, the compatibility condition holds
\begin{equation}
    \int_{\partial \Omega} \mathcal{N}_\e(P,Y) d\sigma(P) = \int_{\partial \Omega} \mathcal{N}_\e(X,Q) d\sigma(Q) = 0.
\end{equation}

\begin{proposition}\label{prop.N1}
    If $g \in H_{\rm at}^1(\partial \Omega)$ with $\int_{\partial \Omega} g d\sigma = 0$, then the solution given by \eqref{eq.NP} satisfies
\begin{equation}\label{est.NP.L1}
    \| \widetilde{N}_\e(\nabla u_\e) \|_{L^1(\partial \Omega)} \lesssim \| g \|_{H^1_{\rm at}(\partial \Omega)}.
\end{equation}
\end{proposition}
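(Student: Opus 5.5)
It suffices, by the atomic decomposition, to prove \eqref{est.NP.L1} for a single atom $g$: $\operatorname{supp} g\subset \Delta_r(P)$, $\|g\|_{L^\infty}\le r^{1-d}$ and $\int g\,d\sigma=0$. For such an atom we aim to show $\|\widetilde{N}_\e(\nabla u_\e)\|_{L^1(\partial\Omega)}\lesssim 1$. The structure copies the proof of Proposition \ref{prop.R1}. The only change is that the pointwise decay of $u_\e$ away from the support of the data now comes from the Neumann function and the cancellation of $g$, not from the $\cL_\e$-harmonic measure. We may assume $r\lesssim \diam(\Omega)$; if $r\simeq\diam(\Omega)$, the $(N)_2$ estimate gives the bound on all of $\partial\Omega$ directly.

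\emph{Near part.} By H\"older's inequality and the large-scale $(N)_2$ estimate (Theorem \ref{thm.Np} with $p=2$, proved in Subsection \ref{subsec.L2}),
\[
\int_{\Delta_{10r}(P)}\widetilde N_\e(\nabla u_\e)\,d\sigma\lesssim r^{(d-1)/2}\|g\|_{L^2(\partial\Omega)}\lesssim 1 .
\]

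\emph{Decay estimate.} Let $Y\in\Omega$ with $|Y-P|\ge 2r$. Since $\int g=0$, we can write
\[
u_\e(Y)-c_0=\int_{\Delta_r(P)}\big(\mathcal N_\e(Y,Q)-\mathcal N_\e(Y,P)\big)g(Q)\,d\sigma(Q),
\]
where the constant $c_0$ appears only because of the normalization of $u_\e$ and is harmless after taking gradients. Using symmetry $\mathcal N_\e(Y,Q)=\mathcal N_\e(Q,Y)$ and the H\"older estimate \eqref{est.NF.Holder} with $|Q-P|<r\le\frac12|P-Y|$, we obtain
\[
|u_\e(Y)-c_0|\lesssim \frac{r^\alpha}{|Y-P|^{d-2+\alpha}} .
\]
This is the analogue of \eqref{est.ptwise.uY}.

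\emph{Far part.} Decompose $\partial\Omega\setminus\Delta_{10r}(P)=\bigcup_j R_j$ as in \eqref{eq.Omega.decomp}. For $Q\in R_j$, follow Lemma \ref{lem.QinRj}, applying Caccioppoli to $u_\e-c_0$ in place of $u_\e$. This gives
\[
\widetilde N_\e(\nabla u_\e)(Q)\lesssim \widetilde N^{cr_j}_\e(\nabla u_\e)(Q)+\frac{r^\alpha}{r_j^{d-1+\alpha}},
\]
where the first term is absent when $cr_j\le 10\e$.

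When $cr_j>10\e$, cover $R_j$ by boundedly many surface balls $\Delta_{cr_j}(Q_k)$, and take them away from $\Delta_{2r}(P)$. There $\partial u_\e/\partial\nu_\e=g=0$ on $\Delta_{2cr_j}(Q_k)$. The localized Neumann estimate \eqref{est.localN2} then gives
\[
\int_{\Delta_{cr_j}(Q_k)}\widetilde N_\e^{cr_j}(\nabla u_\e)^2\,d\sigma\lesssim \frac1{r_j}\int_{D_{2cr_j}(Q_k)}|\nabla u_\e|^2 .
\]
Next apply the boundary Caccioppoli inequality for the Neumann problem with zero data, which is valid for bounded measurable coefficients, to $u_\e-c_0$. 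Combined with the decay estimate, this bounds the right-hand side by $r^{2\alpha}r_j^{-(d-1+2\alpha)}$. By H\"older's inequality, $\int_{R_j}\widetilde N_\e(\nabla u_\e)\,d\sigma\lesssim (r/r_j)^\alpha$, and summing the geometric series in $j$ gives the bound $\lesssim 1$.

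\emph{Main obstacle.} The delicate step is the decay estimate. It requires checking that \eqref{est.NF.Holder} holds when the first variable lies on $\partial\Omega$, i.e.\ boundary H\"older continuity of $\mathcal N_\e(\cdot,Y)$ up to the boundary uniformly in $\e$. This is available from \cite{KP93}, since it depends only on ellipticity, but the normalization constant $c_0$ must be tracked carefully. The remaining steps are routine transcriptions of the regularity case. Interpolating this $L^1$ bound with the $(N)_2$ estimate then yields Theorem \ref{thm.Np} for $1<p<2$.
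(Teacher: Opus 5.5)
Your proposal is correct and follows the paper's proof essentially step by step. The near part $\Delta_{10r}(P)$ is handled by the large-scale $(N)_2$ estimate. On each $R_j$, the pointwise decay of $u_\e$ comes from the Neumann-function representation together with $\int g\,d\sigma=0$ and \eqref{est.NF.Holder}. This is then combined with the Lemma \ref{lem.QinRj}-type bound, the localized estimate \eqref{est.localN2}, and Caccioppoli's inequality to get $(r/r_j)^\alpha$. One small remark: since $u_\e$ is defined by \eqref{eq.NP}, the constant $c_0$ is actually zero, so subtracting it is unnecessary though harmless.
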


\begin{proof}
It suffices to consider an atom $g$ supported in $\Delta_r(P)$ for some $P\in \partial \Omega$ and $|g| \le Cr^{1-d}$, and prove
\begin{equation}\label{est.N1}
    \| \widetilde{N}_\e(\nabla u_\e) \|_{L^1(\partial \Omega)} \lesssim 1.
\end{equation}

Similar to the regularity problem, in $\Delta_{10r}(P)$, we apply the $L^2$ estimate for the Neumann problem to obtain
\begin{equation}\label{est.N1.P0}
    \int_{\Delta_{10r}(P)} |N_\e(\nabla u_\e)| \lesssim 1.
\end{equation}
We decompose $\partial \Omega \setminus \Delta_{10r}(P)$ as in \eqref{eq.Omega.decomp}. On each $R_j$, we can show
\begin{equation}\label{est.N1.Pj}
    \int_{R_j} |N_\e(\nabla u_\e)| \lesssim \Big( \frac{r}{r_j} \Big)^\alpha.
\end{equation}
Comparing the regularity problem, the only difference is the pointwise estimate of $u_\e(Y)$ in \eqref{est.ptwise.uY} for $Y \in \Gamma(Q')$ with $Q' \in R_j$. Instead of using the $\cL_\e$-harmonic measure, we now use the representation of Neumann function. Due to the fact $\int_{\partial \Omega} g d\sigma = 0$,
\begin{equation}
    u_\e(Y) = \int_{\Delta_r(P)} (\mathcal{N}_\e(Y,Q) - \mathcal{N}_\e(Y,P)) g(Q) d\sigma(Q).
\end{equation}
By \eqref{est.NF.Holder}, for $Y\in \Gamma(Q'), Q'\in R_j$,
\begin{equation}
    |u_\e(Y)| \lesssim \int_{\Delta_r(P)} \frac{r^\alpha}{r_j^{d-2+\alpha}} r^{1-d} d\sigma \lesssim \frac{r^\alpha}{r_j^{d-2+\alpha}},
\end{equation}
which is the same as \eqref{est.ptwise.uY1}. Thus, by the interior Caccioppoli inequality 
\begin{equation}
    \widetilde{N}_\e(\nabla u_\e)(Q') \lesssim \widetilde{N}_\e^{cr_j}(\nabla u_\e)(Q') + \frac{r^\alpha}{r_j^{d-1+\alpha}},
\end{equation}
for any $Q'\in \Delta_{r_j}(P_j)$, which together with \eqref{est.localN2} yields \eqref{est.N1.Pj}. Summing \eqref{est.N1.P0} and \eqref{est.N1.Pj} over $j$, we obtain \eqref{est.N1}. 
\end{proof}

\begin{proof}[\textbf{Proof of Theorem \ref{thm.Np} for $1<p<2$} ]
The $(N)_p$ estimate for $1<p<2$ follows by interpolation for the sublinear operator $g \mapsto \widetilde{N}_\e(\nabla u_\e)$ between the $(N)_1$ estimate in Proposition \eqref{prop.N1} and $(N)_2$ estimate proved previously.
\end{proof}



\subsection{$L^p$ estimates for $2<p<2+\delta$}
It is classical that the $L^p$ regularity and Neumann estimates for $2<p<2+\delta$ follow from the $L^2$ estimate and a real-variable argument; see \cite[Theorem 5.3, 6.3]{KP93}.
In this subsection, we prove the large-scale $L^p$ regularity/Neumann estimates for $2<p<2+\delta$ by modifying the real-variable argument in \cite{KS11}. To this end, we first consider a local problem in the graph domain $T_r$. We introduce 
a modified large-scale nontangential maximal function: for $Q \in \partial \Omega$,
\begin{equation}\label{def.hatN}
    \widehat{N}_\e(F)(Q) := \widetilde{N}_\e( F)(Q) + M_{10\e}^\star(F)(Q),
\end{equation}
where 
\begin{equation}\label{def.Mstar}
    M_{10\e}^\star(F)(Q) = \bigg( \fint_{ T_{10\e}^\star(Q)} |F|^2 \bigg)^{1/2}
\end{equation}
and
\begin{equation}
    T_{r}^\star(Q) = \{ X\in \Omega: \text{proj}(X) \in I_r(Q), \delta(X) < r \}.
\end{equation}
Here $\proj(X)$ denotes the vertical projection of $X = (x',x_d)$ on $\partial \Omega$, i.e., $\proj(X) = (x', \phi(x')) \in \partial \Omega$.
This modified large-scale nontangential maximal function includes the information of $F$ on the boundary layer of thickness $10\e$ and is useful in proving the reverse H\"{o}lder inequality. 


\begin{lemma}\label{lem.RH}
Let $u_\e$ be a solution of $\cL_\e(u_\e) = 0$ in $T_{4R}$ satisfying either $u_\e = 0$ or $\frac{\partial u_\e}{\partial \nu_\e} = 0$ on $I_{4R}$ with $R \ge  100\e$. Then there exists $p_0 > 2$ such that
\begin{equation}\label{est.ReverseHolder.Nhat}
    \bigg( \fint_{I_R} | \widehat{N}_\e (\nabla u_\e)|^{p_0} d\sigma \bigg)^{1/{p_0}} \lesssim \bigg( \fint_{I_{2R}} |\widehat{N}_\e(\nabla u_\e)|^{2} d\sigma \bigg)^{1/{2}}.
\end{equation}
\end{lemma}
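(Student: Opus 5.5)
The plan is to derive \eqref{est.ReverseHolder.Nhat} from a weak reverse H\"older inequality on all small surface balls, and then upgrade it to $L^{p_0}$ with Gehring's lemma on $I_{2R}$. Set $F = \widehat{N}_\e(\nabla u_\e)$. The core step is the following local estimate. For every $Q\in I_{R}$ and every $t$ with $0<t\le cR$, we aim to show
\begin{equation}\label{plan.wRH}
\bigg(\fint_{I_t(Q)} F^{2}\,d\sigma\bigg)^{1/2}\lesssim \fint_{I_{Ct}(Q)} F\,d\sigma .
\end{equation}
Since $I_r$ is doubling with respect to $\sigma$, Gehring's lemma applied to \eqref{plan.wRH} gives \eqref{est.ReverseHolder.Nhat} for some $p_0>2$ depending only on the implicit constants. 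Actually an $L^2$--$L^q$ version of \eqref{plan.wRH} with $q<2$ already suffices.

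To prove \eqref{plan.wRH}, fix $I_t(Q)$ and split $F$ on $I_t(Q)$ into a near part and a far part.

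\emph{Near part.} This consists of the cone portion $\widetilde{N}^{ct}_\e(\nabla u_\e)$ together with $M^\star_{10\e}(\nabla u_\e)$. It is relevant only when $t\gtrsim\e$.
\begin{itemize}
\item For $\widetilde N^{ct}_\e$, use the localized $L^2$ estimates \eqref{est.localR2} or \eqref{est.localN2}. The boundary data vanish on $I_{2t}(Q)$: either $u_\e=0$, which gives $\nabla_{\tan}u_\e=0$, or $\partial u_\e/\partial\nu_\e=0$. Hence $\int_{I_t}\widetilde N^{t}_\e(\nabla u_\e)^2\lesssim t^{-1}\int_{T_{2t}(Q)}|\nabla u_\e|^2$.
\item The term $M^\star_{10\e}$ is handled the same way. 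Integrating $(M^\star_{10\e})^2$ over $I_t$ and using Fubini gives $\e^{-1}\int_{T_{2t}\cap\Omega_{20\e}}|\nabla u_\e|^2$. Propositions \ref{prop.localR2} and \ref{prop.localN2}, with zero boundary data, bound this by $t^{-1}\int_{T_{3t}}|\nabla u_\e|^2$.
\end{itemize}

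\emph{Far part.} These are the points $X$ in cones with $\delta(X)\gtrsim t$. The Caccioppoli/interior average over $B(X,\delta(X)/2)$ with $\delta(X)\ge ct$ is essentially constant across $Q'\in I_t(Q)$. It is therefore dominated by $\inf_{I_t(Q)}\widetilde N_\e(\nabla u_\e)$ after a harmless enlargement of the aperture. That term is trivially controlled by $\fint_{I_{Ct}}F$.

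\emph{Small scales $t\lesssim\e$.} In this regime $F$ is essentially constant at scale $\e$. The cone part only sees $\delta\ge10\e$, and $M^\star_{10\e}$ averages over $T^\star_{10\e}$. So $F(Q')\lesssim F(Q'')$ for $|Q'-Q''|\lesssim\e$, up to an aperture change, and \eqref{plan.wRH} is immediate.

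The remaining step is the main obstacle: bounding $\big(t^{-1}\int_{T_{3t}(Q)}|\nabla u_\e|^2\big)^{1/2}$ by $\fint_{I_{Ct}(Q)}F\,d\sigma$. The plan is to cover $T_{3t}(Q)$ by two regions.
\begin{itemize}
\item The boundary layer $\Omega_{10\e}\cap T_{3t}$. Its contribution is $\big(\e^{-1}\int|\nabla u_\e|^2\big)^{1/2}$, which by Fubini equals the $L^2(I_{Ct})$ average of $M^\star_{10\e}$. We then need an $L^2$-to-$L^1$ improvement; this is exactly why $M^\star$ was included in $\widehat N_\e$.
\item Whitney-type regions at heights $\ge10\e$. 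These are controlled pointwise by $\widetilde N_\e(\nabla u_\e)$ of any foot point.
\end{itemize}
Together these give the right side in terms of $\big(\fint_{I_{Ct}}F^2\big)^{1/2}$, which is an $L^2$--$L^2$ estimate, not an $L^2$--$L^1$ one. Two routes can close the gap. The first is to use the large-scale Caccioppoli and interior reverse H\"older (Meyers) inequality so that $|\nabla u_\e|^2$ over the region is bounded by $\big(\fint|\nabla u_\e|^{q}\big)^{2/q}$ with $q<2$. The second is to replace $\int_{T_{3t}}|\nabla u_\e|^2$ by $t^{-2}\int|u_\e-E|^2$ via boundary Caccioppoli, with $E=0$ in the Dirichlet case and $E$ an average in the Neumann case. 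Then we bound $|u_\e-E|$ by integrating $\nabla u_\e$ along vertical segments as in Lemma \ref{lem.NT.ptws}, which yields $\sup|u_\e-E|\lesssim t\,\fint_{I_{Ct}}F$. If this $L^\infty$-type control of $u_\e$ in terms of $F$ fails near the boundary, the fallback is Gehring's lemma in the form that allows the right side of \eqref{plan.wRH} to be $\big(\fint_{I_{Ct}}F^{2}\big)^{1/2}$ plus a small multiple, via a hole-filling argument.
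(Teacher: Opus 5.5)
Your outline is essentially the paper's proof. The paper proves a weak reverse H\"older inequality $\big(\fint_{I_r(P)}\widehat N_\e^2\big)^{1/2}\lesssim\big(\fint_{I_{3r}(P)}\widehat N_\e^{p_*}\big)^{1/p_*}$ on every surface ball and then applies Gehring's lemma. For $r<50\e$ this inequality is automatic from the cone geometry and Proposition \ref{prop.Mef-Bs}. For $r>50\e$ it comes from the localized $L^2$ estimates with zero data, a reduction to $\nabla u_\e$ on $T_{3r}(P)$, and a split of $T_{3r}(P)$ into the layer $\{\delta<10\e\}$ (controlled by $M^\star_{10\e}$ via Fubini and H\"older) and the region $\{\delta>10\e\}$ (controlled by $\widetilde N_\e$).

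The step you call the main obstacle is closed exactly by your first route, but the reverse H\"older inequality must be taken up to the boundary at scale $r$, not in the interior. The paper uses boundary Caccioppoli plus Sobolev--Poincar\'e on $T_{3r}(P)$, with $u_\e=0$ on $I_{4R}$ or with the mean subtracted in the Neumann case, which gives $p_*=2d/(d+2)$. So neither the $L^\infty$ bound of your second route nor the hole-filling fallback is needed. Also, in the far and small-scale parts, do not enlarge the aperture, since $F$ is defined with the fixed one. Instead use that the maximizing point $X$ lies in $\Gamma(Q')$ for a set of $Q'\in I_{Ct}(Q)$ of proportional measure.
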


\begin{proof}
This is proved by the self-improving property of the reverse H\"{o}lder inequality.
Let $P\in I_R$ and $I_{4r}(P) \subset I_{2R}, r>0$. We only need to prove
\begin{equation}\label{est.ReverseHolder.L2L1}
    \bigg( \fint_{I_r(P)} |\widehat{N}_\e(\nabla u_\e)|^2 d\sigma  \bigg)^{1/2} \lesssim \bigg( \fint_{I_{3r}(P)} |\widehat{N}_\e(\nabla u_\e)|^{p_*} d\sigma \bigg)^{p_*},
\end{equation}
where $p_* = \frac{2d}{d+2} < 2$. Then \eqref{est.ReverseHolder.Nhat} is a straightforward corollary of the self-improving property of the reverse H\"{o}lder inequality.

\textbf{Case 1: $r<50\e$.} We will show that for this case, a reverse H\"{o}lder inequality holds for $\widehat{N}_\e(\nabla u_\e)$ by its definition, irrelevant to the equation. To see this,
we first claim that if $r<50\e$, for each $Q \in I_r(P)$,
\begin{equation}\label{est.claim-1}
    \widetilde{N}_\e(\nabla u_\e)(Q) \lesssim \fint_{I_{r}(Q)} \widetilde{N}_\e(\nabla u_\e) d\sigma.
\end{equation}
To show the claim, we fix $Q$ and let $X\in \Gamma(Q)$ be such that $\delta(X) > 10\e$ and $\widetilde{N}_\e(\nabla u_\e)(Q)$ is attained at $X$, namely,
\begin{equation}
    \widetilde{N}_\e(\nabla u_\e)(Q) \simeq \bigg( \fint_{B(X,\delta(X)/2)} |\nabla u_\e|^2 \bigg)^{1/2}.
\end{equation}
Now, in view of the definition of the cone $\Gamma(Q)$ in graph domain \eqref{def.Gamma.graph}, we see that there exists a subset $E \subset I_r(Q)$ with $|E| \simeq |I_r(Q)|$ such that for each $Q' \in E$, $X \in \Gamma(Q')$. This implies $\widetilde{N}_\e(\nabla u_\e)(Q) \lesssim \widetilde{N}_\e(\nabla u_\e)(Q')$ for each $Q'\in E$. Hence,
\begin{equation}
    \widetilde{N}_\e(\nabla u_\e)(Q) \lesssim \frac{1}{|E|} \int_{E} \widetilde{N}_\e(\nabla u_\e) d\sigma \lesssim \fint_{I_r(Q)} \widetilde{N}_\e(\nabla u_\e) d\sigma,
\end{equation}
which proves the claim \eqref{est.claim-1}. 

The claim \eqref{est.claim-1} then easily yields
\begin{equation}\label{est.L2reverse-1}
    \bigg( \fint_{I_r(P)} |\widetilde{N}_\e(\nabla u_\e)|^2 d\sigma  \bigg)^{1/2} \lesssim \fint_{I_{2r}(P)} \widetilde{N}_\e(\nabla u_\e) d\sigma.
\end{equation}

Next, by a property of the average operator \eqref{est.Mef-Bs-1} in Proposition \ref{prop.Mef-Bs} (the property is proved for $M_\e$, but it holds for $M_{10\e}^\star$ as well), for any $s \lesssim 10 \e$, we have
\begin{equation}
    M_{10\e}^\star(\nabla u_\e)(Q) \lesssim \fint_{I_s(Q)} M_{10\e}^\star(\nabla u_\e) d\sigma.
\end{equation}
Taking $s = r < 50\e$ and any $Q\in I_r(P)$, we have
\begin{equation}\label{est.L2reverse-2}
    \bigg( \fint_{I_r(P)} |M_{10\e}^\star(\nabla u_\e)|^2 d\sigma  \bigg)^{1/2} \lesssim \fint_{I_{2r}(P)} M_{10\e}^\star(\nabla u_\e) d\sigma.
\end{equation}
Combining \eqref{est.L2reverse-1} and \eqref{est.L2reverse-2}, we obtain, for $r<50\e$,
\begin{equation}
    \bigg( \fint_{I_r(P)} |\widehat{N}_\e(\nabla u_\e)|^2 d\sigma  \bigg)^{1/2} \lesssim \fint_{I_{2r}(P)} \widehat{N}_\e(\nabla u_\e) d\sigma.
\end{equation}

\textbf{Case 2: $r>50\e$.} In this large-scale case, we first note that for each $Q\in \Delta_r$,
\begin{equation}\label{est.case2}
    \widetilde{N}_\e(\nabla u_\e)(Q) \lesssim \fint_{I_{2r}(P)} \widetilde{N}_\e(\nabla u_\e) d\sigma + \widetilde{N}_\e^r(\nabla u_\e)(Q).
\end{equation}
Indeed, if  $\widetilde{N}_\e(\nabla u_\e)(Q)$ is reached at a point $X$ with $\delta(X) > r$, then, for the same reason as in Case 1, $\widetilde{N}_\e(\nabla u_\e)(Q)$ is bounded by the first term of \eqref{est.case2}. If $\widetilde{N}_\e(\nabla u_\e)(Q)$ is attained at a point $X$ with $\delta(X)<r$, then it is clearly bounded by the second term of \eqref{est.case2}.

We estimate the second term of \eqref{est.case2}.
Using either the local $(R)_2$ estimate \eqref{est.localR2} if $u_\e = 0$ on $I_{4R}$ or the local $(N)_2$ estimate \eqref{est.localN2} if $\frac{\partial u_\e}{\partial \nu_\e} = 0$ on $I_{4R}$,
\begin{equation}\label{est.RH-1}
\begin{aligned}
    \bigg( \fint_{I_r(P)} \widetilde{N}_\e^r(\nabla u_\e)^2 d\sigma \bigg)^{1/2} 
    &\lesssim  \bigg( \fint_{T_{2r}(P)} |\nabla u_\e|^2 \bigg)^{1/2} \\
    & \lesssim  \frac{1}{r} \bigg( \fint_{T_{3r}(P)} |u_\e|^2 \bigg)^{1/2} \\
    & \lesssim  \bigg( \fint_{T_{3r}(P)} |\nabla u_\e|^{p_*} \bigg)^{1/p_*},
\end{aligned}
\end{equation}
where we have used the Caccioppoli inequality and Sobolev-Poincar\'{e} inequality and $1/p_* - 1/d = 1/2$. Note that in the case $\frac{\partial u_\e}{\partial \nu_\e}=0$, we need to replace $u_\e$ by $u_\e-\fint_{T_{3r}(P)} u_\e$ in \eqref{est.RH-1}.

Now we decompose the last integral into two parts
\begin{equation}
\begin{aligned}\label{est.T3r.p*}
    \bigg( \fint_{T_{3r}(P)} |\nabla u_\e|^{p_*} \bigg)^{1/p_*} & \lesssim \bigg( \frac{1}{r^d} \int_{T_{3r}(P) \cap \{ \delta(X) > 10\e \}} |\nabla u_\e|^{p_*} \bigg)^{1/p_*} \\
    & \qquad + \bigg( \frac{1}{r^d} \int_{T_{3r}(P) \cap \{ \delta(X) < 10\e \}} |\nabla u_\e|^{p_*} \bigg)^{1/p_*}.
    \end{aligned}
\end{equation}
Note that the projection $T_{3r}$ on $\partial \Omega$ is $I_{3r}$. We point out that the average operator $M_{10\e}^\star$ is introduced in the modified nontangential maximal function in order to handle the boundary layer term in \eqref{est.T3r.p*} that cannot be avoided in our large-scale analysis. In view of the definition of  $\widetilde{N}_\e(\nabla u_\e)$, if $X \in T_{3r} \cap \{ \delta(X) > 10\e \}$ has the projection $Q \in I_{3r}$, then by the large-scale Lipschitz estimate 
\begin{equation}
    M_\e(\nabla u_\e)(X) \lesssim \bigg( \fint_{B(X, \delta(X)/2)} |\nabla u_\e|^2 \bigg)^{1/2} \lesssim  \widetilde{N}_\e(\nabla u_\e)(Q).
\end{equation}
Thus, by a property of $M_\e$ in \eqref{est.f<Mef} of Proposition \ref{prop.A1} and Fubini's Theorem, we have
\begin{equation}\label{est.T3r.int}
\begin{aligned}
    \bigg( \frac{1}{r^d} \int_{T_{3r}(P) \cap \{ \delta(X) > 10\e \}} |\nabla u_\e|^{p_*} \bigg)^{1/p_*} & \lesssim \bigg( \frac{1}{r^d} \int_{T_{3r}(P) \cap \{ \delta(X) > 10\e \}} |M_\e(\nabla u_\e)|^{p_*} \bigg)^{1/p_*} \\
    & \lesssim \bigg( \fint_{I_{3r}(P)} |\widetilde{N}_\e(\nabla u_\e)|^{p_*} d\sigma \bigg)^{1/p_*}.
\end{aligned}
\end{equation}
Moreover, by Fubini's Theorem and H\"{o}lder inequality,
\begin{equation}\label{est.T3r.layer}
\begin{aligned}
    & \bigg( \frac{1}{r^d} \int_{T_{3r}(P) \cap \{ \delta(X) < 10\e \}} |\nabla u_\e|^{p_*} \bigg)^{1/p_*} \\
    & \lesssim \bigg( \frac{\e}{r^d} \int_{I_{3r}(P)} \fint_{T_{10\e}^\star(Q)} |\nabla u_\e|^{p_*} dX d\sigma(Q) \bigg)^{1/p_*} \\
    & \lesssim \bigg( \frac{\e}{r^d} \int_{I_{3r}(P)} \bigg( \fint_{T_{10\e}^\star(Q)} |\nabla u_\e|^{2} dX \bigg)^{p_*/2} d\sigma(Q) \bigg)^{1/p_*} \\
    & \lesssim \bigg( \frac{\e}{r} \fint_{I_{3r}(P)} |M_{10\e}^\star(\nabla u_\e)|^{p_*} d\sigma \bigg)^{1/p_*}.
    \end{aligned}
\end{equation}
Taking both \eqref{est.T3r.int} and \eqref{est.T3r.layer} into \eqref{est.T3r.p*}, we have
\begin{equation}\label{est.T3r-I3r}
    \bigg( \fint_{T_{3r}(P)} |\nabla u_\e|^{p_*} \bigg)^{1/p_*} \lesssim \bigg( \fint_{I_{3r}(P)} \widehat{N}_\e(\nabla u_\e)^{p_*} d\sigma \bigg)^{1/p_*},
\end{equation}
which together with \eqref{est.RH-1} yields
\begin{equation}\label{est.tNe-hNe}
    \bigg( \fint_{I_r(P)} \widetilde{N}_\e(\nabla u_\e)^2 d\sigma \bigg)^{1/2} \lesssim \bigg( \fint_{I_{3r}(P)} \widehat{N}_\e(\nabla u_\e)^{p_*} d\sigma \bigg)^{1/p_*}.
\end{equation}

To handle $M_{10\e}^\star(\nabla u_\e)$, by the $L^2$ Rellich estimate in $T_{4r}(P)$ for either the regularity problem or Neumann problem, we obtain
\begin{equation}
\begin{aligned}
    \bigg( \fint_{I_r(P)} | M_{10\e}^\star(\nabla u_\e)|^2 d\sigma \bigg)^{1/2} & \lesssim 
    \bigg( \frac{1}{\e r^{d-1}} \int_{T_{3r/2} \cap \Omega_{10\e} } |\nabla u_\e|^2 \bigg)^{1/2} \\
    & \lesssim \bigg( \fint_{T_{2r}} |\nabla u_\e|^2 \bigg)^{1/2} \\
    & \lesssim \bigg( \fint_{I_{3r}} \widehat{N}_\e(\nabla u_\e)^{p_*} d\sigma \bigg)^{1/p_*},
    \end{aligned}
\end{equation}
where we have used the combined estimate of \eqref{est.RH-1} and \eqref{est.T3r-I3r} in the last inequality.
This and \eqref{est.tNe-hNe} together give \eqref{est.ReverseHolder.L2L1} for the case $r>50\e$.
Consequently, we have proved the reverse H\"{o}lder inequality \eqref{est.ReverseHolder.L2L1} for all $I_{4r}(P) \subset I_{2R}$ and $r>0$, as desired.
\end{proof}

\begin{proof}[\textbf{Proof of Theorem \ref{thm.Rp} for $2<p<2+\delta$}]
We first prove a localized result in a graph domain. Let $u_\e$ be a solution of $\cL_\e(u_\e) = 0$ in $T_{4R}$ satisfying $u_\e = f$ in $I_{4R}$ with $R \ge 1000\e$. Then we show that there exists $\delta > 0$ such that for any $p\in (2, 2+\delta)$,
\begin{equation}\label{est.RpInTR}
\begin{aligned}
     & \bigg( \fint_{I_R} | \widetilde{N}_\e (\nabla u_\e)|^{p} d\sigma \bigg)^{1/{p}} \\
     & \lesssim \bigg( \fint_{I_{3R}} |\widetilde{N}_\e(\nabla u_\e)|^{2} d\sigma \bigg)^{1/{2}} + \bigg( \fint_{I_{4R}\cap \Omega_{10\e}} |\nabla u_\e|^{2} d\sigma \bigg)^{1/{2}} + \bigg( \fint_{I_{4R}} |\nabla_{\tan} f|^{p} d\sigma \bigg)^{1/p}. 
\end{aligned}
\end{equation}

We apply the real-variable argument. Let $ I_r(P) \subset I_R$ with $4r < R$ and $r>100\e$. Let $\phi \in C_0^1(B_{5r}(P))$ be the cut-off function such that $\phi = 1$ in $B_{4r}(P)$ and $|\nabla \phi| \lesssim r^{-1}$. Let $\lambda = \fint_{I_{4r}} f d\sigma$. Then we define $f_1 = (f-\lambda)\phi$ and let $u_\e^1$ be the solution of $\cL_\e(u_\e^1) = 0$ and $u_\e^1 = f_1 \mathbbm{1}_{I_{5r}(P)}$ on $\partial T_{4R}$. Decompose $u_\e - \lambda = u_\e^1 + u_\e^2$ and note that $\nabla u_\e = \nabla u_\e^1 + \nabla u_\e^2$.

Let $F = \widehat{N}_\e(\nabla u_\e), F_{I_r} = \widehat{N}_\e(\nabla u^1_\e)$ and $R_{I_r} =\widehat{N}_\e(\nabla u_\e^2)$. By applying the $(R)_2$ in the Lipschitz domain $T_{4R}$ and the $L^2$ Rellich estimate \eqref{eq.LSRellich-D}, we have
\begin{equation}
\begin{aligned}
    \fint_{I_{2r}(P)} |F_{I_{r}}|^2 & \lesssim \frac{1}{|I_{2r}(P)|} \int_{I_{4r}} |\widehat{N}_\e(\nabla u^1_\e)|^2 \\
    & \lesssim \frac{1}{|I_{2r}(P)|} \int_{ I_{4r}} |\widetilde{N}_\e(\nabla u^1_\e)|^2 + \frac{1}{|I_{2r}(P)|\e} \int_{T_{5r}(P) \cap \Omega_{10\e}} |\nabla u^1|^2 \\
    &\lesssim \fint_{I_{5r}(P)} |\nabla_{\tan}((f-\lambda)\phi)|^2 \\
    & \lesssim \fint_{I_{5r}(P)} |\nabla_{\tan} f|^2,
\end{aligned}
\end{equation}
where we have used 
\begin{equation}\label{est.Mstar-Layer}
    \int_{I_{4r}} M_{10\e}^\star(F)^2 \lesssim \frac{1}{\e} \int_{T_{5r}(P) \cap \Omega_{10\e}} |F|^2
\end{equation}
in the second inequality and
the Poincar\'{e} inequality in the last inequality.

For $R_{I_r}$, note that $u_\e^2 = 0$ on $I_{4r}(P)$. Then, Lemma \ref{lem.RH} yields, for some $p_0 = 2+\delta$,
\begin{equation}
\begin{aligned}
    \bigg(\fint_{I_r(P)} |R_{I_r}|^{p_0} \bigg)^{1/p_0} & \lesssim \bigg(\fint_{I_{2r}(P)} |R_{I_{r}}|^{2} \bigg)^{1/2} \\
    &\lesssim \bigg(\fint_{I_{2r}(P)} |F|^{2} \bigg)^{1/2} + \bigg(\fint_{I_{2r}(P)} |F_{I_{r}}|^{2} \bigg)^{1/2} \\
    & \lesssim \bigg(\fint_{I_{2r}(P)} |F|^{2} \bigg)^{1/2} + \bigg(\fint_{I_{5r}(P)} |\nabla_{\tan} f|^{2} \bigg)^{1/2}.
    \end{aligned}
\end{equation}
As a consequence of Theorem \ref{thm.real-variable>t}, we have
\begin{equation}
\begin{aligned}
    & \bigg( \fint_{I_R} | M_{100\e}^{\partial} (\widehat{N}_\e (\nabla u_\e))|^{p} d\sigma \bigg)^{1/{p}}  \\
    & \lesssim \bigg( \fint_{I_{2R}} |M_{100\e}^{\partial}(\widehat{N}_\e(\nabla u_\e))|^{2} d\sigma \bigg)^{1/{2}} + \bigg( \fint_{I_{3R}} |M_{100\e}^{\partial}(\nabla_{\tan} f)|^{p} d\sigma \bigg)^{1/p} \\
     & \lesssim \bigg( \fint_{I_{3R}} |\widehat{N}_\e(\nabla u_\e)|^{2} d\sigma \bigg)^{1/{2}} + \bigg( \fint_{I_{4R}} |\nabla_{\tan} f|^{p} d\sigma \bigg)^{1/p}\\
     & \lesssim \bigg( \fint_{I_{3R}} |\widetilde{N}_\e(\nabla u_\e)|^{2} d\sigma \bigg)^{1/{2}} + \bigg( \fint_{I_{4R}\cap \Omega_{10\e}} |\nabla u_\e|^{2} d\sigma \bigg)^{1/{2}} + \bigg( \fint_{I_{4R}} |\nabla_{\tan} f|^{p} d\sigma \bigg)^{1/p},
    \end{aligned}
\end{equation}
for any $p\in (2,p_0)$, where we have used  Proposition \ref{prop.A1} to remove the boundary average operators $M_{100\e}^\partial$ in the second inequality and \eqref{est.Mstar-Layer} in the last inequality. Now, to remove $M_{100\e}^\partial$ on the left-hand side, we use the fact
$0\le \widetilde{N}(\nabla u_\e) \le \widehat{N}_\e(\nabla u_\e)$ and \eqref{est.claim-1}, due to the definition of the large-scale nontangential maximal function. This yields \eqref{est.RpInTR} for $2<p<2+\delta$.

Finally, taking $R = r_0$ and applying a localization argument as in Subsection \ref{subsec.localization}, we obtain
 \begin{equation}
 \begin{aligned}
    &\| \widetilde{N}_\e(\nabla u_\e) \|_{L^p(\partial \Omega)} \\
    & \lesssim \| \widetilde{N}_\e^R(\nabla u_\e) \|_{L^p(\partial \Omega)} + \| \nabla u \|_{L^2(\Omega\setminus \Omega_{R/2})} \\
    & \lesssim \| \widetilde{N}_\e(\nabla u_\e) \|_{L^2(\partial \Omega)} + \bigg( \frac{1}{\e} \int_{\Omega_{10\e}} |\nabla u_\e|^2 \bigg)^{1/2} + \| \nabla_{\tan} f \|_{L^p(\partial \Omega)} + \| \nabla u \|_{L^2(\Omega\setminus \Omega_{R/2})} \\
    & \lesssim \| \nabla_{\tan} f \|_{L^2(\partial \Omega)} + \| \nabla_{\tan} f \|_{L^p(\partial \Omega)} \\
    & \lesssim \| \nabla_{\tan} f \|_{L^p(\partial \Omega)},
 \end{aligned}
 \end{equation}
 where we have used the $(R)_2$ estimate (proved earlier),  the global $L^2$ Rellich estimate \eqref{eq.LSRellich-D} and energy estimate in the third inequality, and the H\"{o}lder inequality in the last inequality. This completes the proof of \eqref{est.Rp} for $2<p<2+\delta$.
\end{proof}

\begin{proof}[\textbf{Proof of Theorem \ref{thm.Np} for $2<p<2+\delta$}] 
The proof is similar to the regularity problem. It suffices to prove a local estimate in a graph domain $T_{4R}$ for some $R\ge 1000\e$. Let $u_\e$ be a solution in $T_{4R}$ with $\frac{\partial u_\e}{\partial \nu_\e} = g$ on $I_{4R}$. We would like to show that there exists $\delta>0$ such that for any $p\in (2,2+\delta)$,
\begin{equation}
\begin{aligned}
     & \bigg( \fint_{I_R} | \widetilde{N}_\e (\nabla u_\e)|^{p} d\sigma \bigg)^{1/{p}} \\
     & \lesssim \bigg( \fint_{I_{3R}} |\widetilde{N}_\e(\nabla u_\e)|^{2} d\sigma \bigg)^{1/{2}} + \bigg( \fint_{I_{4R}\cap \Omega_{10\e}} |\nabla u_\e|^{2} d\sigma \bigg)^{1/{2}} + \bigg( \fint_{I_{4R}} |g|^{p} d\sigma \bigg)^{1/p}. 
\end{aligned}
\end{equation}

Again, this is proved by the real-variable argument.
Let $ I_r(P) \subset I_R$ with $4r < R$ and $r>100\e$. Let 
$$\lambda = |\partial T_{4R} \setminus I_{4r}(P)|^{-1} \int_{I_{4r}(P)} g d\sigma. $$ 
Then we define $g_1 = g \mathbbm{1}_{I_{4r}(P)} + \lambda \mathbbm{1}_{\partial T_{4R} \setminus I_{4r}(P)}$. Then we have $\int_{\partial T_{4R}} g_1 = 0$. Let $u_\e^1$ be the solution of $\cL_\e(u_\e^1) = 0$ in $T_{4R}$ and $ \frac{\partial u_\e^1}{\partial \nu_\e} = g_1$ on $\partial T_{4R}$. Decompose $u_\e = u_\e^1 + u_\e^2$ and note that $\frac{\partial u_\e^2}{\partial \nu_\e} = 0$ on $I_{4r}(P)$.

Let $F = \widehat{N}_\e(\nabla u_\e), F_{I_r} = \widehat{N}_\e(\nabla u^1_\e)$ and $R_{I_r} =\widehat{N}_\e(\nabla u_\e^2)$. By the $(N)_2$ estimate in the Lipschitz domain $T_{4R}$ and the large-scale $L^2$ Rellich estimate \eqref{eq.LSRellich-N}, we have
\begin{equation}
\begin{aligned}
    \fint_{I_{2r}(P)} |F_{I_{r}}|^2 & \lesssim \frac{1}{|I_{2r}(P)|} \int_{I_{2r}(P)} |\widehat{N}_\e(\nabla u^1_\e)|^2 \\
    & \lesssim \frac{1}{|I_{2r}(P)|}\int_{I_{4R}} |g_1|^2 
    \lesssim \fint_{I_{4r}(P)} |g|^2.
\end{aligned}
\end{equation}

For $R_{I_r}$, Lemma \ref{lem.RH} yields, for some $p_0 = 2+\delta$,
\begin{equation}
\begin{aligned}
    \bigg(\fint_{I_r(P)} |R_{I_r}|^{p_0} \bigg)^{1/p_0} & \lesssim \bigg(\fint_{I_{2r}(P)} |R_{I_{r}}|^{2} \bigg)^{1/2} \\
    &\lesssim \bigg(\fint_{I_{2r}(P)} |F|^{2} \bigg)^{1/2} + \bigg(\fint_{I_{2r}(P)} |F_{I_{r}}|^{2} \bigg)^{1/2} \\
    & \lesssim \bigg(\fint_{I_{2r}(P)} |F|^{2} \bigg)^{1/2} + \bigg(\fint_{I_{4r}(P)} |g|^{2} \bigg)^{1/2}.
    \end{aligned}
\end{equation}
Now, we are in a situation similar to the regularity problem. By Theorem \ref{thm.real-variable>t} and a localization argument as before, we can prove \eqref{est.Np}. The details are omitted.
\end{proof}

\section{$C^1$ domains}\label{sec.C1}

The estimates for the large-scale nontangential maximal functions will be proved for the full range of $p\in (1,\infty)$ for $(D)_p, (R)_p$ and $(N)_p$ problems in $C^1$ domains. The new ingredients we need from the $C^1$ domains are the $L^p$ estimates of the classical nontangential maximal functions for the homogenized operator $\cL_0$ (see \cite{Fabes1978} and  Appendix \ref{A-D}), and the large-scale boundary $W^{1,p}$ estimate for $\cL_\e$ (see Theorem \ref{prop.W1p.local} below). Throughout this section, we assume that $A$ satisfies \eqref{ellipticity} and \eqref{periodicity}.

\subsection{Localized $L^p$ Rellich estimates}
In this subsection, we will establish the large-scale $L^p$ Rellich estimates for any $p\in (2,\infty)$ in $C^1$ domains. Unlike the large-scale $L^2$ Rellich estimate, which can be derived in graph domains under the assumption \eqref{eq.periodicInxd}, the $L^p$ Rellich estimates rely on the quantitative convergence rates in $C^1$ domains. In the case of VMO coefficients, the large-scale $L^p$ Rellich estimates have been established in \cite{Shen17}. Since in this paper we do not assume any regularity on the coefficients, we will work with the averaged gradient $M_\e(\nabla u_\e)$ instead of $\nabla u_\e$ to avoid the local irregularity caused by the rough coefficients. This crucial modification leads to a sequence of large-scale estimates and some of them are even new for harmonic functions.


For the localized Rellich estimates, it is more convenient to work in a $C^1$ graph domain $\Omega$. For $Q \in I_r(Q)$, we redefine the boundary average operator as
\begin{equation}\label{def.bdryMt.inIr}
    M_t^{\partial}(f)(Q) = \bigg( \fint_{I_t(Q)} |f|^2 d\sigma \bigg)^{1/2}.
\end{equation}
The following are the main theorems of this subsection.
\begin{theorem}\label{thm.LocalRellich.Rp}
    Assume that $\Omega$ is a $C^1$ graph domain.
    Let $1\gtrsim r \gtrsim \e$. Let $u_\e$ be a weak solution of
    \begin{equation}\label{eq.Rp.T2r}
    \left\{
    \begin{aligned}
        & \cL_\e(u_\e) = 0 \quad \text{in } T_{2r}, \\
        & u_\e = f \in W^{1,2}(I_{2r}) \quad \text{on } I_{2r}.
    \end{aligned}
    \right.
    \end{equation}
    Then for any $p\in [2,\infty)$ and $\e \le t < r$,
    \begin{equation}\label{est.localRellich.Rp}
        \bigg( \frac{1}{t} \int_{\Omega_t \cap T_{r/2}} |M_\e(\nabla u_\e)|^p \bigg)^{1/p} \lesssim \bigg( \int_{I_{2r}} |M_\e^\partial (\nabla_{\tan} f)|^p \bigg)^{1/p} + r^{\frac{d-1}{p} - \frac{d}{2}} \bigg( \int_{T_{2r}} |\nabla u_\e|^2 \bigg)^{1/2}.
    \end{equation}
\end{theorem}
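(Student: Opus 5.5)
The plan is to follow the strategy of \cite{Shen17}. The $L^p$ Rellich estimate for $p>2$ is reduced to a real-variable (good-$\lambda$ type) argument on the boundary. Its input is a local approximation of $\nabla u_\e$ near the boundary by gradients of solutions of the homogenized operator $\cL_0$, and $\cL_0$ enjoys $L^\infty$-type boundary estimates in $C^1$ domains. All gradients are replaced by $M_\e(\nabla u_\e)$, since the coefficients are rough and pointwise values of $\nabla u_\e$ are meaningless. The quantity to control is
\[
\mathcal{F}(Q)=\sup_{\e\le s\le c r}\bigg(\frac1{s}\int_{T^\star_s(Q)\cap\Omega_s}|M_\e(\nabla u_\e)|^2\bigg)^{1/2},
\]
a vertical ``boundary-layer maximal function'' of $M_\e(\nabla u_\e)$. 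By Fubini, $\frac1t\int_{\Omega_t\cap T_{r/2}}|M_\e(\nabla u_\e)|^p$ is bounded by a constant times $\int_{I_r} \sup_{s\ge \e}\fint_{T^\star_s(Q)\cap\Omega_s}|M_\e(\nabla u_\e)|^p\,d\sigma(Q)$. So it suffices to bound the $L^p(I_r)$ norm of this maximal-type function, and the large-scale Lipschitz estimate lets us pass between $L^2$ and $L^p$ averages of $M_\e(\nabla u_\e)$ on $\e$-cells.

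\textbf{Step 1 (reduction to $f$ with zero data locally).} Subtract $\lambda=\fint f$ and split $u_\e - \lambda = u^1_\e+u^2_\e$ on each surface ball $I_{s}(P)$, with $s\ge\e$. Here $u^1_\e$ solves the problem with data $(f-\lambda)\varphi$, with $\varphi$ a cutoff adapted to $I_{4s}(P)$, and $u^2_\e=0$ on $I_{4s}(P)$. For $u^1_\e$ we use the $L^2$ Rellich estimate, Proposition \ref{prop.localR2}, together with the energy estimate and Poincar\'e, which gives the $L^2$ average bound by $\fint_{I_{5s}}|\nabla_{\tan} f|^2$. Since the boundary data enter only through these $L^2$ averages, they may be replaced by $M^\partial_\e(\nabla_{\tan}f)$.

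\textbf{Step 2 (the key reverse H\"older estimate for $u^2_\e$ at all large scales).} For a solution vanishing on $I_{4s}(P)$ with $s\ge\e$, we want for every $q<\infty$
\[
\bigg(\fint_{I_s(P)} \mathcal{F}_2^q\bigg)^{1/q}\lesssim \bigg(\fint_{T_{4s}(P)}|\nabla u^2_\e|^2\bigg)^{1/2},
\]
with $\mathcal F_2$ built from $u^2_\e$. This is the main obstacle. The plan is to compare $u^2_\e$ at each scale $\rho\in[\e,s]$ with the $\cL_0$-solution $w$ having the same boundary values on a slightly smaller $C^1$ subdomain. We then use the convergence rate $\|u_\e-w\|_{L^2}\lesssim(\e/\rho)^{\sigma}\rho\,(\fint|\nabla u_\e|^2)^{1/2}$, valid in Lipschitz/$C^1$ domains, together with the boundary $W^{1,q}$ (indeed $C^{\gamma}$-type) estimates for $\cL_0$ in $C^1$ domains (Appendix~\ref{A-D}). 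Iterating this over dyadic scales down to $\e$, in the style of the Avellaneda--Lin/Shen large-scale regularity argument, gives boundary large-scale $L^q$ bounds for $M_\e(\nabla u^2_\e)$. This is essentially the large-scale boundary $W^{1,q}$ estimate. If the full iteration is too delicate, we would instead invoke the large-scale boundary $W^{1,p}$ estimate Theorem \ref{prop.W1p.local}, applied on thin layers and combined with Caccioppoli.

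\textbf{Step 3 (real-variable argument and closing).} Feed Steps 1 and 2 into Theorem \ref{thm.real-variable>t}. The scales $s<\e$ are harmless because $M_\e$ and $M^\partial_\e$ are essentially constant at scale $\e$ (Propositions \ref{prop.A1}, \ref{prop.Mef-Bs}). This yields
\[
\|\mathcal F\|_{L^p(I_{r})}\lesssim \|M^\partial_\e(\nabla_{\tan}f)\|_{L^p(I_{2r})}+|I_r|^{1/p}\|\mathcal F\|_{L^2\text{-avg}(I_{3r/2})}.
\]
The last term is bounded via Proposition \ref{prop.localR2} by $\|\nabla_{\tan} f\|_{L^2}$ plus $r^{-1/2}\|\nabla u_\e\|_{L^2(T_{2r})}$. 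H\"older then converts the $L^2$ boundary term into the $L^p$ norm of $M_\e^\partial(\nabla_{\tan}f)$ and produces the factor $r^{\frac{d-1}{p}-\frac d2}$. Finally, taking $t\in[\e,r)$ and applying Fubini gives the estimate of Theorem \ref{thm.LocalRellich.Rp}.
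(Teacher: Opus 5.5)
The gap is Step 2. It is the entire content of the theorem, and the tools you propose for it cannot prove it.

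Through your own Fubini reduction, Step 2 for a solution vanishing on $I_{4s}(P)$ implies
$\frac1\rho\int_{\Omega_\rho\cap T_{s/2}(P)}|M_\e(\nabla u_\e^2)|^q\lesssim s^{d-1}A^q$ uniformly in $\rho\in[\e,s]$, where $A=\big(\fint_{T_{4s}(P)}|\nabla u^2_\e|^2\big)^{1/2}$. That is Theorem \ref{thm.LocalRellich.Rp} itself with $f=0$. Steps 1 and 3 only reduce general data to zero data, which is the routine half.

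The mechanisms you offer produce at best the large-scale boundary $W^{1,q}$ estimate (Proposition \ref{prop.W1p.local}). These are convergence rates combined with boundary $W^{1,q}$ or $C^\gamma$ bounds and a dyadic iteration. The result is a bulk bound, and a bulk bound does not control boundary-layer averages uniformly in the layer width.

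\begin{itemize}
\item From $\fint_{T_s}|M_\e(\nabla u)|^{q'}\lesssim A^{q'}$, H\"older only gives $\frac1\rho\int_{\Omega_\rho\cap T_s}|M_\e(\nabla u)|^q\lesssim s^{d-1}(s/\rho)^{q/q'}A^q$. This is unbounded as $\rho\to\e$ once $s/\e$ is large.
\item Large-scale boundary $C^\gamma$ decay with $\gamma<1$, plus Caccioppoli, loses a factor $(s/\rho)^{1-\gamma}$ in the same way.
\item Your fallback, Proposition \ref{prop.W1p.local} on thin layers plus Caccioppoli, needs $|u^2_\e|\lesssim \delta(X)A$ on the boxes $T_{3\rho}(Q)$. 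That is boundary Lipschitz decay, which is false in $C^1$ domains. Its $L^q(d\sigma)$-averaged version is again the estimate you are trying to prove.
\end{itemize}

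A profile with $|\nabla u|\approx\log(1/\delta)$ shows the issue: it lies in every bulk $L^q$, yet $\frac1\rho\int_{\Omega_\rho}|\nabla u|^q\approx\log^q(1/\rho)$.

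Comparing with $\cL_0$-solutions scale by scale does not rescue this. The corrected homogenization error is only $O(\e^{1/2})$ in $L^2$, so it is of order one in the $\e$-layer and cannot be absorbed. Also, Appendix \ref{A-D} does not supply $W^{1,q}$ or $C^\gamma$ estimates for $\cL_0$. It supplies $L^q$ nontangential maximal function estimates, and those are exactly the missing input.

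A smaller point concerns Step 1. Proposition \ref{prop.localR2} bounds $\frac1t\int_{\Omega_t}$ for each fixed $t$. It does not by itself bound $\int\mathcal F_1^2$, where the supremum over $\rho$ sits inside the boundary integral. A cone-type quantity such as $\widehat N_\e$, together with the $(R)_2$ estimate of Section \ref{sec.RpNp}, is the natural replacement.

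The paper uses the real-variable argument only for $\cL_0$, where the zero-data input for every $q$ is the classical $(R)_q$ theory in $C^1$ domains. This yields the $\e$-averaged bound $\|\widehat N_\e(\nabla u_0)\|_{L^p(I_1)}\lesssim\|M^\partial_\e(\nabla_{\tan}f)\|_{L^p(I_2)}+\|\nabla_{\tan}f\|_{L^2(\partial T_2\setminus I_2)}$ (Proposition \ref{prop.L0.averageNT}, Corollary \ref{coro.L0.hatN}).

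It then transfers this to $\cL_\e$ through a single global two-scale expansion $w_\e$, defined in \eqref{def.we} with the cutoff switching on at distance $5\e$ to $6\e$ from $\partial T_2$. The co-area formula and the bound on $\widehat N_\e(\nabla u_0)$ give $\|M_\e(F_\e)\|_{L^p(T_{3/4})}\lesssim\e^{1/p}(\cdots)$ (Lemma \ref{lem.weLp}).

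The bulk large-scale $W^{1,p}$ estimate is then applied to $w_\e$, not to $u_\e$. This gives $\|M_\e(\nabla w_\e)\|_{L^p(T_{1/2})}\lesssim\e^{1/p}(\cdots)\le t^{1/p}(\cdots)$. A bulk estimate suffices here precisely because the error's total $L^p$ mass is already of $\e$-layer size.

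The remaining terms $\nabla u_0$ and $\nabla(\e\chi(X/\e)K_\e(\nabla u_0)\eta_\e)$ contribute $t^{1/p}\|\widehat N_\e(\nabla u_0)\|_{L^p}$ on $\Omega_t$. Finally, averaging over the domains $T_{2s}$ with $s\in[1/2,1]$ replaces the lateral boundary term by $\|\nabla u_\e\|_{L^2(T_2)}$.
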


\begin{theorem}\label{thm.LocalRellich.Np}
    Assume that $\Omega$ is a $C^1$ graph domain.
    Let $1\gtrsim r \gtrsim \e$. Let $u_\e$ be a weak solution of
    \begin{equation}\label{eq.Np.T2r}
    \left\{
    \begin{aligned}
        & \cL_\e(u_\e) = 0 \quad \text{in } T_{2r}, \\
        & \frac{\partial u_\e}{\partial \nu_\e} = g \in L^2(I_{2r}) \quad \text{on } I_{2r}.
    \end{aligned}
    \right.
    \end{equation}
    Then for any $p\in [2,\infty)$ and $\e \le t < r$,
    \begin{equation}\label{est.localRellich.Np}
        \bigg( \frac{1}{t} \int_{\Omega_t \cap T_{r/2}} |M_\e(\nabla u_\e)|^p \bigg)^{1/p} \lesssim \bigg( \int_{I_{2r}} |M_\e^\partial (g)|^p \bigg)^{1/p} + r^{\frac{d-1}{p} - \frac{d}{2}} \bigg( \int_{T_{2r}} |\nabla u_\e|^2 \bigg)^{1/2}.
    \end{equation}
\end{theorem}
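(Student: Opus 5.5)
We prove Theorem \ref{thm.LocalRellich.Np} following the approach of \cite{Shen17}: approximate $u_\e$ at every scale by a solution of the homogenized Neumann problem, for which $C^1$-domain $L^p$ estimates are available. The basic object is a ``one-scale'' estimate on boundary layers. Fix $Q\in I_{r/2}$ and a scale $s\in[\e,r/4]$. Let $w_0$ solve $\cL_0(w_0)=0$ in $T_{2s}(Q)$ with $\frac{\partial w_0}{\partial\nu_0}=K_\e g$ on $I_{2s}(Q)$, and with the side data of $u_\e$ (after subtracting a constant). Using the smoothed datum $K_\e g$ rather than $g$ is what later produces $M_\e^\partial(g)$. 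By the Neumann convergence rate in Lipschitz/$C^1$ domains we have
\[
\Big(\fint_{T_s(Q)}|\nabla u_\e-\nabla w_0-\nabla\chi(X/\e)\nabla w_0|^2\Big)^{1/2}\lesssim (\e/s)^{\sigma}\,\big(\text{data}\big),
\]
where the error from replacing $g$ by $K_\e g$ is controlled by $M_\e^\partial(g)$. For $w_0$ we use the $C^1$-domain $L^p$ Neumann theory for the constant coefficient operator $\cL_0$ (Appendix \ref{A-D} and \cite{Fabes1978}), namely $\|N(\nabla w_0)\|_{L^p}\lesssim\|K_\e g\|_{L^p}+\text{interior}$. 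This gives pointwise control of $\nabla w_0$ in the layer by a nontangential maximal function, and hence control of $\int_{\Omega_t\cap T_s}|\nabla w_0|^p\lesssim t\int |N(\nabla w_0)|^p$. Because $\chi$ is only bounded in $L^2$ on cells, we pass to $M_\e(\nabla u_\e)$. The quantity $M_\e(\nabla\chi^\e\nabla w_0)$ is bounded by $\sup_{B_\e}|\nabla w_0|$ times $\|\nabla\chi\|_{L^2(\text{cell})}$, which is harmless.

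The second step upgrades the $L^2$ layer estimate to $L^p$ via a real-variable (good-$\lambda$ / Shen's) argument in the layer, as in Theorem \ref{thm.real-variable>t}. Set $F=M_\e(\nabla u_\e)$ restricted to $\Omega_t$, viewed through the boundary function $\mathcal F(Q)=\big(\frac1t\int_{\Omega_t\cap T^\star_{\e}(Q)}\cdots\big)$, or directly through the measure $t^{-1}dX$ on $\Omega_t$. On each surface ball $I_s(Q)$ with $s\ge \max(t,\e)$ we decompose $u_\e=u^1+u^2$. Here $u^1$ carries the data $g\mathbbm 1_{I_{4s}}$ and is bounded in $L^2$ by the $L^2$ Rellich estimate (Proposition \ref{prop.localN2}) together with the $L^2$ inequality $M_\e(\cdot)$ vs.\ $|\cdot|$ (Proposition \ref{prop.A1}). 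The piece $u^2$ has zero conormal data on $I_{4s}$. For $u^2$ we need a reverse Hölder inequality with an arbitrarily large exponent $q$:
\[
\Big(\frac1{t}\fint_{I_s}\int_{\Omega_t\cap\{\proj\in\cdot\}}|M_\e\nabla u^2|^q\Big)^{1/q}\lesssim\Big(\frac1{s^{d}}\int_{T_{2s}}|\nabla u^2|^2\Big)^{1/2}.
\]
For this I will use the large-scale boundary Lipschitz estimate for the Neumann problem with zero data in $C^1$ domains. That estimate is the content of the large-scale $W^{1,p}$ estimate (Theorem \ref{prop.W1p.local}), which in $C^1$ domains holds for every $p<\infty$ and in its averaged form gives $\sup M_\e(\nabla u^2)$ bounds. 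From it the reverse Hölder inequality follows for every $q<\infty$. Shen's real-variable lemma then gives the $L^p$ estimate for all $2<p<q$. The small scales $s<\max(t,\e)$ are handled by the averaging property of $M_\e$ (Proposition \ref{prop.Mef-Bs}), exactly as in Case 1 of Lemma \ref{lem.RH}. Since $M_\e F$ is roughly constant on $\e$-balls, the reverse Hölder inequality is automatic there.

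The third step assembles the estimate. The boundary term $\int_{I_{2r}}|M_\e^\partial(g)|^p$ comes from the $u^1$ pieces, after noting that $M_\e^\partial$ commutes with the localization up to enlarging balls. The interior $L^2$ term comes from the real-variable lemma's global $L^2$ average, which equals $r^{(d-1)/p-d/2}\|\nabla u_\e\|_{L^2(T_{2r})}$ by scaling. The uniformity in $t\in[\e,r)$ follows because all constants in the reverse Hölder inequalities are independent of $t$.

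I expect the main obstacle to be the reverse Hölder inequality for $u^2$ with arbitrary $q$. The point is to show that the large-scale Lipschitz estimate near a $C^1$ boundary holds in averaged form down to scale $\e$ without any smoothness of $A$, and to do so with exactly the averaged quantity $M_\e(\nabla u)$ restricted to the layer $\Omega_t$. I would first try to get this from a Campanato-type iteration that uses the convergence rate at each dyadic scale, in the form of Theorem \ref{prop.W1p.local}. If the $C^1$ modulus prevents a full Lipschitz bound, I would settle for $W^{1,q}$-type reverse Hölder estimates for each finite $q$, which suffice.

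The proof of Theorem \ref{thm.LocalRellich.Rp} is identical. The Dirichlet version of each step replaces the Neumann one: the convergence rate with data $K_\e f$, Proposition \ref{prop.localR2}, and the Dirichlet large-scale $W^{1,p}$ estimate. We also subtract the average $\lambda$ of $f$ and cut off, as in the proof of Theorem \ref{thm.Rp}.
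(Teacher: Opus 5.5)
Your Step 2 has a genuine gap, and it is the obstacle you flagged. For $u^2$ you need a thin-layer estimate
\[
\Big(\frac{1}{t s^{d-1}}\int_{\Omega_t\cap T_s}|M_\e(\nabla u^2)|^q\Big)^{1/q}\lesssim\Big(\fint_{T_{2s}}|\nabla u^2|^2\Big)^{1/2}
\]
with a constant independent of $t\in[\e,s]$.

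Proposition~\ref{prop.W1p.local} only controls the solid average $\fint_{T_s}|M_\e(\nabla u^2)|^q$. Restricting it to the layer costs a factor $(s/t)^{1/q}$. Using H\"older with a larger exponent $q'$ only reduces this to $(s/t)^{1/q'}$, with constants that blow up as $q'\to\infty$. The sup route is also closed: large-scale boundary Lipschitz bounds fail in general $C^1$ domains, already for harmonic functions. In fact the displayed inequality is exactly the case $g=0$ of the theorem you are proving (with $r$ replaced by $s$), so the argument is circular.

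There is also a structural mismatch with the real-variable lemma. Its reverse H\"older hypothesis must have the same function $F$ (here your layer functional) on the right. You have instead the interior energy on $T_{2s}$, which the layer functional does not control. Your Step 1, meanwhile, controls the homogenization error only in $L^2$. It therefore gives no $L^p$ information on the error inside the layer.

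The missing idea is to apply the large-scale $W^{1,p}$ estimate to the homogenization \emph{error}, not to $u_\e$ or $u^2$. The paper writes out the Dirichlet version (Theorem~\ref{thm.LocalRellich.Rp}) and says the Neumann case is similar. It works at the single scale $r=1$. Let $u_0$ be the $\cL_0$-solution in $T_2$ with the boundary data of $u_\e$, and set
\[
w_\e=u_\e-u_0-\e\chi_j(X/\e)K_\e(\partial_j u_0)\eta_\e ,
\]
where $\eta_\e$ cuts off an $O(\e)$-layer. Then $\cL_\e(w_\e)=\nabla\cdot F_\e$.

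The bound on $F_\e$ comes from two inputs. The first is the $L^p$ bound on $\widehat{N}_\e(\nabla u_0)$ (Corollary~\ref{coro.L0.hatN}); this is where the $C^1$ nontangential theory for $\cL_0$ and a real-variable argument enter, and it is what produces the averaged data $M_\e^\partial$. The second is the co-area bound $\int_{T_1^{4\e}}|\nabla^2 u_0|^p\lesssim\e^{1-p}(\cdots)$. Together they give $\|M_\e(F_\e)\|_{L^p(T_{3/4})}\lesssim\e^{1/p}(\cdots)$ (Lemma~\ref{lem.weLp}). Proposition~\ref{prop.W1p.local} applied to $w_\e$ then yields $\|M_\e(\nabla w_\e)\|_{L^p(T_{1/2})}\lesssim\e^{1/p}(\cdots)$.

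Because $t\ge\e$, this solid-region bound survives division by $t^{1/p}$. The layer contributions of $u_0$ and of the corrector term are bounded by $t^{1/p}\|\widehat{N}_\e(\nabla u_0)\|_{L^p}$. Averaging over the domains $T_{2s}$, $s\in[1/2,1]$, then turns the side data into the interior energy term. No $t$-uniform reverse H\"older inequality on thin layers, and no real-variable argument for $u_\e$ itself, is needed.
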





We will concentrate on the proof of Theorem \ref{thm.LocalRellich.Rp} for the regularity problem. The proof of Theorem \ref{thm.LocalRellich.Np} is similar.
The proof relies on the nontangential maximal function of the homogenized solution and the optimal convergence rates in $L^p$ spaces for the gradient $\nabla u_\e$.

We begin with a general lemma that relates the large-scale nontangential maximal function $\widehat{N}_\e$ defined by \eqref{def.hatN} to the classical nontangential maximal function $\widetilde{N}$.

\begin{lemma}\label{lem.hatN<M10eNDu}
    Let $ r > 100\e$. Let $u \in H^1(T_{2r})$. Then for any $Q \in I_r$, we have
    \begin{equation}
        \widehat{N}_\e(\nabla u)(Q) \lesssim M_{10\e}^\partial (\widetilde{N}(\nabla u))(Q).
    \end{equation}
\end{lemma}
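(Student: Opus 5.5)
The plan is to bound the two pieces of $\widehat{N}_\e(\nabla u)(Q)=\widetilde{N}_\e(\nabla u)(Q)+M_{10\e}^\star(\nabla u)(Q)$ separately by $M_{10\e}^\partial(\widetilde{N}(\nabla u))(Q)=\big(\fint_{I_{10\e}(Q)}\widetilde{N}(\nabla u)^2\,d\sigma\big)^{1/2}$. The argument is purely geometric: no equation is used, only the definitions and Fubini. The assumption $r>100\e$ only guarantees that all sets involved lie inside $T_{2r}$, where $u\in H^1$. As in the proof of \eqref{est.claim-1}, I allow $\widetilde{N}$ on the right to be defined with a slightly larger aperture than $\widetilde{N}_\e$. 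This is the usual harmless modification and will be stated explicitly; it is the only point that needs care.

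\emph{Step 1: the term $\widetilde{N}_\e(\nabla u)(Q)$.} Let $Q=(z',\phi(z'))$, and let $X=(x',x_d)\in\Gamma(Q)$ with $\delta(X)\ge 10\e$. For any $Q'=(y',\phi(y'))\in I_{10\e}(Q)$ we have
\[
|x'-y'|\le |x'-z'|+10\e<(\tan\beta)(x_d-\phi(z'))+10\e .
\]
Since $\delta(X)\ge 10\e$ and $x_d-\phi(y')\gtrsim \delta(X)$, the extra $10\e$ is absorbed by enlarging the aperture: $X\in\Gamma'(Q')$, where $\Gamma'$ is a cone with aperture $\beta'>\beta$ depending only on $\beta$ and $\|\nabla\phi\|_\infty$. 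Hence $\big(\fint_{B(X,\delta(X)/2)}|\nabla u|^2\big)^{1/2}\le \widetilde{N}(\nabla u)(Q')$ for every $Q'\in I_{10\e}(Q)$. Squaring, averaging over $Q'\in I_{10\e}(Q)$, and then taking the supremum over $X$ gives $\widetilde{N}_\e(\nabla u)(Q)\le M^\partial_{10\e}(\widetilde{N}(\nabla u))(Q)$.

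\emph{Step 2: the boundary-layer term $M^\star_{10\e}(\nabla u)(Q)$.} Since $|T^\star_{10\e}(Q)|\simeq \e^d$, it suffices to prove
\[
\int_{T^\star_{10\e}(Q)}|\nabla u|^2\lesssim \e\int_{I_{10\e}(Q)}\widetilde{N}(\nabla u)^2\,d\sigma .
\]
\begin{enumerate}[label=(\alph*)]
\item Decompose $T^\star_{10\e}(Q)$ into Whitney-type pieces $W$: at dyadic heights $\delta\simeq 2^{-k}\e$, $k\ge 0$, use cubes of side $\ell(W)\simeq 2^{-k}\e$. Each $W$ is contained in a bounded number of balls $B(Z,\delta(Z)/2)$ with $Z\in W$.
\item For such $Z$, let $P_W$ be the projection of $W$. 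Every $P$ in the shadow $S_W:=I_{c\ell(W)}(P_W)\cap I_{10\e}(Q)$ satisfies $Z\in\Gamma(P)$, by the same cone argument as in Step 1. Because $P_W\in I_{10\e}(Q)$, this shadow still has $\sigma(S_W)\gtrsim \ell(W)^{d-1}$. Therefore
\[
\int_W|\nabla u|^2\lesssim |W|\inf_{S_W}\widetilde{N}(\nabla u)^2\lesssim \ell(W)\int_{S_W}\widetilde{N}(\nabla u)^2\,d\sigma .
\]
\item At a fixed level $k$, the shadows $S_W$ have bounded overlap. Summing over $W$ at that level gives a contribution $\lesssim 2^{-k}\e\int_{I_{10\e}(Q)}\widetilde{N}(\nabla u)^2\,d\sigma$.
\item Summing the geometric series in $k$ yields the claimed bound. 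Dividing by $|T^\star_{10\e}(Q)|\simeq\e^d$ and using $\sigma(I_{10\e}(Q))\simeq \e^{d-1}$ gives $M^\star_{10\e}(\nabla u)(Q)\lesssim M^\partial_{10\e}(\widetilde{N}(\nabla u))(Q)$.
\end{enumerate}

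The main obstacle is bookkeeping rather than analysis: one must keep every shadow inside $I_{10\e}(Q)$, so that the radius on the right-hand side stays $10\e$ and is not enlarged to $C\e$, while keeping its measure comparable to $\ell(W)^{d-1}$. One must also justify the aperture enlargement. If the fixed radius caused trouble near the lateral edge of $T^\star_{10\e}(Q)$, I would fall back on intersecting each shadow with $I_{10\e}(Q)$ as in (b), using the Lipschitz graph structure to guarantee a fixed fraction of $I_{c\ell(W)}(P_W)$ remains.
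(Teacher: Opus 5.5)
Your overall plan matches the paper's: both pieces of $\widehat N_\e(\nabla u)(Q)$ are bounded by averaging $\widetilde N(\nabla u)^2$ over $I_{10\e}(Q)$. Your Whitney-type Step 2 is a legitimate unpacking of the paper's one-line ``by Fubini'' for $M^\star_{10\e}$, and the lower bound $\sigma(S_W)\gtrsim\ell(W)^{d-1}$ for the truncated shadows is fine.

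The gap is in Step 1. You assert that $x_d-\phi(y')\gtrsim\delta(X)$ for \emph{every} $Q'=(y',\phi(y'))\in I_{10\e}(Q)$. For the vertical cones \eqref{def.Gamma.graph} this is false unless $\|\nabla\phi\|_\infty$ is small, because at the bottom scale $\delta(X)\approx 10\e$ the shift $|Q-Q'|$ is comparable to $\delta(X)$. Here is a counterexample in $d=2$:
\begin{itemize}
\item take $\phi(w)=2|w|$, $\tan\beta=1/4$, and $X=(0,h)$ with $h=10\sqrt5\,\e$, so that $\delta(X)=10\e$;
\item then $Q=(z,2z)$ with $z=3.7\e<h/6$ satisfies $X\in\Gamma(Q)$;
\item but $Q'=(y,2y)$ with $y=13.6\e$ lies in $I_{10\e}(Q)$ and has $\phi(y)=27.2\e>x_d$.
\end{itemize}
So $X$ lies below $Q'$ and belongs to no upward cone at $Q'$, of any aperture. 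The pointwise bound $\fint_{B(X,\delta(X)/2)}|\nabla u|^2\le\widetilde N(\nabla u)(Q')^2$ is therefore not available on all of $I_{10\e}(Q)$. The natural estimate $x_d-\phi(y')\ge x_d-\phi(z')-10L\e\ge 10\e(\cos\beta-L)$ only closes when $L<\cos\beta$.

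The fix is the paper's argument, borrowed from \eqref{est.claim-1}. Use only a subset $E\subset I_{10\e}(Q)$ with $\sigma(E)\simeq\e^{d-1}$ on which $X\in\Gamma(Q')$ with the \emph{same} aperture. For example, take points of $I_{10\e}(Q)$ displaced from $Q$ toward $\proj(X)$ together with a small transversal neighborhood, or a small surface ball about $Q$ when $|x'-z'|\ll\e$. Then average the squared inequality over $E$ only:
$\widetilde N_\e(\nabla u)(Q)^2\lesssim\sigma(E)^{-1}\int_E\widetilde N(\nabla u)^2\,d\sigma\lesssim M^\partial_{10\e}(\widetilde N(\nabla u))(Q)^2$.
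This also makes your aperture enlargement unnecessary, so the lemma holds as stated. If you instead work throughout with cones $\{Y:|Y-Q'|<\beta'\delta(Y)\}$, your enlargement is correct via $|X-Q'|\le|X-Q|+10\sqrt{1+L^2}\,\e$, but that is not the cone you wrote down.

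Finally, Step 2(b) should not appeal to Step 1. The correct justification is that for $P$ horizontally within $c\,\ell(W)$ of $\proj(Z)$ one has $z_d-\phi(p')\ge\delta(Z)-Lc\,\ell(W)\gtrsim\ell(W)$, using $z_d-\phi(z')\ge\delta(Z)$. This needs $c$ small in terms of $L$ and $\beta$. Hence the shadows must be chosen per ball center (or the pieces taken of side $c'\delta$), since different centers in one $W$ are horizontally $\simeq\ell(W)$ apart.
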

\begin{proof}
    First, by a reasoning similar to \eqref{est.claim-1},
    \begin{equation}
        \widetilde{N}_\e(\nabla u)(Q) \lesssim \fint_{I_{10\e}(Q)} \widetilde{N}(\nabla u) d\sigma \lesssim M_{10\e}^\partial (\widetilde{N}(\nabla u))(Q).
    \end{equation}
    Next, by Fubini's Theorem,
    \begin{equation}
    \begin{aligned}
        M_{10\e}^\star(\nabla u)(Q) & = \bigg(\fint_{T_{10\e}^\star(Q)} |\nabla u|^2 \bigg)^{1/2} \\
        & \lesssim \bigg( \frac{10\e}{|T_{10\e}^\star(Q)|} \int_{I_{10\e}(Q)}  \widetilde{N}(\nabla u)^2 d\sigma  \bigg)^{1/2} \\
        & \lesssim M_{10\e}^\partial (\widetilde{N}(\nabla u))(Q).
    \end{aligned}
    \end{equation}
    These estimates together prove the lemma.
\end{proof}

The next proposition, which is our first ingredient from the $C^1$ domains, is a large-scale version for the local $L^p$ estimate of nontangential maximal function for the homogenized operator $\cL_0 = -\nabla\cdot (\widehat{A}\nabla)$; see Appendix \ref{Appendix-A} for a brief introduction of the homogenized operator.

\begin{proposition}\label{prop.L0.averageNT}
    Assume that $\Omega$ is a $C^1$ graph domain. Let $1\ge r > 100\e$.
    Let $u$ be a solution to
    \begin{equation}\label{eq.L0.T2r}
    \left\{
    \begin{aligned}
        & \cL_0(u) = 0 \quad \text{in } T_{2r}, \\
        & u = f \in W^{1,2}(\partial T_{2r}) \quad \text{on } \partial T_{2r}.
    \end{aligned}
    \right.
    \end{equation}
    Then for any $p\in [2,\infty)$,
    \begin{equation}\label{est.L0.nt}
     \begin{aligned}
        & \bigg( \fint_{I_r} |M_{\e}^\partial (\widetilde{N}(\nabla u))|^p d\sigma \bigg)^{1/p} \\
        & \lesssim \bigg( \fint_{I_{2r}} |M_\e^\partial(\nabla_{\tan} f)|^p d\sigma \bigg)^{1/p} + \bigg( \fint_{\partial T_{2r} \setminus I_{2r} } |\nabla_{\tan} f|^2 d\sigma \bigg)^{1/2}.
    \end{aligned}   
    \end{equation}
\end{proposition}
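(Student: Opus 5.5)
The plan is to split $f$ into a piece supported near $I_{2r}$ and a remainder, and to treat the boundary average $M_\e^\partial$ as a harmless outer operation. Three classical facts about $\cL_0$ (constant coefficients) in $C^1$ domains will be used. First, the $(R)_q$ estimate $\|\widetilde N(\nabla w)\|_{L^q(\partial D)}\lesssim \|\nabla_{\tan} w\|_{L^q(\partial D)}$ holds for every $1<q<\infty$ (Fabes et al., Appendix~\ref{A-D}). Second, $M_\e^\partial$ is bounded on $L^p$ for $p\ge2$, being dominated by $(\M(|\cdot|^2))^{1/2}$. Third, solutions of $\cL_0$ vanishing (in the tangential sense) on a boundary piece satisfy reverse H\"older inequalities for $\widetilde N(\nabla u)$ of all orders. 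With these in hand, $M_\e^\partial$ is essentially cosmetic, and the core is a localized $L^p$ regularity estimate for $\cL_0$. I will carry out the argument in a fixed auxiliary $C^1$ domain $D$ with $T_{3r/2}\subset D\subset T_{2r}$, chosen so that $\partial D\cap\partial T_{2r}\supset I_{3r/2}$.

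\textbf{Step 1: local piece.} Let $\eta$ be a cutoff equal to $1$ on $I_{3r/2}$ and supported in $I_{2r}$, and set $\lambda=\fint_{I_{2r}}f$. Let $u_1$ solve $\cL_0 u_1=0$ in $D$ with $u_1=(f-\lambda)\eta$ on $\partial D$. Applying $(R)_p$ in $D$ and then Poincar\'e gives
$$\|\widetilde N(\nabla u_1)\|_{L^p(\partial D)}\lesssim \|\nabla_{\tan} f\|_{L^p(I_{2r})}+r^{-1}\|f-\lambda\|_{L^p(I_{2r})}\lesssim \|\nabla_{\tan} f\|_{L^p(I_{2r})}.$$
To obtain the averaged form, I will apply $(R)_p$ to data mollified at scale $\e$ in the tangential variables, or more simply use the pointwise bound $|\nabla_{\tan}f|\le M_\e^\partial(\nabla_{\tan}f)$ a.e. in the $L^p$-average sense. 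Since $\|g\|_{L^p}\lesssim\|M_\e^\partial g\|_{L^p}$, the right-hand side may be replaced by $\|M_\e^\partial(\nabla_{\tan}f)\|_{L^p(I_{2r})}$. On the left, $\|M_\e^\partial(\widetilde N(\nabla u_1))\|_{L^p(I_r)}\lesssim\|\widetilde N(\nabla u_1)\|_{L^p(I_{r+\e})}$ by the maximal bound.

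\textbf{Step 2: remainder, the main obstacle.} Set $u_2=u-\lambda-u_1$. It solves $\cL_0u_2=0$ in $D$ with $u_2=0$ on $I_{3r/2}$, and its boundary data on the rest of $\partial D$ are controlled only in $L^2$-type norms. I need the estimate
$$\Big(\fint_{I_{r+\e}}\widetilde N(\nabla u_2)^p\Big)^{1/p}\lesssim \Big(\fint_{D}|\nabla u_2|^2\Big)^{1/2}+\text{(a term absorbable into the right-hand side)}.$$
Two routes are available. The first is to apply $(R)_p$ to $u_2\theta$ in a smaller domain, where $\theta$ is an interior cutoff; but then $u_2\theta$ is no longer a solution. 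The better route is to split $\widetilde N$ at height $\sim r$. Cones from $I_{r+\e}$ truncated at height $cr$ stay inside the region where $u_2$ vanishes on the boundary. There, boundary $C^{1,\alpha}$-type (in fact $W^{1,\infty}$ for $\cL_0$ in $C^1$ domains, via the $L^q$ theory for all $q$) estimates give $\sup_{T_{5r/4}}|\nabla u_2|\lesssim(\fint_{T_{3r/2}}|\nabla u_2|^2)^{1/2}$. If Lipschitz bounds fail in $C^1$ domains, I will use instead the reverse H\"older inequality for $\widetilde N(\nabla u_2)$ of every order $q<\infty$. This follows from applying $(R)_q$ to the Lipschitz subdomains $T_s(P)$ and averaging over $s$, as in Lemma~\ref{lem.NeQe}. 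It costs only an $L^2$ integral of $\nabla u_2$ near $\partial T_s$, which Caccioppoli and the $(R)_2$ estimate control. The untruncated part of $\widetilde N$ is bounded directly by interior estimates.

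\textbf{Step 3: collecting terms.} It remains to bound $\|\nabla u_2\|_{L^2(D)}\le\|\nabla(u-\lambda)\|_{L^2}+\|\nabla u_1\|_{L^2}$. The energy of $u-\lambda$ in $D$ is controlled by $(R)_2$ for $u$ in $T_{2r}$, which gives $\|\nabla u\|_{L^2(T_{2r})}\lesssim r^{1/2}\|\nabla_{\tan}f\|_{L^2(\partial T_{2r})}$. Splitting $\partial T_{2r}$ into $I_{2r}$ and its complement, H\"older turns the $I_{2r}$ part into the $L^p$ average of $M_\e^\partial(\nabla_{\tan}f)$ and leaves the $L^2$ average over $\partial T_{2r}\setminus I_{2r}$. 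The energy of $u_1$ is handled in the same way. Normalizing by $|I_r|\simeq r^{d-1}$ then yields \eqref{est.L0.nt}.
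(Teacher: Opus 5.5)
\textbf{There is a genuine gap in Step 1, and it sits exactly where the content of the proposition lies.}

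You assert $\|g\|_{L^p}\lesssim\|M_\e^\partial g\|_{L^p}$. This holds only for $p\le 2$; it is \eqref{est.f<Mef} in Proposition \ref{prop.A1}. For $p>2$ the inequality goes the other way.

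A concrete failure: take $g=\mathbbm{1}_{I_\delta(Q)}$ with $\delta\ll\e$. Then $\|g\|_{L^p}\simeq\delta^{(d-1)/p}$, while $\|M_\e^\partial g\|_{L^p}\simeq(\delta/\e)^{(d-1)/2}\e^{(d-1)/p}$. The ratio $(\e/\delta)^{(d-1)(1/2-1/p)}$ is unbounded. Taking $f=\delta\,\zeta(\cdot/\delta)$ with $\zeta$ a bump shows the same thing for genuine tangential gradients.

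So applying $(R)_p$ to $u_1$ only yields the classical bound by $\|\nabla_{\tan}f\|_{L^p(I_{2r})}$. Since $f$ is only assumed to lie in $W^{1,2}$, that quantity may be infinite. By contrast, the right-hand side of \eqref{est.L0.nt} is always finite: it is an amalgam-type norm, $L^2$ below scale $\e$ and $L^p$ above it.

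In short, $M_\e^\partial$ is not cosmetic; it is the whole point of the estimate. Your alternative of mollifying the data at scale $\e$ does not close the argument either. The solution with data $f$ minus its mollification leads back to the very estimate you are trying to prove.

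\textbf{The missing idea is the multi-scale real-variable argument.}

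Carry out your decomposition for every $P$ and every scale $0<s\le r/10$, not once at scale $r$. Take $\lambda=\fint_{I_{4s}(P)}f$ and a cutoff at scale $s$.

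For the local piece $v$, use only $(R)_2$:
$$\Big(\fint_{I_{4s}(P)}\widetilde N(\nabla v)^2\Big)^{1/2}\lesssim\Big(\fint_{I_{4s}(P)}|\nabla_{\tan}f|^2\Big)^{1/2}.$$

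The remainder $w$ vanishes on $I_{2s}(P)$ and satisfies a reverse H\"older inequality of arbitrary order $q$. This is essentially what your Step 2 supplies.

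Now set $F=\widetilde N(\nabla u)$, $F_B=\widetilde N(\nabla v)$, $R_B=\widetilde N(\nabla w)$, and feed these into the large-scale real-variable lemma, Theorem \ref{thm.real-variable>t} with $t=\e$. That lemma is precisely what turns local $L^2$ control of the data into $\|M_\e^\partial(\nabla_{\tan}f)\|_{L^p}$ on the right, at the price of $M_\e^\partial$ on the left. The leftover $L^2$ term is then handled by $(R)_2$, as in your Step 3.

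\textbf{Two smaller points in Step 2.}

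First, $W^{1,\infty}$-type gradient bounds fail in general $C^1$ domains. You hedge on this correctly.

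Second, $(R)_q$ for large $q$ is not available in the Lipschitz boxes $T_s(P)$, because their corners obstruct it. You need $C^1$ subdomains that agree with $\partial\Omega$ near $P$. The resulting $L^q$ integrals over the interior sides must then be reduced to $L^2$ by the local $W^{1,q}$ estimate; Caccioppoli together with $(R)_2$ is not enough.
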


By a localization argument, the above proposition implies the following statement. If $\Omega$ is a bounded $C^1$ domain and $u$ is a solution to
    \begin{equation}\label{eq.L0.Omega}
    \left\{
    \begin{aligned}
        & \cL_0(u) = 0 \quad \text{in } \Omega, \\
        & u = f \quad \text{on } \partial \Omega,
    \end{aligned}
    \right.
    \end{equation}
then for any $p\in [2,\infty)$,
\begin{equation}\label{est.L0.ntinOmega}
    \| M_\e^\partial (\widetilde{N}(\nabla u)) \|_{L^p(\partial \Omega)} \lesssim \| M_\e^\partial (\nabla_{\tan} f) \|_{L^p(\partial \Omega)}.
\end{equation}
Note that the homogenized problem \eqref{eq.L0.Omega} has nothing to do with the parameter $\e$, while the average in \eqref{est.L0.ntinOmega} can be taken at any $\e$-scale. This property is essentially due to the multiscale nature of the real-variable argument. 

Combining Proposition \ref{prop.L0.averageNT} and Lemma \ref{lem.hatN<M10eNDu}, we obtain the following.

\begin{corollary}\label{coro.L0.hatN}
    Under the assumptions of Proposition \ref{prop.L0.averageNT}, we have
    \begin{equation}\label{est.L0.hatNnt}
    \begin{aligned}
        & \bigg( \fint_{I_{r}} | \widehat{N}_\e(\nabla u)|^p d\sigma \bigg)^{1/p} \\
        & \lesssim \bigg( \fint_{I_{2r}} |M_\e^\partial(\nabla_{\tan} f)|^p d\sigma \bigg)^{1/p} + \bigg( \fint_{\partial T_{2r} \setminus I_{2r} } |\nabla_{\tan} f|^2 d\sigma \bigg)^{1/2}.
    \end{aligned}   
    \end{equation}
\end{corollary}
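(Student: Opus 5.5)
The plan is to combine Lemma \ref{lem.hatN<M10eNDu} with Proposition \ref{prop.L0.averageNT}, adjusting the averaging scale from $10\e$ to $\e$ along the way. Since $u$ solves $\cL_0(u)=0$ in $T_{2r}$, we have $u\in H^1(T_{2r})$, and $r>100\e$. Lemma \ref{lem.hatN<M10eNDu} therefore applies and gives, for every $Q\in I_r$,
\begin{equation*}
\widehat{N}_\e(\nabla u)(Q)\lesssim M_{10\e}^\partial(\widetilde{N}(\nabla u))(Q).
\end{equation*}
(The truncation inside $\widetilde N$ and $\widehat N_\e$ is taken relative to the graph domain, exactly as in the lemma.) Taking the $L^p(I_r)$ average of both sides reduces the corollary to bounding $\big(\fint_{I_r}|M_{10\e}^\partial(\widetilde N(\nabla u))|^p\big)^{1/p}$.

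Next I would compare averages at scale $10\e$ with averages at scale $\e$. Write $F=\widetilde N(\nabla u)$. For $Q\in I_r$, cover $I_{10\e}(Q)$ by boundedly many surface discs $I_\e(Q_k)$. Averaging over points $Q'\in I_{\e}(Q_k)$ then gives
\begin{equation*}
M_{10\e}^\partial(F)(Q)^2\lesssim \fint_{I_{11\e}(Q)} M_\e^\partial(F)^2\,d\sigma.
\end{equation*}
By H\"older's inequality (since $p\ge2$) and Fubini's theorem, it follows that
\begin{equation*}
\fint_{I_r}|M_{10\e}^\partial(F)|^p\lesssim \fint_{I_{r+11\e}}|M_\e^\partial(F)|^p\lesssim \fint_{I_{1.2 r}}|M_\e^\partial(F)|^p,
\end{equation*}
where the last step uses $r>100\e$.

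Finally, I would apply Proposition \ref{prop.L0.averageNT} on $I_{1.2r}$. The proposition is stated on $I_r$ with data on $I_{2r}$, so either state it with $r$ replaced by a slightly different radius, or cover $I_{1.2r}$ by finitely many discs $I_{r/10}(P_j)$ and apply it in $T_{r/5}(P_j)\subset T_{2r}$. In that second option, the boundary data of $u$ on $\partial T_{r/5}(P_j)\setminus I_{r/5}(P_j)$ is interior to $T_{2r}$, and this lateral term is then controlled through the standard $L^2$ nontangential estimate for $\cL_0$ together with a Fubini choice of the radius. The simplest route, which I would follow, is to note that the proof of Proposition \ref{prop.L0.averageNT} (a real-variable argument) works equally with $I_r$ replaced by $I_{1.2r}$ on the left. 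This gives exactly the right-hand side of \eqref{est.L0.hatNnt}. The main obstacle is this harmless enlargement $I_r\to I_{1.2r}$. If one insists on using the proposition verbatim, the fallback is the covering argument, with the lateral boundary term absorbed by an energy bound via Caccioppoli's inequality and the $L^2$ theory. Everything else is routine.
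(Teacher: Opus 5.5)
Your proposal is correct and follows the paper's proof. The paper's chain is: Lemma \ref{lem.hatN<M10eNDu}; then passing from $M_{10\e}^\partial$ to $M_\e^\partial$ at the cost of enlarging $I_r$ by $O(\e)$ (the paper cites Proposition \ref{prop.Me=MKe}, and your Fubini--H\"older argument does the same job); and finally Proposition \ref{prop.L0.averageNT} with slightly adjusted radii, using $r>100\e$, which is exactly your primary route of rerunning the real-variable proof with a slightly larger disc on the left. The covering fallback is unnecessary, and it would also need a step you did not mention: splitting off the part of $\widetilde N(\nabla u)$ at heights $\gtrsim r$, which the smaller domains $T_{r/5}(P_j)$ do not see.
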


\begin{proof}
    By Lemma \ref{lem.hatN<M10eNDu} and Proposition \ref{prop.Me=MKe}, we have
    \begin{equation}
    \begin{aligned}
        \bigg( \fint_{I_{r}} | \widehat{N}_\e(\nabla u)|^p d\sigma \bigg)^{1/p} & \lesssim \bigg( \fint_{I_{r}} | M_{10\e}^\partial (\nabla u)|^p d\sigma \bigg)^{1/p} \\
        & \lesssim \bigg( \fint_{I_{r+10\e }} | M_{\e}^\partial (\nabla u)|^p d\sigma \bigg)^{1/p}.
    \end{aligned}
    \end{equation}
    Since $r > 100\e$, we conclude \eqref{est.L0.hatNnt} from Proposition \ref{prop.L0.averageNT} (by slightly adjusting the size of $I_{2r}$).
\end{proof}


\begin{proof}[\textbf{Proof of Proposition \ref{prop.L0.averageNT}}]
    This is proved by the real-variable argument. By rescaling, it suffices to prove the case $r = 1$. Consider any $I_s(P) \subset I_1$ with $0 < s \le 1/10$. Let $\lambda = \fint_{I_{4s}(P)} f d\sigma$. Let $\phi$ be a cutoff function such that $\phi = 1$ in $B_{2s}(P)$, $\phi = 0$ in $\R^d\setminus B_{4s}(P)$ and $|\nabla \phi| \lesssim s^{-1}$. We decompose $u = v + w+\lambda$, where $v$ is the solution to
    \begin{equation}
    \left\{
    \begin{aligned}
        & \cL_0(v) = 0 \quad \text{in } T_{2r}, \\
        & v = (f - \lambda)\phi \in W^{1,2}(\partial T_{2r}) \quad \text{on } \partial T_{2r}.
    \end{aligned}
    \right.
    \end{equation}
    Since $T_2$ is a Lipschitz domain, we apply the $(R)_2$ estimate for the operator $\cL_0$ to get
    \begin{equation}
        \| \widetilde{N}(\nabla v) \|_{L^2(\partial T_2)} \lesssim \| \nabla_{\tan} ((f-\lambda) \phi ) \|_{L^2(I_{2})} \lesssim \| \nabla_{\tan} f \|_{L^2(I_{4s}(P))}.
    \end{equation}
    This implies
    \begin{equation}\label{est.localu0.v}
        \bigg( \fint_{I_{4s}(P)} |\widetilde{N}(\nabla v)|^2 d\sigma \bigg)^{1/2} \lesssim \bigg( \fint_{I_{4s}(P)} |\nabla_{\tan} f|^2 d\sigma \bigg)^{1/2}.
    \end{equation}

    Next, we consider $w$, which satisfies $\cL_0(w) = 0$ in $T_2$ and $w = 0$ in $I_{2s}(P)$. Then for any $Q \in I_s(P)$, we have
    \begin{equation}\label{est.ptws.NDw}
        \widetilde{N}(\nabla w)(Q) \lesssim \fint_{I_{2s}(P)} \widetilde{N}(\nabla w) d\sigma + \widetilde{N}^s(\nabla w)(Q).
    \end{equation}
    Since $I_{2s}(P)$ is a part of the $C^1$ boundary, then for any $q\in [2,\infty)$, by $(R)_q$ estimate and the local $W^{1,q}$ estimate \cite{Fabes1978, DPP07}, we have
    \begin{equation}
    \begin{aligned}
        \bigg( \fint_{I_s(P)} |\widetilde{N}^s(\nabla w)|^q d\sigma \bigg)^{1/q} & \lesssim \bigg( \fint_{T_{2s}(P)} |\nabla w|^2 \bigg)^{1/2} \\
        & \lesssim \bigg( \fint_{T_{2s}(P)} |\nabla v|^2 \bigg)^{1/2} + \bigg( \fint_{T_{2s}(P)} |\nabla u|^2 \bigg)^{1/2} \\
        & \lesssim \bigg( \fint_{I_{2s}(P)} |\widetilde{N} (\nabla v)|^2 \bigg)^{1/2} + \bigg( \fint_{I_{2s}(P)} |\widetilde{N}(\nabla u)|^2 \bigg)^{1/2} \\
        & \lesssim \bigg( \fint_{I_{4s}(P)} |\nabla_{\tan} f|^2 d\sigma \bigg)^{1/2} + \bigg( \fint_{I_{2s}(P)} |\widetilde{N}(\nabla u)|^2 \bigg)^{1/2},
    \end{aligned}
    \end{equation}
    where we have used \eqref{est.localu0.v} in the last inequality.
    Also, by the triangle inequality and \eqref{est.localu0.v}, we have
    \begin{equation}
        \fint_{I_{2s}(P)} \widetilde{N}(\nabla w) d\sigma \lesssim \bigg( \fint_{I_{4s}(P)} |\nabla_{\tan} f|^2 d\sigma \bigg)^{1/2} + \bigg( \fint_{I_{2s}(P)} |\widetilde{N}(\nabla u)|^2 \bigg)^{1/2}.
    \end{equation}
    Consequently, taking the $L^q$ average of \eqref{est.ptws.NDw} over $I_s(Q)$ and combining the last two inequalities, we have
    \begin{equation}
        \bigg( \fint_{I_{s}(P)} |\widetilde{N}(\nabla w)|^q d\sigma \bigg)^{1/q} \lesssim \bigg( \fint_{I_{4s}(P)} |\nabla_{\tan} f|^2 d\sigma \bigg)^{1/2} + \bigg( \fint_{I_{2s}(P)} |\widetilde{N}(\nabla u)|^2 \bigg)^{1/2}.
    \end{equation}

    Now, put $F = |\widetilde{N}(\nabla u)|, F_{I_{s}(P)} = |\widetilde{N}(\nabla v)|$ and $R_{I_s(P)} = |\widetilde{N}(\nabla w)|$. Then we have for any $0 < s < 1/10$, $F \le F_{I_{s}(P)} + R_{I_s(P)}$ and
    \begin{equation}
        \left\{ 
        \begin{aligned}
            & \bigg( \fint_{I_{4s}(P)} |F_{I_{s}(P)}|^2 d\sigma \bigg)^{1/2} \lesssim \bigg( \fint_{I_{4s}(P)} |\nabla_{\tan} f|^2 d\sigma \bigg)^{1/2},\\
            & \bigg( \fint_{I_{s}(P)} |R_{I_s(P)}|^q d\sigma \bigg)^{1/q} \lesssim \bigg( \fint_{I_{4s}(P)} |\nabla_{\tan} f|^2 d\sigma \bigg)^{1/2} + \bigg( \fint_{I_{2s}(P)} |F|^2 \bigg)^{1/2}.
        \end{aligned}
        \right.
    \end{equation}
    By Theorem \ref{thm.real-variable>t}, we derive
    \begin{equation}\label{est.MeF2Mef}
        \bigg( \fint_{I_1} |M_\e^\partial (F)|^p \bigg)^{1/p} \lesssim \bigg( \fint_{I_{3/2}} |M_\e^\partial (\nabla_{\tan} f)|^p d\sigma \bigg)^{1/p} + \bigg( \fint_{I_{3/2}} |M_\e^\partial (F)|^2 \bigg)^{1/2}.
    \end{equation}
    for $p\in [2,q)$ and any $0<\e \le 1$. But since $q$ can be taken arbitrarily large, $p$ can also be arbitrarily large. 

    Finally, by \eqref{est.Mef<f} (with $M_\e$ replaced by $M_\e^\partial$) and the $(R)_2$ estimate in the Lipschitz domain $T_2$, we have
    \begin{equation}
    \begin{aligned}
        \bigg( \fint_{I_{3/2}} |M_\e^\partial (F)|^2 \bigg)^{1/2} & \lesssim \bigg( \fint_{\partial T_2} |\widetilde{N}(\nabla u)|^2 \bigg)^{1/2} \\
        & \lesssim \bigg( \fint_{\partial T_2} |\nabla_{\tan} f|^2 \bigg)^{1/2} \\
        & \lesssim \bigg( \fint_{I_{2}} |M_\e^\partial(\nabla_{\tan} f)|^2 d\sigma \bigg)^{1/2} + \bigg( \fint_{\partial T_{2} \setminus I_{2} } |\nabla_{\tan} f|^2 d\sigma \bigg)^{1/2}.
    \end{aligned}  
    \end{equation}
    This together with \eqref{est.MeF2Mef} gives \eqref{est.L0.nt} for the case $r = 1$.
\end{proof}

Our second ingredient coming from the $C^1$ domains is the local large-scale $W^{1,p}$ estimate for the operator $\cL_\e$. This is a folklore result whose proof can be extracted from \cite[Chapter 4.3]{ShenBook}, \cite[Chapter 7]{AKM19}, etc. 

\begin{proposition}\label{prop.W1p.local}
    Let $\Omega$ be a $C^1$ graph domain.
    Let $u_\e$ be a weak solution of $\cL_\e(u_\e) = \nabla\cdot F$ in $T_{2r}$ and either $u_\e = 0$ or $\frac{\partial u_\e}{\partial \nu_\e} = 0$ on $I_{2r}$, where $0< r< 1$. Then for $2\le p < \infty$,
    \begin{equation}
        \bigg( \fint_{T_r} |M_\e(\nabla u_\e)|^p \bigg)^{1/p} \lesssim \bigg( \fint_{T_{2r}} |M_\e(F)|^p \bigg)^{1/p} + \bigg( \fint_{T_{2r}} |\nabla u_\e|^2 \bigg)^{1/2}.
    \end{equation}   
\end{proposition}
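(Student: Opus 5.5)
The plan is to prove Proposition \ref{prop.W1p.local} by the real-variable (Caffarelli--Peral / Shen) argument, with the averaged gradient $M_\e(\nabla u_\e)$ playing the role of $|\nabla u_\e|$. The approximating functions at large scales come from homogenization. At small scales no approximation is needed: $M_\e$ is essentially constant at scale $\e$. By rescaling we may assume $r=1$, and by the usual decomposition we may absorb the right-hand side $F$. Concretely, in each ball $B_s(P)$ meeting $T_1$ we split $u_\e=v_\e+w_\e$. Here $v_\e$ solves $\cL_\e(v_\e)=\nabla\cdot(F\mathbbm 1_{B_{4s}})$ with zero data of the same type, so energy gives $\fint|\nabla v_\e|^2\lesssim\fint_{B_{4s}}|F|^2$. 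The remainder $w_\e$ is $\cL_\e$-harmonic with zero Dirichlet or Neumann data on the boundary part. The goal is a reverse Hölder bound for $M_\e(\nabla w_\e)$ at every exponent $q<\infty$. Theorem \ref{thm.real-variable>t} (or its interior analogue, with $M_\e$ in place of $M^\partial_\e$) then gives the $L^p$ bound for all $2\le p<q$, with the averaged quantities on both sides.

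The key estimate is, for $D_{2s}=B_{2s}(P)\cap T_2$,
\begin{equation*}
\Big(\fint_{D_s}|M_\e(\nabla w_\e)|^q\Big)^{1/q}\lesssim\Big(\fint_{D_{2s}}|\nabla w_\e|^2\Big)^{1/2}.
\end{equation*}
We split into two regimes according to the size of $s$ relative to $\e$.

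\textbf{Case $s\le C\e$.} Here $M_\e(\nabla w_\e)(X)\lesssim(\e^{-d}\int_{B_{2\e}(X)\cap\Omega}|\nabla w_\e|^2)^{1/2}$, and this is comparable for all $X\in D_s$. The bound is then trivial after enlarging balls to scale $C\e$, exactly as in Case 1 of Lemma \ref{lem.RH}.

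\textbf{Case $s\ge C\e$.} We use the large-scale boundary Lipschitz estimate for $\cL_\e$ in $C^1$ domains. For $C^1$ domains only the large-scale Hölder$\,C^{\alpha}$-for-all-$\alpha<1$ version is available, obtained from convergence rates via the excess-decay scheme of \cite{AKM19, ShenBook}; the interior version is \eqref{lip}. For every $X\in D_s$ and every $\rho\in[\e,s]$ it yields
\begin{equation*}
\Big(\fint_{B_\rho(X)\cap\Omega}|\nabla w_\e|^2\Big)^{1/2}\lesssim (s/\rho)^{\kappa}\Big(\fint_{D_{2s}}|\nabla w_\e|^2\Big)^{1/2},
\end{equation*}
with $\kappa>0$ as small as we like. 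We apply this at $\rho=\delta(X)$ or at the boundary point nearest $X$, combined with interior large-scale Lipschitz estimates down to scale $\e$. This controls $M_\e(\nabla w_\e)(X)$ by $(s/\max\{\delta(X),\e\})^{\kappa}$ times the $L^2$ average. Since $\kappa q<1$, the weight $\delta(X)^{-\kappa q}$ is integrable in the boundary layer, which gives the $L^q$ reverse Hölder bound for any $q<1/\kappa$. Letting $\kappa\to0$ makes $q$ arbitrarily large.

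The main obstacle is this large-scale boundary $C^{1-}$ estimate in a $C^1$ (not $C^{1,\alpha}$) domain without smoothness of $A$. I would establish it by comparing $w_\e$ with the $\cL_0$-solution in $D_{t}$ at each scale $t\ge\e$. The comparison uses the $O((\e/t)^{\gamma})$ convergence rate, and the $\cL_0$-solution satisfies boundary $W^{1,q}$/$C^{\alpha}$ estimates in $C^1$ domains for every $\alpha<1$. After this comparison, the standard Campanato iteration down to scale $\e$ closes the argument. The Neumann case is identical, using the Neumann convergence rate. Finally we sum the real-variable conclusion and remove the averaging on the right via $\fint|M_\e(F)|^p\gtrsim$ the needed quantity (Proposition \ref{prop.A1}). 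For the lower-order term we use $\fint_{T_2}|M_\e\nabla u_\e|^2\lesssim\fint_{T_{2}}|\nabla u_\e|^2$.
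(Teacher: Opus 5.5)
Your proposal is correct and is essentially the argument the paper has in mind: the paper gives no proof of Proposition \ref{prop.W1p.local}, calling it folklore extractable from \cite[Chapter 4.3]{ShenBook} and \cite[Chapter 7]{AKM19}, and the route there is the one you take. That route is a real-variable argument for the $\e$-averaged gradient, with the reverse H\"older input supplied by large-scale boundary $C^{\alpha}$ estimates for every $\alpha<1$ (via convergence rates and the regularity of $\cL_0$ in $C^1$ domains), interior large-scale Lipschitz estimates, and the integrability of $(\max\{\delta(X),\e\})^{-\kappa q}$ for $\kappa q<1$. Two minor points to tidy: apply the real-variable theorem to $M_\e(\nabla u_\e)$ itself with $R_B=M_\e(\nabla w_\e)$, since Theorem \ref{thm.real-variable>t} as stated requires an unaveraged $L^q$ bound on $R_B$, which is unavailable for rough $A$. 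Also, your displayed key estimate is false as written when $s\ll\e$, so below scale $\e$ the condition should come from the self-averaging property of $M_\e$ (Proposition \ref{prop.Mef-Bs}), as in Case 1 of Lemma \ref{lem.RH}, or be dropped altogether because Theorem \ref{thm.real-variable>t} applied to $M_\e(\nabla u_\e)$ only needs balls of diameter above $2\e$.
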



Now we are ready to prove Theorem \ref{thm.LocalRellich.Rp}.
By rescaling, without loss of generality, assume $r = 1 \ge 100 \e$. Let $u_\e$ be a solution of \eqref{eq.Rp.T2r}. Let $u_0$ be the corresponding homogenized solution,  i.e.,
    \begin{equation}\label{eq.L0Rp.T2r}
    \left\{
    \begin{aligned}
        & \cL_0(u_0) = 0 \quad \text{in } T_{2}, \\
        & u_0 = f \in W^{1,2}(\partial T_{2}) \quad \text{on } \partial T_{2},
    \end{aligned}
    \right.
    \end{equation}
    where $f=u_\e|_{\partial T_{2}}$.
Let $\eta_\e \in C_0^\infty(T_{2})$ be a cutoff function such that $\eta_\e = 1$ in $T_{2}^{6\e} = T_2 \setminus T_{2,6\e}$, $\eta_\e = 0$ in $T_{2,5\e}$ and $|\nabla \eta_\e| \lesssim \e^{-1}$.
Consider
\begin{equation}\label{def.we}
    w_\e = u_\e - u_0 - \e \chi_j(X/\e) K_\e (\partial_j u_0) \eta_\e.
\end{equation}
Then by a standard calculation (see \cite[Lemma 7.3]{Shen17}), we get
\begin{equation}\label{eq.we.inT2}
\left\{
\begin{aligned}
    & \cL_\e(w_\e) = \nabla\cdot F_\e \quad \text{in } T_2,\\
    & w_\e = 0 \quad \text{ on } \partial T_2,
\end{aligned}
    \right.
\end{equation} 
where $F_\e = (F_{\e,i})$ is given by
\begin{equation}\label{def.Fe}
\begin{aligned}
    F_{\e,i} & = (a_{ij}(X/\e) - \widehat{a}_{ij})(\partial_j u_0 - K_\e(\partial_j u_0) \eta_\e ) - \e \phi_{kij}(X/\e) \partial_k (K_\e(\partial_j u_0) \eta_\e )  \\
    & \quad  + \e a_{ij}(X/\e) \chi_k(X/\e) \partial_j ( K_\e (\partial_k u_0 ) \eta_\e).
\end{aligned}
\end{equation}

We state a lemma for the $L^2$ estimate of $F_\e$, which is essentially contained in the proof of \cite[Theorem 2.6]{Shen17} (also see \cite[Theorem 3.2.3]{ShenBook}).

\begin{lemma}\label{lem.weL2}
Under the above assumptions and for $F_\e$ given by \eqref{def.Fe}, we have
\begin{equation}\label{est.MeFe1}
    \| F_\e \|_{L^2(T_2)} \lesssim \e^{1/2} \| \nabla_{\tan} f \|_{L^2(\partial T_2)}.
\end{equation}
\end{lemma}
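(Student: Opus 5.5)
We split $F_\e$ into the three terms of \eqref{def.Fe} and estimate each in $L^2(T_2)$. The tools are: the boundedness of $\chi$ and of the flux correctors $\phi_{kij}$ in $L^2$ of unit cells (periodicity), and the standard inequality $\|g(X/\e) K_\e h\|_{L^2} \lesssim \|g\|_{L^2(Y)} \|h\|_{L^2}$ for 1-periodic $g$. The key input from the homogenized problem is the $(R)_2$ estimate for $\cL_0$ in the Lipschitz domain $T_2$,
\[
\|\widetilde N(\nabla u_0)\|_{L^2(\partial T_2)} \lesssim \|\nabla_{\tan} f\|_{L^2(\partial T_2)}.
\]
This gives two consequences. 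First, the layer bound $\int_{T_{2,C\e}} |\nabla u_0|^2 \lesssim \e\|\nabla_{\tan} f\|_{L^2(\partial T_2)}^2$. Second, via interior estimates for the constant-coefficient operator $\cL_0$, the weighted bound
\[
\int_{T_2\setminus T_{2,4\e}} |\nabla^2 u_0|^2 \lesssim \int_{T_2\setminus T_{2,4\e}} \delta^{-2}\fint_{B(X,\delta/2)}|\nabla u_0|^2 \lesssim \|\widetilde N(\nabla u_0)\|^2_{L^2}\int_{4\e}^{C} t^{-2}\,dt \lesssim \e^{-1}\|\nabla_{\tan} f\|^2_{L^2(\partial T_2)}.
\]
Here $\delta=\dist(X,\partial T_2)$.

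\textbf{First term.} Write
\[
\partial_j u_0 - K_\e(\partial_j u_0)\eta_\e = (1-\eta_\e)\partial_j u_0 + \eta_\e(\partial_j u_0 - K_\e \partial_j u_0).
\]
The piece $(1-\eta_\e)\partial_j u_0$ is supported in $T_{2,6\e}$, so the layer bound gives an $L^2$ norm $\lesssim \e^{1/2}\|\nabla_{\tan} f\|$. On the support of $\eta_\e$ we have $\delta \ge 5\e$, so $K_\e$ only samples points at distance $\ge 4\e$ from the boundary. Hence
\[
|\partial_j u_0 - K_\e\partial_j u_0| \lesssim \e \Big(\fint_{B(X,\e)}|\nabla^2 u_0|^2\Big)^{1/2},
\]
and the $L^2$ norm of this piece is $\lesssim \e\|\nabla^2 u_0\|_{L^2(T_2\setminus T_{2,4\e})} \lesssim \e^{1/2}\|\nabla_{\tan} f\|$.

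\textbf{Second and third terms.} These are handled identically. Expanding $\partial_k(K_\e(\partial_j u_0)\eta_\e)$ produces two pieces.
\begin{itemize}
\item $\e\,\phi(X/\e) K_\e(\nabla^2 u_0)\eta_\e$. By the periodic smoothing inequality and the weighted bound, its $L^2$ norm is $\lesssim \e\|\nabla^2 u_0\|_{L^2(T_2\setminus T_{2,4\e})} \lesssim \e^{1/2}\|\nabla_{\tan} f\|$.
\item $\e\,\phi(X/\e) K_\e(\nabla u_0)\nabla\eta_\e$. This is supported in $T_{2,6\e}$ with $\e|\nabla\eta_\e|\lesssim 1$, so its $L^2$ norm is $\lesssim \|\nabla u_0\|_{L^2(T_{2,7\e})} \lesssim \e^{1/2}\|\nabla_{\tan} f\|$ by the layer bound.
\end{itemize}
Since the flux correctors $\phi$ may be unbounded, the smoothing $K_\e$ is essential in both pieces. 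The same bounds apply to $\chi$.

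\textbf{Main difficulty.} The delicate step is the weighted second-derivative bound and the layer bound for $u_0$, which depend on the Lipschitz $(R)_2$ theory for $\cL_0$ on $T_2$. The natural route is through the nontangential maximal function: by Fubini, $\int_{T_{2,t}}|\nabla u_0|^2 \lesssim t\,\|\widetilde N(\nabla u_0)\|^2_{L^2}$ for $t\ge \e$. Pointwise control near the edges of $T_2$, which are corners of a Lipschitz domain, is harmless because the cones there are still admissible.
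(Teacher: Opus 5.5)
Your proposal is correct and follows essentially the same route the paper relies on. The paper cites \cite[Theorem 2.6]{Shen17} for this lemma, and its proof of the $L^p$ analogue, Lemma \ref{lem.weLp}, uses your scheme: split $F_\e$ into its three terms, bound the layer pieces by $\|\nabla u_0\|_{L^2(T_{2,C\e})}$ and the interior pieces by $\e\|\nabla^2 u_0\|_{L^2(T_2\setminus T_{2,4\e})}$, and control both via $|\nabla^2 u_0(X)|\lesssim \delta(X)^{-1}(\fint_{B(X,\delta(X)/2)}|\nabla u_0|^2)^{1/2}$ and Fubini/co-area against a nontangential maximal function, with the $(R)_2$ estimate for $\cL_0$ in the Lipschitz domain $T_2$ playing the role of Corollary \ref{coro.L0.hatN}.
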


The next lemma provides the large-scale $L^p$ estimate of $F_\e$ for $p>2$.

\begin{lemma}\label{lem.weLp}
Under the above assumptions and for $F_\e$ given by \eqref{def.Fe}, we have, for any $2<p<\infty$,
\begin{equation}\label{est.MeFe}
    \| M_\e(F_\e) \|_{L^p(T_{3/4})} \lesssim \e^{1/p} \big( \| M_\e(\nabla_{\tan} f) \|_{L^p(I_2)} + \| \nabla_{\tan} f \|_{L^2(\partial T_2 \setminus I_2)} \big).
\end{equation}
\end{lemma}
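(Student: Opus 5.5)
The plan is to split $F_\e$ in \eqref{def.Fe} into a boundary-layer part, supported where $\eta_\e\neq 1$, and an interior part, where $\eta_\e=1$. Each part is then controlled by the $\e$-averaged nontangential maximal function of $\nabla u_0$, which Proposition \ref{prop.L0.averageNT} bounds in $L^p$.

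\textbf{Pointwise bounds.} Since $\chi$ and $\phi_{kij}$ are periodic and belong to $L^2_{\rm loc}$ (not $L^\infty$), we cannot bound them pointwise. Instead we use $M_\e$. For $X\in T_{3/4}$, the ball $B_\e(X)$ meets only $O(1)$ periodic cells. The functions $K_\e(\nabla u_0)$, $\nabla K_\e(\nabla u_0)$ and $\nabla\eta_\e$ vary slowly at scale $\e$, so we may pull their local suprema out of the average. Together with $\fint_{B_\e}|\chi(\cdot/\e)|^2+|\phi(\cdot/\e)|^2\lesssim 1$, this gives the following estimates.

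In the layer $T_{2,7\e}$ we have
$$M_\e(F_\e)(X)\lesssim \sup_{B_{2\e}(X)\cap T_2}|K_\e(\nabla u_0)|+\Big(\fint_{B_{2\e}(X)\cap T_2}|\nabla u_0|^2\Big)^{1/2}\lesssim M_{C\e}^\partial(\widetilde N(\nabla u_0))(\proj X).$$

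In the interior region $\delta(X)\ge 7\e$, the first term $(A-\widehat A)(\nabla u_0-K_\e\nabla u_0)$ is bounded by $\e|\nabla^2u_0|$ averaged at scale $\e$. The other terms are $\e\,|\nabla K_\e\nabla u_0|\lesssim \e \sup_{B_\e}|\nabla^2 u_0|$. Interior estimates for the constant-coefficient operator $\cL_0$ then give
$$M_\e(F_\e)(X)\lesssim \frac{\e}{\delta(X)}\Big(\fint_{B(X,\delta(X)/4)}|\nabla u_0|^2\Big)^{1/2}\lesssim \frac{\e}{\delta(X)}\,\widetilde N(\nabla u_0)(Q)$$
for every $Q$ with $X\in\Gamma(Q)$.

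\textbf{Integration.} For the layer part, we integrate over $T_{3/4}\cap\Omega_{7\e}$ with Fubini. This gives $\e\int_{I_1}|M_{C\e}^\partial(\widetilde N(\nabla u_0))|^p d\sigma$. For the interior part, we integrate the $p$-th power in the normal variable $t=\delta(X)$:
$$\int_{7\e}^1 \Big(\frac{\e}{t}\Big)^p dt\lesssim \e \qquad (p>1).$$
To keep averages at scale $\e$, we bound $\widetilde N(\nabla u_0)(Q)$ by its $\e$-average over $I_{c\e}(\proj X)$, using the cone-overlap argument of \eqref{est.claim-1}. Together this yields
$$\|M_\e(F_\e)\|_{L^p(T_{3/4})}\lesssim \e^{1/p}\,\|M_{C\e}^\partial(\widetilde N(\nabla u_0))\|_{L^p(I_1)}.$$

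\textbf{Conclusion.} We replace $M_{C\e}^\partial$ by $M_\e^\partial$ using Proposition \ref{prop.Me=MKe} and then apply Proposition \ref{prop.L0.averageNT} with $r=1$. This bounds the right side by $\|M_\e^\partial(\nabla_{\tan} f)\|_{L^p(I_2)}+\|\nabla_{\tan}f\|_{L^2(\partial T_2\setminus I_2)}$, which is \eqref{est.MeFe}.

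\textbf{Main obstacle.} The hardest step is the pointwise control of the layer and transition terms. Near $\partial T_2$, $K_\e(\nabla u_0)$ samples $\nabla u_0$ within $\e$ of the boundary, and the term $\chi\,\nabla\eta_\e\sim\e^{-1}$ is present. We must check that $\e\,|\chi|\,|\nabla\eta_\e|\,|K_\e\nabla u_0|\lesssim|\chi|\,|K_\e\nabla u_0|$ and that $\sup_{B_\e(X)}|K_\e\nabla u_0|$ is dominated by the $\e$-boundary average of $\widetilde N(\nabla u_0)$. This should follow because $\eta_\e$ vanishes on $T_{2,5\e}$, so $K_\e$ only sees points at distance $\ge 4\e$ from $\partial T_2$. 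Those points lie in cones with apertures comparable to $\e$-balls on $I_1$.
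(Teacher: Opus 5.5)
Your argument is correct and follows essentially the same route as the paper. Both bound $\nabla^2 u_0$ in the region $\delta\gtrsim\e$ by $\delta(X)^{-1}$ times an $\e$-averaged nontangential maximal function of $\nabla u_0$ (your $\int_{c\e}^1(\e/t)^p\,dt\lesssim\e$ is the paper's co-area step), control the $O(\e)$ layer by Fubini, and close with Proposition \ref{prop.L0.averageNT}; the only difference is organizational, since you split by region and pull the slowly varying factors out of $M_\e$ pointwise, whereas the paper splits term by term, uses Propositions \ref{prop.f-Kef} and \ref{prop.Me(gKef)}, and routes the bookkeeping through $\widehat{N}_\e$, Lemma \ref{lem.hatN<M10eNDu} and Corollary \ref{coro.L0.hatN}.
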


\begin{proof}
According to the three terms on the right-hand side of \eqref{def.Fe}, we write $F_\e = F_\e^{1} + F_\e^2 + F_\e^3$.
We begin with the estimate of $F_\e^1$.
By the triangle inequality, we have
\begin{equation}\label{def.Fe1}
    |F_\e^1| \lesssim |(\nabla u_0 - K_\e(\nabla u_0)) \eta_\e| + |\nabla u_0 (1-\eta_\e)|.
\end{equation}
By a property of $K_\e$ in Proposition \ref{prop.f-Kef} and using the support of $\eta_\e$, we have
\begin{equation}\label{est.Du0-KeDu0}
    \| (\nabla u_0 - K_\e(\nabla u_0)) \eta_\e \|_{L^p(T_{4/5})} \lesssim \e \| \nabla^2 u_0 \|_{L^p(T_{1}^{4\e})}.
\end{equation}
Now, we estimate $\nabla^2 u_0(X)$ for $X\in T_{1}^{4\e}$. 
In fact, if $X \in  T_{1}^{4\e}$ for some $\proj(X) = Q\in I_1$, we have, by the interior estimate for the $\widehat{A}$-harmonic function $u_0$,
\begin{equation}
    |\nabla^2 u_0(X)| \lesssim \delta(X)^{-1} \bigg( \fint_{B(X,\delta(X)/2)} |\nabla u_0|^2 \bigg)^{1/2} \lesssim \delta(X)^{-1} \widehat{N}_\e(\nabla u_0)(Q).
\end{equation}
This implies that for any $4\e < t < 1$
\begin{equation}
    \int_{ T_1 \cap \{\delta(X) = t\} } |\nabla^2 u_0(X)|^p d\sigma(X) \lesssim t^{-p} \int_{I_1} |\widehat{N}_\e(\nabla u_0)(Q)|^p d\sigma(Q).
\end{equation}
By the co-area formula, we have
\begin{equation}\label{est.D2u0.coarea}
\begin{aligned}
    \int_{T_1^{4\e}} |\nabla^2 u_0(X)|^p dX 
    & \lesssim \int_{4\e}^{1} dt \int_{T_1 \cap \{ \delta(X) = t\} } |\nabla^2 u_0(X)|^p d\sigma(X) \\
    & \lesssim \int_{4\e}^{1} t^{-p} dt \int_{I_1 } |\widehat{N}_\e(\nabla u_0)(Q)|^p d\sigma(Q) \\
    & \lesssim \e^{1-p} \bigg\{ \int_{I_2} |M_\e^\partial(\nabla_{\tan} f)|^p d\sigma + \bigg( \int_{\partial T_{2} \setminus I_{2} } |\nabla_{\tan} f|^2 d\sigma \bigg)^{p/2} \bigg\},
\end{aligned}
\end{equation}
where we have used Corollary \ref{coro.L0.hatN} in the last inequality.
Combining this with \eqref{est.Du0-KeDu0}, we obtain
\begin{equation}\label{est.Du0-KDu0.final}
    \| (\nabla u_0 - K_\e(\nabla u_0)) \eta_\e \|_{L^p(T_{4/5})}  \lesssim \e^{1/p} \big( \| M_\e^\partial(\nabla_{\tan} f) \|_{L^p(I_2)} + \| \nabla_{\tan} f \|_{L^2(\partial T_2 \setminus I_2)} \big).
\end{equation}

Next, we estimate $M_\e(\nabla u_0(1-\eta_\e))$. Note that $1-\eta_\e$ is supported in 
$T_{2,4\e}$. Thus,
\begin{equation}\label{est.Du0.elayer}
\| M_\e(\nabla u_0(1-\eta_\e)) \|_{L^p(T_{3/4})} \lesssim \| M_\e(\nabla u_0) \|_{L^p(T_{1} \cap \Omega_{7\e})}.
\end{equation}
By virtue of the second term in \eqref{def.hatN} for the definition of $\widehat{N}_\e(\nabla u_0)$, we see
\begin{equation}\label{est.MeDu0Lp}
\begin{aligned}
    \| M_\e(\nabla u_0) \|_{L^p(T_{1} \cap \Omega_{7\e})} & \lesssim \| M_{10\e}^\star(\nabla u_0) \|_{L^p(I_1)} \lesssim \e^{1/p} \| \widehat{N}_\e(\nabla u_0) \|_{L^p(I_1)} \\
    & \lesssim \e^{1/p} \big( \| M_\e^\partial(\nabla_{\tan} f) \|_{L^p(I_2)} + \| \nabla_{\tan} f \|_{L^2(\partial T_2 \setminus I_2)} \big).
\end{aligned}
\end{equation}
Taking this into \eqref{est.Du0.elayer}, and combining it with \eqref{def.Fe1} and \eqref{est.Du0-KDu0.final}, we arrive at
\begin{equation}\label{est.Fe1}
\begin{aligned}
    \| M_\e(F_\e^1) \|_{L^p(T_{3/4})} & \lesssim \| (\nabla u_0 - K_\e(\nabla u_0)) \eta_\e \|_{L^p(T_{4/5})} \| + \| M_\e(\nabla u_0(1-\eta_\e)) \|_{L^p(T_{3/4})} \\
    & \lesssim \e^{1/p} \big( \| M_\e^\partial(\nabla_{\tan} f) \|_{L^p(I_2)} + \| \nabla_{\tan} f \|_{L^2(\partial T_2 \setminus I_2)} \big).
\end{aligned}
\end{equation}

Next, we estimate
\begin{equation}\label{def.Fe2}
    |M_\e(F_\e^2)| \lesssim \e |M_\e(\phi(X/\e) K_\e(\nabla^2 u_0) \eta_\e)| + \e |M_\e(\nabla \eta_\e \phi(X/\e) K_\e(\nabla u_0))|.
\end{equation}
The estimates of these two terms are similar to $F_\e^1$. For the first term, note that $\eta_\e$ is supported in $T_2^{5\e}$ and $\phi(Y)$ is a local $L^2$ periodic function. Thus, by Proposition \ref{prop.Me(gKef)},
we have
\begin{equation}\label{est.phi.KeDDu0}
\begin{aligned}
    \e \| M_\e(\phi(X/\e) K_\e(\nabla^2 u_0) \eta_\e) \|_{L^p(T_{3/4})} & \lesssim \e \| M_\e(\phi(X/\e) K_\e(\nabla^2 u_0)) \|_{L^p(T_{4/5}^{4\e})} \\
    & \lesssim \e \| \nabla^2 u_0 \|_{L^p(T_1^{2\e})} \\
    & \lesssim \e^{1/p} \big( \| M_\e^\partial(\nabla_{\tan} f) \|_{L^p(I_2)} + \| \nabla_{\tan} f \|_{L^2(\partial T_2 \setminus I_2)} \big),
\end{aligned}
\end{equation}
where the last inequality follows from  \eqref{est.D2u0.coarea} as before. For the second term in  \eqref{def.Fe2}, using the fact that $\nabla \eta_\e$ is supported in $T_{2,6\e} \setminus T_{2, 5\e}$ and the interior estimate $|\nabla u_0(X)| \lesssim M_\e(\nabla u_0)(X)$ for $X\in T_2^\e$, as well as Proposition \ref{prop.Me(gKef)}, we have
\begin{equation}\label{est.phi.KeDu0}
\begin{aligned}
    \e \| M_\e( \nabla \eta_\e \phi(X/\e) K_\e(\nabla u_0) )\|_{L^p(T_{3/4})} & \lesssim \| M_\e( \phi(X/\e) K_\e(\nabla u_0)) \|_{L^p(T_{4/5} \cap (T_{2,7\e} \setminus T_{2,4\e}))} \\
    & \lesssim \| \nabla u_0 \|_{L^p(T_{1} \cap (T_{2,9\e} \setminus T_{2,2\e}) )} \\
    & \lesssim \| M_\e(\nabla u_0) \|_{L^p(T_1 \cap T_{2,9\e})}  \\
    & \lesssim \e^{1/p} \big( \| M_\e^\partial(\nabla_{\tan} f) \|_{L^p(I_2)} + \| \nabla_{\tan} f \|_{L^2(\partial T_2 \setminus I_2)} \big),
\end{aligned}
\end{equation}
where we have used an estimate similar to \eqref{est.MeDu0Lp} in the last inequality. Consequently, we have proved
\begin{equation}\label{est.Fe2}
    \| M_\e( F_\e^2) \|_{L^p(T_{3/4})} \lesssim \e^{1/p} \big( \| M_\e^\partial(\nabla_{\tan} f) \|_{L^p(I_2)} + \| \nabla_{\tan} f \|_{L^2(\partial T_2 \setminus I_2)} \big).
\end{equation}

Finally, the estimate of $M_\e(F_\e^3)$ is the same as in $M_\e(F_\e^2)$ and we have
\begin{equation}\label{est.Fe3}
    \| M_\e(F_\e^3) \|_{L^p(T_{3/4})} \lesssim \e \big( \| M_\e^\partial(\nabla_{\tan} f) \|_{L^p(I_2)} + \| \nabla_{\tan} f \|_{L^2(\partial T_2 \setminus I_2)} \big).
\end{equation}
Thus, the desired estimate \eqref{est.MeFe} follows from \eqref{est.Fe1}, \eqref{est.Fe2} and \eqref{est.Fe3}.
\end{proof}

\begin{proposition}\label{prop.Dwe.Lp}
    Let $\Omega$ be a $C^1$ graph domain and $w_\e$ be given by \eqref{def.we}. Then for any $p\in [2,\infty)$,
    \begin{equation}
        \| M_\e(\nabla w_\e) \|_{L^p(T_{1/2})} \lesssim \e^{1/p} \big( \| M_\e^\partial(\nabla_{\tan} f) \|_{L^p(I_2)} + \| \nabla_{\tan} f \|_{L^2(\partial T_2 \setminus I_2)} \big).
    \end{equation}
\end{proposition}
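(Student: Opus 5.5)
The plan is to combine the large-scale $W^{1,p}$ estimate of Proposition \ref{prop.W1p.local}, applied to $w_\e$ on $T_{1/2}\subset T_{3/4}$, with the source bound of Lemma \ref{lem.weLp} and the energy bound of Lemma \ref{lem.weL2}.

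By \eqref{eq.we.inT2}, $w_\e$ solves $\cL_\e(w_\e)=\nabla\cdot F_\e$ in $T_2$ with $w_\e=0$ on $\partial T_2\supset I_{2}$. Taking $r=1/2$ in Proposition \ref{prop.W1p.local}, so that $T_{2r}=T_1$, gives
\begin{equation*}
\bigg(\fint_{T_{1/2}} |M_\e(\nabla w_\e)|^p\bigg)^{1/p} \lesssim \bigg(\fint_{T_{1}} |M_\e(F_\e)|^p\bigg)^{1/p} + \bigg(\fint_{T_{1}} |\nabla w_\e|^2\bigg)^{1/2}.
\end{equation*}
Lemma \ref{lem.weLp} controls $F_\e$ only on $T_{3/4}$, so I would use a slightly smaller ball, or rescale/cover, so that $T_{2r}\subset T_{3/4}$. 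For example, take $r=3/8$ and cover $T_{1/2}\cap\Omega_{c}$ by finitely many boxes $T_{s}(Q)$, $Q\in I_{1/2}$, with $T_{2s}(Q)\subset T_{3/4}$, treating interior points by the interior large-scale $W^{1,p}$ estimate. With this, the first term is bounded directly by \eqref{est.MeFe}, giving $\e^{1/p}(\|M_\e^\partial(\nabla_{\tan}f)\|_{L^p(I_2)}+\|\nabla_{\tan} f\|_{L^2(\partial T_2\setminus I_2)})$. One should check that Lemma \ref{lem.weLp}, stated with $M_\e(\nabla_{\tan} f)$, is the same as $M^\partial_\e$; I take this as intended.

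The second term is the main obstacle, since the target factor is $\e^{1/p}$ while energy gives $\e^{1/2}$, and $\e^{1/2}\le \e^{1/p}$ for $p\ge2$ and $\e\le 1$. Testing \eqref{eq.we.inT2} with $w_\e$ (which vanishes on all of $\partial T_2$) gives $\|\nabla w_\e\|_{L^2(T_2)}\lesssim \|F_\e\|_{L^2(T_2)}$. By Lemma \ref{lem.weL2} this is at most $\e^{1/2}\|\nabla_{\tan} f\|_{L^2(\partial T_2)}$. It remains to bound $\|\nabla_{\tan} f\|_{L^2(I_2)}$ by $\|M^\partial_\e(\nabla_{\tan} f)\|_{L^p(I_2)}$. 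This follows from $\|g\|_{L^2(I_2)}\lesssim \|M^\partial_\e g\|_{L^2(I_{2+\e})}$ (Fubini, the analogue of \eqref{est.f<Mef}) and then H\"older on the bounded set. The slight enlargement $I_{2+\e}$ is absorbed by adjusting the sizes of $T_2$ and $I_2$ at the start, as done elsewhere in the paper.

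Combining the two bounds and multiplying the averages by $|T_{1/2}|^{1/p}\simeq 1$ yields the claim. One technical point remains: the covering and boundary-region bookkeeping near the lateral boundary of $T_{1/2}$. There $\partial T_2$ is not $C^1$, so I would keep all boxes inside $T_{3/4}$ and away from the lateral sides, with the $C^1$ part $I_2$ as their only boundary portion.
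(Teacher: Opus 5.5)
Your proposal is correct and follows the paper's proof. Like the paper, you apply Proposition \ref{prop.W1p.local} to \eqref{eq.we.inT2}, bound the source term by Lemma \ref{lem.weLp}, and bound the $L^2$ term by the energy estimate together with Lemma \ref{lem.weL2}. You are in fact more careful than the paper, which silently passes from $T_1$ to $T_{3/4}$ in the source term and leaves implicit the conversion of $\e^{1/2}\|\nabla_{\tan} f\|_{L^2(I_2)}$ into $\e^{1/p}\|M_\e^\partial(\nabla_{\tan} f)\|_{L^p(I_2)}$. That conversion needs no enlargement of $I_2$, since the analogue of \eqref{est.f<Mef} applies directly with $\mathcal{O}=I_2$.
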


\begin{proof}
    Applying Proposition \ref{prop.W1p.local} to the equation \eqref{eq.we.inT2} for $w_\e$, we have
    \begin{equation}\label{est.MeWe.T1/2}
        \| M_\e(\nabla w_\e) \|_{L^p(T_{1/2})} \lesssim \| M_\e(F_\e) \|_{L^p(T_{3/4})} + \| \nabla w_\e \|_{L^2(T_1)}.
    \end{equation}
    By the energy estimate for \eqref{eq.we.inT2} and Lemma \ref{lem.weL2},
    \begin{equation}
        \| \nabla w_\e \|_{L^2(T_2)} \lesssim  \| F_\e \|_{L^2(T_2)} \lesssim \e^{1/2} \| \nabla_{\tan} f \|_{L^2(\partial T_2)}.
    \end{equation}
 Inserting this into \eqref{est.MeWe.T1/2} and using Lemma \ref{lem.weLp}, we obtain the desired estimate.
\end{proof}

\begin{proof}[\textbf{Proof of Theorem \ref{thm.LocalRellich.Rp}}]
    It suffices to consider $\e < t < r = 1$. First of all, by Proposition \ref{prop.Dwe.Lp} and \eqref{def.we}, we have
    \begin{equation}
    \begin{aligned}
        &\| M_\e(\nabla u_\e) \|_{L^p(T_{1/2} \cap \Omega_t)} \\
        & \lesssim \e^{1/p} \big( \| M_\e^\partial(\nabla_{\tan} f) \|_{L^p(I_2)} + \| \nabla_{\tan} f \|_{L^2(\partial T_2 \setminus I_2)} \big) \\
        & \qquad + \| M_\e(\nabla u_0) \|_{L^p(T_{1/2} \cap \Omega_t)} +  \| M_\e(\nabla ( \e \chi(X/\e) K_\e(\nabla u_0) \eta_\e ) )\|_{L^p(T_{1/2}\cap \Omega_t)}.
        \end{aligned}
    \end{equation}
    By mimicking the estimate \eqref{est.MeDu0Lp}, we have
    \begin{equation}
        \| M_\e(\nabla u_0) \|_{L^p(T_{1/2} \cap \Omega_t)} \lesssim t^{1/p} \big( \| M_\e^\partial(\nabla_{\tan} f) \|_{L^p(I_2)} + \| \nabla_{\tan} f \|_{L^2(\partial T_2 \setminus I_2)} \big).
    \end{equation}
    By mimicking the estimate for $F_\e^2$ in the proof of Lemma \ref{lem.weLp} (using Proposition \ref{prop.Me(gKef)}), we can show
    \begin{equation}
    \begin{aligned}
        & \| M_\e(\nabla ( \e \chi(X/\e) K_\e(\nabla u_0) \eta_\e ) )\|_{L^p(T_{1/2}\cap \Omega_t)} \\
        & \qquad \lesssim t^{1/p} \big( \| M_\e^\partial(\nabla_{\tan} f) \|_{L^p(I_2)} + \| \nabla_{\tan} f \|_{L^2(\partial T_2 \setminus I_2)} \big).
    \end{aligned}
    \end{equation}
    Consequently, we arrive at
    \begin{equation}\label{est.MeDue.Ot}
        \| M_\e(\nabla u_\e) \|_{L^p(T_{1/2} \cap \Omega_t)} \lesssim t^{1/p} \big( \| M_\e^\partial(\nabla_{\tan} f) \|_{L^p(I_2)} + \| \nabla_{\tan} f \|_{L^2(\partial T_2 \setminus I_2)} \big).
    \end{equation}

    Now, we apply \eqref{est.MeDue.Ot} in domains $T_{2s}$ as $s$ varies in $[1/2,1]$. It follows that
    \begin{equation}\label{est.MeDue.Ts}
        \| M_\e(\nabla u_\e) \|_{L^p(T_{s/2} \cap \Omega_t)} \lesssim t^{1/p} \big( \| M_\e^\partial(\nabla_{\tan} f) \|_{L^p(I_{2s})} + \| \nabla_{\tan} u_\e \|_{L^2(\partial T_{2s} \setminus I_{2s})} \big).
    \end{equation}
    By the co-area formula, we have
    \begin{equation}
        \int_{1/2}^1 \| \nabla u_\e \|_{L^2(\partial T_s \setminus I_{2s})}^2 ds \simeq \int_{T_2\setminus T_1} |\nabla u_\e|^2.
    \end{equation}
    Consequently, integrating \eqref{est.MeDue.Ts} over $s \in [1/2,1]$ yields
    \begin{equation}
        \| M_\e(\nabla u_\e) \|_{L^p(T_{1/4} \cap \Omega_t)} \lesssim t^{1/p} \big( \| M_\e^\partial(\nabla_{\tan} f) \|_{L^p(I_{2})} + \| \nabla u_\e \|_{L^2(T_{2} )} \big).
    \end{equation}
    This proves \eqref{est.localRellich.Rp} with $r = 1$. The general cases follow by rescaling.
    \end{proof}

\subsection{Global $L^p$ Rellich estimates}
In this subsection, we deduce the global large-scale $L^p$ Rellich estimates from the local estimates established in the previous subsection.
\begin{theorem}\label{thm.C1Rellich.Rp}
Let $\Omega$ be a bounded $C^1$ domain.
Let $u_\e$ be a solution of \eqref{Rp} with $f\in W^{1,2}(\partial \Omega)$. Then for any $2<p<\infty$ and
 any $\e \le t < {\rm diam}(\Omega)$,
    \begin{equation}\label{est.GlobalLpRellich.C1}
        \bigg( \frac{1}{t} \int_{\Omega_t} |M_\e(\nabla u_\e)|^p \bigg)^{1/p} \lesssim \| M_\e^\partial(\nabla_{\tan} f) \|_{L^p(\partial \Omega)}.
    \end{equation}
\end{theorem}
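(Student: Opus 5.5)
The plan is to derive the global estimate from the localized Rellich estimate, Theorem \ref{thm.LocalRellich.Rp}, through the localization of Subsection \ref{subsec.localization}, and then to control the remaining energy term by the data.

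\textbf{Step 1: small $t$.} By the $C^1$ character of $\Omega$ there is $r_0$, depending only on $\Omega$, such that near each $P_i\in\partial\Omega$ the domain is, after a rotation, a $C^1$ graph domain at scale $10r_0$. Choose finitely many $P_i$, their number depending only on $\Omega$, so that $\Omega_{c r_0}\subset\bigcup_i D_{cr_0}(P_i)$ with finite overlap, and use \eqref{eq.Tr=Dr} to pass between $D_r$ and $T_r$.

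Assume first $\e\le t< c r_0$. Apply \eqref{est.localRellich.Rp} with $r\simeq r_0$ in each chart and sum over $i$. This gives
\begin{equation*}
\Big(\frac1t\int_{\Omega_t}|M_\e(\nabla u_\e)|^p\Big)^{1/p}\lesssim \|M_\e^\partial(\nabla_{\tan} f)\|_{L^p(\partial\Omega)}+\|\nabla u_\e\|_{L^2(\Omega)}.
\end{equation*}
Two routine checks are needed here. First, in each chart $M_\e$ and $M_\e^\partial$ are defined with $I_\e$ in place of $\Delta_\e$; these versions are comparable, since $I_{c\e}\subset\Delta_\e\subset I_\e$ and Proposition \ref{prop.Me=MKe} lets us change the averaging radius by a constant factor. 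Second, the tangential gradient is rotation invariant.

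\textbf{Step 2: the energy term.} The energy estimate gives $\|\nabla u_\e\|_{L^2(\Omega)}\lesssim\|f\|_{H^{1/2}(\partial\Omega)}$. We need this bounded by $\|\nabla_{\tan}f\|_{L^2(\partial\Omega)}$, so we subtract a constant: $u_\e-\fint_{\partial\Omega}f$ solves the same problem with the same $\nabla_{\tan} f$. The Poincaré inequality on $\partial\Omega$ together with the energy estimate (or equivalently \eqref{eq.LSRellich-D}) then gives $\|\nabla u_\e\|_{L^2(\Omega)}\lesssim\|\nabla_{\tan}f\|_{L^2(\partial\Omega)}$.

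The key point is the inequality $\|\nabla_{\tan} f\|_{L^2(\partial\Omega)}\lesssim\|M^\partial_\e(\nabla_{\tan}f)\|_{L^2(\partial\Omega)}$, which holds by \eqref{est.f<Mef} in the boundary version. Combining it with Hölder's inequality on the bounded surface $\partial\Omega$ (since $p>2$) yields
\begin{equation*}
\|\nabla_{\tan}f\|_{L^2(\partial\Omega)}\lesssim\|M^\partial_\e(\nabla_{\tan}f)\|_{L^p(\partial\Omega)}.
\end{equation*}

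\textbf{Step 3: large $t$.} Now suppose $cr_0\le t<\diam(\Omega)$. The part of $\Omega_t$ inside $\Omega_{cr_0}$ is handled by Step 1 with $t$ replaced by $cr_0$. On the interior region $\Omega\setminus\Omega_{cr_0}$, the large-scale interior Lipschitz estimate \eqref{lip} gives $M_\e(\nabla u_\e)(X)\lesssim\|\nabla u_\e\|_{L^2(\Omega)}$ uniformly, with a constant depending on $r_0$. Since $|\Omega|$ is finite, the $L^p$ norm over this region, divided by $t\simeq1$, is bounded by Step 2.

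\textbf{Main obstacle.} The delicate step is Step 2, namely absorbing the global energy term into the averaged $L^p$ norm $\|M_\e^\partial(\nabla_{\tan}f)\|_{L^p}$ rather than into $\|\nabla_{\tan}f\|_{L^p}$. As described, I would do this via the $L^2$ domination by $M^\partial_\e$ followed by Hölder. A secondary nuisance is the near-boundary region in Step 3 at distance about $r_0$ from $\partial\Omega$, which may not lie inside a single chart; I would cover it by the interior estimate with radius comparable to $r_0$.
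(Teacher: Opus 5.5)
Your proposal is correct and follows the paper's proof. You localize Theorem~\ref{thm.LocalRellich.Rp} with $r\simeq r_0$, sum over a finite cover, absorb $\|\nabla u_\e\|_{L^2(\Omega)}$ by the energy estimate, and handle $t>r_0$ with an interior large-scale estimate; you use \eqref{lip} where the paper cites the interior large-scale $W^{1,p}$ estimate, which makes no difference here.

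Your Step 2 just spells out the paper's one-line energy step. Note that the boundary Poincar\'e inequality there tacitly needs $\partial\Omega$ to be connected: otherwise an $f$ that is locally constant, with different values on different components, makes the right side of \eqref{est.GlobalLpRellich.C1} vanish while the left side does not. The paper also leaves this assumption implicit.
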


\begin{proof}
    The local $L^p$ Rellich estimates in Theorem \ref{thm.LocalRellich.Rp} over a $C^1$ graph domain can be turned into a version for a bounded  $C^1$ domain $\Omega$, namely, for any $P\in \partial \Omega$ and $\e \le r \le r_0$ (where $r_0$ is a constant depending only on $\Omega$) such that for any $\e \le t \le r$,
    \begin{equation}\label{est.patch-global}
    \begin{aligned}
        & \bigg( \frac{1}{t} \int_{\Omega_t \cap D_{r/2}(P)} |M_\e(\nabla u_\e)|^p \bigg)^{1/p} \\
        & \qquad \lesssim \bigg( \int_{\Delta_{2r}(P)} |M_\e^\partial (\nabla_{\tan} f)|^p \bigg)^{1/p} + r^{\frac{d-1}{p} - \frac{d}{2}} \bigg( \int_{D_{2r}(P)} |\nabla u_\e|^2 \bigg)^{1/2}.
    \end{aligned}
    \end{equation}
    Fix $r = r_0$ and let $\{ P_i: i = 1,2,\cdots, N = N(\Omega) \}$ be a sequence of points on $\partial \Omega$ such that
    \begin{equation}
        \Omega_t = \bigcup_{i=1}^{N} ( \Omega_t \cap D_{r_0/2}(P_i) ),
    \end{equation}
    and $\{ D_{2r_0}(P_i) \}$ have finite overlaps. Hence, applying \eqref{est.patch-global} to each $D_{2r_0}(P_i)$ and summing over $i$, we obtain
    \begin{equation}
        \bigg( \frac{1}{t} \int_{\Omega_t} |M_\e(\nabla u_\e)|^p \bigg)^{1/p} \lesssim \| M_\e^\partial (\nabla_{\tan} f) \|_{L^p(\partial \Omega)} + \| \nabla u_\e \|_{L^2(\Omega)}.
    \end{equation}
    Finally, the energy estimate implies $\| \nabla u_\e \|_{L^2(\Omega)} \lesssim \| M_\e^\partial (\nabla_{\tan} f) \|_{L^p(\partial \Omega)}$. Hence, the last displayed estimate gives \eqref{est.GlobalLpRellich.C1} for $\e \le t\le r_0$, while the remaining case for $r_0 < t< \diam(\Omega)$ follows from the case $t = r_0$ and the interior large-scale $W^{1,p}$ estimate.
\end{proof}

Similarly, we also have the global large-scale $L^p$ Rellich estimate for the Neumann problem.
\begin{theorem}\label{thm.C1Rellich.Np}
Let $\Omega$ be a bounded $C^1$ domain.
Let $u_\e$ be a solution of \eqref{Np} with $g\in L^2(\partial \Omega)$. Then for any $2<p<\infty$ and
 any $\e \le t < {\rm diam}(\Omega)$,
    \begin{equation}
        \bigg( \frac{1}{t} \int_{\Omega_t} |M_\e(\nabla u_\e)|^p \bigg)^{1/p} \lesssim \| M_\e^\partial(g) \|_{L^p(\partial \Omega)}.
    \end{equation}
\end{theorem}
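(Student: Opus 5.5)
The plan is to follow the proof of Theorem \ref{thm.C1Rellich.Rp} line by line, with Theorem \ref{thm.LocalRellich.Np} used in place of Theorem \ref{thm.LocalRellich.Rp}. We normalize $u_\e$ by subtracting a constant so that $\int_\Omega u_\e=0$; this leaves $\nabla u_\e$ unchanged.

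\textbf{Localization.} After the rotation and localization of Subsection \ref{subsec.localization}, Theorem \ref{thm.LocalRellich.Np} gives the following. For each $P\in\partial\Omega$, $\e\le t\le r\le r_0$,
\begin{equation*}
\bigg( \frac{1}{t} \int_{\Omega_t \cap D_{r/2}(P)} |M_\e(\nabla u_\e)|^p \bigg)^{1/p} \lesssim \bigg( \int_{\Delta_{2r}(P)} |M_\e^\partial (g)|^p \bigg)^{1/p} + r^{\frac{d-1}{p} - \frac{d}{2}} \bigg( \int_{D_{2r}(P)} |\nabla u_\e|^2 \bigg)^{1/2}.
\end{equation*}
We use \eqref{eq.Tr=Dr} to pass from $T_r,I_r$ to $D_r,\Delta_r$. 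The averages $M_\e$ and $M_\e^\partial$ change only by harmless enlargements of the averaging region, which Proposition \ref{prop.Me=MKe} and the properties in the appendix absorb.

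\textbf{Covering and summation.} Fix $r=r_0$ and cover $\Omega_t$ by finitely many sets $D_{r_0/2}(P_i)$ such that the $D_{2r_0}(P_i)$ have bounded overlap. Summing the local estimates gives
\begin{equation*}
\bigg( \frac{1}{t} \int_{\Omega_t} |M_\e(\nabla u_\e)|^p \bigg)^{1/p} \lesssim \| M_\e^\partial(g)\|_{L^p(\partial\Omega)} + \|\nabla u_\e\|_{L^2(\Omega)} .
\end{equation*}

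\textbf{Energy term.} This is the only point where the Neumann case differs from the regularity case. Test the equation with $u_\e$ and use the trace and Poincar\'e inequalities, which hold because $u_\e$ has mean zero. This gives $\|\nabla u_\e\|_{L^2(\Omega)}\lesssim \|g\|_{L^2(\partial\Omega)}$.

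It remains to bound $\|g\|_{L^2(\partial\Omega)}$ by $\|M_\e^\partial(g)\|_{L^p(\partial\Omega)}$, which is also the only place an obstacle could arise. On $\partial\Omega$ we have
\begin{equation*}
\|g\|_{L^2(\partial\Omega)} \lesssim \|M_\e^\partial(g)\|_{L^2(\partial\Omega)} \lesssim \|M_\e^\partial(g)\|_{L^p(\partial\Omega)}.
\end{equation*}
The first inequality follows from Fubini, since $\int_{\partial\Omega}\fint_{\Delta_\e(Q)}|g|^2\,d\sigma(P)\,d\sigma(Q)\simeq\int|g|^2$ because $|\Delta_\e(P)|\simeq\e^{d-1}$ uniformly. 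The second is H\"older's inequality on the bounded set $\partial\Omega$, using $p>2$.

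\textbf{Remaining range of $t$.} This proves the claim for $\e\le t\le r_0$. For $r_0<t<\diam(\Omega)$, write $\Omega_t=\Omega_{r_0}\cup(\Omega_t\setminus\Omega_{r_0})$. The set $\Omega_t\setminus\Omega_{r_0}$ is at distance at least $r_0$ from $\partial\Omega$. There, the interior large-scale $W^{1,p}$ estimate (the interior analogue of Proposition \ref{prop.W1p.local}, with $F=0$) gives $\|M_\e(\nabla u_\e)\|_{L^p(\Omega\setminus\Omega_{r_0})}\lesssim\|\nabla u_\e\|_{L^2(\Omega)}\lesssim \|M_\e^\partial(g)\|_{L^p(\partial\Omega)}$. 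Since $1/t\le 1/r_0$, combining this with the case $t=r_0$ completes the proof.
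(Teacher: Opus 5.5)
Your proposal is correct and follows the paper's route. The paper proves this theorem exactly as it proves Theorem \ref{thm.C1Rellich.Rp}: localize Theorem \ref{thm.LocalRellich.Np}, cover and sum, apply the energy estimate, and treat large $t$ by the interior large-scale $W^{1,p}$ estimate; your explicit bound $\|\nabla u_\e\|_{L^2(\Omega)}\lesssim \|g\|_{L^2(\partial\Omega)}\lesssim \|M_\e^\partial(g)\|_{L^p(\partial\Omega)}$ supplies the one Neumann-specific step the paper leaves implicit. One harmless slip, which the paper shares: the sets $D_{r_0/2}(P_i)$ cover $\Omega_t$ only when $t\le c\,r_0$ for some $c<1/2$, so run the covering step for $\e\le t\le c\,r_0$ and your remaining-range argument, with $c\,r_0$ in place of $r_0$, then handles $t>c\,r_0$ unchanged.
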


\subsection{Dirichlet problem}
As in the case of Lipschitz domains, the estimate \eqref{est.Dp} in Theorem \ref{thm.Dp} for $C^1$ domains follows from the reverse H\"{o}lder inequality of the large-scale $\cL_\e$-harmonic measure $\overline{\omega}_\e^X$.
\begin{theorem}\label{thm.C1reverse}
    Let $\Omega$ be a bounded $C^1$ domain. Then for any $p\in (2,\infty)$, we have $\overline{\omega}_\e^X \in B_p(d\sigma)$ uniformly in $\e$;  i.e., for any $r>0, P\in \partial \Omega, \Delta_r = \Delta_r(P)$ and $X\in \Omega\setminus B_{10r}(P)$
    \begin{equation}
        \bigg( \fint_{\Delta_r} \overline{\omega}_\e^X(Q)^p d\sigma(Q) \bigg)^{1/p} \lesssim \fint_{\Delta_{r}} \overline{\omega}_\e^X(Q) d\sigma(Q).
    \end{equation}
\end{theorem}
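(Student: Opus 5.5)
The plan is to repeat the proof of Theorem \ref{thm.B2}, replacing the $L^2$ Rellich input (Proposition \ref{prop.LSRellich}) by the $L^p$ Rellich estimates of Theorem \ref{thm.LocalRellich.Rp}. These involve the averaged gradient $M_\e(\nabla u_\e)$, which is exactly what the large-scale harmonic measure sees. The case $r<10\e$ is handled as in Lemma \ref{lem.doubling}: $\overline{\omega}^X_\e$ is essentially constant on $\Delta_{r}(P)$ by doubling. So assume $r\ge 10\e$ and set $u(Y)=G_\e(X,Y)$. Since $A$ is symmetric, $u$ solves $\cL_\e u=0$ in $D_{4r}(P)$ with $u=0$ on $\Delta_{4r}(P)$.

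\textbf{Step 1 (pointwise bound).} As in the proof of Theorem \ref{thm.B2}, Proposition \ref{prop.hmBasics} \ref{item.w=G}, \ref{item.G-L2} and the Poincar\'e inequality (using $u=0$ on the boundary) give, for $Q\in\Delta_r$,
\[
\overline{\omega}^X_\e(Q)\lesssim \e^{-1}G_\e(X,A_\e(Q))\lesssim \Big(\fint_{B_{2\e}(Q)\cap\Omega}|\nabla u|^2\Big)^{1/2}\lesssim M_{3\e}(\nabla u)(Z)
\]
for every $Z\in B_\e(Q)\cap\Omega$. The operators $M_{3\e}$ and $M_\e$ are interchangeable up to enlarging the domain, as in Proposition \ref{prop.Me=MKe}. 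Raising this to the power $p$ and averaging over $Z\in B_\e(Q)\cap\Omega$ gives
\[
\overline{\omega}^X_\e(Q)^p\lesssim \e^{-d}\int_{B_{\e}(Q)\cap\Omega}M_{3\e}(\nabla u)^p .
\]
Integrating over $Q\in\Delta_r$ with Fubini then yields
\[
\int_{\Delta_r}(\overline{\omega}^X_\e)^p\,d\sigma\lesssim \frac1\e\int_{\Omega_{\e}\cap D_{2r}}M_\e(\nabla u)^p .
\]

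\textbf{Step 2 (Rellich with zero data).} After localization to a $C^1$ graph domain, apply Theorem \ref{thm.LocalRellich.Rp} with $t=\e$, $f=0$, and the radius comparable to $r$ (valid since $r\gtrsim\e$ and $r\lesssim 1$; large $r$ is handled by a covering). This gives
\[
\Big(\frac1\e\int_{\Omega_\e\cap D_{2r}}M_\e(\nabla u)^p\Big)^{1/p}\lesssim r^{\frac{d-1}{p}-\frac d2}\Big(\int_{D_{8r}}|\nabla u|^2\Big)^{1/2}.
\]
Caccioppoli and Proposition \ref{prop.hmBasics} \ref{item.G-L2}, \ref{item.w=G} bound the last factor by $r^{\frac d2-1}G_\e(X,A_r(P))\simeq r^{\frac d2+1-d}\,\omega^X_\e(\Delta_r)$. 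This requires $X\notin B_{10r}(P)$ so that $D_{8r}$ avoids the pole, which is why the hypothesis is $10r$.

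\textbf{Step 3 (conclusion).} Combining Steps 1 and 2,
\[
\Big(\fint_{\Delta_r}(\overline{\omega}^X_\e)^p\Big)^{1/p}\lesssim r^{-\frac{d-1}{p}}\,r^{\frac{d-1}{p}-\frac d2}\,r^{\frac d2+1-d}\,\omega^X_\e(\Delta_r)\simeq\frac{\omega^X_\e(\Delta_r)}{\sigma(\Delta_r)}.
\]
By doubling and the Fubini argument \eqref{est.doubling>}, this is $\lesssim\fint_{\Delta_r}\overline{\omega}^X_\e\,d\sigma$.

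The main obstacle is Step 1. The pointwise bound by an $\e$-ball average of $|\nabla u|$ must be converted into $M_\e(\nabla u)^p$ integrated over a layer, since Theorem \ref{thm.LocalRellich.Rp} controls only averaged gradients and not $|\nabla u|^p$ itself. The choice of centers and radii of the averages has to be compatible with the definition of $M_\e$ near the boundary, and $D_r$ must be replaced by $T_r$ via \eqref{eq.Tr=Dr}. A secondary point is that the $W^{1,2}$ trace hypothesis in Theorem \ref{thm.LocalRellich.Rp} is trivially met, since $f=0$.
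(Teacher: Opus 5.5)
Your argument is essentially the paper's proof. Both bound $\overline{\omega}^X_\e(Q)$ by an $\e$-scale $L^2$ average of $\nabla_Y G_\e(X,\cdot)$, integrate over $\Delta_r$ to get $\e^{-1}\int_{\Omega_{C\e}\cap D_{Cr}}M_\e(\nabla G_\e)^p$, apply Theorem~\ref{thm.LocalRellich.Rp} with zero data, and finish with Caccioppoli, Proposition~\ref{prop.hmBasics} and doubling. The only difference is that you use the $L^2$ term on the right of Theorem~\ref{thm.LocalRellich.Rp} directly. The paper instead first bounds the layer integral by $r^{-1}\int_{D_{6r}}|M_\e(\nabla G_\e)|^p$ and then returns to $L^2$ via Proposition~\ref{prop.W1p.local} and Proposition~\ref{prop.A1}. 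Your chain is shorter and equally valid.

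One step needs correcting; the paper's proof, which only treats $r\gtrsim\e$, passes over it as well. Your reduction of the case $r<10\e$ to Lemma~\ref{lem.doubling} needs $|X-P|\gtrsim\e$, which is the hypothesis $|X-P|>5\max\{r,\e\}$ of Lemma~\ref{lem.doubling} and Theorem~\ref{thm.B2}. That does not follow from $X\notin B_{10r}(P)$, and the inequality actually fails under the latter alone. Take $r\le\e/10$ and choose $Q_X\in\partial\Omega$ so that $E=\{Q\in\Delta_r(P):|Q-Q_X|<\e\}$ has small positive relative measure $\theta$. This forces $|Q_X-P|$ close to $\e+r$, hence $|X-P|>10r$ for $X$ near $Q_X$. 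Now let $X\to Q_X$. Then $\omega^X_\e\to\delta_{Q_X}$, so $\overline{\omega}^X_\e\to|\Delta_\e(\cdot)|^{-1}\mathbbm{1}_E$ a.e.\ on $\Delta_r(P)$. The ratio of the left side to the right side then tends to a quantity comparable to $\theta^{1/p-1}$, which is unbounded as $\theta\to0$.

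So the hypothesis should read $|X-P|\ge C\max\{r,\e\}$. Under it your small-$r$ argument is correct, and this is all that the applications use (Theorem~\ref{thm.Neve<f} and Remark~\ref{rmk.Bp-Dp'}, where $r\gtrsim\delta(X)>10\e$). Similarly, Caccioppoli combined with Proposition~\ref{prop.hmBasics}~\ref{item.G-L2} at radius $9r$ needs $X$ at distance at least $18r$ from $P$, not $10r$. Covering $\Delta_r(P)$ by smaller surface balls and using doubling repairs this.
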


\begin{proof}
    Similar to the proof of Theorem \ref{thm.B2}, we begin with
    \begin{equation}
        \overline{\omega}^X_\e(Q) \lesssim M_\e(\nabla G_\e(X,\cdot))(Q).
    \end{equation}
    Taking $p$th power of this inequality and integrating $Q$ over $\Delta_r(P)$ for $r\gtrsim \e$, we get
    \begin{equation}\label{est.HM.C1Lp}
    \begin{aligned}
        \int_{\Delta_r} \overline{\omega}_\e^X(Q)^p d\sigma(Q) & \lesssim \e^{-1} \int_{\Omega_{2\e} \cap B_{1.5r}} |M_\e(\nabla G_\e(X,\cdot))(Y)|^p dY \\
        & \lesssim r^{-1} \int_{D_{6r}} |M_\e(\nabla G_\e(X,\cdot))(Y)|^p dY \\
        & \lesssim r^{-1}|D_{7r}| \bigg( \fint_{D_{7r}} |M_\e(\nabla G_\e(X,\cdot))(Y)|^2 dY \bigg)^{p/2} \\
        & \lesssim |\Delta_r| \bigg( \fint_{D_{8r}} |\nabla G_\e(X,Y)|^2 dY \bigg)^{p/2} \\
        & \lesssim |\Delta_r| r^{-p} \bigg( \fint_{D_{9r}} |G_\e(X,Y)|^2 dY \bigg)^{p/2} \\
        & \lesssim |\Delta_r| r^{-p} |G_\e(X, A_{9r}(P))|^p,
    \end{aligned}
    \end{equation}
    where we have used Theorem \ref{thm.LocalRellich.Rp} (with $f = 0$) in the second inequality, Proposition \ref{prop.W1p.local} (with $F = 0$) in the third, Proposition \ref{prop.A1} in the fourth, Caccioppoli inequality in the fifth, and Proposition \ref{prop.hmBasics} \ref{item.G-L2} in the last inequality.
    
     Finally, using Proposition \ref{prop.hmBasics} \ref{item.w=G} and \ref{item.doubling},
\begin{equation}
    \frac{G_\e(X,A_{9r}(P))}{r} \simeq \frac{\omega_\e^X(\Delta_{9r})}{|\Delta_{9r}|} \simeq \frac{\omega_\e^X(\Delta_{r})}{|\Delta_{r}|} \simeq \fint_{\Delta_r} \overline{\omega}_\e^X d\sigma,
\end{equation}
    we obtain the desired estimate from \eqref{est.HM.C1Lp}.
\end{proof}

\begin{proof}[\textbf{Proof of Theorem \ref{thm.Dp} for $C^1$ domains}]
   With the above reverse H\"{o}lder inequality at our disposal, the estimate for any $1<p<2$ in \eqref{est.Dp} follows identically from the proof of Theorem \ref{thm.Dp} for Lipschitz domains; also see Remark \ref{rmk.Bp-Dp'}. 
\end{proof}

\subsection{Regularity and Neumann problems}

We first derive a local estimate of $\widetilde{N}_\e^r(\nabla u_\e)$  for the regularity and Neumann problems in a $C^1$ graph domain for any $p\in (2,\infty)$. 

\begin{lemma}\label{lem.C1Rp}
    Let $\Omega$ be a $C^1$ graph domain. Let $\cL_\e(u_\e) = 0$ in $T_{10r}$, where $\e< r\le 1$. Then
    \begin{equation}
    \begin{aligned}
    \int_{I_r} |\widetilde{N}_\e^r(\nabla u_\e)|^p d\sigma \lesssim \frac{1}{\e} \int_{T_{10r} \cap \Omega_{20\e}} |M_\e(\nabla u_\e)|^p + \frac{1}{r} \int_{T_{10r}} |M_\e(\nabla u_\e)|^p.
    \end{aligned}
\end{equation}
\end{lemma}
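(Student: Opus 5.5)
The plan is to run the $L^p$ analogue of the $L^2$ argument behind Corollary \ref{coro.localR2}. That argument combines the pointwise bound of Lemma \ref{lem.NT.ptws} with the difference-operator estimate of Lemma \ref{lem.NeQe}, and in the $C^1$ setting the $(D)_p$ estimate from Theorem \ref{thm.C1reverse} replaces the $L^2$ one.

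First, Lemma \ref{lem.NT.ptws} gives the pointwise bound
\[
\widetilde{N}_\e^r(\nabla u_\e)\lesssim \M_{I_{3r}}\big(\widetilde{N}_\e^{3r}(\Q_\e(u_\e))\big)+\M_{I_{3r}}(V_\e).
\]
After a rational rotation, periodicity in $x_d$ still holds, so $\Q_\e(u_\e)$ is again a solution. The $L^p$ boundedness of the Hardy--Littlewood maximal function on $I_{3r}$ then yields
\[
\int_{I_r}|\widetilde{N}_\e^r(\nabla u_\e)|^p\lesssim \int_{I_{3r}}|\widetilde{N}_\e^{3r}(\Q_\e(u_\e))|^p+\int_{I_{3r}}V_\e^p .
\]
The term $V_\e$ is a vertical average of $|\nabla u_\e|$ over length $11\e$, which is not controlled pointwise by $|\nabla u_\e|$ without smoothness. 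I would therefore replace it by an $\e$-scale average. Averaging the foot point over $I_\e(P)$ is harmless, because the maximal function is applied anyway and an $\e$-neighborhood average of $V_\e$ is what actually arises in the proof of Lemma \ref{lem.NT.ptws}. With this, $V_\e(P)$ is dominated by $\big(\fint_{T^\star_{12\e}(P)}|\nabla u_\e|^2\big)^{1/2}$. Since $M_\e$ is an $L^2$ average at scale $\e$, this quantity is $\lesssim M_\e(\nabla u_\e)(Y)$ for every $Y\in T^\star_{12\e}(P)$, after enlarging the radius and invoking Proposition \ref{prop.A1}. Hence
\[
\int V_\e^p\lesssim \frac1\e\int_{T_{10r}\cap\Omega_{20\e}}|M_\e(\nabla u_\e)|^p .
\]

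Second, for the difference term I would apply the $C^1$ version of \eqref{N-M}, namely $\|N_\e(v)\|_{L^p}\lesssim\|M_{4\e}(v)\|_{L^p}$ on $\partial T_t$ for $t\in[4r,5r]$. This holds for every $p>1$ by Theorem \ref{thm.C1reverse} and the argument of Theorem \ref{thm.Dp}, provided $T_t$ can be taken $C^1$; if needed, replace the box $T_t$ by a smoothed $C^1$ version. Apply it to $v=\Q_\e(u_\e)$, using that $\widetilde{N}_\e\lesssim N_\e$ with slightly enlarged cones. This gives
\[
\int_{I_r}|\widetilde{N}^{r}_\e(\Q_\e u_\e)|^p\lesssim \int_{\partial T_t}|M_{4\e}(\Q_\e u_\e)|^p .
\]
Next, $\Q_\e u_\e$ is a vertical average of $\partial_d u_\e$ over length $\e$, so $M_{4\e}(\Q_\e u_\e)(Q)\lesssim M_{6\e}(\nabla u_\e)(Q)$ by Minkowski's inequality. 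The boundary integral of $M_{6\e}(\nabla u_\e)$ is comparable to $\e^{-1}$ times its integral over the $\e$-layer of $\partial T_t$, and there $M_{6\e}\lesssim M_\e$ pointwise up to neighboring points, again by Proposition \ref{prop.A1}. Averaging over $t\in[4r,5r]$ as in Lemma \ref{lem.NeQe}, the bottom part contributes $\e^{-1}\int_{T_{10r}\cap\Omega_{20\e}}|M_\e\nabla u_\e|^p$. The lateral and top parts, after the $t$-average, contribute $r^{-1}\int_{T_{10r}}|M_\e\nabla u_\e|^p$.

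The main obstacle is the second step. It needs the large-scale $(D)_p$ estimate in the form \eqref{N-M} on the auxiliary domains $T_t$, uniformly in $t$, and it must handle the lateral boundary of $T_t$, where the $\e$-layer average gives the factor $1/\e$ rather than $1/r$. I would first try to deal with this by averaging in $t$, which converts the lateral layer integrals into $\frac1r\int_{T_{10r}}$ as in Lemma \ref{lem.NeQe}. If the corners of $T_t$ obstruct the $C^1$ theory, I would use instead a $C^1$ domain sandwiched between $T_{4r}$ and $T_{5r}$ with the same bottom.
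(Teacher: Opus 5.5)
Your plan is correct and is essentially the paper's proof. It uses the pointwise bound of Lemma \ref{lem.NT.ptws}, with the $V_\e$ term controlled through a large-scale ($s\ge\e$) maximal function, i.e.\ by $M_\e^\partial(V_\e)$. It then applies \eqref{N-M} to $\Q_\e(u_\e)$ on $T_t$, uses Minkowski/Fubini with Proposition \ref{prop.Me=MKe}, and averages in $t$ to turn the lateral part into $\frac{1}{r}\int_{T_{10r}}|M_\e(\nabla u_\e)|^p$.

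The obstacle you flag is not real. The lemma is only needed for $p>2$, and Theorem \ref{thm.Dp} already holds on the Lipschitz domains $T_t$ for $p>2-\delta$ with uniform constants, so no $C^1$ smoothing of $T_t$ is required. The paper simply takes $t\in(9r,10r-5\e)$ rather than $[4r,5r]$, so that the truncated cones over $I_{3r}$ fit inside.
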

\begin{proof}
Recalling Lemma \ref{lem.NT.ptws}, we have
\begin{equation}
    \begin{aligned}
        \widetilde{N}_\e^r(\nabla u_\e)(Q) &\lesssim \M_{I_{3r}} (\widetilde{N}_\e^{3r}(\Q_\e(u_\e)))(Q)  + \M_{I_{3r}} (V_\e)(Q),
    \end{aligned}    
\end{equation}
where $Q = (y',\phi(y')) \in I_{3r}$, $V_\e$ and $\M_{I_{3r}}$ are defined in \eqref{def.VeP} and \eqref{def.MI3r}, respectively.

For our application here, we need to slightly modify the estimate, by examining the proof of Lemma \ref{lem.NT.ptws}, as
\begin{equation}\label{est.NeDue.loc-e}
    \widetilde{N}_\e^r(\nabla u_\e)(Q) \lesssim \M_{I_{3r}} (\widetilde{N}_\e^{3r}(\Q_\e(u_\e)))(Q)  + \M_{I_{3r}}^\e (V_\e)(Q),
\end{equation}
where
\begin{equation}
    \M_{I_{3r}}^\e f(Q) = \sup \bigg\{ \fint_{I_s(P)} |f| d\sigma: Q\in I_s(P) \subset I_{3r} \text{ and } s \ge \e \bigg\}.
\end{equation}
For this large-scale Hardy-Littlewood maximal function, we have
\begin{equation}
    \int_{I_r} |\M_{I_{3r}}^\e f|^p d\sigma \lesssim \int_{I_{3r}} |M_\e^\partial (f)|^p d\sigma,
\end{equation}
for any $p\in (1,\infty)$. This can be proved in a similar manner as Proposition \ref{prop.LS.HLM}, replacing the volume average $M_\e$ in $\R^d$ by the surface average $M_\e^\partial$ on $I_{3r}$.
Thus, from \eqref{est.NeDue.loc-e},
\begin{equation}\label{est.local-Rp-Ir}
\begin{aligned}
    \int_{I_r} |\widetilde{N}_\e^r(\nabla u_\e)|^p(Q) d\sigma(Q) \lesssim \int_{I_{3r}} |\widetilde{N}_\e^{3r} (\Q_\e(u_\e)) |^p d\sigma + \int_{I_{3r}} |M_\e^\partial (V_\e)|^p d\sigma .
\end{aligned}
\end{equation}
We need to estimate the two terms on the right-hand side.

By the $(D)_p$ estimate in Theorem \ref{thm.Dp} with $p>2$ for $\Q_\e(u_\e)$ in $T_{t}$ for $t\in (9r,10r-5\e)$ and $r \ge 10\e$, we have
\begin{equation}
    \int_{I_{3r}} |\widetilde{N}_\e^{3r} (\Q_\e(u_\e)) |^p d\sigma \lesssim \int_{\partial T_t} |N_\e(\Q_\e(u_\e))|^p d\sigma \lesssim \int_{\partial T_{t}} |M_{4\e}(\Q_\e(u_\e))|^p d\sigma.
\end{equation}
Now, integrating over $t \in (9r,10r-5\e)$, we have
\begin{equation}
\begin{aligned}
    & \int_{I_{3r}} |\widetilde{N}_\e^{3r} (\Q_\e(u_\e)) |^p d\sigma \\
    & \qquad \lesssim \int_{I_{10r-5\e}} |M_{4\e}(\Q_\e(u_\e))|^p d\sigma + \frac{1}{r} \int_{T_{10r-5\e}} |M_{4\e}(\Q_\e(u_\e))|^p dX.
    \end{aligned}
\end{equation}
Recall that
\begin{equation}
    \Q_\e(u_\e)(X) = \frac{1}{\e} \int_0^\e \frac{\partial u_\e}{\partial x_d}(x', x_d + t) dt.
\end{equation}
Thus, by Fubini's Theorem and Proposition \ref{prop.Me=MKe}, we have
\begin{equation}
    \int_{I_{10r-5\e}} |M_{4\e}(\Q_\e(u_\e))|^p d\sigma \lesssim \frac{1}{\e} \int_{T_{10r} \cap \Omega_{10\e}} |M_\e(\nabla u_\e)|^p
\end{equation}
and
\begin{equation}
    \frac{1}{r} \int_{T_{10r-5\e}} |M_{4\e}(\Q_\e(u_\e))|^p dX \lesssim \frac{1}{r} \int_{T_{10r}} |M_\e(\nabla u_\e)|^p.
\end{equation}
Hence, we obtain
\begin{equation}\label{est.NQe-1}
    \int_{I_{3r}} |\widetilde{N}_\e^{3r} (\Q_\e(u_\e)) |^p d\sigma \lesssim \frac{1}{\e} \int_{T_{10r} \cap \Omega_{10\e}} |M_\e(\nabla u_\e)|^p + \frac{1}{r} \int_{T_{10r}} |M_\e(\nabla u_\e)|^p.
\end{equation}

On the other hand, by Proposition \ref{prop.A1}, 
\begin{equation}
    M_\e^\partial (V_\e)(Q) \lesssim \fint_{I_\e(Q) \times (0,11\e)} M_\e(\nabla u_\e)(X) dX
\end{equation}
By Fubini's Theorem,
\begin{equation}\label{est.MeVe-1}
\begin{aligned}
    \int_{I_{3r}} |M_\e^\partial (V_\e)|^p d\sigma & \lesssim \int_{I_{3r}} \fint_{I_{\e}(Q) \times (0,11\e)} |M_\e(\nabla u_\e)(X)|^p dX d\sigma \\
    & \lesssim \frac{1}{\e} \int_{I_{3r+\e} \times (0,11\e)}  |M_\e(\nabla u_\e)(X)|^p dX \\
    & \lesssim \frac{1}{\e} \int_{T_{4r} \cap \Omega_{20\e}} |M_\e(\nabla u_\e)|^p.
\end{aligned}    
\end{equation}
Hence, combining \eqref{est.NQe-1}, \eqref{est.MeVe-1} with \eqref{est.local-Rp-Ir}, we obtain the desired estimate.
\end{proof}


\begin{proof}[\textbf{Proof of Theorems \ref{thm.Rp} and \ref{thm.Np} for $C^1$ domains}]
    Since the case $p\in (1,2+\delta)$ has been proved for general Lipschitz domains, it suffices to assume $p\in (2,\infty)$. First, Lemma \ref{lem.C1Rp} can be transferred into a version in a bounded $C^1$ domain, i.e.,
    \begin{equation}
    \int_{D_r} |\widetilde{N}_\e^r(\nabla u_\e)|^p d\sigma \lesssim \frac{1}{\e} \int_{D_{10r} \cap \Omega_{20\e}} |M_\e(\nabla u_\e)|^p + \frac{1}{r} \int_{D_{10r}} |M_\e(\nabla u_\e)|^p.
\end{equation}
    Using this, by \eqref{est.ReduceToLayer} and a covering argument as in the proof of Theorem \ref{thm.C1Rellich.Rp}, we have
    \begin{equation}\label{est.MainInProof}
        \begin{aligned}
            \| \widetilde{N}_\e(\nabla u_\e) \|_{L^p(\partial \Omega)} 
           & \lesssim \| \widetilde{N}_\e^{r_0}(\nabla u_\e) \|_{L^p(\partial \Omega)}  + \| \nabla u_\e \|_{L^2(\Omega)} \\
           & \lesssim \bigg( \frac{1}{\e} \int_{\Omega_{20\e}} |M_\e(\nabla u_\e)|^p d\sigma \bigg)^{1/p} + \| M_\e(\nabla u_\e) \|_{L^p(\Omega)}.
        \end{aligned}
    \end{equation}
    Note that this estimate holds for both regularity problem \eqref{Rp} and  Neumann problem \eqref{Np}. Hence, if $u_\e$ is the weak solution of \eqref{Rp}, by the large-scale $L^p$ Rellich estimate in Theorem \ref{thm.C1Rellich.Rp} in $C^1$ domains (with $t = 20\e$), as well as the energy estimate, we conclude \eqref{est.Rp} for $p>2$. Similarly, the estimate \eqref{est.Np} for $p>2$ follows from \eqref{est.MainInProof}, Theorem \ref{thm.C1Rellich.Np} and the energy estimate.
\end{proof}

\begin{remark}\label{rmk.convex}
    All the proofs in this section remain valid if the domain is convex (or even strongly quasiconvex in the sense of \cite[Definition 1.8]{FLZ26}) instead of $C^1$. This is because, as we mentioned at the beginning of this section, the only ingredients we need from the geometry of domains are the $L^p$ estimates of the classical nontangential maximal functions for $\cL_0$ and the large-scale boundary $W^{1,p}$ estimate for $\cL_\e$ for any $p\in (1,\infty)$. These are both valid in convex domains; see \cite{Mazya2009,Geng2010} for the gradient estimate of Neumann problem, while the gradient estimate of Dirichlet problem is classical). Thus Theorems \ref{thm.Dp}-\ref{thm.Np} hold for convex domains for any $p\in (1,\infty)$. Furthermore, together with the small-scale estimate of the nontangential maximal functions (known in convex domains at least for regularity and hence Dirichlet problems), we can even obtain the full-scale estimates in Corollary \ref{coro.fullscale} in convex domains for the full range of $p\in (1,\infty)$.
\end{remark}



\section{Full-scale estimates}\label{sec.full-scale}

In the previous sections, we dealt with bounded measurable coefficients with periodic structure and established various large-scale estimates. A natural question is that if we assume some regularity on the coefficients at small scale (i.e., in each periodic cell), can we combine the large-scale estimate established previously with the small-scale estimates to obtain the classical full-scale estimates? In this section, we will answer this question positively by proving the estimates of the full-scale nontangential maximal functions in Corollary \ref{coro.fullscale}, which also yield the classical Rellich estimates and the reverse H\"{o}lder inequalities of $\cL_\e$-harmonic measures.

\begin{proof}[\textbf{Proof of Corollary \ref{coro.fullscale}}]
    We only consider the regularity problems in Lipschitz or $C^1$ domains. The estimates for Neumann problems are similar. The Dirichlet problem follows by a classical duality that (full-scale) $(R)_p$ directly implies (full-scale) $(D)_{p'}$; see \cite[Theorem 5.4]{KP93}.

    We first recall the local (small-scale) regularity estimate in Lipschitz domains. Let $A(X)$ be H\"{o}lder continuous. Let $u$ be a weak solution of 
    \begin{equation}
    \left\{
    \begin{aligned}
        & -\nabla\cdot (A \nabla u) = 0 \quad \text{in } T_{1}, \\
        & u = f \in C^1(I_1) \quad \text{on }  I_1.
    \end{aligned}
    \right.
    \end{equation}
    Then by $(R)_p$ estimates for elliptic operators with $C^\alpha$ coefficients and a localization technique (\cite[Theorem 8.1]{KS11-2} for the case $p = 2$), we have, for any $1<p<2+\delta$,
    \begin{equation}\label{est.Ca.Rp}
        \int_{I_{1/2}} |\widetilde{N}(\nabla u)|^p \lesssim \int_{I_1} |\nabla_{\tan} f|^p d\sigma + \int_{T_1} |\nabla u|^p.
    \end{equation}

    Now, let $u_\e$ be a weak solution of \eqref{Rp} in a bounded Lipschitz domain $\Omega$. We will first consider $(R)_2$ estimate. Let $P\in \partial \Omega$. By a blow-up argument and \eqref{est.Ca.Rp}, we have
    \begin{equation}\label{est.SmallScale.local}
        \int_{\Delta_\e(P)} |\widetilde{N}^\e(\nabla u_\e)|^2 d\sigma \lesssim \int_{\Delta_{100\e}(P)} |\nabla_{\tan} f|^2 d\sigma + \frac{1}{\e}\int_{D_{100\e}(P)} |\nabla u_\e|^2 d\sigma,
    \end{equation}
    where $\widetilde{N}^\e(F)(Q)$ is the truncated nontangential maximal function defined on the boundary layer:
    \begin{equation}
    \widetilde{N}^\e(F)(Q) = \sup \bigg\{ \bigg(\fint_{B(X, \delta(X)/2) }|F(Y)|^2 dY \bigg)^{1/2}: X\in \Gamma(Q) \cap \Omega_{10\e} \bigg\}.
    \end{equation}
    Integrating \eqref{est.SmallScale.local} over $P\in \partial \Omega$, we have
    \begin{equation}
        \| \widetilde{N}^\e(\nabla u_\e) \|_{L^2(\partial \Omega)} \lesssim \| \nabla_{\tan} f \|_{L^2(\partial \Omega)} + \bigg( \frac{1}{\e} \int_{\Omega_{100\e}} |\nabla u_\e|^2 \bigg)^{1/2}.
    \end{equation}
    Note that $\widetilde{N}(\nabla u_\e) \le \widetilde{N}_\e(\nabla u_\e) + \widetilde{N}^\e(\nabla u_\e)$ and the large-scale estimate for $\widetilde{N}_\e(\nabla u_\e)$ has been given in Theorem \ref{thm.Rp}. It follows that
    \begin{equation}\label{est.S+L}
    \begin{aligned}
        \| \widetilde{N}(\nabla u_\e) \|_{L^2(\partial \Omega)} & \le \| \widetilde{N}_\e(\nabla u_\e) \|_{L^2(\partial \Omega)} + \| \widetilde{N}^\e(\nabla u_\e) \|_{L^2(\partial \Omega)} \\
        & \lesssim \| \nabla_{\tan} f \|_{L^2(\partial \Omega)} + \bigg( \frac{1}{\e} \int_{\Omega_{100\e}} |\nabla u_\e|^2 \bigg)^{1/2} \\
        & \lesssim \| \nabla_{\tan} f \|_{L^2(\partial \Omega)},
        \end{aligned}
    \end{equation}
    where the last inequality follows from the large-scale Rellich estimate \eqref{eq.LSRellich-D}. Hence, we have proved the full-scale $L^2$ regularity estimate.

    Next, as before, the full-scale $L^p$ regularity estimate for $1<p<2$ follows from an interpolation between $L^2$ estimate and $L^1$ estimate with boundary data in Hardy space. While the estimate for $2<p<2+\delta$ follows from the self-improving property of the reverse H\"{o}lder inequality. These are generally the consequences of the full-scale $(R)_2$ estimates; see \cite[Theorem 5.2, 5.3]{KP93} (also see \cite[Theorem 6.2, 6.3]{KP93} for Neumann problem).

    Finally, in $C^1$ domains, \eqref{est.Ca.Rp} holds for all $2<p<\infty$; see Appendix \ref{A-D}. By blow-up and covering arguments (as well as a small-scale $W^{1,p}$ estimate), we have
    \begin{equation}
    \begin{aligned}
        \| \widetilde{N}^\e(\nabla u_\e) \|_{L^p(\partial \Omega)} & \lesssim \| \nabla_{\tan} f \|_{L^p(\partial \Omega)} + \bigg( \frac{1}{\e} \int_{\Omega_{100\e}} |\nabla u_\e|^p \bigg)^{1/p} \\
        & \lesssim \| \nabla_{\tan} f \|_{L^p(\partial \Omega)} + \bigg( \frac{1}{\e} \int_{\Omega_{100\e}} |M_\e(\nabla u_\e)|^p \bigg)^{1/p},
        \end{aligned}
    \end{equation}
    where the last term can be handled by Theorem \ref{thm.C1Rellich.Rp}. As a result, the desired $L^p$ estimate for $2<p<\infty$ follows similarly as \eqref{est.S+L}.
\end{proof}

\begin{remark}
    It is a well-known fact, see e.g. \cite[Theorem 1.5]{FKP91}, that the full-scale $(D)_p$ estimate is equivalent to the reverse H\"{o}lder inequality of $ k_\e = d\omega^{X_0}_\e/d\sigma $, i.e., for any $\Delta \subset \partial \Omega$,
    \begin{equation}
        \bigg( \fint_{\Delta} k_\e^{p'} d\sigma \bigg)^{1/p'} \lesssim \fint_{\Delta} k_\e d\sigma.
    \end{equation}
    Also, the full-scale $(R)_p$ and $(N)_p$ estimates yield the classical Rellich estimates
    \begin{equation}
        \| \nabla_{\tan} u_\e \|_{L^p(\partial \Omega)} \simeq \Big\| \frac{\partial u_\e}{\partial \nu_\e} \Big\|_{L^p(\partial \Omega)}.
    \end{equation}
    Therefore, these estimates, for the corresponding range of $p$, hold under the assumptions of Corollary \ref{coro.fullscale}.
\end{remark}

\appendix

\section{Basics in periodic homogenization}
\label{Appendix-A}
In this appendix we provide the basics of periodic homogenization for the operator $\cL_\e$ for the reader's convenience, while all of them can be found in the monograph \cite{ShenBook}. 

Let $A = A(Y)$ be a bounded measurable matrix defined on $\R^d$ satisfying \eqref{ellipticity} and \eqref{periodicity}. Due to the peridoicity, we can view $A$ as a function defined on the flat torus (a periodic cell) $\T^d = [0,1)^d$. Let $\cL_1 = -\nabla\cdot (A \nabla)$ be an elliptic operator defined in $\R^d$. The correctors $\{ \chi_j: j=1,2,\cdots, d\}$ are 1-periodic functions satisfying
\begin{equation}
    \left\{
    \begin{aligned}
        & \cL_1 (\chi_j) = -\cL_1(e_j\cdot Y), \quad \text{for } Y \in \R^d, \\
        & \int_{\T^d} \chi_j = 0.
    \end{aligned}
    \right.
\end{equation}
The homogenized (constant) coefficients $\widehat{A} = (\widehat{a}_{ij})$ are defined by
\begin{equation}\label{eq.hatA}
    \widehat{a}_{ij} = \fint_{\T^d} \Big(a_{ij}(Y) + a_{ik}(Y) \frac{\partial}{\partial y_k} \chi_j(Y) \Big) dY,
\end{equation}
where the repeated index $k$ is summed from $1$ to $d$. The homogenized matrix $\widehat{A}$ also satisfies the ellitpicity condition with the same constant. The homogenized operator is given by $\cL_0 = - \nabla\cdot (\widehat{A} \nabla)$. If $A$ is symmetric, then $\widehat{A}$ is also symmetric.

The flux correctors are defined as follows. Let $f = (f_{ij})$ be the matrix-valued 1-periodic solution of
\begin{equation}\label{def.f}
    \Delta f(Y) = A(Y) + A(Y) \nabla \chi(Y) - \widehat{A}.
\end{equation}
The $H^2$ regularity for the Laplace operator implies that $f \in H^2(\T^d)$. The flux correctors are defined by $\phi = \nabla \times f$, or in the component form
\begin{equation}
    \phi_{kij} = \frac{\partial}{\partial y_k} f_{ij} - \frac{\partial}{\partial y_i} f_{kj}.
\end{equation}
The key properties of $\phi$ are
\begin{equation}\label{eq.fluxProperty}
    \phi_{kij} = -\phi_{ikj} \qquad \text{and} \qquad \frac{\partial }{\partial y_k} \phi_{kij} = 0.
\end{equation}
The latter is due to \eqref{def.f} and the observation that $f$ is divergence free, i.e., $\partial f_{kj}/\partial y_k = 0$. The properties in \eqref{eq.fluxProperty} are useful for the calculation of \eqref{def.Fe}. In this paper, we only need to use the fact that $\chi \in H^1(\T^d)$ and $\phi \in L^2(\T^d)$ in the proof of Lemma \ref{lem.weLp}.

\section{Basic properties of average operator \\and smoothing operator}

We introduce a convenient notation for the $\e$-neighborhood of an open set $\mathcal{O} \subset \R^d$, $B_\e(\mathcal{O}): = \{ X\in \R^d: \dist(X,\mathcal{O}) < \e \}$.
In this paper, we use different versions of average operators, including $M_\e, M_\e^{\star}$ and $M_\e^\partial$. We will only prove several properties of $M_\e$, while similar properties also hold for other average operators.

\begin{proposition}\label{prop.A1}
    Suppose that $\mathcal{O}$ is an open subset of $\R^d$ such that for any $X \in \mathcal{O}, |B_\e(X) \cap \mathcal{O}| \gtrsim |B_\e(X)|$. Then for any locally $L^2$ function $f$, and any $1\le p\le 2$
    \begin{equation}\label{est.f<Mef}
        \| f \|_{L^p(\mathcal{O})} \lesssim \| M_\e( f) \|_{L^p(\mathcal{O})}
    \end{equation}
    Moreover, for any $q \ge 2$,
    \begin{equation}\label{est.Mef<f}
        \| M_\e( f) \|_{L^q(\mathcal{O})} \lesssim \| f \|_{L^q(B_\e(\mathcal{O}))}.
    \end{equation}
\end{proposition}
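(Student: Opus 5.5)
For \eqref{est.f<Mef}, the plan is to bound $|f|$ pointwise in an averaged sense by $M_\e(f)$ and then integrate. By Fubini,
\begin{equation*}
\int_{\mathcal{O}} M_\e(f)(X)^p\, dX \ \ge\ \int_{\mathcal{O}} \bigg(\fint_{B_\e(X)\cap\mathcal{O}} |f|\bigg)^p dX,
\end{equation*}
where I use $M_\e(f)\ge \fint |f|$ (Hölder, since $p\le 2$ is only used through $L^2$ averages dominating $L^1$ averages). The key is that averaging is applied at the $L^1$ level, where Fubini is available. Here $M_\e(f)(X)$ is the average over $B_\e(X)\cap\mathcal{O}$; if only the average over $B_\e(X)$ is intended, the same argument works.

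The case $p=1$ is direct. Using $|B_\e(X)\cap\mathcal{O}|\simeq \e^d$, which follows from the hypothesis together with the trivial upper bound, Fubini gives
\begin{equation*}
\int_{\mathcal{O}}\fint_{B_\e(X)\cap\mathcal{O}}|f|\,dX \simeq \e^{-d}\int_{\mathcal{O}}|f(Y)|\,|B_\e(Y)\cap\mathcal{O}|\,dY \gtrsim \|f\|_{L^1(\mathcal{O})}.
\end{equation*}

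For $1<p\le 2$, the step I expect to be the main obstacle is that $(\fint|f|)^p$ cannot be compared to $\fint|f|^p$ in the favorable direction. To get around this, I would cover $\mathcal{O}$ by a grid of cubes $Q_k$ of side $c\e$ small enough that, for every $X\in Q_k\cap\mathcal{O}$, the ball $B_\e(X)$ contains the cube $\widetilde{Q}_k\supset Q_k$ of side $3c\e$, with bounded overlap. Then for $X\in Q_k\cap\mathcal{O}$,
\begin{equation*}
M_\e(f)(X)^2 \gtrsim \e^{-d}\int_{\widetilde Q_k\cap\mathcal{O}}|f|^2 .
\end{equation*}
Two subcases follow. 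If $|Q_k\cap \mathcal{O}|\gtrsim \e^d$, I integrate over $Q_k\cap\mathcal{O}$ and use Hölder ($p\le2$) to get
\begin{equation*}
\int_{Q_k\cap \mathcal{O}} M_\e(f)^p \gtrsim \e^{d}\Big(\fint_{Q_k\cap\mathcal{O}}|f|^2\Big)^{p/2}\gtrsim \int_{Q_k\cap\mathcal{O}}|f|^p .
\end{equation*}
Cubes with small intersection are handled by the hypothesis: each point of such a cube has a ball $B_\e(X)\cap \mathcal{O}$ of volume $\simeq \e^d$. So I instead group points into $\e$-balls $B_{\e/2}(Y_k)$ with centers $Y_k\in\mathcal{O}$ forming a maximal $\e/4$-separated set. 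For $X\in B_{\e/2}(Y_k)\cap\mathcal{O}$ we have $B_\e(X)\supset B_{\e/2}(Y_k)$, and $|B_{\e/4}(Y_k)\cap \mathcal{O}|\gtrsim \e^d$, which comes from applying the hypothesis at scale $\e$ and then rescaling the constant. This gives the same lower bound on each piece, and summing with bounded overlap proves \eqref{est.f<Mef}.

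For \eqref{est.Mef<f} with $q\ge 2$, Jensen/Hölder gives
\begin{equation*}
M_\e(f)(X)^q\le \fint_{B_\e(X)\cap\mathcal{O}}|f|^q \lesssim \e^{-d}\int_{B_\e(X)}|f|^q \mathbbm{1}_{B_\e(\mathcal{O})}.
\end{equation*}
Here the lower volume bound from the hypothesis is used once more. Integrating over $X\in\mathcal{O}$ and applying Fubini then yields $\|M_\e f\|_{L^q(\mathcal{O})}^q\lesssim \|f\|^q_{L^q(B_\e(\mathcal{O}))}$.
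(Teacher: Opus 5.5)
Your proof of \eqref{est.Mef<f} is fine and matches the paper's Jensen--Fubini argument. Your treatment of \eqref{est.f<Mef} for $1<p\le 2$, however, has a genuine gap.

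In the small-intersection case you claim that $|B_{\e/4}(Y_k)\cap\mathcal{O}|\gtrsim\e^d$ follows from the hypothesis ``by rescaling the constant''. It does not, because the hypothesis controls density only at scale $\e$. Take $\mathcal{O}=B_{\eta\e}(0)\cup\{0.6\e<|Y|<0.9\e\}$ with $\eta<1/10$. Every $X\in\mathcal{O}$ satisfies $|B_\e(X)\cap\mathcal{O}|\gtrsim\e^d$ with a constant independent of $\eta$. Yet, by maximality, any maximal $\e/4$-separated subset of $\mathcal{O}$ has a center $Y_k\in B_{\eta\e}(0)$, and for that center $S=B_{\e/2}(Y_k)\cap\mathcal{O}=B_{\eta\e}(0)$.

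Worse, the piecewise bound you are aiming for is false on this piece. For $f=\mathbbm{1}_{B_{\eta\e}(0)}$ one has $M_\e(f)^2\simeq\eta^d$ on $S$, so $\int_S M_\e(f)^p\simeq\eta^{dp/2}\int_S|f|^p$. The mass of $M_\e(f)$ that compensates lives on the annulus, outside the piece. So no scheme that bounds each $\e$-piece by the $L^2$ mass of $f$ on that same piece can close.

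The obstacle you identified is self-inflicted: there is no reason to pass through the $L^1$ average. Since $p\le 2$, H\"older gives pointwise
\[
M_\e(f)(X)^p=\Big(\fint_{B_\e(X)}|f|^2\Big)^{p/2}\ge\fint_{B_\e(X)}|f|^p ,
\]
and the same holds with $B_\e(X)\cap\mathcal{O}$ in place of $B_\e(X)$. You in fact use this inequality in your first subcase. Your $p=1$ Fubini computation, applied to $|f|^p$ instead of $|f|$, then yields
\[
\int_{\mathcal{O}}M_\e(f)^p\ge|B_\e|^{-1}\int_{\mathcal{O}}|f(Y)|^p\,|B_\e(Y)\cap\mathcal{O}|\,dY\gtrsim\|f\|_{L^p(\mathcal{O})}^p .
\]
This is exactly the paper's proof. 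The hypothesis is used at each point $Y$ where $f$ lives, collecting the contributions from all centers $X\in B_\e(Y)\cap\mathcal{O}$. Your decomposition into pieces throws that information away.
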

\begin{proof}
    We first prove \eqref{est.f<Mef}. By the H\"{o}lder inequality and Fubini's Theorem, for $1\le p\le 2$,
    \begin{equation}
    \begin{aligned}
        \int_{\mathcal{O}} M_\e(f)^p dX & \ge \int_{\mathcal{O}} \fint_{B_\e(X)} |f(Y)|^p dY dX \\
        & \ge |B_\e|^{-1} \int_{B_\e(\mathcal{O})} \int_{\mathcal{O}} \mathbbm{1}_{\{|X-Y|<\e\} } |f(Y)|^p dX dY \\
        & \ge |B_\e|^{-1} \int_{\mathcal{O}} |\mathcal{O} \cap B_\e(Y)| |f(Y)|^p dY \\
        & \gtrsim \int_{\mathcal{O}} |f(Y)|^p dY,
    \end{aligned}
    \end{equation}
    where we have used the assumption $|B_\e(Y) \cap \mathcal{O}| \gtrsim |B_\e(Y)|$ for $Y\in \mathcal{O}$.

    To prove \eqref{est.Mef<f}, we use the H\"{o}lder inequality and Fubini's Theorem again,
    \begin{equation}
    \begin{aligned}
        \int_{\mathcal{O}} |M_\e(f)|^q dX & \le \int_{\mathcal{O}} \fint_{B_\e(X)} |f(Y)|^q dY dX \\
        & \le |B_\e|^{-1} \int_{B_\e(\mathcal{O})} \int_{\mathcal{O}} \mathbbm{1}_{\{|X-Y| < \e \} } |f(Y)|^q dX dY  \\
        & \le \int_{B_\e(\mathcal{O})} |f(Y)|^q dX dY.
        \end{aligned}
    \end{equation}
    This ends the proof.
\end{proof}

\begin{proposition}\label{prop.Mef-Bs}
    Let $\e > 0$. Then for any $s \lesssim \e$,
    \begin{equation}\label{est.Mef-Bs-1}
        M_\e(f)(X) \lesssim \fint_{B_s(X)} M_\e(f),
    \end{equation}
    and
    \begin{equation}\label{est.Mef-Bs-2}
        M_s(M_\e(f))(X) \lesssim M_{\e+s}(f)(X) \lesssim M_{2s}(M_\e(f))(X).
    \end{equation}
\end{proposition}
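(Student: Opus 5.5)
Both estimates in the proposition are elementary consequences of how the balls $B_\e(X)$, $B_s(X)$ and $B_{\e+s}(X)$ are nested, combined with $|B_\e|\simeq|B_{\e+s}|$ and $|B_\e|\simeq|B_{2s}|$. These comparisons hold because $s\lesssim\e$; for the second inequality in \eqref{est.Mef-Bs-2} I read the hypothesis as $s\simeq\e$, which is how it is used, since otherwise $B_{2s}$ is too small to see $B_{\e+s}$. I write $M_\e(f)(X)^2=|B_\e|^{-1}\int_{B_\e(X)}|f|^2$ (whole-space version, with $f$ extended by zero as needed); the version with $B_\e(X)\cap\Omega$ is handled the same way under the corkscrew-type assumption $|B_\e(X)\cap\mathcal O|\gtrsim|B_\e|$ of Proposition \ref{prop.A1}.

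\textbf{First estimate \eqref{est.Mef-Bs-1}.} Fix $X$ and take $Y\in B_s(X)$. I first handle $s\le \e/2$. Then $B_{\e/2}(X)\subset B_\e(Y)$, so
\[
M_\e(f)(Y)^2\ge |B_\e|^{-1}\int_{B_{\e/2}(X)}|f|^2 .
\]
This only controls half of the ball, so I cover $B_\e(X)$ by $N=N(d)$ balls $B_{\e/2}(X_k)$ with $|X_k-X|<\e$. I then compare $M_\e(f)(X)$ with a family of points $Y$ near $X$ such that the sets $B_\e(Y)$, taken together, cover $B_\e(X)$.

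The cleanest way to do this is to integrate directly. By Fubini,
\[
\fint_{B_s(X)}M_\e(f)(Y)^2\,dY=|B_\e|^{-1}\int |f(Z)|^2\,\frac{|B_s(X)\cap B_\e(Z)|}{|B_s|}\,dZ .
\]
For $Z\in B_\e(X)$, the set $B_s(X)\cap B_\e(Z)$ contains a ball of radius $\gtrsim s$, because $s\le\e$; hence the weight is $\gtrsim 1$. This gives $M_\e(f)(X)^2\lesssim \fint_{B_s(X)}M_\e(f)^2$, i.e. an $L^2$-average bound.

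To reach the $L^1$-average stated in \eqref{est.Mef-Bs-1}, I use the pointwise comparability of $M_\e(f)$ at nearby points. For $|Y-X|<s\le\e$, cover $B_\e(X)$ by boundedly many balls $B_\e(Y_j)$ with $Y_j\in B_s(Y)$. Then $M_\e(f)(X)\lesssim \sum_j M_\e(f)(Y_j)$, and averaging this over $Y$ gives the $L^1$ average on the right-hand side. In a cleaner form: for each $Z\in B_\e(X)$ there is a set of $Y\in B_s(X)$ of measure $\gtrsim |B_s|$ with $Z\in B_\e(Y)$. Choose finitely many directions so that the cones of such $Y$ are disjoint pieces $E_j\subset B_s(X)$, each of measure $\simeq|B_s|$, with $\bigcup_j \bigcap_{Y\in E_j}B_\e(Y)\supset B_\e(X)$. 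Then
\[
M_\e(f)(X)\le\sum_j\Big(|B_\e|^{-1}\int_{\bigcap_{E_j}B_\e(Y)}|f|^2\Big)^{1/2}\le\sum_j\inf_{E_j}M_\e(f)\lesssim\fint_{B_s(X)}M_\e(f).
\]
The main obstacle in the whole proposition is this geometric covering: choosing the sets $E_j$ and checking that the intersections $\bigcap_{Y\in E_j}B_\e(Y)$ cover $B_\e(X)$. A workable choice is to take $E_j=B_{cs}(X+\tfrac{s}{2}\theta_j)$ for a fine net of directions $\theta_j$.

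\textbf{Second estimate \eqref{est.Mef-Bs-2}.} For the first inequality, Fubini gives
\[
M_s(M_\e f)(X)^2=\fint_{B_s(X)}|B_\e|^{-1}\int_{B_\e(Y)}|f|^2\le |B_\e|^{-1}\int_{B_{\e+s}(X)}|f|^2\simeq M_{\e+s}(f)(X)^2 .
\]
For the second inequality, apply the Fubini weight argument above with radius $2s$. Every $Z\in B_{\e+s}(X)$ satisfies $|B_{2s}(X)\cap B_\e(Z)|\gtrsim |B_{2s}|$; for this one needs $\e\lesssim s$, which is why I take $s\simeq\e$ here. This yields $M_{\e+s}(f)(X)^2\lesssim \fint_{B_{2s}(X)}M_\e(f)^2=M_{2s}(M_\e f)(X)^2$.
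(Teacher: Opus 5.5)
Your proof of \eqref{est.Mef-Bs-1}, using the sets $E_j$ and the intersections $\bigcap_{Y\in E_j}B_\e(Y)$, is correct. So is your proof of the first inequality in \eqref{est.Mef-Bs-2}, and both are essentially the paper's argument. For \eqref{est.Mef-Bs-1} the paper uses pigeonhole to pick a single cone $\mathcal{C}_\theta$ that carries a fixed fraction of $\int_{B_\e(X)}|f|^2$. It then averages over $B_s(X)\cap\mathcal{C}_\theta$ instead of summing over all directions. The $L^2$ Fubini detour is not needed.

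\textbf{The gap is in the second inequality of \eqref{est.Mef-Bs-2}.} You prove it only for $s\simeq\e$. Your reason is that $|B_{2s}(X)\cap B_\e(Z)|\gtrsim|B_{2s}|$ for $Z\in B_{\e+s}(X)$ supposedly needs $\e\lesssim s$. That is backwards: the bound needs $s\lesssim\e$, which is exactly the hypothesis.

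To see this, take $Z\in B_{\e+s}(X)$. The point of $\overline{B_\e(Z)}$ nearest to $X$ lies at distance $|Z-X|-\e<s$ from $X$. So along the line through $X$ and $Z$, the lens $B_{2s}(X)\cap B_\e(Z)$ has depth greater than $s$. Since $s\lesssim\e$, the sphere $\partial B_\e(Z)$ is essentially flat at scale $s$, and the lens contains a ball of radius $\simeq s$. This is the same geometric fact you already used for $Z\in B_\e(X)$ and $B_s(X)$ in your Fubini step for \eqref{est.Mef-Bs-1}. Fubini then gives, for every $s\lesssim\e$,
\[
M_{2s}(M_\e f)(X)^2=\frac{1}{|B_{2s}|\,|B_\e|}\int_{\R^d}|f(Z)|^2\,\bigl|B_{2s}(X)\cap B_\e(Z)\bigr|\,dZ\gtrsim\frac{1}{|B_\e|}\int_{B_{\e+s}(X)}|f|^2\ge M_{\e+s}(f)(X)^2 .
\]
This is precisely the paper's proof. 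The inequality in fact holds for all $s>0$: in general the weight is $\gtrsim\min(s,\e)^d$, and $\max(s,\e)\le\e+s$.

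The restriction to $s\simeq\e$ is not harmless, and it is not how the estimate is used. In the proof of Theorem \ref{thm.real-variable>t}, the second inequality of \eqref{est.Mef-Bs-2} is applied with $t$ in the role of $\e$ and $s<t$ arbitrarily small, to bound $M_{t+2s}(f)$ by $M_{4s}(M_t f)$. That is exactly the regime $s\ll\e$ your argument excludes. As written, your proposal does not prove the proposition as stated. The fix is simply to use the correct lower bound on the Fubini weight.
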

\begin{proof}
    Let $X = 0$. Without loss of generality, assume $s < \e/10$. For any $\theta \in (0,1/2)$, there exists a cone $\mathcal{C}_\theta = \{ X \in \R^d: 1-\theta < \frac{X}{|X|}\cdot v \le 1 \} $ for some $v \in \mathbb{S}^{d-1}$ such that
    \begin{equation}\label{est.Mef0}
        M_\e(f)(0) = \bigg( \fint_{B_\e(0)} |f(Y)|^2 dY \bigg)^{1/2} \simeq \bigg( |B_\e|^{-1}\int_{B_\e(0) \cap \mathcal{C}_\theta} |f(Y)|^2 dY \bigg)^{1/2}.
    \end{equation}
    Now, observe that if $\theta$ is small enough and $s<\e/10$, then for any $X \in B_s(0) \cap \mathcal{C}_\theta$, $B_\e(0) \cap \mathcal{C}_\theta \subset B_\e(X)$. This and \eqref{est.Mef0} imply that for each $X \in B_\e(0) \cap \mathcal{C}_\theta$,
    \begin{equation}
    \begin{aligned}
        M_\e(f)(X) & = \bigg( \fint_{B_\e(X)} |f(Y)|^2 dY \bigg)^{1/2} \\
        & \ge \bigg( |B_\e|^{-1} \int_{B_\e(0) \cap \mathcal{C}_\theta} |f(Y)|^2 dY \bigg)^{1/2} \simeq M_\e(f)(0).
    \end{aligned}
    \end{equation}
    It follows that
    \begin{equation}
        M^\e(f)(0) \lesssim \fint_{B_\e(0) \cap \mathcal{C}_\theta} M_\e(f)(X) dX \lesssim \fint_{B_\e(0) } M_\e(f)(X) dX.
    \end{equation}
    This proves \eqref{est.Mef-Bs-1}.

    The first inequality in \eqref{est.Mef-Bs-2} follows from the simple fact that for each $Y \in B_s(X)$, $M_\e(f)(Y) \lesssim M_{\e+s}(f)(X)$. Taking $L^2$ average of $M_\e(f)(Y)$ over $B_s(X)$, we get the desired estimate. For the second inequality in \eqref{est.Mef-Bs-2}, we apply Fubini's Theorem as follows (assume $X = 0$ for simplicity),
    \begin{equation}
        \begin{aligned}
            M_{2s}(M_\e(f))(0) & = \frac{1}{|B_{2s}| |B_\e|} \int_{B_{2s}(0)} \int_{B_\e(Y)} |f(Z)|^2 dZ dY \\
            & = \frac{1}{|B_{2s}| |B_\e|} \int_{B_{\e+2s}(0)} |f(Y)|^2 \int_{B_{2s}(0)} \mathbbm{1}_{\{ |Y-Z|<\e \} } dZ dY.
        \end{aligned}
    \end{equation}
    Now, the key observation is that for $Y \in B_{\e+s}(0)$, we have
    \begin{equation}
        \int_{B_{2s}(0)} \mathbbm{1}_{\{ |Y-Z|<\e \} } dZ= | B_{2s}(0) \cap B_\e(Y) | \gtrsim |B_s(0)|.
    \end{equation}
    It follows that
    \begin{equation}
        M_{2s}(M_\e(f))(0) \gtrsim \frac{1}{|B_\e|} \int_{B_{\e+s}(0)} |f(Y)|^2 dY \simeq M_{\e+s}(f)(0),
    \end{equation}
    as desired.
\end{proof}

\begin{proposition}\label{prop.Me=MKe}
    Let $\mathcal{O}$ be an open subset of $\R^d$. Then given $K \ge 1$, 
    \begin{equation}\label{est.Me=MKe}
        \| M_\e(f) \|_{L^p(\mathcal{O})} \lesssim \| M_{K\e}(f) \|_{L^p(\mathcal{O})} \lesssim \| M_{\e}(f) \|_{L^p(B_{K\e}(\mathcal{O}))},
    \end{equation}
    where the implicit constant depends on $d$ and $K$.
\end{proposition}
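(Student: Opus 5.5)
The plan is to prove both inequalities pointwise, or after one application of Fubini's theorem, by comparing balls of radii $\e$ and $K\e$. Throughout, $M_\e(f)(X)=(\fint_{B_\e(X)}|f|^2)^{1/2}$ is taken with respect to the ambient $\R^d$. If instead $M_\e$ is understood with $B_\e(X)\cap\Omega$, the same argument works, provided the relevant domain satisfies $|B_\e(X)\cap\Omega|\simeq \e^d$; this holds for Lipschitz domains.

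\textbf{Second inequality.} First cover $B_{K\e}(X)$ by $N=N(d,K)$ balls $B_{\e}(X_k)$ with $X_k\in B_{K\e}(X)$. Then
\[
M_{K\e}(f)(X)^2\lesssim \sum_{k=1}^N M_\e(f)(X_k)^2 .
\]
This alone only gives a bound by values at the shifted points $X_k$, which is not yet an $L^p$ statement. To make it one, average over shifts: by Proposition \ref{prop.Mef-Bs} (take $s=\e$ and iterate at most $K$ times), or directly by Fubini's theorem,
\[
M_{K\e}(f)(X)^2 \lesssim \fint_{B_{K\e}(X)} M_\e(f)(Y)^2\,dY .
\]
This holds because every $Z\in B_{K\e}(X)$ lies in $B_\e(Y)$ for a set of $Y\in B_{(K+1)\e}(X)$ of measure $\simeq\e^d$. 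Strictly, this gives the average over $B_{(K+1)\e}(X)$ rather than $B_{K\e}(X)$; this is harmless. Hence $M_{K\e}(f)(X)\lesssim M_{(K+1)\e}(M_\e f)(X)$.

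For $p\ge2$, apply Jensen's inequality to get
\[
M_{K\e}(f)(X)^p\lesssim \fint_{B_{(K+1)\e}(X)}M_\e(f)^p .
\]
Integrating over $X\in\mathcal O$ and using Fubini's theorem yields $\|M_\e f\|_{L^p(B_{(K+1)\e}(\mathcal O))}$. For $p<2$, instead use the pointwise sum bound together with the observation that $M_\e(f)(X_k)\lesssim M_\e(f)(Y)$ for $Y$ in a sub-ball of size $\simeq\e$ near $X_k$ (Proposition \ref{prop.Mef-Bs}, \eqref{est.Mef-Bs-1}), then again integrate and apply Fubini's theorem. The enlargement from $K\e$ to $(K+1)\e$ is absorbed by renaming $K$, or equivalently by adjusting the sets slightly as the paper does elsewhere.

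\textbf{First inequality.} This is the key step. Since $B_\e(X)\subset B_{K\e}(X)$ and $|B_{K\e}|/|B_\e|=K^d$, we have
\[
M_\e(f)(X)\le K^{d/2}M_{K\e}(f)(X)
\]
pointwise. Taking $L^p(\mathcal O)$ norms finishes this part immediately. In the intersected-with-$\Omega$ version the same comparison holds up to the ratio $|B_{K\e}(X)\cap\Omega|/|B_\e(X)\cap\Omega|\lesssim K^d$, which is controlled by the interior measure condition.

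\textbf{Main obstacle.} The only delicate point is the second inequality for $p<2$. There one cannot use Jensen's inequality, and must rely on the ``cone'' lower-bound trick of Proposition \ref{prop.Mef-Bs} to replace the point value $M_\e(f)(X_k)$ by an average over a set of measure $\simeq\e^d$. Once that substitution is made, the rest is a routine Fubini argument.
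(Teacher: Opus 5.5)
Your proof is correct, and your first inequality is exactly the paper's pointwise comparison. For the second inequality you take a longer route than necessary.

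The paper fixes translation vectors $Z_1,\dots,Z_m\in B_{K\e}(0)$ with $B_{K\e}(0)=\bigcup_i B_\e(Z_i)$. This gives $M_{K\e}(f)(X)\lesssim\sum_i M_\e(f)(X+Z_i)$ for every $X$. Translation invariance of Lebesgue measure then gives $\|M_\e(f)(\cdot+Z_i)\|_{L^p(\mathcal O)}=\|M_\e(f)\|_{L^p(\mathcal O+Z_i)}\le\|M_\e(f)\|_{L^p(B_{K\e}(\mathcal O))}$. So your remark that the shifted bound ``is not yet an $L^p$ statement'' is the misjudgment. Once the shifts are independent of $X$, it already is one, for all $p$ simultaneously and with exactly $B_{K\e}(\mathcal O)$ on the right. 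The ``main obstacle'' at $p<2$ therefore disappears.

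Your averaging argument still works. Applying \eqref{est.Mef-Bs-1} at the covering points gives an $L^1$ domination $M_{K\e}(f)(X)\lesssim\fint M_\e(f)$ over a ball of radius $\simeq K\e$. Jensen plus Fubini then handle every $p\ge1$ at once, so your split at $p=2$ is superfluous.

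Two small corrections. First, the enlargement to $(K+1)\e$ cannot be absorbed by ``renaming $K$'', since $K$ also appears on the left. Second, that enlargement is not needed: for $K\ge1$ and $Z\in B_{K\e}(X)$ one has $|B_\e(Z)\cap B_{K\e}(X)|\gtrsim\e^d$. So your Fubini bound holds with the average over $B_{K\e}(X)$, as you first wrote. In the $L^1$ version, take the covering centers in $B_{K\e-\e/2}(X)$ and $s=\e/2$, so that each $B_s(X_k)\subset B_{K\e}(X)$.

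As for what each approach buys: the paper's proof is a two-line consequence of translation invariance. Yours uses only Fubini and doubling, so it transfers directly to the boundary operator $M_\e^\partial$. The paper also feeds $M_\e^\partial$ into this proposition (e.g.\ in the proof of Corollary \ref{coro.L0.hatN}), and there translations are unavailable unless one passes to graph coordinates.
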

\begin{proof}
    The first inequality is due to the observation
    \begin{equation}
        M_\e(f)(X) \lesssim M_{K\e}(f)(X) \quad \text{for any } X\in \mathcal{O}.
    \end{equation}
    To see the second, we use the following covering property: there exists $Z_1,Z_2,\cdots, Z_m \in B_{K\e}(0)$ with $m$ depending only on $d$ and $K$ such that
    \begin{equation}
        B_{K\e}(0) = \bigcup_{i=1}^m B_\e(Z_i).
    \end{equation}
    This implies that
    \begin{equation}
        M_{K\e}(f)(X) \lesssim \sum_{i=1}^m M_\e(f)(X+Z_i).
    \end{equation}
    Taking $L^p$ norm on both sides in $X$ over $\mathcal{O}$, we obtain the second inequality in \eqref{est.Me=MKe}.
\end{proof}

Define the large-scale Hardy-Littlewood maximal function by
\begin{equation}\label{def.HardyLittlewood.Me}
    \M^\e(f)(X) = \sup\bigg\{ \fint_{B_r} |f|: X\in B_r, r> \e \bigg\}.
\end{equation}
\begin{proposition}\label{prop.LS.HLM}
    Let $f \in L^p(\R^d)$ for some $1<p\le \infty$. Then
    \begin{equation}\label{est.Me.Lpbound}
        \| \M^\e(f) \|_{L^p(\R^d)} \lesssim \| M_\e(f) \|_{L^p(\R^d)}.
    \end{equation}
\end{proposition}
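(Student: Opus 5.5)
The plan is to compare $\M^\e(f)$ pointwise with the classical Hardy--Littlewood maximal function of $M_\e(f)$, and then invoke the $L^p$ boundedness of the classical maximal operator for $1<p\le\infty$. The only input from earlier is the Fubini argument used in Proposition \ref{prop.A1}; the point is to run it at a general large scale.

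\textbf{Step 1 (pointwise comparison).} Fix a ball $B_r$ with $X\in B_r$ and $r>\e$, and let $B_r^*$ denote the concentric ball of radius $r+\e\le 2r$. For each $Y\in B_r$, the set $B_\e(Y)\cap B_r^*$ is all of $B_\e(Y)$, so
\begin{equation*}
    \fint_{B_r}|f| = \frac{1}{|B_r|}\int_{B_r}|f(Y)|\,\frac{1}{|B_\e|}\int_{B_\e(Y)}dZ\,dY .
\end{equation*}
Exchanging the order of integration gives
\begin{equation*}
    \fint_{B_r}|f| \le \frac{1}{|B_r|}\int_{B_r^*}\frac{1}{|B_\e|}\int_{B_\e(Z)}|f(Y)|\,dY\,dZ \le \frac{1}{|B_r|}\int_{B_r^*}M_\e(f)(Z)\,dZ .
\end{equation*}
The last inequality uses Cauchy--Schwarz in the form $\fint_{B_\e(Z)}|f|\le M_\e(f)(Z)$.

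\textbf{Step 2 (conclusion).} Since $|B_r^*|\le 2^d|B_r|$, Step 1 yields $\fint_{B_r}|f|\lesssim\fint_{B_r^*}M_\e(f)\le \M(M_\e(f))(X)$, where $\M$ is the classical Hardy--Littlewood maximal function. Here we use that $X\in B_r\subset B_r^*$. Taking the supremum over all admissible balls gives
\begin{equation*}
    \M^\e(f)\lesssim \M(M_\e(f)) \quad\text{pointwise.}
\end{equation*}
The $L^p$ bound for $\M$, valid for $1<p\le\infty$, then gives \eqref{est.Me.Lpbound}.

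The only point needing care is Step 1: the restriction $r>\e$ is exactly what allows the $\e$-enlargement $B_r^*$ to have volume comparable to $B_r$. Everything else is routine, so I do not expect a genuine obstacle. For the boundary version used in Lemma \ref{lem.C1Rp}, one would repeat the argument on $I_{3r}$, using surface balls and the doubling of $\sigma$ in place of Lebesgue measure.
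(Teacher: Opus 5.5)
Your argument is correct and is essentially the paper's proof. Both establish the pointwise bound $\M^\e(f)\lesssim \M(M_\e(f))$ by exchanging the order of integration, and then invoke the $L^p$ boundedness of $\M$. The only cosmetic difference is in how the ball is handled. The paper keeps the average over $B_r$ itself, applying Proposition \ref{prop.A1} with $p=1$ and using $|B_r\cap B_\e(Y)|\gtrsim|B_\e|$ for $Y\in B_r$ when $r>\e$. You instead pass to the enlarged ball of radius $r+\e$ and use $|B_r^*|\le 2^d|B_r|$.
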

\begin{proof}
    Suppose $r>\e$. Then for any $X\in B_r$, then $|B_r \cap B_\e(X)| \simeq |B_\e(X)|$. By Proposition \ref{prop.A1} with $p = 1$,
    \begin{equation}
        \fint_{B_r} |f| \lesssim \fint_{B_r} M_\e(f).
    \end{equation}
    Thus, by the definition \eqref{def.HardyLittlewood.Me},  we have $\M^\e(f)(X) \lesssim \M (M_\e f)(X)$, where $\M$ is the classical Hardy-Littlewood maximal function. Thus, it follows from the $L^p$ boundedness of $\M$ that \eqref{est.Me.Lpbound} holds for any $1<p\le \infty$.
\end{proof}

Recall that the smoothing operator is defined in \eqref{def.Ke}. We have the following properties.

\begin{proposition}\label{prop.f-Kef}
    Let $\mathcal{O}$ be an open set and  $f$ be  a $W^{1,p}$ function defined in $B_{\e}(\mathcal{O})$. Then for $1\le p<\infty$,
    \begin{equation}
        \| K_\e(f) - f \|_{L^p(\mathcal{O})} \le \e \| \nabla f \|_{L^p(B_\e(\mathcal{O}))}.
    \end{equation}
\end{proposition}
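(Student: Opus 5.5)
The plan is to write the mollification error as an average of difference quotients along short segments, and then apply Minkowski's integral inequality. Recall $K_\e f = \psi_\e * f$ with $\psi\ge 0$, $\int\psi = 1$ and $\operatorname{supp}\psi \subset B_{1/2}(0)$. For $X\in\mathcal{O}$ we have
\begin{equation*}
    K_\e f(X) - f(X) = \int_{\R^d} \psi_\e(Z)\big( f(X-Z) - f(X)\big)\, dZ ,
\end{equation*}
and only $|Z|<\e/2$ contributes. For such $Z$ the whole segment $\{X - \theta Z : \theta\in[0,1]\}$ lies in $B_{\e}(\mathcal{O})$, since each point is within distance $\e/2$ of $X\in\mathcal{O}$.

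The first step is to assume $f$ is smooth on $B_\e(\mathcal{O})$. The general $W^{1,p}$ case then follows by a standard approximation, using local smooth approximation on open sets (Meyers--Serrin) and Fatou's lemma. For smooth $f$, the fundamental theorem of calculus gives
\begin{equation*}
    f(X-Z)-f(X) = -\int_0^1 Z\cdot \nabla f(X-\theta Z)\, d\theta .
\end{equation*}
Substituting this, we get
\begin{equation*}
    |K_\e f(X) - f(X)| \le \int_0^1\!\!\int_{\R^d} \psi_\e(Z)\,|Z|\,|\nabla f(X-\theta Z)|\, dZ\, d\theta .
\end{equation*}

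The second step is to take the $L^p(\mathcal{O})$ norm in $X$ and apply Minkowski's integral inequality, moving the norm inside the $dZ\,d\theta$ integral. For fixed $Z$ and $\theta$, translation invariance of Lebesgue measure gives
\begin{equation*}
    \|\nabla f(\cdot-\theta Z)\|_{L^p(\mathcal{O})} \le \|\nabla f\|_{L^p(B_\e(\mathcal{O}))},
\end{equation*}
because $\mathcal{O}-\theta Z\subset B_\e(\mathcal{O})$. This yields
\begin{equation*}
    \|K_\e f - f\|_{L^p(\mathcal{O})} \le \Big(\int \psi_\e(Z)|Z|\,dZ\Big) \|\nabla f\|_{L^p(B_\e(\mathcal{O}))} \le \tfrac{\e}{2}\,\|\nabla f\|_{L^p(B_\e(\mathcal{O}))},
\end{equation*}
where the last bound uses $|Z|<\e/2$ on the support of $\psi_\e$ and $\int\psi_\e = 1$. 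This is even better than the claimed constant.

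There is no real obstacle here. The only point needing care is justifying the pointwise formula for Sobolev functions. I would handle it either by the approximation described above or by noting that $W^{1,p}$ functions are absolutely continuous on almost every line parallel to $Z$. The case $p=1$ is covered by the same Minkowski argument.
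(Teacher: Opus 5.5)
Your proof is correct and is the paper's argument: write $K_\e f - f$ with the fundamental theorem of calculus along segments, apply Minkowski's integral inequality, and use that $\mathcal{O}-\theta Z \subset B_\e(\mathcal{O})$. Two details differ, both harmless refinements: you get the constant $\e/2$ by using ${\rm supp}(\psi)\subset B_{1/2}(0)$, and you justify the pointwise formula for $W^{1,p}$ functions by approximation, which the paper leaves implicit.
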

\begin{proof}
    Write, for $X\in \mathcal{O}$,
    \begin{equation}
        K_\e f(X)  - f(X) = \int_{B_\e(0)} \psi_\e(Y) \int_0^1 \nabla f(X - sY) \cdot Y ds dY.
    \end{equation}
    By the Minkowski inequality,
    \begin{equation}
    \begin{aligned}
        \| K_\e f(\cdot)  - f(\cdot) \|_{L^p(\mathcal{O})} & \le \e \int_{B_\e(0)} \psi_\e(Y) \int_0^1 \| \nabla f(\cdot - sY) \|_{L^p(\mathcal{O})}  ds dY \\
        & \le \e \| \nabla f \|_{L^p(B_\e(\mathcal{O}))}.
    \end{aligned}
    \end{equation}
    The proof is complete.
\end{proof}

\begin{proposition}[{\cite[Proposition 3.1.5]{ShenBook}}]
    Let $\mathcal{O}$ be an open set and  $f$ be a function defined in $B_{\e}(\mathcal{O})$. Let $g \in L^p(\T^d)$ be a 1-periodic function. Then for any $1 \le p < \infty$,
    \begin{equation}
        \| g(X/\e) K_\e (f)  \|_{L^p(\mathcal{O})} \lesssim \| g \|_{L^p(\T^d)} \| f \|_{L^p(B_{\e}(\mathcal{O}))}.
    \end{equation}
\end{proposition}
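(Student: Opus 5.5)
The estimate should follow from a pointwise identity for $g(X/\e)K_\e(f)$ on cells of size $\e$, after which periodicity of $g$ turns the local $L^p$ norm of $g(\cdot/\e)$ into $\|g\|_{L^p(\T^d)}$. Since $\mathrm{supp}\,\psi\subset B_{1/2}(0)$, for $X\in\mathcal{O}$ we have
\[
|K_\e f(X)| \le \int_{B_{\e/2}(X)} \psi_\e(X-Y)|f(Y)|\,dY .
\]
Apply Hölder's inequality with respect to the probability measure $\psi_\e(X-Y)\,dY$; Jensen would give the same bound. This yields
\[
|K_\e f(X)|^p \le \int \psi_\e(X-Y)|f(Y)|^p\,dY \lesssim \e^{-d}\int_{B_{\e/2}(X)}|f(Y)|^p\,dY ,
\]
using $\|\psi_\e\|_\infty\lesssim \e^{-d}$. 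This holds for every $1\le p<\infty$; when $p=1$ it is immediate.

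Next, multiply by $|g(X/\e)|^p$, integrate over $X\in\mathcal{O}$, and exchange the order by Fubini:
\[
\int_{\mathcal{O}}|g(X/\e)|^p|K_\e f(X)|^p\,dX \lesssim \int_{B_{\e}(\mathcal{O})}|f(Y)|^p\Big(\e^{-d}\int_{B_{\e/2}(Y)}|g(X/\e)|^p\,dX\Big)dY .
\]
Here we used that $X\in\mathcal{O}$ and $|X-Y|<\e/2$ force $Y\in B_{\e}(\mathcal{O})$.

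The inner quantity is handled by rescaling $X=\e Z$. It becomes $\int_{B_{1/2}(Y/\e)}|g(Z)|^p\,dZ$. The ball $B_{1/2}(Y/\e)$ is covered by at most $C(d)$ integer translates of the unit cell, so by periodicity this is $\le C(d)\|g\|_{L^p(\T^d)}^p$, uniformly in $Y$ and $\e$. Inserting this bound and taking $p$-th roots gives the claim with a constant depending only on $d$ and $p$ through $\psi$.

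There is no real obstacle. The only point needing care is to put the factor $g(X/\e)$ inside the Fubini exchange before bounding it. One must not try to bound $g(X/\e)$ pointwise, since $g$ is only in $L^p$. It is the integration in $X$ over an $\e$-ball that produces the cell average of $|g|^p$ and hence the torus norm. It also matters that $f$ is needed on $B_\e(\mathcal{O})$, which accounts for the enlarged domain on the right-hand side.
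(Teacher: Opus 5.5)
Your proof is correct. The paper gives no proof of its own and only cites Proposition 3.1.5 of Shen's monograph, whose argument is the same as yours: Jensen's inequality for the probability kernel $\psi_\e$, then Fubini to move $|g(X/\e)|^p$ onto an $\e$-ball around $Y$, then periodicity to bound the rescaled ball integral by $C(d)\|g\|_{L^p(\T^d)}^p$ uniformly in $Y$ and $\e$.
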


\begin{proposition}\label{prop.Me(gKef)}
    Let $\mathcal{O}$ be an open set and  $f$ be a function defined in $B_{2\e}(\mathcal{O}): = \{ X\in \R^d: \dist(X,\mathcal{O}) < \e \}$. Let $g \in L^2(\T^d)$ be a 1-periodic function. Then for any $2\le p < \infty$,
    \begin{equation}
        \| M_\e(g(X/\e) K_\e (f) ) \|_{L^p(\mathcal{O})} \lesssim \| g \|_{L^2(\T^d)} \| f \|_{L^p(B_{2\e}(\mathcal{O}))}.
    \end{equation}
\end{proposition}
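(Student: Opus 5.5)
The plan is a pointwise domination of $M_\e(g(X/\e)K_\e(f))$ by a local $L^2$ average of $f$ at scale comparable to $\e$, after which Proposition \ref{prop.Me=MKe}-type averaging arguments convert this into the $L^p$ bound. Fix $X\in\mathcal{O}$. By definition,
\begin{equation*}
M_\e(g(\cdot/\e)K_\e f)(X)^2 \le \frac{1}{|B_\e\cap\ldots|}\int_{B_\e(X)} |g(Y/\e)|^2 |K_\e f(Y)|^2\,dY,
\end{equation*}
where (as elsewhere in the paper) we may replace $B_\e(X)\cap\Omega$ by $B_\e(X)$ up to a harmless constant, assuming the usual interior-density condition.

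The key step is the pointwise bound for the mollified function: since $\psi$ is supported in $B_{1/2}(0)$, for $Y\in B_\e(X)$ we have $|K_\e f(Y)|\le \|\psi\|_\infty \e^{-d}\int_{B_{\e/2}(Y)}|f| \lesssim \fint_{B_{2\e}(X)}|f| \le \bigl(\fint_{B_{2\e}(X)}|f|^2\bigr)^{1/2}$, with $B_{2\e}(X)\subset B_{2\e}(\mathcal{O})$. Thus $K_\e f$ is essentially constant at scale $\e$ and can be pulled out of the integral, giving
\begin{equation*}
M_\e(g(\cdot/\e)K_\e f)(X)^2 \lesssim \Bigl(\fint_{B_\e(X)}|g(Y/\e)|^2\,dY\Bigr)\, M_{2\e}(f)(X)^2 .
\end{equation*}
By periodicity and rescaling, $B_\e(X)/\e$ is a unit ball covered by a bounded number (depending on $d$) of translates of $\T^d$, so $\fint_{B_\e(X)}|g(\cdot/\e)|^2\lesssim \|g\|_{L^2(\T^d)}^2$. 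Hence $M_\e(g(\cdot/\e)K_\e f)(X)\lesssim \|g\|_{L^2(\T^d)} M_{2\e}(f)(X)$ for every $X\in\mathcal{O}$.

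Finally take $L^p$ norms over $\mathcal{O}$: since $p\ge 2$, \eqref{est.Mef<f} of Proposition \ref{prop.A1} (with $\e$ replaced by $2\e$; its proof only uses Hölder from $L^2$ to $L^p$ averages and Fubini) yields $\|M_{2\e}(f)\|_{L^p(\mathcal{O})}\lesssim \|f\|_{L^p(B_{2\e}(\mathcal{O}))}$, which gives the claim. There is no real obstacle; the only points needing care are the role of the restriction $p\ge 2$ (needed exactly in the last Hölder step, since $g$ is only $L^2$ and so the estimate must pass through $L^2$ averages of $f$), and bookkeeping of neighborhoods: one must check that all balls used lie in $B_{2\e}(\mathcal{O})$, where $f$ is defined (the paper's definition of $B_{2\e}(\mathcal{O})$ contains a typo, and $2\e$ is the intended radius).
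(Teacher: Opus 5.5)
Your argument is correct, but it takes a different route from the paper. The paper treats $T_\e f = M_\e(g(\cdot/\e)K_\e f)$ as a sublinear operator and proves two endpoint bounds, then invokes Marcinkiewicz interpolation:
\begin{itemize}
\item at $p=2$, via \eqref{est.Mef<f} and the $L^2$ estimate $\|g(\cdot/\e)K_\e f\|_{L^2}\lesssim \|g\|_{L^2(\T^d)}\|f\|_{L^2}$ quoted from Shen's book;
\item at $p=\infty$, via $|K_\e f|\le \|f\|_\infty$ together with $\sup_X \fint_{B_\e(X)}|g(\cdot/\e)|^2\lesssim \|g\|_{L^2(\T^d)}^2$.
\end{itemize}

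You instead localize the $p=\infty$ argument. Since $\psi_\e$ is supported in $B_{\e/2}(0)$, $|K_\e f|$ on $B_\e(X)$ is controlled by $\fint_{B_{2\e}(X)}|f|$. This gives the pointwise domination
\begin{equation*}
M_\e(g(\cdot/\e)K_\e f)(X)\lesssim \|g\|_{L^2(\T^d)}\, M_{2\e}(f)(X),
\end{equation*}
after which the $L^p$ bound for $p\ge 2$ is just Jensen plus Fubini, exactly as in \eqref{est.Mef<f}.

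What your route buys:
\begin{itemize}
\item it is self-contained, with no interpolation of a sublinear operator and no appeal to the external $L^2$ lemma;
\item it yields a stronger, local pointwise statement, with a constant independent of $p$;
\item it shows transparently that $p\ge 2$ is needed only because $M_{2\e}$ is an $L^2$ average.
\end{itemize}
The paper's route is more modular, reusing a standard estimate as a black box.

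Your reading of $M_\e$ as the full-ball average matches how the appendix actually uses it, for instance in the proof of Proposition~\ref{prop.A1}. Your correction of the radius in the definition of $B_{2\e}(\mathcal{O})$ is also right: with radius $2\e$, all the balls you use lie where $f$ is defined.
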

\begin{proof}
    Define $T_\e f = M_\e(g(X/\e) K_\e (f) )$. Clearly, $T_\e$ is a sublinear operator. We prove the theorem by considering $p = 2$ and $p = \infty$, and then apply the real interpolation theorem.
    For $p  = 2$, we have
    \begin{equation}
        \| T_\e f \|_{L^p(\mathcal{O})} \lesssim \| g(X/\e) K_\e (f) \|_{L^2(B_\e(\mathcal{O}))} \lesssim \| g \|_{L^2(\T^d)} \| f \|_{L^2(B_{2\e}(\mathcal{O}))}.
    \end{equation}
    For $p = \infty$, 
    \begin{equation}
        \| T_\e f \|_{L^\infty(\mathcal{O})} \lesssim \| f \|_{L^\infty(B_{2\e}(\mathcal{O}))} \| M_\e(g(X/\e)) \|_{L^\infty(B_{\e}(\mathcal{O}))} \lesssim \| g \|_{L^2(\T^d)} \| f \|_{L^\infty(B_{2\e}(\mathcal{O}))}.
    \end{equation}
    The desired estimate follows from Marcinkiewicz interpolation Theorem.
\end{proof}

\section{The real-variable argument}

The real-variable argument (originating from \cite{CP98}) is built upon the following theorem taken from \cite[Theorem 4.2.3]{ShenBook}.
\begin{theorem}\label{thm.full real-variable}
Let $B_0$ be a ball in $\R^d$ and $F \in L^2(4B_0)$. Let $q>2$ and $f \in L^p(4B_0)$ for some $2 < p < q$. Suppose that for each ball $B \subset 2B_0$ with $|B| \le c_1 |B_0|$, there exist two measurable functions $F_B$ and $R_B$ on $2B$ such that $|F| \le |F_B| + |R_B|$ on $2B$, and
\begin{equation}\label{cond.real-variable}
\left\{
\begin{aligned}
& \bigg( \fint_{2B} |F_B|^2 \bigg)^{1/2} 
\leq N_1 \sup_{4B_0 \supset B' \supset B} \bigg( \fint_{B'} |f|^2  \bigg)^{1/2},\\
& \bigg( \fint_{2B} |R_B|^q \bigg)^{1/q} 
\leq N_2 \bigg\{ 
\bigg( \fint_{\beta B} |F|^2  \bigg)^{1/2} 
+ \sup_{4B_0 \supset B' \supset B} \bigg(  \fint_{B'} |f|^2 \bigg)^{1/2} 
\bigg\},
\end{aligned}
\right.
\end{equation}
where $N_1, N_2 > 0, 0<c_1<1$ and $\beta > 2$. Then $F \in L^p(B_0)$ and
\begin{equation}
    \bigg( \fint_{B_0} |F|^p \bigg)^{1/p} \le C \bigg\{ \bigg( \fint_{4B_0} |F|^2 \bigg)^{1/2} + \bigg( \fint_{4B_0} |f|^p \bigg)^{1/p} \bigg\},
\end{equation}
where $C$ depends only on $d$, $p$, $q$, $N_1$, $N_2, c_1$, and $\beta$.
\end{theorem}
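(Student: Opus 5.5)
The plan is the standard Caffarelli--Peral good-$\lambda$ argument, run on the level sets of a localized Hardy--Littlewood maximal function. Set
\[
E(\lambda)=\{X\in B_0: \M_{4B_0}(|F|^2)(X)>\lambda\},
\]
where $\M_{4B_0}$ denotes the maximal operator restricted to balls contained in $4B_0$. The aim is to prove that for some small $\gamma>0$ and a large constant $A>1$,
\[
|E(A\lambda)|\le \gamma\,|E(\lambda)| + \big|\{X\in B_0:\M_{4B_0}(|f|^2)(X)>\gamma\lambda\}\big|
\]
for all $\lambda\ge\lambda_0$, where $\lambda_0:=C\fint_{4B_0}|F|^2$.

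To establish this, fix $\lambda\ge\lambda_0$ and perform a Calder\'on--Zygmund stopping-time decomposition of $E(\lambda)$. This produces disjoint balls (or dyadic cubes) $B\subset 2B_0$ with $|B|\le c_1|B_0|$, which is guaranteed by the choice of $\lambda_0$. Each such $B$ satisfies $\fint_{B'}|F|^2\le C\lambda$ for every enlargement $B'\supset B$; this maximality gives in particular $\fint_{\beta B}|F|^2\lesssim\lambda$. Now take a ball $B$ that meets the set $\{\M_{4B_0}(|f|^2)\le\gamma\lambda\}$. For such $B$ the supremum of $\fint_{B'}|f|^2$ over $B'\supset B$ is at most $\gamma\lambda$. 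On $B$, the localization property of maximal functions together with $|F|\le|F_B|+|R_B|$ gives
\[
\{\M_{4B_0}(|F|^2)>A\lambda\}\cap B\subset \{\M(|F_B|^2\mathbbm 1_{2B})>A\lambda/4\}\cup\{\M(|R_B|^2\mathbbm 1_{2B})>A\lambda/4\}.
\]
We bound the first set with the weak $(1,1)$ inequality and the first condition in \eqref{cond.real-variable}: its measure is $\lesssim (A\lambda)^{-1}N_1^2\gamma\lambda\,|B|$. We bound the second set with the weak $(q/2,q/2)$ inequality and the second condition: its measure is $\lesssim (A\lambda)^{-q/2}N_2^q\lambda^{q/2}|B|$. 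Summing over the stopping balls yields
\[
|E(A\lambda)|\le \big(C N_1^2\gamma/A + C N_2^q A^{-q/2}\big)|E(\lambda)| + \big|\{\M_{4B_0}(|f|^2)>\gamma\lambda\}\big|.
\]

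It remains to integrate. Multiply by $\lambda^{p/2-1}$ and integrate over $\lambda\in(\lambda_0,\Lambda)$; the cutoff $\Lambda$ keeps every quantity finite, since a priori we only know $F\in L^2$. Because $p<q$, we can first fix $A$ large so that $C N_2^q A^{p/2-q/2}<1/4$, and then fix $\gamma$ small so that $C N_1^2\gamma A^{p/2-1}<1/4$. After the change of variables $\lambda\mapsto A\lambda$, the resulting terms absorb into the left-hand side. The leftover contributions are $\lambda_0^{p/2}|B_0|$, which accounts for the $(\fint_{4B_0}|F|^2)^{p/2}$ term, and $\int_{B_0}\M_{4B_0}(|f|^2)^{p/2}\lesssim\int_{4B_0}|f|^p$, which holds by $L^{p/2}$-boundedness since $p/2>1$. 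Letting $\Lambda\to\infty$ and using $|F|^2\le\M(|F|^2)$ a.e. completes the argument.

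The main obstacle is the stopping-time step. Three things must be checked there: that every selected ball satisfies both $B\subset 2B_0$ and $|B|\le c_1|B_0|$, so that the hypotheses of the theorem apply to it; that the enlargement $\beta B$ stays inside $4B_0$; and that the selected balls have bounded overlap. I would try dyadic cubes subordinate to $B_0$ first, controlling $|F|^2$ at every scale by the maximal function, since that makes the disjointness and containment issues easiest to track.
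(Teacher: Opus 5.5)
Your proposal is correct and is essentially the standard proof of this result. The paper does not reprove it but quotes it from Shen's monograph (Theorem 4.2.3, in the spirit of Caffarelli--Peral). That proof has the same structure as yours: a good-$\lambda$ inequality for $\M_{4B_0}(|F|^2)$ obtained from a Calder\'on--Zygmund-type stopping argument, weak-type $(1,1)$ and $(q/2,q/2)$ bounds applied to $F_B$ and $R_B$, and integration in $\lambda$ with $A$ fixed large first (using $p<q$) and $\gamma$ small second.

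The containment issues you flag are routine. The usual way to handle them is not to decompose $E(\lambda)$ directly. Instead, run the stopping time on $E(A\lambda)$ itself, via a density condition on small dyadic subcubes of a cube containing $B_0$ (or on small balls centered in $B_0$); this is possible because $|E(A\lambda)|$ is a small fraction of $|B_0|$ for $\lambda\ge\lambda_0$. Then argue by contradiction using one point of the parent cube where both maximal functions are small.
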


In this paper, we actually need a large-scale version of the real-variable argument, in which the assumption \eqref{cond.real-variable} holds only above a certain scale. In this case, we only have large-scale estimates above this particular scale. The following theorem or its similar versions also appears in \cite[Remark 4.2]{S23} and \cite[Theorem 2.6]{Z21}.

\begin{theorem}\label{thm.real-variable>t}
    Let $B_0$ be a ball in $\R^d$ and $F \in L^2(4B_0)$. Let $q>2$ and $f \in L^2(4B_0)$ for some $2 < p < q$. Suppose that there exists $t > 0$ such that for each ball $B \subset 2B_0$ with $|B| \le c_1 |B_0|$ and $\diam(B) > 2t$, there exist two measurable functions $F_B$ and $R_B$ on $2B$ such that $|F| \le |F_B| + |R_B|$ on $2B$, and \eqref{cond.real-variable} holds. Then $M_t(F) \in L^p(B_0)$ and
    \begin{equation}\label{est.real-variable-t}
    \bigg( \fint_{B_0} |M_t(F)|^p \bigg)^{1/p} \le C \bigg\{ \bigg( \fint_{4B_0} |F|^2 \bigg)^{1/2} + \bigg( \fint_{4B_0} |M_t(f)|^p \bigg)^{1/p} \bigg\},
    \end{equation}
    where $C$ depends only on $d$, $p$, $q$, $N_1$, $N_2, c_1$, and $\beta$.
\end{theorem}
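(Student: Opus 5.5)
The plan is to deduce Theorem \ref{thm.real-variable>t} from Theorem \ref{thm.full real-variable}, applied to the averaged functions $\widetilde F = M_t(F)$ and $\widetilde f = M_t(f)$. The hypotheses are only available on balls of diameter $>2t$. Averaging at scale $t$ makes every function roughly constant on balls of radius $\lesssim t$, and this gives the hypotheses for small balls for free. Throughout I would replace $t$ by a fixed multiple $ct$ wherever convenient; this costs only constants, by Proposition \ref{prop.Me=MKe} and \eqref{est.Mef-Bs-2}.

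\textbf{Large balls, $\diam(B) > 4t$.} Let $B$ be such a ball, and write $B^\sharp$ for the concentric ball with radius enlarged by $t$, so $\diam(B^\sharp)>2t$. Apply the hypothesis on $B^\sharp$ to get $F_{B^\sharp}$ and $R_{B^\sharp}$ with $|F|\le |F_{B^\sharp}|+|R_{B^\sharp}|$ on $2B^\sharp$. Then set
\[
\widetilde F_B = M_t(F_{B^\sharp}\mathbbm{1}_{2B^\sharp}), \qquad \widetilde R_B = M_t(R_{B^\sharp}\mathbbm{1}_{2B^\sharp}).
\]
Sublinearity of $M_t$ gives $M_t(F)\le \widetilde F_B+\widetilde R_B$ on $2B$, because $B_t(X)\subset 2B^\sharp$ for $X\in 2B$. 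Next, \eqref{est.Mef<f} gives
\[
\fint_{2B}|\widetilde F_B|^2 \lesssim \fint_{2B^\sharp}|F_{B^\sharp}|^2, \qquad \Big(\fint_{2B}|\widetilde R_B|^q\Big)^{1/q}\lesssim \Big(\fint_{2B^\sharp}|R_{B^\sharp}|^q\Big)^{1/q}.
\]
Here $|2B^\sharp|\simeq|2B|$ because $r_B\gtrsim t$. The right-hand sides in \eqref{cond.real-variable} are stated in terms of $f$ and $F$, and I need them in terms of $M_t f$ and $M_t F$. On balls $B'$ of radius $\gtrsim t$, the inequality $\fint_{B'}|f|^2\lesssim \fint_{\widehat{B'}}|M_tf|^2$ holds with $\widehat{B'}$ a slight enlargement of $B'$; this follows from Proposition \ref{prop.A1} with $p=2$. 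The same applies to $F$ on $\beta B^\sharp\subset \beta' B$. Any ball $B'\supset B$ automatically has radius $\ge t$. This step therefore yields \eqref{cond.real-variable} for $(\widetilde F,\widetilde f)$ with a slightly larger $\beta$. The one technicality is that the enlarged balls should stay inside $4B_0$. I would handle this by working with $B_0$ replaced by a slightly smaller ball, or by trivially extending $f$ by zero; only constants change.

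\textbf{Small balls, $\diam(B)\le 4t$.} Take $F_B=0$ and $R_B=M_t(F)$. By \eqref{est.Mef-Bs-2} (or directly), for $X\in 2B$ we have
\[
M_t(F)(X)\lesssim M_{10t}(F)(X_B)\lesssim \Big(\fint_{\beta B''}|F|^2\Big)^{1/2},
\]
where $B''$ is the ball of radius $\simeq t$ about the center of $B$. Comparably, $M_{10t}(F)(X_B)\lesssim \big(\fint_{B'''}|M_t F|^2\big)^{1/2}$ for a ball $B'''$ of radius $\sim 20t$; this uses \eqref{est.Mef-Bs-2} with $s\simeq t$. So $R_B$ is bounded in $L^\infty(2B)$, hence in $L^q$ for every $q$, by an $L^2$ average of $\widetilde F$ over a ball comparable to $t$. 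This ball is not $\beta B$ itself when $r_B\ll t$, which is the main obstacle. I would first simply allow the constant $\beta$ to be replaced by balls of radius $\max(\beta r_B, Ct)$. Then I would check that the proof of Theorem \ref{thm.full real-variable} (the Calderón–Zygmund/good-$\lambda$ argument) tolerates this, since it only uses that the enlarged ball is contained in a bounded dilate of the stopping cube's parent at scales above $t$.

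As a cleaner alternative, I would restrict the stopping-time decomposition to cubes of side $\ge t$. Below that scale $\widetilde F$ is essentially constant, because $\widetilde F(X)\lesssim \fint_{B_{t}(X)}\widetilde F$ by \eqref{est.Mef-Bs-1}. So the level-set estimate on small cubes follows from the one on their ancestor at scale $t$. Either way, the conclusion of Theorem \ref{thm.full real-variable} gives
\[
\Big(\fint_{B_0}|M_tF|^p\Big)^{1/p}\lesssim \Big(\fint_{4B_0}|M_tF|^2\Big)^{1/2}+\Big(\fint_{4B_0}|M_tf|^p\Big)^{1/p}.
\]
The first term is bounded by $\big(\fint_{4B_0}|F|^2\big)^{1/2}$ via \eqref{est.Mef<f}, after the harmless enlargement noted above. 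This gives \eqref{est.real-variable-t}.
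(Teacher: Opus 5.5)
Your architecture is the paper's: apply Theorem \ref{thm.full real-variable} to $M_t(F)$ and $M_t(f)$, and verify \eqref{cond.real-variable} at every scale. Your large-ball step is sound. It is in fact more careful than the paper's sketch, because passing to $B^\sharp$ ensures that $F_{B^\sharp}$ and $R_{B^\sharp}$ are defined on the $t$-neighbourhood of $2B$, which is where $M_t$ needs them.

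The gap is in the small-ball step. With $F_B=0$ and $R_B=M_t(F)$, you must show $\big(\fint_{2B}|M_t(F)|^q\big)^{1/q}\lesssim\big(\fint_{\beta' B}|M_t(F)|^2\big)^{1/2}$ for a fixed $\beta'$. Instead you invoke \eqref{est.Mef-Bs-2} with $s\simeq t$, which lands on a ball of radius $\simeq t\gg r_B$. You then propose to change the hypothesis of Theorem \ref{thm.full real-variable} and re-check its proof. As written this is not a proof, and both justifications you offer are off:
\begin{itemize}
\item For a stopping cube of side $\ll t$, a ball of radius $Ct$ is not contained in any bounded dilate of its parent. What would actually be needed is that the enlarged ball contains a point of the parent at which the localized maximal function of $|F|^2$ is $\le\gamma\lambda$. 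That is an argument inside the black box, not something you can take from its statement.
\item $M_t(F)$ is not essentially constant below scale $t$. If $F$ is concentrated near a point $Z$, then $M_t(F)$ jumps across $\partial B_t(Z)$. Only the one-sided bound \eqref{est.Mef-Bs-1} holds.
\end{itemize}

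The missing observation is that \eqref{est.Mef-Bs-1} and \eqref{est.Mef-Bs-2} hold for every $s\lesssim t$, however small. So use them at scale $r_B$, not $t$. If $r_B\le 2t$ and $X\in 2B$, then $B_t(X)\subset B_{t+2r_B}(X_B)$. The second inequality in \eqref{est.Mef-Bs-2} with $s=2r_B\le 4t$ then gives
\[
M_t(F)(X)\lesssim M_{t+2r_B}(F)(X_B)\lesssim M_{4r_B}\big(M_t(F)\big)(X_B)=\Big(\fint_{4B}|M_t(F)|^2\Big)^{1/2}.
\]
Hence $\sup_{2B}|R_B|$ is controlled by the $L^2$ average of $M_t(F)$ over $4B$. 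This is the second condition of \eqref{cond.real-variable} with $\beta$ replaced by $\max\{\beta,4\}$; alternatively, use \eqref{est.Mef-Bs-1} with $s=(\beta-2)r_B$ to keep $\beta$ itself. The first condition is trivial since $F_B=0$. Theorem \ref{thm.full real-variable} then applies verbatim, with the slightly adjusted $\beta$ and $c_1$ from your large-ball step, and your argument closes.

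For comparison, the paper handles a small ball $B_s$ by pulling back the decomposition from $B_{t+2s}$. For the $R$-part this produces an average of $F$ over a ball of radius $\simeq\beta t$. That is exactly the mismatch you ran into, and the paper passes over it with ``similarly''. Your trivial decomposition, completed by the scale-$r_B$ bound above, is the cleaner route.
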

\begin{proof}
    The proof is simple, using Theorem \ref{thm.full real-variable}. Actually, let $\overline{F} = M_t(F)$. Then $\overline{F}$ satisfies $\overline{F} \le M_t(F_B) + M_t(R_B)$ and the assumption \eqref{cond.real-variable} with $F_B, R_B$ and $f$ replaced by $M_t(F_B), M_t(R_B)$ and $M_t(f)$ for any $B$ with arbitrarily small radius, namely
    \begin{equation}\label{cond.real-variable-t}
\left\{
\begin{aligned}
& \bigg( \fint_{2B} |M_t(F_B)|^2 \bigg)^{1/2} 
\lesssim \sup_{4B_0 \supset B' \supset B} \bigg( \fint_{B'} |M_t(f)|^2  \bigg)^{1/2},\\
& \bigg( \fint_{2B} |M_t(R_B)|^q \bigg)^{1/q} 
\lesssim \bigg\{ 
\bigg( \fint_{\beta B} |\overline{F}|^2  \bigg)^{1/2} 
+ \sup_{4B_0 \supset B' \supset B} \bigg(  \fint_{B'} |M_t(f)|^2 \bigg)^{1/2} 
\bigg\}.
\end{aligned}
\right.
\end{equation}
    In fact, if for $B = B_s$ and $s>t$, then \eqref{cond.real-variable} directly implies \eqref{cond.real-variable-t}, due to the comparison of the $L^2$ norm of $F$ and $M_t(F)$ in Proposition \ref{prop.A1}. For $s<t$, thanks to the first inequality in \eqref{est.Mef-Bs-2}, we have
    \begin{equation}
        \bigg( \fint_{B_{2s}} |M_t(F_B)|^2 \bigg)^{1/2} \lesssim \bigg( \fint_{B_{t+2s}} |F_B|^2 \bigg)^{1/2}.
    \end{equation}
    Now $\diam(B_{t+2s}) = 2(t+2s) > 2t$ and we can apply the first condition in \eqref{cond.real-variable} at this scale and the second inequality in \eqref{est.Mef-Bs-2} to get
    \begin{equation}
        \bigg( \fint_{B_{2s}} |M_t(F_B)|^2 \bigg)^{1/2} \lesssim \sup_{4B_0 \supset B' \supset B_s} \bigg( \fint_{B'} |M_t(f)|^2  \bigg)^{1/2}.
    \end{equation}
    This is a large-scale version of the first condition of \eqref{cond.real-variable}.
    Similarly, a large-scale version of the second condition in \eqref{cond.real-variable} is valid for $M_t(R_B)$. Thus, applying Theorem \ref{thm.full real-variable} to $\overline{F} = M_t(F)$, we get \eqref{est.real-variable-t}. Note that we changed $M_t(F)$ back to $F$ on the right-hand side of \eqref{est.real-variable-t} by \eqref{est.Mef<f}.
\end{proof}

\section{Regular elliptic operators in $C^1$ domains}\label{A-D}

Consider the elliptic equation 
$\mathcal{L}_0(u)=0$ in a bounded domain $\Omega$, where $\mathcal{L}_0$ is a second-order elliptic operator with constant and symmetric  coefficients.
If $\Omega$ is $C^1$ and $1<p< \infty$, it was proved  in \cite{Fabes1978} by the method of layer potentials that the Dirichlet problem,
\begin{equation}\label{D-1}
\left\{
\aligned 
& \mathcal{L}_0 (u) =0 \ \ \text{ in } \Omega,\\
& u=f \in L^p(\partial\Omega) \   \text{ on } \partial\Omega \text{ and } N(u) \in L^p(\partial\Omega)
\endaligned
\right.
\end{equation}
is solvable, and that  the solution satisfies the estimate, 
\begin{equation}\label{D-2}
\| N(u) \|_{L^p(\partial\Omega)}
\lesssim \| f \|_{L^p(\partial\Omega)}.
\end{equation}
Moreover, if $f\in W^{1, p}(\partial\Omega)$, then 
\begin{equation}\label{D-3}
\| N(\nabla u)\|_{L^p(\partial\Omega)}
\lesssim \| f\|_{W^{1, p}(\partial\Omega)}.
\end{equation}
Furthermore, if $\int_{\partial \Omega} g d\sigma =0$, the Neumann problem,
\begin{equation}\label{D-4}
\left\{
\aligned 
& \mathcal{L}_0 (u) =0 \ \ \text{ in } \Omega,\\
& \frac{\partial u}{\partial \nu_0}=g \in L^p(\partial\Omega) \   \text{ on } \partial\Omega \text{ and } N(\nabla u) \in L^p(\partial\Omega)
\endaligned
\right.
\end{equation}
is solvable for $1< p< \infty$, and  
the solution satisfies 
\begin{equation}\label{D-5}
    \| N(\nabla u) \|_{L^p(\partial\Omega)}
\lesssim \| g \|_{L^p(\partial\Omega)}.
\end{equation}
We point out that the results above also follow from some general results in 
\cite{DPP07, Dindos2017} for elliptic operators with coefficients that satisfy the so-called ``small Carleson norm'' condition, particularly including $C^\alpha$ H\"{o}lder continuous coefficients. Moreover, \eqref{D-3} yields the local estimate \eqref{est.Ca.Rp} for any $p\in (1,\infty)$. Similar estimate holds for the local Neumann problem in $C^1$ domains.


\bibliographystyle{abbrv}
\bibliography{ref}
\end{document}